\numberwithin{equation}{section}
\tikzset{cong/.style={draw=none,edge node={node [sloped, allow upside down, auto=false]{$\cong$}}},
         Isom/.style={draw=none,every to/.append style={edge node={node [sloped, allow upside down, auto=false]{$\cong$}}}}}
\newtheorem{Thm}[equation]{Theorem}
\newtheorem*{Thm*}{Theorem}
\newtheorem{Prop}[equation]{Proposition}
\newtheorem{Lem}[equation]{Lemma}
\newtheorem{Cor}[equation]{Corollary}
\theoremstyle{remark}
\newtheorem{Def}[equation]{Definition}
\newtheorem{Ter}[equation]{Terminology}
\newtheorem{Not}[equation]{Notation}
\newtheorem{Exa}[equation]{Example}
\newtheorem{Exas}[equation]{Examples}
\newtheorem{Hyp}[equation]{Hypothesis}
\newtheorem{Rem}[equation]{Remark}
\newcommand{\nc}{\newcommand}
\nc{\dmo}{\DeclareMathOperator}
\nc{\Adhspace}{\hspace{-1.5pt}}
\nc{\HZ}{{\rmH \hspace{-0.2em}\bbZ}}
\nc{\HFp}{{\rmH \hspace{-0.15em}\bbF_{\hspace{-0.1em}p}}}
\dmo{\Iso}{Iso}
\dmo{\Ab}{Ab}
\dmo{\Abelem}{Abelem}
\dmo{\Add}{Add}
\dmo{\Aut}{Aut}
\dmo{\Bi}{bi}
\dmo{\Bisets}{Bisets}
\dmo{\CAT}{CAT}
\dmo{\coev}{coev}
\dmo{\Coloc}{Coloc}
\dmo{\ev}{ev}
\dmo{\Fib}{Fib}
\dmo{\Free}{Free}
\dmo{\Id}{Id}
\dmo{\Loc}{Loc}
\dmo{\rmI}{I}
\dmo{\rmL}{L}
\dmo{\rmR}{R}
\dmo{\Spc}{Spc}
\dmo{\thick}{thick}
\dmo{\Thick}{Thick}
\dmo{\chara}{char}%
\dmo{\coh}{coh} 
\dmo{\Coind}{CoInd}
\dmo{\coker}{coker}
\dmo{\pd}{pd}
\dmo{\fd}{fd}
\dmo{\pt}{\text{pt}}
\dmo{\cone}{cone}
\dmo{\Der}{D}
\dmo{\Derqc}{D_{\Qcoh}}
\nc{\Rder}{\mathrm{R}} 
\nc{\Lder}{\mathrm{L}} 
\dmo{\Khocat}{K}
\dmo{\End}{End}
\dmo{\Ext}{Ext}
\dmo{\rmH}{H}
\dmo{\Ho}{Ho}
\dmo{\Hom}{Hom}
\dmo{\id}{id}
\dmo{\Img}{Im}
\dmo{\incl}{incl}
\dmo{\Ker}{Ker}
\dmo{\Res}{res}
\dmo{\Infl}{infl}
\dmo{\triv}{triv}
\dmo{\Ind}{ind}
\dmo{\CoInd}{coind}
\dmo{\Oname}{O}
\nc{\Op}{\Oname^p}
\nc{\Oq}{\Oname^q}
\nc{\smashh}{\wedge}
\dmo{\Ad}{Ad}
\dmo{\Les}{Les}
\dmo{\Map}{Map}%
\dmo{\Mod}{Mod}
\dmo{\GrMod}{GrMod}
\dmo{\lax}{lax}
\dmo{\modname}{mod}%
\dmo{\grmod}{grmod}
\dmo{\Mor}{Mor}%
\dmo{\Obj}{Obj}
\dmo{\opname}{op}
\dmo{\Or}{Or}
\dmo{\pr}{pr}
\dmo{\canin}{in} 
\dmo{\Proj}{Proj} 
\dmo{\proj}{proj}
\dmo{\Qcoh}{Qcoh}
\dmo{\rank}{rank}
\dmo{\Rname}{R}
\dmo{\DM}{DM}
\dmo{\SH}{SH}
\nc{\SHc}{{\SH^c}}
\nc{\SHp}{{\SH_{(p)}}}
\nc{\SHcp}{{\SH^c_{(p)}}}
\nc{\SHG}{\SH(G)}
\nc{\SHGp}{\SH(G)_{(p)}}
\nc{\SHGc}{\SHG^c}
\nc{\SHGcp}{\SHG^c_{(p)}}
\dmo{\smallb}{b}
\dmo{\smallperf}{perf}
\dmo{\Span}{Span}
\dmo{\Spec}{Spec}
\dmo{\Stab}{Stab}
\dmo{\stab}{stab}
\dmo{\supp}{supp}
\dmo{\switch}{switch}
\dmo{\TTR}{TTR}
\nc{\locus}{compactness locus}
\dmo{\lenormal}{\unlhd}
\dmo{\lnormal}{\lhd}
\dmo{\Z}{Z}
\dmo{\A}{{\cat A}}
\nc{\ZGN}{\Z_{N,G}}
\nc{\tildeE}{\widetilde{E}}
\nc{\AGN}{\A_{N,G}}
\nc{\E}{{\cat E}}
\nc{\F}{{\cat F}}
\nc{\G}{{\cat G}}
\nc{\perfgen}{{\cat S}}
\nc{\U}{{\cat U}}
\nc{\V}{{\cat V}}
\nc{\Beren}[1]{{\color{MidnightBlue}#1}}
\nc{\Ivo}[1]{{\color{OliveGreen}#1}}
\nc{\Paul}[1]{{\color{Violet}#1}}
\nc{\Pout}[1]{\Paul{\sout{#1}}}
\nc{\Bout}[1]{\Beren{\sout{#1}}}
\nc{\Iout}[1]{\Ivo{\sout{#1}}}
\nc{\notsupseteq}{\not\supseteq\hspace{-0.7ex}}
\nc{\Fp}{{\mathbb{F}_{\hspace{-2pt}p}}}
\nc{\Fpstar}{{\mathbb{F}^*_{\hspace{-2pt}p}}}
\nc{\IFF}{$\Leftrightarrow$}
\nc{\DO}{\omega_f}
\nc{\Crelcpt}{\cat{C}^{c/f}}
\nc{\Crich}{\underline{\cat{C}}}
\nc{\DbG}{\Db(\kk G\mmod)}
\nc{\uA}{\underline{A}}
\nc{\doublequot}[3]{#1\backslash #2/#3}
\nc{\HGK}{\doublequot HGK}
\nc{\quadtext}[1]{\quad\textrm{#1}\quad}
\nc{\qquadtext}[1]{\qquad\textrm{#1}\qquad}
\nc{\PZG}{\cat C_{\bbZ}(\bbZ G)}
\nc{\TTRK}{\TTR(\cat K)}
\nc{\psets}{\mathsf{-sets}_\sbull}
\nc{\Gsets}{G\mathsf{-sets}}
\nc{\Hsets}{H\mathsf{-sets}}
\nc{\AddK}{\Add^{\Sigma}(\cat K)}
\nc{\adj}{\dashv}
\nc{\adjto}{\rightleftarrows}
\nc{\AK}{A\MModcat{K}}
\nc{\BK}{B\MModcat{K}}
\nc{\bbA}{\mathbb{A}}
\nc{\bbB}{\mathbb{B}}
\nc{\bbC}{\mathbb{C}}
\nc{\bbI}{\mathbb{I}}
\nc{\bbN}{\mathbb{N}}
\nc{\bbP}{\mathbb{P}}
\nc{\bbQ}{\mathbb{Q}}
\nc{\bbR}{\mathbb{R}}
\nc{\bbZ}{\mathbb{Z}}
\nc{\bbF}{\mathbb{F}}
\nc{\bbZp}{\mathbb{Z}_{(p)}}
\nc{\Sphere}{\mathbb{S}} 
\nc{\cat}[1]{\mathscr{#1}}
\nc{\Displ}{\displaystyle}
\nc{\ie}{{\sl i.e.}\ }
\nc{\into}{\mathop{\rightarrowtail}}
\nc{\inv}{^{-1}}
\nc{\isoto}{\buildrel \sim\over\to}
\nc{\isotoo}{\mathop{\buildrel \sim\over\too}}
\nc{\kk}{\Bbbk}
\nc{\onto}{\mathop{\twoheadrightarrow}}
\nc{\too}{\mathop{\longrightarrow}\limits}
\nc{\xytriangle}[7]{\xymatrix@C=#7em{#1\ar[r]^-{\Displ #4} & #2 \ar[r]^-{\Displ #5}&#3\ar[r]^-{\Displ #6}&T #1}}
\nc{\ababs}{{\sl ab absurdo}}
\nc{\adh}[1]{\overline{#1}}
\nc{\adhpt}[1]{\adh{\{#1\}}}
\nc{\aka}{{a.\,k.\,a.}\ }
\nc{\ala}{{\sl \`a la}\ }
\nc{\Autcat}[1]{\Aut_{\cat #1}}
\nc{\cO}{\mathcal{O}}
\nc{\calO}{\mathcal{O}}
\nc{\dimm}{{\operatorname{dim}}}
\nc{\tordim}{{\operatorname{tor-dim}}}
\nc{\cV}{\mathcal{V}}
\nc{\Db}{\Der^{\smallb}}
\nc{\Dqc}{\Der_{\Qcoh}}
\nc{\Dperf}{\Der^{\smallperf}}
\nc{\eg}{{\sl e.\,g.}}
\nc{\eps}{\epsilon}
\nc{\FFree}{\,\text{--}\Free}%
\nc{\FFreecat}[1]{\FFree_{\cat #1}}
\nc{\FK}{\mathcal{F}(\cat K)}
\nc{\gm}{\mathfrak{m}}
\nc{\Homcat}[1]{\Hom_{\cat #1}}
\nc{\Morcat}[1]{\Mor_{\cat #1}}
\nc{\hook}{\hookrightarrow}
\nc{\Idcat}[1]{\Id_{\cat{#1}}}
\nc{\ideal}[1]{\langle #1\rangle}
\nc{\ihom}{{\mathsf{hom}}} 
\nc{\ihomcat}[1]{\ihom_{\cat #1}}
\nc{\Kcat}[1]{#1\MModcat{K}}
\nc{\KP}{\cat{K}_{\cat P}}
\nc{\loccit}{{\sl loc.\ cit.}}
\nc{\lind}{\rmL\!}
\nc{\RR}{\rmR\!}
\nc{\RRb}{\mathbb{R}}
\nc{\Lotimes}{\otimes^{\rmL}}
\nc{\Mid}{\,\big|\,}
\nc{\MMod}{\,\text{-}\Mod}%
\nc{\MModcat}[1]{\MMod_{\cat #1}}%
\nc{\mmod}{\,\text{--}\modname}%
\nc{\mmodb}{\mmod^\sbull}%
\nc{\op}{^{\opname}}
\nc{\oto}[1]{\overset{#1}\to}
\nc{\otoo}[1]{\overset{#1}{\,\too\,}}
\nc{\ourfrac}[2]{\genfrac{}{}{0pt}{}{\Displ #1}{\scriptstyle #2}}
\nc{\ouriff}{\Leftrightarrow}
\nc{\oursetminus}{\!\smallsetminus\!}
\nc{\potimes}[1]{^{\otimes #1}}
\nc{\pproj}{\,\text{-}\proj}
\nc{\ptimes}[1]{^{\times #1}}
\nc{\dd}[1]{_{{\scriptscriptstyle(#1)}}}
\nc{\uu}[1]{^{{\scriptscriptstyle(#1)}}}
\nc{\pushout}{\textrm{\rm p.o.}}
\nc{\qp}{q_{_{\scriptstyle \cat P}}\!}%
\nc{\Rcat}[1]{\Rname_{\cat #1}^\sbull}
\nc{\rdto}{}
\nc{\restr}[1]{_{|_{\scriptstyle #1}}}
\nc{\RK}{\Rcat{K}}
\nc{\sbull}{{\scriptscriptstyle\bullet}}
\nc{\SET}[2]{\big\{\,#1\Mid#2\,\big\}}
\nc{\SHA}{\SH{}^{\bbA^{1}}}
\nc{\SHfin}{\SH^{\text{\rm fin}}}
\nc{\smallmatrice}[1]{\left(\begin{smallmatrix} #1 \end{smallmatrix}\right)}
\nc{\SpcAK}{\Spc(A\MModcat{K})}
\nc{\SpcK}{\Spc(\cat K)}
\nc{\suppcat}[1]{\supp(\cat #1)}
\nc{\then}{\Rightarrow}
\nc{\tideal}[1]{\ideal{#1}}
\nc{\unit}{\mathbb{1}}
\nc{\unitcat}[1]{\unit_{\cat #1}}
\nc{\onept}{\mathrm{B}} 
\nc{\HG}{\!{}^{^H}\overline{G}}
\nc{\uY}{\widetilde{Y}}
\nc{\Dk}{\dual_{\kappa}}
\nc{\Dkk}{\dual_{\kappa'}}
\nc{\bs}{\backslash}
\nc{\biCpt}{\mathrm{biCpt}}
\nc{\biLCpt}{\mathrm{biLCpt}} 
\nc{\Grps}{\mathsf{Grps}}
\nc{\Sets}{\mathsf{Sets}}
\nc{\Top}{\mathsf{Top}}
\nc{\Comp}{\mathsf{Top}^{\mathsf{comp}}}
\nc{\lG}{{}_{{\color{Gray}\scriptscriptstyle G}}}
\nc{\lH}{{}_{{\color{Gray}\scriptscriptstyle H}}}
\nc{\rG}{_{{\color{Gray}\!\scriptscriptstyle G}}}
\nc{\rH}{_{{\color{Gray}\!\scriptscriptstyle H}}}
\nc{\rK}{_{{\color{Gray}\!\scriptscriptstyle K}}}
\nc{\dual}{\Delta}
\nc{\ra}{\rightarrow}
\nc{\xra}{\xrightarrow}
\nc{\C}{\mathbb{C}} 
\nc{\Cont}{\mathrm{C}} 
\nc{\KK}{\mathsf{KK}}
\nc{\Modules}{\mathsf{Mod}}
\nc{\Alg}{\mathsf{Alg}}
\nc{\Sep}{\mathsf{Sep}}
\begin{document}



\title[Compactness locus and Adams isomorphism]{The compactness locus of a geometric functor and the formal construction of the Adams isomorphism}

\author{Beren Sanders}
\date{October 20, 2018}

\address{Beren Sanders, Mathematics Department, UC Santa Cruz, 95064 CA, USA}
\email{beren@ucsc.edu}
\urladdr{http://people.ucsc.edu/$\sim$beren}

\begin{abstract}
	We introduce the compactness locus of a geometric functor between
	rigidly-compactly generated tensor-triangulated categories, and describe it
	for several examples arising in equivariant homotopy theory and algebraic
	geometry.  It is a subset of the tensor-triangular spectrum of the target
	category which, crudely speaking, measures the failure of the functor to
	satisfy Grothendieck-Neeman duality (or equivalently, to admit a left
	adjoint).  We prove that any geometric functor --- even one which does not
	admit a left adjoint --- gives rise to a Wirthm\"{u}ller isomorphism once
	one passes to a colocalization of the target category determined by the
	compactness locus.  When applied to the inflation functor in equivariant
	stable homotopy theory, this produces the Adams isomorphism.
\end{abstract}

\subjclass[2010]{18E30; 55U35, 55P91, 14F05}
\keywords{Tensor triangular geometry, compactness locus, perfect locus, inflation functor, Wirthm\"uller isomorphism, Adams isomorphism, Grothendieck duality}

\thanks{Author supported by the Danish National Research Foundation through the Centre for Symmetry and Deformation (DNRF92)}

\maketitle


\vskip-\baselineskip\vskip-\baselineskip
\tableofcontents
\vskip-\baselineskip\vskip-\baselineskip\vskip-\baselineskip


\section{Introduction}
\medskip

Motivated by a desire to clarify the relationship between Grothendieck duality in algebraic geometry and the Wirthm\"{u}ller isomorphism in equivariant stable homotopy theory, P.~Balmer, I. Dell'Ambrogio, and the present author recently made a general study \cite{BalmerDellAmbrogioSanders16} of the existence and properties of adjoints to a geometric functor $f^*:\cat D \to \cat C$ between rigidly-compactly generated tensor-triangulated categories.  (These definitions will be recalled in Section~\ref{sec:main-thm}.) One highlight of that work was the realization that the Wirthm\"{u}ller isomorphism is a general formal phenomenon, emerging (in our rigidly-compactly generated setting) as an isomorphism between the left and right adjoint of any tensor-triangulated functor --- up to a twist by the relative dualizing object --- whenever those adjoints exist.  Moreover, we proved that the existence of those adjoints is equivalent to the original functor satisfying a form of Grothendieck duality.

More precisely, as explained in \cite{BalmerDellAmbrogioSanders16}, a geometric functor $f^*:\cat D \to \cat C$ between rigidly-compactly generated categories always admits a right adjoint $f_*$ which itself admits a right adjoint $f^!$\,:
\[\xymatrix{
		\cat D \ar[d]_{f^*} \ar@<22pt>@{<-}[d]^-{f_*} \ar@<45pt>[d]^-{f^!} \\
		\cat C
	}\]
	Then, defining the \emph{relative dualizing object} to be $\DO:=f^!(\unit_{\cat D})$, we proved:
\begin{Thm}[{{\cite[Thm.~B]{BalmerDellAmbrogioSanders16}}}]
	\label{thm:original-thm}
	The following are equivalent:
	\begin{enumerate}
		\item The functor $f^{!}$ admits a right adjoint.
		\item There is a natural isomorphism $f^! \simeq \DO \otimes f^*(-)$.
		\item The functor $f_*$ preserves compact objects.
		\item There is a natural isomorphism $f^* \simeq \ihom(\DO,f^!(-))$.
		\item The functor $f^*$ admits a left adjoint $f_!$.
	\end{enumerate}
	Moreover, in this case, there is a canonical natural isomorphism
	\begin{equation}\label{eq:original-wirthmuller}
		f_! \simeq f_*(- \otimes \DO).
	\end{equation}
\end{Thm}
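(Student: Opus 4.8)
The plan is to organise all the equivalences around the essentially formal equivalence (a)~$\Leftrightarrow$~(c) together with the identification, under hypothesis (c), of the relative dualizing object $\DO$ as a genuine dualizing object. For (a)~$\Leftrightarrow$~(c): since $f^*$ is a tensor-triangulated functor it preserves rigid, hence compact, objects (compact $=$ rigid in both categories), so its right adjoint $f_*$ automatically preserves coproducts; by Brown representability $f^!$ admits a right adjoint iff $f^!$ preserves coproducts, and by Neeman's criterion applied to the adjunction $f_*\dashv f^!$ this holds iff $f_*$ preserves compact objects. So (a)~$\Leftrightarrow$~(c) is immediate, and it is the structural skeleton of the argument; the remaining conditions will be attached to it through two canonical comparison maps.

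Those maps are as follows. There is always a morphism $\theta_x\colon \DO\otimes f^*(x)\to f^!(x)$, natural in $x\in\cat D$, obtained by combining the counit $f_*f^!\to\Id_{\cat D}$ with the projection-formula isomorphism $f_*\bigl(f^!(y)\otimes f^*(x)\bigr)\cong f_*f^!(y)\otimes x$ (the projection formula holds for any geometric functor, by reduction to rigid $x$) and transposing along $f_*\dashv f^!$; transposing $\theta$ further along $\DO\otimes(-)\dashv\ihom(\DO,-)$ and $f_*\dashv f^!$ gives a morphism $\vartheta_x\colon f^*(x)\to\ihom\bigl(\DO,f^!(x)\bigr)$, and (b), (d) assert that $\theta$, resp.\ $\vartheta$, is invertible. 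Two facts are used repeatedly: $\ihom(\DO,f^!(-))$ is right adjoint to $c\mapsto f_*(c\otimes\DO)$, and $\DO\otimes f^*(-)$ preserves coproducts. Granting (c), $f^!$ too preserves coproducts, so $\theta$ is a transformation of coproduct-preserving exact functors on $\cat D$; hence $\{x:\theta_x\text{ invertible}\}$ is localizing, and since $\cat D$ is compactly generated it suffices to check $\theta_x$ for compact --- equivalently rigid --- $x$, where $f^*(x)$ is rigid in $\cat C$ and a Yoneda computation with $f_*\dashv f^!$, the projection formula and rigidity identifies $\Hom_{\cat C}(c,\DO\otimes f^*(x))$ with $\Hom_{\cat C}(c,f^!(x))$ compatibly with $\theta_x$. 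This proves (c)~$\Rightarrow$~(b); conversely (b)~$\Rightarrow$~(a) is immediate since $\DO\otimes f^*(-)$ manifestly preserves coproducts, and (d)~$\Rightarrow$~(e) is formal because $\ihom(\DO,f^!(-))$ comes with the explicit left adjoint $f_*(-\otimes\DO)$.

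What remains --- and what I expect to be the main obstacle --- is to show that under (c) the object $\DO$ is \emph{dualizing}, i.e.\ that the unit map $\unitcat{C}\to\ihom(\DO,\DO)$ is an isomorphism; equivalently $z\to\ihom(\DO,z\otimes\DO)$ is an isomorphism for every rigid $z$, by extracting the rigid factor from the internal hom. The localizing-subcategory shortcut is unavailable here, since $\ihom(\DO,-)$ need not preserve coproducts; one must instead argue directly, unwinding the relevant composite to the identity by hand using the units and counits of $f^*\dashv f_*\dashv f^!$, the projection formula, and rigidity of compacts. Granting this, (d) follows on rigid $x$ from $\ihom(\DO,f^!(x))\cong\ihom(\DO,\DO\otimes f^*(x))\cong\ihom(\DO,\DO)\otimes f^*(x)\cong f^*(x)$ (then on all of $\cat D$ once $f^*$ is known to be a right adjoint), and the same input verifies the triangle identities for an adjunction $f_*(-\otimes\DO)\dashv f^*$, giving (e); uniqueness of left adjoints then forces $f_!\cong f_*(-\otimes\DO)$, which is exactly \eqref{eq:original-wirthmuller}. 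Finally the cycle is closed by (e)~$\Rightarrow$~(a): once $f^*$ has a left adjoint $f_!$ (which then preserves compacts), feeding the projection formula for $f_*$ and the co-projection isomorphism $f_!(f^*(x)\otimes c)\cong x\otimes f_!(c)$ (valid for the left adjoint of a closed tensor-triangulated functor) into a compact-generation argument forces $f^!$ to preserve coproducts, i.e.\ (a).
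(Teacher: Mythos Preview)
This theorem is not proved in the present paper; it is quoted from \cite[Thm.~B]{BalmerDellAmbrogioSanders16}. What the paper does prove is the generalization Theorem~\ref{thm:main-thm}, which specializes to Theorem~\ref{thm:original-thm} when $\cat I=\cat C^c$ (so $\Gamma^*=\Id$). Comparing your outline to that proof: your route to (a)$\Leftrightarrow$(c) and (c)$\Rightarrow$(b) via Brown representability, Neeman's criterion, and a Yoneda/localizing-subcategory reduction to compacts is essentially the paper's argument (Lemma~\ref{lem:preserves-coproducts} and Proposition~\ref{prop:b-iso}).

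The divergence is in how you reach (d) and the Wirthm\"uller isomorphism. The paper does \emph{not} pass through the statement that $\DO$ is dualizing. Instead it first constructs the left adjoint directly by showing $\Gamma^*f^*$ preserves products (Proposition~\ref{prop:preserves-products}, via the identification $(f\circ\Gamma)_!(b)\cong \dual f_*\dual\Gamma_!(b)$ on compacts, Lemma~\ref{lem:adj} and Remark~\ref{rem:canon-iso}), then proves the Wirthm\"uller map $\varpi$ is an isomorphism by a lengthy explicit diagram chase (Proposition~\ref{prop:wirth-is-iso}), and finally obtains the analogue of (d) --- invertibility of $\Gamma^*(\psi^{\mathrm{ad}})$ --- by taking right adjoints. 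Your proposal inverts this: prove $\unit\to\ihom(\DO,\DO)$ is an isomorphism first, deduce (d) from (b), and read off the Wirthm\"uller isomorphism formally.

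Your route is viable --- the dualizing property does follow under (c), for instance from the identity $f_*(c\otimes\DO)\cong \dual f_*(\dual c)$ for rigid $c$ combined with rigidity of $f_*(\dual c)$ --- but you have not actually supplied this argument: you flag it as ``the main obstacle'' and gesture at an unwinding without carrying it out. That is the one genuine gap. If you fill it in, your approach is arguably more conceptual than the paper's long chase in Proposition~\ref{prop:wirth-is-iso}, since it isolates a single fact about $\DO$ from which the rest is formal. Your (e)$\Rightarrow$(a) is also vaguer than necessary: the direct argument is that for compact $c$ one has $f_*(c)\cong \dual f_!(\dual c)$ by rigidity and adjunction, and $f_!$ preserves compacts since its right adjoint $f^*$ preserves coproducts, giving (c) immediately.
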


We say that $f^*$ satisfies \emph{Grothendieck-Neeman duality} (or \emph{GN-duality}, for short) when it satisfies the equivalent conditions (a)--(e) of Theorem~\ref{thm:original-thm}.  This terminology is motivated by the natural isomorphism in (b), exhibiting the double right adjoint $f^!$ as a twisted version of the original functor $f^*$, as in classical algebro-geometric Grothendieck duality.  On the other hand, when applied to the restriction functor $f^*:=\Res_H^G:\SH(G) \to \SH(H)$ between equivariant stable homotopy categories,~\eqref{eq:original-wirthmuller} specializes to the Wirthm\"{u}ller isomorphism between induction and coinduction.  In this way, Theorem~\ref{thm:original-thm} provides a purely formal, canonical construction of the classical Wirthm\"{u}ller isomorphism in equivariant stable homotopy theory.

On the other hand, there is another important isomorphism in equivariant stable homotopy theory: the Adams isomorphism.  Loosely speaking, it is an isomorphism (up to a twist) between the $N$-orbits and the $N$-fixed points of an $N$-free $G$-spectrum, and it lies at the heart of genuine equivariant stable homotopy theory, appearing for example in the tom Dieck splitting of equivariant homotopy groups.  More precisely, given a closed normal subgroup $N \lenormal G$ of a compact Lie group $G$, it is a natural isomorphism of $G/N$-spectra
\begin{equation}\label{eq:adams-iso-intro}
	(i^*X \smashh E\cat F(N)_+)/N \cong \lambda^N(X \smashh S^{{{-}\Adhspace\Ad}(N;G)})
\end{equation}
defined for any $N$-free $G$-spectrum $X$.
(For the uninitiated, this notation will be explained in~Section~\ref{sec:adams-wirth}.)
At naive first glance,~\eqref{eq:adams-iso-intro} looks like it might be some kind of twisted isomorphism between a left and right adjoint, and it is then very natural to attempt to give it a formal treatment, just like Theorem~\ref{thm:original-thm} gave a formal treatment of the classical Wirthm\"uller isomorphism.  Indeed, J.P.~May raises the problem of giving a formal analysis of the Adams isomorphism in~\cite{May03}.

Now, the functor $\lambda^N : \SH(G) \to \SH(G/N)$ appearing on the right-hand side of~\eqref{eq:adams-iso-intro} is the categorical fixed point functor, which is right adjoint to the inflation functor $\Infl_{G/N}^G : \SH(G/N) \to \SH(G)$.  However, the tom Dieck splitting theorem implies that $\lambda^N$ (with or without the twist by $S^{-\Adhspace\Ad(N;G)}$) does not preserve compact objects, except in the trivial case when $N=1$ (see Proposition~\ref{prop:infl-doesnt-satisfy-gn-duality} below).  It then follows from condition (c) of Theorem~\ref{thm:original-thm} that the inflation functor $\Infl_{G/N}^G$ does not have a left adjoint, and moreover, that the right-hand side of the Adams isomorphism~\eqref{eq:adams-iso-intro} cannot be left adjoint to \emph{any} functor between compactly generated categories which admits a right adjoint.
Thus, naive attempts to understand~\eqref{eq:adams-iso-intro} as a twisted isomorphism between a left and right adjoint\label{page:adams} do not succeed.

Nevertheless, we will show that the Adams isomorphism can be given a purely formal, conceptual construction, and that it can in fact be realized as a Wirthm\"{u}ller isomorphism, properly understood.
To this end, we continue the study initiated in~\cite{BalmerDellAmbrogioSanders16}, now with a focus on functors $f^*$ --- such as $\Infl_{G/N}^G$ --- which do not satisfy GN-duality (\ie~ do not have a left adjoint).
We can prove that every geometric functor unconditionally gives rise to a Wirthm\"{u}ller isomorphism (and satisfies GN-duality) once one passes to a canonically determined finite colocalization of the target category:
\begin{Thm}\label{thm:main-thm-intro}
	Let $f^*:\cat D\to \cat C$ be a geometric functor between rigidly-compactly generated tensor-triangulated categories.
	Consider the thick $\otimes$-ideal 
	\[\A_f := \SET{ x \in \cat C^c }{ f_*(x \otimes y) \text{ is compact for all } y \in \cat C^c} \subseteq \cat C^c \]
	and let $\Gamma^* : \cat C \to \cat B := \Loc\langle \A_f\rangle$ 
	denote the associated finite colocalization, \ie the
	right adjoint of the inclusion functor $\Gamma_! : \cat B \hookrightarrow \cat C$.
Then:
\begin{enumerate}
	\item The composite $\Gamma^* \circ f^! : \cat D \to \cat B$ has a right adjoint.
	\item There is a canonical natural isomorphism $\Gamma^*\circ f^! \cong \Gamma^*(\DO \otimes f^*(-))$.
	\item There is a canonical natural isomorphism $\Gamma^*\circ f^* \cong \Gamma^*(\ihom(\DO,f^!(-)))$.
	\item The composite $\Gamma^* \circ f^* : \cat D \to \cat B$ has a left adjoint, which we will denote
	\[ (f \circ \Gamma)_! : \cat B \to \cat D \]
	and there is a canonical natural isomorphism
	\begin{equation}\label{eq:generalized-wirthmuller-intro}
	 (f \circ \Gamma)_!(x) \cong f_*( \Gamma_!(x) \otimes \DO ) 
	\end{equation}
	for all $x \in \cat B$.
\end{enumerate}
\end{Thm}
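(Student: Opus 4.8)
The plan is to establish parts (b) and (c) first --- as the assertions that two canonical natural transformations become isomorphisms after applying $\Gamma^*$ --- and then to deduce (a) and (d) from them.

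Before starting I record the tools. Since $\A_f$ is a thick $\otimes$-ideal of compact objects of $\cat C$: every $x\in\A_f$ is rigid in $\cat C$; $\cat B=\Loc\langle\A_f\rangle$ is compactly generated with $\cat B^c=\thick\langle\A_f\rangle$; the colocalization $\Gamma^*$ is triangulated and preserves coproducts (because $\Gamma_!$ preserves compactness); $\ker(\Gamma^*)$ equals the right-orthogonal subcategory $\cat B^{\perp}$ of $\cat B$-local objects; and $\A_f$ is closed under duality (the usual retract argument, $x^{\vee}$ being a retract of $x\otimes x^{\vee}\otimes x^{\vee}\in\A_f$). Because the tt-functor $f^*$ preserves compactness, its right adjoint $f_*$ preserves coproducts; being triangulated, $f_*$ also sends $\thick\langle\A_f\rangle$ into $\cat D^c$, using that $f_*(x)$ is compact for $x\in\A_f$ (take $y=\unit_{\cat C}$ in the definition of $\A_f$). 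I will use the projection formula $f_*(f^*(d)\otimes c)\cong d\otimes f_*(c)$, the internal-hom projection formula $\ihom_{\cat D}(f_*(x),d)\cong f_*\ihom_{\cat C}(x,f^!(d))$, and the identity $\Hom_{\cat C}(\unit_{\cat C},-)\cong\Hom_{\cat D}(\unit_{\cat D},f_*(-))$ coming from $f^*(\unit_{\cat D})\cong\unit_{\cat C}$. Evaluating the internal-hom projection formula at $d=\unit_{\cat D}$ and using rigidity of $x\in\A_f$ gives a natural isomorphism $(f_*x)^{\vee}\cong f_*(x^{\vee}\otimes\DO)$; applying it to $x^{\vee}$ (again in $\A_f$) shows $f_*(x\otimes\DO)\cong(f_*(x^{\vee}))^{\vee}$ is rigid in $\cat D$, since $f_*(x^{\vee})$ is compact. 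Finally, there is a canonical natural transformation $\alpha_d\colon\DO\otimes f^*(d)\to f^!(d)$, namely the mate under $f_*\dashv f^!$ of the composite $f_*(\DO\otimes f^*(d))\cong f_*(\DO)\otimes d\to d$ (projection formula, then the counit of $f_*\dashv f^!$), together with its tensor-hom adjunct $\beta_d\colon f^*(d)\to\ihom(\DO,f^!(d))$.

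Parts (b) and (c) say precisely that $\Gamma^*(\alpha_d)$, respectively $\Gamma^*(\beta_d)$, is an isomorphism for every $d$. Since $\Gamma^*$ is triangulated this is equivalent to $\cone(\alpha_d)$, respectively $\cone(\beta_d)$, lying in $\ker(\Gamma^*)=\cat B^{\perp}$; and since for any fixed $c$ the full subcategory $\{b\in\cat C:\Hom_{\cat C}(b,c[n])=0\text{ for all }n\in\bbZ\}$ is localizing and $\cat B=\Loc\langle\A_f\rangle$, it suffices in each case to show the transformation induces an isomorphism on $\Hom_{\cat C}(x,-)$ for every $x\in\A_f$. For $\alpha_d$: on the target, $\Hom_{\cat C}(x,f^!(d))\cong\Hom_{\cat D}(f_*x,d)\cong\Hom_{\cat D}(\unit,(f_*x)^{\vee}\otimes d)\cong\Hom_{\cat D}(\unit,f_*(x^{\vee}\otimes\DO)\otimes d)$, using $f_*\dashv f^!$, rigidity of $f_*x$, and the isomorphism recorded above; on the source, $\Hom_{\cat C}(x,\DO\otimes f^*(d))\cong\Hom_{\cat C}(\unit,x^{\vee}\otimes\DO\otimes f^*(d))\cong\Hom_{\cat D}(\unit,f_*(x^{\vee}\otimes\DO)\otimes d)$, by rigidity of $x$, the identity above, and the projection formula. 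For $\beta_d$: $\Hom_{\cat C}(x,\ihom(\DO,f^!(d)))\cong\Hom_{\cat C}(x\otimes\DO,f^!(d))\cong\Hom_{\cat D}(f_*(x\otimes\DO),d)\cong\Hom_{\cat D}(\unit,(f_*(x\otimes\DO))^{\vee}\otimes d)\cong\Hom_{\cat D}(\unit,f_*(x^{\vee})\otimes d)$, using rigidity of $f_*(x\otimes\DO)$, while $\Hom_{\cat C}(x,f^*(d))\cong\Hom_{\cat C}(\unit,x^{\vee}\otimes f^*(d))\cong\Hom_{\cat D}(\unit,f_*(x^{\vee})\otimes d)$ by rigidity, the identity above, and the projection formula. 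In both cases one must verify that the comparison map induced by $\alpha_d$, respectively $\beta_d$, is the identity under these identifications; this is a diagram chase unwinding the mate defining $\alpha_d$ and applying the triangle identities together with the coherence of the projection formula, and it is the step I expect to be the main obstacle.

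It remains to deduce (a) and (d). For (a): part (b) gives a natural isomorphism $\Gamma^*\circ f^!\cong\Gamma^*(\DO\otimes f^*(-))$, whose right-hand side is a composite of coproduct-preserving functors out of the compactly generated category $\cat D$ and hence admits a right adjoint by Brown representability. For (d): put $(f\circ\Gamma)_!(x):=f_*(\Gamma_!(x)\otimes\DO)$. For $x\in\cat B$ and $d\in\cat D$,
\[\Hom_{\cat D}\big(f_*(\Gamma_!(x)\otimes\DO),\,d\big)\;\cong\;\Hom_{\cat C}\big(\Gamma_!(x)\otimes\DO,\,f^!(d)\big)\;\cong\;\Hom_{\cat C}\big(\Gamma_!(x),\,\ihom(\DO,f^!(d))\big)\]
by $f_*\dashv f^!$ and tensor-hom adjunction; since $\Gamma_!(x)\in\cat B$ while $\cone(\beta_d)\in\cat B^{\perp}$ by (c), the map $\beta_d$ induces an isomorphism $\Hom_{\cat C}(\Gamma_!(x),\ihom(\DO,f^!(d)))\cong\Hom_{\cat C}(\Gamma_!(x),f^*(d))$, and the latter equals $\Hom_{\cat B}(x,\Gamma^*f^*(d))$ by $\Gamma_!\dashv\Gamma^*$. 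All these isomorphisms are natural in $x$ and $d$, so $(f\circ\Gamma)_!\dashv\Gamma^*\circ f^*$ with the asserted formula. I will note in passing that (d) cannot simply be read off from Theorem~\ref{thm:original-thm} applied to $\Gamma^*\circ f^*\colon\cat D\to\cat B$: although the latter is a coproduct-preserving tt-functor, its target $\cat B$, being a $\otimes$-ideal of $\cat C$, need not be rigidly-compactly generated, since its tensor unit need not be a compact object.
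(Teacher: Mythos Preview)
Your proposal is correct and takes a genuinely different route from the paper. The paper first establishes (a) directly by checking that $\Gamma^*\circ f^!$ preserves coproducts (Lemma~\ref{lem:preserves-coproducts}), then proves (b) via a Yoneda argument (Proposition~\ref{prop:b-iso}), then separately shows $\Gamma^*\circ f^*$ preserves products to obtain an abstract left adjoint by Brown representability (Proposition~\ref{prop:preserves-products}), and only then establishes the Wirthm\"uller formula~\eqref{eq:generalized-wirthmuller} by a substantial diagram chase (Proposition~\ref{prop:wirth-is-iso}); part (c) is finally read off by taking right adjoints. By contrast, you prove (b) and (c) in parallel --- both as ``cone lies in $\cat B^\perp$'' statements tested against $x\in\A_f$ --- and then deduce (a) from (b) and, more notably, deduce (d) from (c) by directly verifying that $f_*(\Gamma_!(-)\otimes\DO)$ satisfies the required adjunction. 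This last step is a real simplification: your use of the isomorphism $(f_*x)^\vee\cong f_*(x^\vee\otimes\DO)$ (and its consequence that $f_*(x\otimes\DO)$ is rigid for $x\in\A_f$) lets you bypass both the product-preservation argument and the ``nightmarish'' chase of Proposition~\ref{prop:wirth-is-iso}. The price is a second compatibility verification for $\beta_d$ analogous to that for $\alpha_d$, but that chase is of the same moderate difficulty as the one the paper already performs for (b), so the trade is favorable. One small point: your formulation of (d) fixes $(f\circ\Gamma)_!$ to be the explicit functor and the ``isomorphism'' is tautological, whereas the paper defines $(f\circ\Gamma)_!$ abstractly and then identifies it; the content is the same by uniqueness of adjoints, and the adjunction you construct is precisely the mate of $\Gamma^*(\beta_d)$, matching the paper's canonical map (Remark~\ref{rem:defn-wirth}).
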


This theorem will be proved in Section~\ref{sec:main-thm}.  In the case that $f^*$ does satisfy GN-duality, then $\cat B=\cat C$ and~\eqref{eq:generalized-wirthmuller-intro} recovers the original Wirthm\"{u}ller isomorphism~\eqref{eq:original-wirthmuller}.  We shall see in Section~\ref{sec:adams-wirth} that when applied to the inflation functor $f^*:=\Infl_{G/N}^G$, the associated ``Wirthm\"{u}ller'' isomorphism \eqref{eq:generalized-wirthmuller-intro} is nothing but the Adams isomorphism.  In this way, Theorem~\ref{thm:main-thm-intro} provides a purely formal, conceptual, and canonical construction of the Adams isomorphism.  Moreover, we obtain a unification of the Adams and Wirthm\"{u}ller isomorphisms in equivariant stable homotopy theory; they arise by applying the same formal construction to inflation and restriction, respectively.

\smallskip

Now let's take a step back.  According to Theorem~\ref{thm:main-thm-intro}, every geometric functor $f^*:\cat D \to \cat C$ between rigidly-compactly generated tensor-triangulated categories has a canonically associated colocalization and concomitant Wirthm\"{u}ller isomorphism.  From the perspective of tensor-triangular geometry \cite{BalmerICM}, the thick tensor-ideal $\A_f$ appearing in the theorem corresponds to a certain Thomason subset 
$$\Z_f \subset \Spc(\cat C^c)$$
of the tensor-triangular spectrum of the subcategory of compact objects $\cat C^c$.  This subset $\Z_f \subset \Spc(\cat C^c)$ is a new invariant of the functor $f^*$ which, crudely speaking, measures the failure of $f^*$ to satisfy GN-duality (\ie~have a left adjoint).
We call it the \emph{\locus{}} of $f^*$ (Def.~\ref{def:locus}) and initiate its general study in Section~\ref{sec:locus}.  The rest of the paper is devoted to understanding the \locus{} geometrically and computing it in specific examples.

We begin our geometric study in Section~\ref{sec:finite}, where we give a complete topological description of the \locus{} of a finite localization (Proposition~\ref{prop:locus-of-finite-localization}) and discuss several examples.

In Section~\ref{sec:adams-locus}, we complete our discussion of the Adams isomorphism by completely describing the \locus{} of the inflation functor $\Infl_{G/N}^G$ for any finite group $G$ and normal subgroup $N \lenormal G$ (Theorem~\ref{thm:locus-of-inflation}).  We will see that the \locus{} singles out a larger subcategory than the subcategory of \mbox{$N$-free} $G$-spectra --- which can be interpreted as saying that the ``natural domain'' of the Adams isomorphism is actually larger than the category of $N$-free $G$-spectra (cf.~Rem.~\ref{rem:adams-larger}).  Two explicit examples for $G=D_{10}$ the dihedral group of order 10 are depicted in Figures~\ref{fig:D10}--\ref{fig:D10C5} on pages \pageref{fig:D10}--\pageref{fig:D10C5}.  We restrict ourselves to finite groups in this section because it is only for finite groups that we have a description of the spectrum $\Spc(\SH(G)^c)$ of the \mbox{$G$-equivariant} stable homotopy category (see \cite{BalmerSanders17}).

In Section~\ref{sec:alg-geom}, we study examples arising in algebraic geometry.  Indeed, given any morphism $f:X \to Y$ of (quasi-compact and quasi-separated) schemes, we can consider the \locus{} of the associated pull-back functor $f^*:\Derqc(Y)\to \Derqc(X)$. It is a certain Thomason subset
	\[ \Z_f \subset X \cong \Spc(\Dqc(X)^c) \]
of the domain of the morphism $f$ which we would like to understand scheme-theoretically. 
One of our main theorems (Theorem~\ref{thm:main-algebro-thm})
states that for a separated morphism of finite type $f:X \to Y$ between noetherian schemes,
the categorically-defined \locus{} $\Z_f$ is 
the union of all closed subsets of $X$ over which the morphism $f$ is proper and which are contained in the scheme-theoretic perfect locus of~$f$.
In particular, the \locus{} of a proper morphism 
coincides with
the largest specialization closed subset 
contained in the perfect locus.

Finally, in Section~\ref{sec:further-directions}, we give some miscellaneous additional examples and discuss directions for future research.
Given its generality, it is perhaps not surprising that there exist geometric functors $f^*$ whose \locus{} is empty, \ie~functors~$f^*$ for which the colocalization of Theorem~\ref{thm:main-thm-intro} is the zero category.  This can be seen in simple algebro-geometric examples, but Example~\ref{exa:eilenberg-maclane} provides a non-trivial example of this phenomenon in stable homotopy theory.  Nevertheless, it is quite satisfying that the property of having a left adjoint can be refined by a topological invariant associated to the functor, and it remains an interesting zoological challenge to analyze the \locus{} of any geometric functor we now find in nature.

\subsection*{Acknowledgements\,:}
We readily thank 
Paul Balmer, 
Ivo Dell'Ambrogio,
John Greenlees, 
Jesper Grodal, 
Irakli Patchkoria, and
Greg Stevenson 
for their encouragement and helpful discussions.
We would also like to thank an anonymous referee for useful comments and especially for noticing a problem with the original proof of
Proposition~\ref{prop:infl-doesnt-satisfy-gn-duality}.
\bigbreak
\section{The Wirthm\"{u}ller isomorphism of a geometric functor}
\label{sec:main-thm}

Let's begin with some standard terminology. 

\begin{Ter}
	By a \emph{tensor-triangulated category}, we mean 
	a triangulated category with a compatible closed symmetric monoidal structure
	as in \cite[App.~A]{HoveyPalmieriStrickland97}.
	Such a category is \emph{rigidly-compactly generated}
	if it is compactly generated as a triangulated category and 
	if
	the compact objects coincide with the rigid objects (\aka the strongly dualizable objects).
	In particular, the unit object $\unit$ is rigid-compact.
	In the language of \cite{HoveyPalmieriStrickland97}, this is precisely the same thing as a ``unital algebraic stable homotopy category''.
	We will sometimes drop the ``tensor-triangulated'' and just speak of rigidly-compactly generated categories.
\end{Ter}

\begin{Exas}\label{exas:examples}
	Several examples of rigidly-compactly generated categories are discussed in \cite[Examples~2.9--2.13]{BalmerDellAmbrogioSanders16}.
	Briefly, examples include:
	\begin{enumerate}
		\item The derived category $\Dqc(X)$ of complexes of $\cat O_X$-modules having quasi-coherent homology, for $X$ a quasi-compact and quasi-separated scheme.
		\item The stable homotopy category $\SH$.
		\item The genuine $G$-equivariant stable homotopy category $\SHG$ for $G$ a compact Lie group.
		\item The stable module category $\Stab(\kk G)$ for $\kk$ a field and $G$ a finite group (or, more generally, $G$ a finite group scheme over $\kk$).
		\item The stable $\bbA^1$-homotopy category $\SHA\hspace{-0.4ex}(\kk)$ 
			for a field $\kk$ of characteristic zero.
		\item The derived category of motives $\DM(\kk;R)$ 
			for a field $\kk$ whose exponential characteristic is invertible in the coefficient ring $R$.
			(See \cite{Totaro18} and the references therein.)
		\item The derived category $\Der(A)$ of a highly-structured commutative ring spectrum $A$ (\eg~a commutative dg-algebra).
		\item Any smashing localization of a rigidly-compactly generated category (see \eg~\cite[Sec.~3.3]{HoveyPalmieriStrickland97}).
	\end{enumerate}
\end{Exas}

\begin{Ter}
	By a \emph{tensor-triangulated functor}, we mean a triangulated functor which is a \emph{strong} monoidal functor. 
	For brevity, a coproduct-preserving tensor-triangulated functor between rigidly-compactly generated categories will
	be called a \emph{geometric functor}.
	(This terminology is motivated by \cite[Def.~3.4.1]{HoveyPalmieriStrickland97}.)
\end{Ter}

\begin{Hyp}
Throughout this section $f^*:\cat D \to \cat C$ will denote a geometric functor between rigidly-compactly generated categories.
As recalled in the Introduction, any such functor admits two adjoints on the right, $f^* \dashv f_* \dashv f^!$, and we define the \emph{relative dualizing object} of $f^*$ to be $\DO:=f^!(\unit_{\cat D})$. (See \cite{BalmerDellAmbrogioSanders16} for more details.)
\end{Hyp}

\begin{Rem}
	Under these hypotheses, the functors $f^* \dashv f_*$ automatically satisfy the 
	following projection formula:
	$f_*(f^*x \otimes y) \cong x \otimes f_* y$ (see \cite[(2.16)]{BalmerDellAmbrogioSanders16}).
\end{Rem}

\begin{Rem}[The spectrum]
	Associated to $\cat C$ is the topological space $\Spc(\cat C^c)$, consisting of the prime tensor-ideals of the subcategory of compact objects~$\cat C^c$ (see \cite{Balmer05a}).  Every compact object $x \in \cat C^c$ has an associated closed subset $\supp(x) := \SET{ \cat P \subset \cat C^c }{x \not\in \cat P} \subset \Spc(\cat C^c)$, and these sets form a basis of closed sets for the topology on $\Spc(\cat C^c)$.  By the abstract Thick Subcategory Classification Theorem (see \cite[Thm~4.10]{Balmer05a} and \cite[Rem.~4.3]{Balmer05a}), the thick tensor-ideals of $\cat C^c$ are in one-to-one correspondence with the \emph{Thomason subsets} of $\Spc(\cat C^c)$ --- \ie the unions of closed sets, each of which has quasi-compact complement.  The bijection sends a thick tensor-ideal $\cat I \subset \cat C^c$ to the Thomason subset $\bigcup_{x \in \cat I} \supp(x)$, while a Thomason subset $Y \subset \Spc(\cat C^c)$ is sent to the thick tensor-ideal $\cat C^c_Y := \SET{x \in \cat C^c}{\supp(x) \subset Y}$.  It will also be useful to recall that the Thomason \emph{closed} sets are precisely those of the form $\supp(x)$ for some compact $x \in \cat C^c$ (cf.~\cite[Prop~2.14]{Balmer05a}).
\end{Rem}
\begin{Rem}[Finite localization]\label{rem:finite-localization}
	Let $Y \subset \Spc(\cat C^c)$ be a Thomason subset of the spectrum, with corresponding thick tensor-ideal $\cat C^c_Y := \SET{x \in\cat C^c}{\supp(x) \subset Y}$, and let $V := \Spc(\cat C^c)\setminus Y$ denote the complement.  By the general theory of finite localizations, there is an associated idempotent triangle \[ e_Y \to \unit \to f_Y \to \Sigma e_Y \] in $\cat C$ (cf.~\cite[Sec.~2--3]{BalmerFavi11}), and we have identifications $e_Y \otimes \cat C = \Loc(e_Y) = \Loc_\otimes(\cat C^c_Y) = \Loc(\cat C^c_Y)$ and $f_Y \otimes \cat C = \Loc(\cat C^c_Y)^\perp$.  We denote the category of colocal objects (\aka acyclic objects) by $\cat C_Y := e_Y \otimes \cat C$ and the category of local objects by $\cat C(V) := f_Y \otimes \cat C$.  The inclusion $\cat C_Y \hookrightarrow \cat C$ of the colocal objects has a right adjoint $e_Y \otimes - : \cat C \to \cat C_Y$, called colocalization, and the inclusion $\cat C(V) \hookrightarrow \cat C$ of the local objects has a left adjoint $f_Y \otimes - :\cat C \to \cat C(V)$, called localization.  (See \cite[Rem.~5.3]{BalmerSanders17} for a diagram of all the relevant functors.)

	The category of local objects $\cat C(V)$ inherits the structure of a tensor-triangulated category such that the localization functor $\cat C \to \cat C(V)$ is a tensor-triangulated functor.  It maps a set of compact-rigid generators of $\cat C$ to a set of compact-rigid generators of $\cat C(V)$. In particular, the unit $f_Y$ is compact in $\cat C(V)$ and $\cat C(V)$ is rigidly-compactly generated just as $\cat C$ is.  The localization functor preserves compact objects and hence induces a map on spectra $\Spc(\cat C(V)^c) \to \Spc(\cat C^c)$ which identifies $\Spc(\cat C(V)^c) \cong V \subset \Spc(\cat C^c)$.  This follows from the Thomason-Neeman localization theorem \cite[Thm.~2.1]{Neeman92b} which identifies $\cat C(V)^c \cong (\cat C/\cat C_Y)^c \cong (\cat C^c/\cat C^c_Y)^\natural$.  

	On the other hand, the category of colocal objects $\cat C_Y$ also inherits the structure of a tensor-triangulated category such that the colocalization functor $e_Y \otimes -:{\cat C \to \cat C_Y}$ is a tensor-triangulated functor.
	By construction, $\cat C_Y$ is compactly generated, but it is usually not rigidly-compactly generated
	since the unit object $e_Y$ is usually not compact in $\cat C_Y$.
	Otherwise, since the inclusion $\cat C_Y \hookrightarrow \cat C$ preserves compact objects, it
	would follow that $e_Y$ is compact in $\cat C$, 
	and this is only possible in the very special situation that $Y$ is an open
	and closed subset of $\Spc(\cat C^c)$ (see the proof of Prop.~\ref{prop:smashing-localization} below).
	In the language of \cite{HoveyPalmieriStrickland97}, the category of colocal objects~$\cat C_Y$ is
	an algebraic stable homotopy category, but it is not a \emph{unital} algebraic stable homotopy category (in general).
\end{Rem}
\begin{Rem}\label{rem:two-view-compact}
	The notation $\cat C^c_Y$ can now be interpreted
	in two ways: as $(\cat C_Y)^c$ or as $(\cat C^c)_Y$.
	However, these coincide. More precisely, setting $\cat K := \cat C^c$,
	we have that $(\cat C^c)_Y =: \cat K_Y = (\Loc(\cat K_Y))^c = (\cat C_Y)^c$
	where the middle equality is \cite[Lem.~2.2]{Neeman92b}.
\end{Rem}

Our goal is to establish the following result:

\begin{Thm}\label{thm:main-thm}
	Let $f^*:\cat D \to \cat C$ be a geometric functor between rigidly-compactly generated tensor-triangulated categories.
	Let $\cat I \subseteq \cat C^c$ be a thick tensor-ideal of the category of compact objects $\cat C^c$ such that
	$f_*(\cat I) \subseteq \cat D^c$.
	Let $\cat B:=\Loc(\cat I) = \Loc_\otimes(\cat I)$ 
	denote the localizing subcategory of $\cat C$ generated by $\cat I$
	and let $\Gamma^*: \cat C \to \cat B$ denote the associated finite colocalization (Rem.~\ref{rem:finite-localization}), \ie the right adjoint of the inclusion $\Gamma_! : \cat B \hookrightarrow \cat C$.
(Keep the diagram
\begin{equation}\label{eq:diagram-of-adjoints}
	\begin{gathered}
	\xymatrix @R=3em{
		\cat D \ar@<-18pt>@{<.}[d]|-{\times} \ar@<-2.5pt>[d]_(.46){f^*} \ar@<2.5pt>@{<-}[d]^-{f_*} \ar@<18pt>[d]^(.45){f^!} \\
		\cat C \ar@<-18pt>@{<-_{)}}[d]_(.49){\Gamma_{\hspace{-1pt}!}} \ar@<-2.5pt>[d]_(.47){\Gamma^*} \ar@<2.5pt>@{<-}[d]^(.49){\Gamma_{\hspace{-1.5pt}*}} \ar@<18pt>@{.>}[d]|-{\times} \\
		\cat B 
	}
	\end{gathered}
\end{equation}
in mind.)
	Then:
	\begin{enumerate}
		\item The composite $\Gamma^* \circ f^!: \cat D \to \cat B$ has a right adjoint.
		\item There is a canonical natural isomorphism $\Gamma^* \circ f^! \cong \Gamma^*(\DO \otimes f^*(-))$.
		\item There is a canonical natural isomorphism $\Gamma^* \circ f^* \cong \Gamma^*(\ihom(\DO,f^!(-)))$.
		\item The composite $\Gamma^* \circ f^* : \cat D \to \cat B$ has a left adjoint, which we will denote by
		\[ (f \circ \Gamma)_! : \cat B \to \cat D \]
	and there is a canonical natural isomorphism
	\begin{equation}\label{eq:generalized-wirthmuller}
		 (f \circ \Gamma)_! \cong f_*(\Gamma_!(-) \otimes \DO) 
	\end{equation}
	which we call the \emph{Wirthm\"{u}ller isomorphism} associated to $f^*$ and $\cat I$.
\end{enumerate}
\end{Thm}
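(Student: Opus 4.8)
The plan is to adapt the proof of \cite[Thm.~B]{BalmerDellAmbrogioSanders16} to the present relative situation: after one applies the colocalization $\Gamma^*$, the hypothesis $f_*(\cat I)\subseteq\cat D^c$ will play exactly the role that ``$f_*$ preserves all compact objects'' plays there. I will use throughout the standard facts about the finite colocalization recalled in Remark~\ref{rem:finite-localization}: that $\cat B=\Loc(\cat I)$ is a localizing $\otimes$-ideal with $\Gamma_!$ fully faithful and coproduct-preserving, that $\Gamma^*$ identifies with $e_Y\otimes(-)$ and is a coproduct-preserving strong monoidal functor (hence admits a right adjoint $\Gamma_*$ by Brown representability), and that --- since $\cat I\subseteq\cat C^c$ --- one has $e_Y\otimes z=0$ if and only if $\Hom^{\bullet}_{\cat C}(a,z)=0$ for all $a\in\cat I$. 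I will also use that $\cat I$ is closed under duals (recall $\supp(a^{\vee})=\supp(a)$), so that each $a\in\cat I$ is rigid with $a^{\vee}\in\cat I$.

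\emph{Part (a).} I would first prove that $\Gamma^*\circ f^!$ preserves coproducts; the right adjoint then exists by Brown representability for the compactly generated category $\cat D$. Given a family $(y_i)$ in $\cat D$, applying the fully faithful, coproduct-preserving functor $\Gamma_!$ shows that the comparison morphism $\bigoplus_i\Gamma^*f^!(y_i)\to\Gamma^*f^!(\bigoplus_i y_i)$ is invertible if and only if $e_Y\otimes\cone(\bigoplus_i f^!(y_i)\to f^!(\bigoplus_i y_i))=0$, i.e.\ if and only if, for every $a\in\cat I$, the map $\Hom^{\bullet}_{\cat C}(a,\bigoplus_i f^!(y_i))\to\Hom^{\bullet}_{\cat C}(a,f^!(\bigoplus_i y_i))$ is an isomorphism. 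Because $a$ is compact in $\cat C$ and $f_*\dashv f^!$, this map is identified with the canonical map $\bigoplus_i\Hom^{\bullet}_{\cat D}(f_*(a),y_i)\to\Hom^{\bullet}_{\cat D}(f_*(a),\bigoplus_i y_i)$, which is an isomorphism precisely because $f_*(a)$ is compact in $\cat D$ --- that is, precisely by our hypothesis. I expect this to be the main obstacle: recognizing that coproduct-preservation of $\Gamma^*\circ f^!$ is exactly equivalent to the compactness of $f_*(\cat I)$ in $\cat D$, and hence that the stated hypothesis is the correct one.

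\emph{Parts (c) and (d).} I would then set $(f\circ\Gamma)_!:=f_*\bigl(\Gamma_!(-)\otimes\DO\bigr)\colon\cat B\to\cat D$. Being a composite of left adjoints (first $\Gamma_!$, then $-\otimes\DO$, then $f_*$), it is automatically a left adjoint, with right adjoint $\Gamma^*\circ\ihom(\DO,-)\circ f^!$; proving (c) therefore amounts to identifying this right adjoint with $\Gamma^*\circ f^*$, which by uniqueness of adjoints is the same as proving $(f\circ\Gamma)_!\dashv\Gamma^*\circ f^*$, i.e.\ (d). To see the latter, note that both $\Hom^{\bullet}_{\cat D}\bigl((f\circ\Gamma)_!(-),d\bigr)\cong\Hom^{\bullet}_{\cat C}\bigl(\Gamma_!(-)\otimes\DO,f^!(d)\bigr)$ and $\Hom^{\bullet}_{\cat B}\bigl(-,\Gamma^*f^*(d)\bigr)\cong\Hom^{\bullet}_{\cat C}\bigl(\Gamma_!(-),f^*(d)\bigr)$ are contravariant cohomological functors on $\cat B$ that send coproducts to products; hence the full subcategory of $\cat B$ on which the natural comparison between them is invertible is localizing, and it suffices to treat the compact generators $a\in\cat I$. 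There the chain of natural isomorphisms
\[ \Hom^{\bullet}_{\cat C}(a,f^*(d))\cong\Hom^{\bullet}_{\cat D}(\unit,f_*(a^{\vee}\otimes f^*(d)))\cong\Hom^{\bullet}_{\cat D}(\unit,f_*(a^{\vee})\otimes d)\cong\Hom^{\bullet}_{\cat D}(f_*(a^{\vee})^{\vee},d) \]
--- using in turn the rigidity of $a$, the identity $f^*(\unit)\cong\unit$ together with $f^*\dashv f_*$, the projection formula $f_*(x\otimes f^*(y))\cong f_*(x)\otimes y$, and the rigidity of the \emph{compact} object $f_*(a^{\vee})$ (here is where $a^{\vee}\in\cat I$ and the hypothesis re-enter) --- exhibits $f_*(a^{\vee})^{\vee}$ as corepresenting $\Hom^{\bullet}_{\cat B}(a,\Gamma^*f^*(-))$. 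A similar covariant chain shows $f_*(a\otimes\DO)\cong f_*(a^{\vee})^{\vee}$, so $(f\circ\Gamma)_!(a)$ is this corepresenting object; the dévissage above then upgrades this to $(f\circ\Gamma)_!\dashv\Gamma^*\circ f^*$ with the asserted formula \eqref{eq:generalized-wirthmuller}, hence also to (c).

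\emph{Part (b).} Finally, recall from \cite{BalmerDellAmbrogioSanders16} the canonical natural transformation $\theta\colon\DO\otimes f^*(-)\Rightarrow f^!(-)$ --- adjoint, under $f_*\dashv f^!$, to the morphism $f_*(\DO\otimes f^*(-))\cong f_*(\DO)\otimes(-)\to(-)$ built from the projection formula and the counit $f_*\DO=f_*f^!(\unit)\to\unit$ --- which a short projection-formula computation shows is invertible on rigid objects of $\cat D$. After applying $\Gamma^*$, the functors $\Gamma^*\circ f^!$ and $\Gamma^*(\DO\otimes f^*(-))$ both preserve coproducts (the first by part (a)), so the full subcategory of $y\in\cat D$ with $\Gamma^*(\theta_y)$ invertible is localizing; since it contains a set of compact generators of $\cat D$, it is all of $\cat D$, giving (b). Throughout, the one point demanding care is that $\cat B$ is \emph{not} rigidly-compactly generated --- its unit $e_Y$ need not be compact (cf.\ Remark~\ref{rem:finite-localization}) --- so all of these manipulations must be carried out in $\cat C$ with attention restricted to $\cat B$, rather than internally to $\cat B$.
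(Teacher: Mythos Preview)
Your proposal is essentially correct and uses the same key ingredients as the paper (compactness of $f_*(a)$ for $a\in\cat I$, projection formula, rigidity, restricted Yoneda on compacts), but with a somewhat different organization that is worth commenting on.

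For (a), your argument is identical to the paper's Lemma~\ref{lem:preserves-coproducts}. For (b), you take a slightly slicker route than the paper: you use that the comparison map $\theta=\psi$ is already an isomorphism on \emph{rigid} objects of $\cat D$ (before colocalizing) --- which is true and follows from the projection formula and rigidity, as you say --- and then invoke (a) to run the localizing-subcategory argument in $\cat D$. The paper instead tests $\Gamma^*(\psi_x)$, for $x$ compact, against compact objects of $\cat B$ (Proposition~\ref{prop:b-iso}), which amounts to the same thing but requires a longer verification that the Yoneda chain matches $\Gamma^*(\psi)$. For (d), you define $(f\circ\Gamma)_!$ by the formula and prove directly that it is left adjoint to $\Gamma^*f^*$; the paper instead first establishes abstractly that $\Gamma^*f^*$ preserves products (Proposition~\ref{prop:preserves-products}, via the same chain you write), obtains the left adjoint by Brown representability, and only then identifies it with the formula (Proposition~\ref{prop:wirth-is-iso}). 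Your ordering is more direct; the paper's buys a cleaner separation between existence and identification.

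One point of imprecision: in your d\'evissage for (d) you invoke ``the natural comparison'' between $\Hom_{\cat C}(\Gamma_!(-)\otimes\DO,f^!d)$ and $\Hom_{\cat C}(\Gamma_!(-),f^*d)$ without saying what it is. There is one --- post-composition with $\psi^{\mathrm{ad}}_d\colon f^*d\to\ihom(\DO,f^!d)$ --- and your localizing-subcategory argument then goes through, but you must check that your two chains through $\Hom_{\cat D}((f_*a^{\vee})^{\vee},d)$ are compatible with it. This is precisely the ``lengthy but routine diagram chase'' the paper does in Propositions~\ref{prop:b-iso} and~\ref{prop:wirth-is-iso}. Alternatively, since both hom-functors of $b$ are representable (by $\Gamma^*\ihom(\DO,f^!d)$ and $\Gamma^*f^*d$ respectively), you can bypass the comparison map entirely by using restricted Yoneda: your chains, being natural in $a\in\cat I$, force the representing objects to be isomorphic, naturally in $d$. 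Either way the argument completes; just be explicit about which route you take.
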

\begin{Rem}
	If $f^*$ satisfies GN-duality (\ie~if it already admits a left adjoint)
	then we can just take $\cat I := \cat C^c$.
	In this case, $\cat B = \cat C$ and the colocalization is just the identity.
One can readily check 
(cf.~Rem.~\ref{rem:defn-wirth} and \cite[(3.19)]{BalmerDellAmbrogioSanders16})
that the Wirthm\"{u}ller isomorphism~\eqref{eq:generalized-wirthmuller} reduces in this case to the original Wirthm\"{u}ller isomorphism~\eqref{eq:original-wirthmuller} of Theorem~\ref{thm:original-thm}.
	The point of the new theorem is that it holds for functors which do not satisfy GN-duality.
\end{Rem}
\begin{Rem}\label{rem:not-rigidly-compactly-generated}
	The composite $\Gamma^*\circ f^*$ also has a right adjoint $f_* \circ \Gamma_*$.
	However, the category $\cat B$ is usually not rigidly-compactly generated
	and so we cannot apply the original Theorem~\ref{thm:original-thm} to the
	functor $\Gamma^*\circ f^*$. (The unit $\unit_{\cat B}$ is the left idempotent for the colocalization
	and is almost never compact, neither in $\cat B$ nor in $\cat C$.)
	Indeed, since $\Gamma^*\circ f^*$ does not preserve compact objects in general,
	its right adjoint $f_* \circ \Gamma_*$ does not itself have a right adjoint.
	This demonstrates, in particular, that 
	the Trichotomy Theorem of \cite[Cor.~1.13]{BalmerDellAmbrogioSanders16} can fail when the
	target category is not rigidly-compactly generated.
\end{Rem}
\begin{Rem}\label{rem:not-right-adjoint}
	The functor appearing on the right-hand side of \eqref{eq:generalized-wirthmuller} uses the \emph{left} adjoint $\Gamma_!$ of $\Gamma^*$ rather than its right adjoint $\Gamma_*$.  Indeed, as mentioned in Remark~\ref{rem:not-rigidly-compactly-generated}, the right adjoint $f_* \circ \Gamma_*$ (or a twisted version like $f_*(\Gamma_*(-)\otimes \DO)$) is very unlikely to preserve coproducts and so have any chance of being a left adjoint.  This is the precise issue that we mentioned in the Introduction (p.~\pageref{page:adams}) about how the Adams isomorphism cannot naively be realized as an isomorphism of left and right adjoints.
\end{Rem}

\begin{Rem}\label{rem:defn-of-psi}
	Let $\psi : f^*x\otimes \DO \to f^!(x)$ denote the map adjoint to
	\[ f_*(f^*x\otimes \DO) \stackrel[\text{proj}]{}{\simeq} x\otimes f_*\DO \xra{1 \otimes \eps} x \otimes \unit \simeq x\]
	and let $\psi^{\text{ad}} : f^*x \to \ihom(\DO,f^!x)$ denote its adjoint.
	The precise statement of Theorem~\ref{thm:main-thm} (b)--(c) is that
	$\Gamma^*(\psi)$ and $\Gamma^*(\psi^{\text{ad}})$ are isomorphisms.
	Similarly, the Wirthm\"{u}ller isomorphism~\eqref{eq:generalized-wirthmuller}
	is precisely the isomorphism induced by $\Gamma^*(\psi^{\text{ad}})$ by taking left adjoints.
	Thus, all three isomorphisms of the theorem are derived from the same source.
	Recognizing that the Wirthm\"{u}ller isomorphism arises from $\Gamma^*(\psi^{\text{ad}})$ 
clarifies the subtle point mentioned in Remark~\ref{rem:not-right-adjoint} that the right-hand side of 
	\eqref{eq:generalized-wirthmuller} involves a mixture of left and right adjoints.
	While this might make formula~\eqref{eq:generalized-wirthmuller} look strange when compared with the original~\eqref{eq:original-wirthmuller},
the two isomorphisms from which they spring, 
$\Gamma^*(\psi^{\text{ad}})$ and $\psi^{\text{ad}}$,
	are perfectly harmonious.
\end{Rem}

\begin{Rem}\label{rem:not-formal}
	Obtaining an ``explicit'' description of the relative dualizing object~$\DO$ is of course not formal.
	Indeed, the notion of ``explicit'' is subjective and naturally depends on the particular
	subject at hand --- be it equivariant stable homotopy theory, algebraic geometry, modular representation theory, and so on.
	Nevertheless, there is a useful formal procedure we can use to get a grasp on $\DO$ by applying previously established work.
	Namely, suppose $\Theta$ is an ``explicit'' object (\ie~an object we have explicitly defined using the methods of our
	particular subject) for which we have established that $f_*(\Gamma_!(-) \otimes \Theta)$ is left adjoint to $\Gamma^*\circ f^*$.
	Then by taking right adjoints we have
	$\Gamma^*\ihom(\Theta,\DO) \cong \unit_{\cat B}$
	so that $\Gamma^* \DO \cong \Gamma^* \Theta$ if, moreover, the object $\Theta$ is invertible.
	If $f^*$ satisfies GN-duality then of course $\Gamma^*$ may be removed and we can conclude that
	$\DO \cong \Theta$, thereby giving an ``explicit'' description of the categorically defined $\DO$.
	As an example,
	this method shows that the relative dualizing object $\DO$ of the
	restriction functor $f^*:=\Res_H^G:{\SHG\to \SH(H)}$ in equivariant stable
	homotopy theory is isomorphic to 
	the representation sphere $S^{L(H,G)}$ associated to the
	tangent representation of $G/H$ at the identity coset
	(cf.~the proof of \cite[Prop.~4.4]{BalmerDellAmbrogioSanders16}).
\end{Rem}

We devote the remainder of this section to proving Theorem~\ref{thm:main-thm}. 

\begin{Not}
We write $\dual x := \ihom(x,\unit)$ for the dual of a (not necessarily rigid) object $x$, 
and abbreviate $\cat C(x,y) := \Hom_{\cat C}(x,y)$.
In commutative diagrams, we will sometimes use the symbol $\pi$ to indicate the use of 
the $f^* \dashv f_*$ projection formula:
$f_*(f^*x\otimes y)\cong x\otimes f_*y$ (see \cite[(2.16)]{BalmerDellAmbrogioSanders16}).
\end{Not}
\begin{Lem}\label{lem:preserves-coproducts}
	The composite $\Gamma^* \circ f^! : \cat D \to \cat B$ preserves coproducts, and hence has 
	a right adjoint.
\end{Lem}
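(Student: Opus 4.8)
The plan is to show $\Gamma^* \circ f^!$ preserves coproducts and then invoke Neeman's Brown representability theorem (since $\cat D$ is compactly generated) to obtain the right adjoint. So the real content is the coproduct-preservation statement, and the key idea is that $\Gamma^*$, although it fails to preserve coproducts as a functor $\cat C \to \cat C$ in general, does preserve them when precomposed with $f^!$ precisely because of the defining property of $\cat I$, namely $f_*(\cat I) \subseteq \cat D^c$.

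First I would fix a family $(x_i)_{i\in I}$ in $\cat D$ and the canonical comparison map $\coprod_i \Gamma^* f^!(x_i) \to \Gamma^* f^!(\coprod_i x_i)$ in $\cat B$. Since $\cat B = \Loc(\cat I)$ is compactly generated by $\cat I$, and both source and target lie in $\cat B$, it suffices to show that $\Hom_{\cat B}(s, -)$ carries this map to an isomorphism for every $s \in \cat I$. Using that $\Gamma_!$ is fully faithful and that $\Hom_{\cat B}(s, \Gamma^*(-)) \cong \Hom_{\cat C}(\Gamma_! s, -) = \cat C(s, -)$ for $s \in \cat I \subseteq \cat B$, the problem reduces to showing that the natural map
\[
\coprod_i \cat C(s, f^!(x_i)) \longrightarrow \cat C\bigl(s, f^!(\textstyle\coprod_i x_i)\bigr)
\]
is an isomorphism for each $s \in \cat I$. (Here one must be slightly careful: $\coprod_i \Gamma^* f^! x_i$ computed in $\cat B$ need not be $\coprod_i \Gamma^* f^! x_i$ computed in $\cat C$, but since $\Gamma_!$ is a coproduct-preserving inclusion of the localizing subcategory $\cat B$, coproducts in $\cat B$ agree with those in $\cat C$; I would note this at the outset.)

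Next I would unwind the Hom via the adjunctions $f^* \dashv f_* \dashv f^!$. We have $\cat C(s, f^! x) \cong \cat D(f_* s, x)$, and $f_*$ preserves coproducts (it is a left adjoint of $f^!$), so the comparison map identifies with the canonical map $\coprod_i \cat D(f_* s, x_i) \to \cat D(f_* s, \coprod_i x_i)$. This is an isomorphism exactly when $f_* s$ is a compact object of $\cat D$ — and that is precisely what the hypothesis $f_*(\cat I) \subseteq \cat D^c$ guarantees, since $s \in \cat I$. This completes the verification that $\Gamma^* \circ f^!$ preserves coproducts.

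Finally, $\cat D$ is rigidly-compactly generated, hence compactly generated, hence satisfies Brown representability; since $\Gamma^* \circ f^! : \cat D \to \cat B$ is a triangulated functor (composite of triangulated functors) preserving all coproducts, it admits a right adjoint. I do not expect any genuine obstacle here; the only point requiring care is the bookkeeping about where coproducts are computed ($\cat B$ versus $\cat C$) and the reduction to testing against the generating set $\cat I$ — everything else is a direct chain of adjunctions together with the single hypothesis on $\cat I$.
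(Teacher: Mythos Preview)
Your proposal is correct and follows essentially the same route as the paper: test the comparison map against compact generators of $\cat B$, use the adjunctions $\Gamma_!\dashv\Gamma^*$ and $f_*\dashv f^!$ to rewrite $\cat B(s,\Gamma^*f^!(-))\cong\cat D(f_*s,-)$, and invoke compactness of $f_*s$ from the hypothesis $f_*(\cat I)\subseteq\cat D^c$; then Brown representability gives the right adjoint. The paper phrases the chain of isomorphisms slightly differently (pulling out the coproduct first using compactness of $b\in\cat B^c$, then applying the adjunctions) and notes that one should check the composite isomorphism really is post-composition by the canonical map, but the substance is the same.
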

\begin{proof}
	For a set $\{d_i\}_{i \in I}$ of objects in $\cat D$, let $\varphi : \amalg_i \Gamma^*f^!d_i \to \Gamma^*f^!\amalg_i d_i$
	denote the canonical map.
	By Yoneda, it suffices to prove that the natural map
	\begin{equation}\label{eq:yoneda-prod}
		\cat B(b,\amalg_i \Gamma^* f^!d_i) \xra{\varphi\circ -} \cat B(b,\Gamma^* f^!\amalg_i d_i)
	\end{equation}
	is an isomorphism for all $b \in \cat B$.
	Since $\cat B$ is compactly generated, it suffices to check this for $b \in \cat B^c$ compact.
	Now, for $b \in \cat B^c$ compact we have a chain of isomorphisms
	\begin{align*}
		\cat B(b,\amalg_i \Gamma^* f^!d_i) &\;\;\cong\;\; \oplus_i \cat B(b,\Gamma^*f^!d_i)
					&&b \textrm{ is compact} \\
		&\;\;\cong\;\; \oplus_i \cat C(\Gamma_! b,f^!d_i)
					&&\textrm{adjunction } \Gamma_! \dashv \Gamma^*\\
		&\;\;\cong\;\; \oplus_i \cat D(f_*\Gamma_! b,d_i)
					&&\textrm{adjunction } f_*\dashv f^!\\
		&\;\;\cong\;\; \cat D(f_*\Gamma_! b,\amalg_i d_i)
					&&f_*\Gamma_! b \textrm{ is compact}\\
		&\;\;\cong\;\; \cat C(\Gamma_!b,f^!\amalg_i d_i) 
					&&\textrm{adjunction }f_*\dashv f^!\\
		&\;\;\cong\;\; \cat B(b,\Gamma^*f^!\amalg_i d_i)
					&&\textrm{adjunction }\Gamma_!\dashv \Gamma^*.
	\end{align*}
	Here we have used that $\Gamma_!b \in \Loc(\cat I)^c = \cat I$ (Rem.~\ref{rem:two-view-compact}) and hence $f_*(\Gamma_!b)$ is compact in $\cat D$.
	Then we just need to check that the composite isomorphism coincides with \eqref{eq:yoneda-prod}.
	This is a routine diagram chase.
	The existence of the right adjoint follows from Neeman's Brown Representability Theorem for compactly generated categories (see \eg~\cite[Cor.~2.3]{BalmerDellAmbrogioSanders16}).
\end{proof}

\begin{Prop}\label{prop:b-iso}
The natural transformation
\[ \Gamma^*(\psi) : \Gamma^*(f^*x\otimes \DO) \to \Gamma^*f^!x \]
(cf.~Rem.~\ref{rem:defn-of-psi}) is a natural isomorphism for all $x \in \cat D$.
\end{Prop}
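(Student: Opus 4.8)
The plan is to prove this by a Yoneda argument that reduces the assertion to a computation on compact objects.

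\textbf{Step 1: reduce to $x$ compact.} The two functors $\Gamma^*\circ f^!$ and $\Gamma^*\circ(f^*(-)\otimes\DO)$ from $\cat D$ to $\cat B$ are triangulated and preserve coproducts --- the first by Lemma~\ref{lem:preserves-coproducts}, the second because $f^*$, $-\otimes\DO$ and $\Gamma^*$ each do --- and $\Gamma^*(\psi)$ is a natural transformation between them. Hence $\SET{x\in\cat D}{\Gamma^*(\psi_x)\text{ is an isomorphism}}$ is a localizing subcategory of $\cat D$, so since $\cat D$ is compactly generated it is enough to treat $x\in\cat D^c$.

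\textbf{Step 2: reduce to a Hom-computation.} Because $\cat B=\Loc(\cat I)$ is compactly generated with $\cat B^c=\cat I$ (Rem.~\ref{rem:two-view-compact}), a morphism of $\cat B$ is invertible exactly when $\cat B(c,-)$ carries it to a bijection for every $c\in\cat I$; and the adjunction $\Gamma_!\dashv\Gamma^*$, with $\Gamma_!$ the inclusion, gives $\cat B(c,\Gamma^*(-))\cong\cat C(c,-)$ naturally for $c\in\cat I$. So it suffices to show that
\[\cat C(c,\psi_x)\colon\cat C(c,f^*x\otimes\DO)\longrightarrow\cat C(c,f^!x)\]
is bijective for all $c\in\cat I$ and $x\in\cat D^c$.

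\textbf{Step 3: the computation.} Fix such $c$ and $x$. By the standing hypothesis $f_*(\cat I)\subseteq\cat D^c$ the object $f_*c$ is compact, hence rigid, in $\cat D$; and $f^*x\in\cat C^c$ is rigid with $\dual(f^*x)\cong f^*(\dual x)$ since $f^*$ is strong monoidal. On the target side $\cat C(c,f^!x)\cong\cat D(f_*c,x)$ by $f_*\dashv f^!$, while on the source side
\begin{align*}
\cat C(c,f^*x\otimes\DO)
	&\cong\cat C\big(c\otimes f^*(\dual x),\DO\big) &&\text{($f^*x$ rigid)}\\
	&=\cat C\big(c\otimes f^*(\dual x),f^!\unit\big) &&(\DO=f^!\unit)\\
	&\cong\cat D\big(f_*(c\otimes f^*(\dual x)),\unit\big) &&(f_*\dashv f^!)\\
	&\cong\cat D\big(f_*c\otimes\dual x,\unit\big) &&\text{(projection formula $\pi$)}\\
	&\cong\cat D(f_*c,x) &&\text{($x$ rigid).}
\end{align*}
It then remains to check that $\cat C(c,\psi_x)$ corresponds to the identity of $\cat D(f_*c,x)$ under these identifications. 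Unravelling the definition of $\psi_x$ (Rem.~\ref{rem:defn-of-psi}) as the $f_*\dashv f^!$-adjoint of $(1\otimes\eps)\circ\pi\colon f_*(f^*x\otimes\DO)\to x$, this becomes a diagram chase in $\cat D$ following the counit $\eps\colon f_*\DO\to\unit$ and the evaluation map through the projection-formula isomorphism. I expect this last coherence verification to be the main obstacle: Steps 1 and 2 are formal and the isomorphisms of Step 3 are individually routine, but confirming that they splice together precisely with $\psi_x$ requires the same careful bookkeeping of the projection formula and the triangle identities already carried out in \cite{BalmerDellAmbrogioSanders16}.
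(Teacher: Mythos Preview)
Your proposal is correct and follows essentially the same approach as the paper: reduce to compact $x$ via coproduct-preservation, apply Yoneda, and then run the same chain of isomorphisms (rigidity of $f^*x$, the $f_*\dashv f^!$ adjunction, the projection formula, and rigidity of $x$) to identify both sides with $\cat D(f_*c,x)$, leaving the compatibility with $\psi_x$ as a routine diagram chase. The only cosmetic difference is that the paper carries out the Hom-computation for arbitrary $b\in\cat B$ (writing $\Gamma_!b$ throughout) rather than restricting to $b=c\in\cat I$ as you do, but this changes nothing of substance.
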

\begin{proof}
	By Lemma~\ref{lem:preserves-coproducts}, both sides are coproduct-preserving exact functors $\cat D \to \cat B$.
	As $\cat D$ is compactly generated, it suffices to prove the statement for $x \in \cat D^c$ compact.
	Then by Yoneda it suffices to show that
	\begin{equation}\label{eq:gamma-psi-yoneda}
		\cat B(b,\Gamma^*(f^*x \otimes \DO)) \xrightarrow{\Gamma^*(\psi)\circ -} \cat B(b,\Gamma^*f^!x)
	\end{equation}
	is an isomorphism for all $b \in \cat B$.
	Now observe that for $b \in \cat B$ and compact $x \in \cat D^c$, we have a chain of isomorphisms
\begin{equation}
\label{eq:chaina}
\begin{aligned}
		\cat B(b,\Gamma^*(f^*x \otimes \DO)) &\;\;\cong\;\; \cat C(\Gamma_!b,f^*x\otimes \DO) 
		&&
		\\
		&\;\;\cong\;\; \cat C(\Gamma_!b\otimes\dual f^*x,\DO) 
		&&f^*x \in \cat C^c \textrm{ is rigid}\\
		&\;\;\cong\;\; \cat C(\Gamma_!b\otimes f^*\dual x,\DO)
		&&x\in\cat D^c \textrm{ is rigid}\\
		&\;\;\cong\;\; \cat D(f_*(\Gamma_!b \otimes f^*\dual x),\unit_{\cat D})
		&&
		\\
		&\;\;\cong\;\; \cat D(f_*\Gamma_!b \otimes \dual x,\unit_{\cat D})
		&&\textrm{projection formula}\\
		&\;\;\cong\;\; \cat D(f_*\Gamma_!b, x) 
		&&x \in \cat D^c \textrm{ is rigid}\\
		&\;\;\cong\;\; \cat C(\Gamma_!b,f^!x) 
		&&
		\\
		&\;\;\cong\;\; \cat B(b,\Gamma^*f^!x).
		&&
	\end{aligned}
\end{equation}
	We claim that the composite of this chain of isomorphisms coincides (for $x \in \cat D^c$) with \eqref{eq:gamma-psi-yoneda}.
	According to the definition of $\psi$ (Rem.~\ref{rem:defn-of-psi}), $\Gamma^*(\psi)$ is
	\begin{equation*}
		\Gamma^*(f^*x \otimes \DO) \xra{\Gamma^* \eta} \Gamma^*f^!f_*(f^*x\otimes \DO)\xra{\Gamma^*f^!\pi} \Gamma^*f^!(x\otimes f_*\DO) \xra{\Gamma^*f^!(1\otimes\eps)} \Gamma^*f^!x.
	\end{equation*}
	On the other hand, the chain of isomorphisms~\eqref{eq:chaina} sends $u \in \cat B(b,\Gamma^*(f^*x\otimes \DO))$ to the composite
	\begin{equation*}
		\begin{tikzpicture}
			\def\arrowlen{10.4pt};
			\def\varrowlen{30pt};
			\node (b1) {$b$};
			\node [right=\arrowlen of b1] (b2) {$\Gamma^*\Gamma_! b$};
			\node [right=\arrowlen of b2] (b3) {$\Gamma^*f^!f_*\Gamma_! b$};
			\node [right=\arrowlen of b3] (b4) {$\Gamma^*f^!(f_*\Gamma_! b\otimes \dual x\otimes x) \simeq \Gamma^*f^!(f_*(\Gamma_! b \otimes f^*\dual x) \otimes x)$\,\,};
			\node [below=\varrowlen of b4.east,anchor=east] (c4) {$\Gamma^*f^!(f_*(\Gamma_! b \otimes \dual f^* x) \otimes x)$\,\,};
			\node [below=\varrowlen of c4.east,anchor=east] (d4) {$\Gamma^*f^!(f_*(\Gamma_!\Gamma^*(f^*x\otimes \DO) \otimes \dual f^*x)\otimes x)$\,\,};
			\node [below=\varrowlen of d4.east,anchor=east] (e4) {$\Gamma^*f^!(f_*(f^*x\otimes \DO\otimes \dual f^* x)\otimes x)$\,\,};
			\node [below=\varrowlen of e4.east,anchor=east] (f4) {$\Gamma^*f^!(f_*(\DO \otimes f^*x\otimes \dual f^* x)\otimes x)$\,.};
			\node [anchor=west] (f3) at (b3.west|-f4) {$\Gamma^* f^!(f_*\DO \otimes x)$};
			\node (f2) at (b1|-f4) {$\Gamma^* f^!x$};
			\draw [->] (b1) -- node[above]{$\scriptstyle \eta $} (b2);
			\draw [->] (b2) -- node[above]{$\scriptstyle \eta $} (b3);
			\draw [->] (b3) -- node[above]{$\scriptstyle \text{coev}$} (b4);
			\coordinate (o) at (b4.south -| c4);
			\draw [->] (o) -- node[right]{$\scriptstyle \simeq$} (c4);
			\draw [->] (c4.south -| o) -- node[right]{$\scriptstyle u$} (d4.north -| o);
			\draw [->] (d4.south -| o) -- node[right]{$\scriptstyle \eps$} (e4.north -| o);
			\draw [->] (e4.south -| o) -- node[right]{$\scriptstyle \simeq$} (f4.north -| o);
			\draw [->] (f4) -- node[above]{$\scriptstyle \text{ev}$} (f3);
			\draw [->] (f3) -- node[above]{$\scriptstyle \eps $} (f2);
			\draw [dashed, ->] (b1) -- (f2);
		\end{tikzpicture}
	\end{equation*}
Our claim 
then follows from a lengthy but routine diagram chase.
In performing this verification, 
the commutativity of
\[\begin{tikzcd}
		f_*(\DO\otimes f^*x)\otimes \dual x \arrow{r}[swap]{\simeq}{\pi}
		\arrow{d}{\simeq}[swap]{\pi^{-1}\otimes 1} & f_*(\DO \otimes f^*x \otimes f^*\dual x) \arrow{d}{\simeq} \\
		f_*\DO \otimes x \otimes \dual x \arrow{r}{\pi}[swap]{\simeq} & f_*(\DO\otimes f^*(x\otimes \dual x)).
	\end{tikzcd}\]
is useful, which can be readily checked
using the definition of the projection formula (\cite[(2.16)]{BalmerDellAmbrogioSanders16}).
\end{proof}
\begin{Lem}\label{lem:adj}
	For any $b \in \cat B^c$, we have natural isomorphisms
	\begin{equation}\label{eq:adj}
		\cat D( \dual f_* \dual \Gamma_!(b),-) \cong \cat C(\Gamma_!(b),f^*(-)) \cong \cat B(b,\Gamma^* f^*(-)) 
	\end{equation}
	of functors $\cat D \to \Ab$.
\end{Lem}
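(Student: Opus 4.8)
The second isomorphism in~\eqref{eq:adj}, namely $\cat C(\Gamma_!(b),f^*(-)) \cong \cat B(b,\Gamma^*f^*(-))$, is simply the adjunction $\Gamma_! \adj \Gamma^*$ applied to the object $f^*(-)$, so there is nothing to do there; the plan is to concentrate entirely on the first isomorphism $\cat D(\dual f_* \dual \Gamma_!(b),-) \cong \cat C(\Gamma_!(b),f^*(-))$, which I would establish in the same style as the chains of natural isomorphisms appearing in the proofs of Lemma~\ref{lem:preserves-coproducts} and Proposition~\ref{prop:b-iso}.

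Write $c := \Gamma_!(b)$. The first point to address is that the object $\dual f_*\dual\Gamma_!(b)$ in the statement makes sense as a \emph{rigid} object of $\cat D$ --- equivalently, that $f_*(\dual c)$ is compact. Since $b$ is compact in $\cat B = \Loc(\cat I)$, we have $c \in \Loc(\cat I)^c = \cat I \subseteq \cat C^c$ by Remark~\ref{rem:two-view-compact}, so $c$ is rigid; I would then observe that $\dual c$ also lies in $\cat I$, using that a thick tensor-ideal is automatically closed under duality. Concretely, $\dual c$ is a retract of $\dual c \otimes c \otimes \dual c$ via the triangle identities for the rigid object $c$, and $\dual c \otimes c \otimes \dual c$ belongs to the tensor-ideal $\cat I$ because $c$ does; since $\cat I$ is thick, $\dual c \in \cat I$. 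The hypothesis $f_*(\cat I) \subseteq \cat D^c$ then yields $f_*(\dual c) \in \cat D^c$, which is therefore rigid, so that $\dual f_*\dual c = \dual f_*\dual\Gamma_!(b)$ is indeed a rigid object of $\cat D$.

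With this preliminary in place, I would exhibit the first isomorphism of~\eqref{eq:adj} as the composite of the following chain of natural isomorphisms in the variable $d \in \cat D$: (i) rewrite $\cat D(\dual(f_*\dual c),d) \cong \cat D(\unitcat{D}, d \otimes f_*\dual c)$ using the rigidity of $f_*\dual c$ (together with $\dual\dual(f_*\dual c) \cong f_*\dual c$); (ii) apply the projection formula $d \otimes f_*\dual c \cong f_*(f^*d \otimes \dual c)$; (iii) apply the adjunction $f^* \adj f_*$ to obtain $\cat C(f^*\unitcat{D}, f^*d \otimes \dual c)$; (iv) use that $f^*$ is strong monoidal to replace $f^*\unitcat{D}$ by $\unitcat{C}$; (v) use the rigidity of $c$ to rewrite $f^*d \otimes \dual c \cong \ihom(c,f^*d)$, and finally apply the hom-tensor adjunction to land at $\cat C(c,f^*d) = \cat C(\Gamma_!(b),f^*d)$. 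Each of (i)--(v) is manifestly natural in $d$, so composing them gives the desired natural isomorphism of functors $\cat D \to \Ab$; if needed, a short diagram chase identifies the composite with the map induced by the evident unit and counit, but the lemma as stated does not require an explicit description.

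The only genuinely non-formal ingredient is the closure of the thick tensor-ideal $\cat I$ under duality, which is exactly what guarantees the compactness of $f_*(\dual\Gamma_!(b))$ and hence the rigidity of $\dual f_*\dual\Gamma_!(b)$; once this is observed, the remaining work --- assembling the projection formula, the $f^* \adj f_*$ adjunction, strong monoidality, and the hom-tensor adjunction, and checking naturality in $d$ --- is routine, and I do not expect any difficulty there.
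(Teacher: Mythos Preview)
Your proposal is correct and follows essentially the same approach as the paper's proof: the same preliminary observation that $\dual\Gamma_!(b)\in\cat I$ via the retract $\dual c \hookrightarrow \dual c\otimes c\otimes \dual c$, followed by the identical chain of isomorphisms (rigidity of $f_*\dual c$, projection formula, $f^*\dashv f_*$, strong monoidality, rigidity of $c$, and $\Gamma_!\dashv\Gamma^*$).
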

\begin{proof}
	If $b \in \cat B^c$ then $\Gamma_!(b) \in \cat I$ (Rem.~\ref{rem:two-view-compact})
	and hence $\dual\Gamma_!(b) \in \cat I$ 
	(since $\cat I$ is a thick tensor-ideal and $\dual x$ is a
	direct summand 
	of $\dual x \otimes x \otimes \dual x$ for any rigid $x$).
	Hence, $f_*(\dual \Gamma_!(b))$ is compact-rigid in $\cat D$.
	So for any $d \in \cat D$, we have a chain
	\begin{align*}
		\cat D(\dual f_* \dual \Gamma_! b, d) &\;\;\cong\;\; \cat D(\unit_{\cat D},f_* \dual \Gamma_! b \otimes d) &&f_*\dual \Gamma_! b \in \cat D^c \textrm{ is rigid}\\
		&\;\;\cong\;\; \cat D(\unit_{\cat D},f_*(\dual \Gamma_! b \otimes f^* d)) &&\text{projection formula}\\
		&\;\;\cong\;\; \cat C(f^*\unit_{\cat D}, \dual \Gamma_! b \otimes f^* d) &&\\
		&\;\;\cong\;\; \cat C(\unit_{\cat C},\dual \Gamma_! b \otimes f^* d) &&\\
		&\;\;\cong\;\; \cat C(\Gamma_! b, f^* d) &&\Gamma_! b\in \cat C^c \text{ is rigid}\\
		&\;\;\cong\;\; \cat B(b,\Gamma^* f^* d)&&
	\end{align*}
	of isomorphisms natural in $d$.
\end{proof}
\begin{Prop}\label{prop:preserves-products}
	The composite $\Gamma^*\circ f^* : \cat D \to \cat B$ preserves products, and hence has 
	a left adjoint.
\end{Prop}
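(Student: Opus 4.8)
The plan is to run the same kind of argument as in the proof of Lemma~\ref{lem:preserves-coproducts}, but now with products in place of coproducts and with Lemma~\ref{lem:adj} as the key input. Recall first that $\cat B=\Loc(\cat I)$ is compactly generated with $\cat B^c=\cat I$ (Rem.~\ref{rem:two-view-compact}), so a morphism in $\cat B$ is an isomorphism as soon as it becomes a bijection after applying $\cat B(b,-)$ for every $b\in\cat B^c$. Hence, for a family $\{d_i\}_{i\in I}$ of objects of $\cat D$, in order to see that the canonical comparison map $\Gamma^*f^*(\prod_i d_i)\to\prod_i\Gamma^*f^*(d_i)$ is an isomorphism it suffices to prove that
\[ \cat B\big(b,\Gamma^*f^*(\textstyle\prod_i d_i)\big)\longrightarrow\prod_i\cat B\big(b,\Gamma^*f^*(d_i)\big) \]
is bijective for every compact $b\in\cat B^c$.

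For such $b$, Lemma~\ref{lem:adj} provides a natural isomorphism $\cat B(b,\Gamma^*f^*(-))\cong\cat D(\dual f_*\dual\Gamma_!(b),-)$ of functors $\cat D\to\Ab$. Since every representable functor $\cat D(x,-)$ carries products to products, this yields a chain
\[ \cat B\big(b,\Gamma^*f^*(\textstyle\prod_i d_i)\big)\cong\cat D\big(\dual f_*\dual\Gamma_!(b),\textstyle\prod_i d_i\big)\cong\prod_i\cat D\big(\dual f_*\dual\Gamma_!(b),d_i\big)\cong\prod_i\cat B\big(b,\Gamma^*f^*(d_i)\big). \]
It then remains to check that the composite of this chain agrees with the comparison map displayed above --- a routine diagram chase that uses only the naturality of Lemma~\ref{lem:adj} in the $\cat D$-variable, so that the chain intertwines the projections $\prod_i d_i\to d_j$ on both sides. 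This proves that $\Gamma^*\circ f^*$ preserves products.

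For the last assertion, $\Gamma^*\circ f^*$ is exact (a composite of exact functors) and preserves products by what we have just shown; since $\cat D$ is compactly generated, the dual form of Neeman's Brown Representability Theorem (cf.~\cite{BalmerDellAmbrogioSanders16}) then shows that for each $b\in\cat B$ the homological, product-preserving functor $\cat B(b,\Gamma^*f^*(-)):\cat D\to\Ab$ is corepresentable, and the corepresenting objects assemble into the desired left adjoint $(f\circ\Gamma)_!:\cat B\to\cat D$. (On compact objects it is given by $b\mapsto\dual f_*\dual\Gamma_!(b)$, recovering Lemma~\ref{lem:adj}.) I expect no serious obstacle here: the conceptual work --- in particular the use of the tensor-ideal hypothesis on $\cat I$, via the fact that $\dual\Gamma_!(b)$ lies back in $\cat I$ --- has already been carried out in Lemma~\ref{lem:adj}, and the only step requiring real attention is the bookkeeping identifying the composite isomorphism with the canonical comparison map.
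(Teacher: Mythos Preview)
Your proposal is correct and follows essentially the same approach as the paper: reduce to compact $b\in\cat B^c$ via Yoneda, apply the natural isomorphism of Lemma~\ref{lem:adj} to convert $\cat B(b,\Gamma^*f^*(-))$ into the representable functor $\cat D(\dual f_*\dual\Gamma_!(b),-)$, use that representable functors preserve products, and then invoke Neeman's Brown Representability to obtain the left adjoint. The paper's proof is identical in structure, including the remark that identifying the chained isomorphism with the canonical comparison map is a routine diagram chase using naturality of Lemma~\ref{lem:adj} in the $\cat D$-variable.
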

\begin{proof}
	For any set of objects $\{d_i \}_{i\in I}$ in $\cat D$, let
	$\varphi: \Gamma^* f^*(\Pi_i d_i) \to \Pi_i \Gamma^* f^*(d_i)$
	denote the canonical map.
	By Yoneda, it suffices to prove that the natural map
\begin{equation}\label{eq:natb}
	\cat B(b,\Gamma^* f^*(\Pi_i d_i)) \xra{\varphi \circ -} \cat B(b,\Pi_i \Gamma^* f^* d_i)
\end{equation}
	is an isomorphism for all $b \in \cat B$.
	Since $\cat B$ is compactly generated, it suffices to check this for $b \in \cat B^c$.
	Now, for $b \in \cat B^c$ we have a chain of isomorphisms
	\begin{align*}
		\cat B(b,\Gamma^* f^* \Pi_i d_i) &\;\;\cong\;\; \cat D(\dual f_* \dual \Gamma_! b, \Pi_i d_i) &&\textrm{Lemma~}\ref{lem:adj}\\
		&\;\;\cong\;\; \Pi_i\cat D(\dual f_* \dual \Gamma_! b,d_i) && \\
		&\;\;\cong\;\; \Pi_i \cat B(b,\Gamma^* f^* d_i)&&\textrm{Lemma~}\ref{lem:adj} \\
		&\;\;\cong\;\; \cat B(b,\Pi_i \Gamma^* f^* d_i)&&
	\end{align*}
	which we claim coincides 
	with~\eqref{eq:natb}.
	This is a routine diagram chase using the naturality in $d$ of the 
	isomorphisms of Lemma~\ref{lem:adj}.
	The existence of the left adjoint then follows from Neeman's Brown Representability Theorem for compactly generated categories (see \eg~\cite[Cor.~2.3]{BalmerDellAmbrogioSanders16}).
\end{proof}
\begin{Not}
	We will denote the left adjoint of $\Gamma^* \circ f^* :\cat D \to \cat B$ by
	\[ (f \circ \Gamma)_! : \cat B \to \cat D.\]
\end{Not}
\begin{Rem}\label{rem:canon-iso}
	Lemma~\ref{lem:adj} implies that
	there is a canonical isomorphism
	${(f \circ \Gamma)_!b} \cong \dual f_* \dual \Gamma_! b$
	for all compact $b \in \cat B^c$.
	More explicitly, chasing the unit for the $(f \circ \Gamma)_! \dashv {\Gamma^*\circ f^*}$ adjunction
	through \eqref{eq:adj}
	we obtain 
	an
	isomorphism $\dual f_*\dual \Gamma_! b \to (f\circ \Gamma)_!b$
	whose inverse 
	$(f\circ \Gamma)_!b \to \dual f_*\dual \Gamma_!b$
	is the map adjoint to the map $b \to \Gamma^* f^* \dual f_* \dual \Gamma_! b$
	corresponding under 
	\eqref{eq:adj}
	to the
	identity of $\dual f_*\dual \Gamma_! b$.
\end{Rem}
\begin{Rem}\label{rem:defn-wirth}
Consider the natural transformation
	\[ \psi^{\text{ad}} : f^* \to \ihom(\DO,f^!(-)) \]
defined in Remark~\ref{rem:defn-of-psi}.
	Applying $\Gamma^*$ we get a natural transformation
	\[ \Gamma^*(\psi^{\text{ad}}) : \Gamma^*\circ f^*  \to \Gamma^* \ihom(\DO,f^!(-))\]
	of functors $\cat D \to \cat B$.
	Taking left adjoints it corresponds to a natural transformation
	\begin{equation}
		\varpi:		f_*(\Gamma_!(-) \otimes \DO) \to (f\circ \Gamma)_!
	\end{equation}
	of functors $\cat B \to \cat D$.
	A long but straight-forward unravelling of the definitions
	shows that it is given explicitly for any $x \in \cat B$ as the following composite:
	\begin{equation*}\label{eq:explicit-wirthmuller}
		\begin{gathered}
		\xymatrix @C=3em{
			f_*(\Gamma_!x \otimes \DO) \ar@{.>}[d]_{\varpi} \ar[r]^-{f_*(\Gamma_! \eta \otimes 1)} &
		 f_*(\Gamma_! \Gamma^* f^*(f \circ \Gamma)_!x \otimes \DO)  
		 \ar[r]^-{f_*(\eps \otimes 1)}& f_*(f^*(f\circ \Gamma)_!x \otimes \DO)
		 \ar[d]_-{\simeq}^-{\pi}\\
		 (f \circ \Gamma)_!x
		 &	 (f\circ \Gamma)_!x\otimes \unit \ar[l]_-{\simeq}& (f \circ \Gamma)_!x \otimes f_*\DO.
		 \ar[l]_-{1 \otimes \eps}
	 }
 \end{gathered}
 \end{equation*}
\end{Rem}
\begin{Prop}\label{prop:wirth-is-iso}
	The natural transformation
	$\varpi:f_*(\Gamma_!x \otimes \DO) \to (f \circ \Gamma)_!x$
	(Rem.~\ref{rem:defn-wirth})
	is a natural isomorphism for all $x \in \cat B$.
\end{Prop}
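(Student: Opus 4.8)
The plan is to reduce to compact objects of $\cat B$ and there identify $\varpi$ with an evident isomorphism, with Proposition~\ref{prop:b-iso} controlling the only non-formal ingredient. First note that both sides of $\varpi$ are coproduct-preserving exact functors $\cat B\to\cat D$: the target $(f\circ\Gamma)_!$ is a left adjoint (Proposition~\ref{prop:preserves-products}), while in the source $f_*(\Gamma_!(-)\otimes\DO)$ the functor $\Gamma_!$ is the inclusion of a localizing subcategory (hence preserves coproducts), $-\otimes\DO$ preserves coproducts, and $f_*$ preserves coproducts because it has the right adjoint $f^!$. Since $\cat B$ is compactly generated, it therefore suffices to show that $\varpi_b$ is an isomorphism for every $b\in\cat B^c$.

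Fix such a $b$ and put $c:=\Gamma_!(b)\in\cat I$, which is compact-rigid in $\cat C$ (Remark~\ref{rem:two-view-compact}). By Remark~\ref{rem:canon-iso} there is a canonical isomorphism $(f\circ\Gamma)_!(b)\cong\dual f_*\dual c$. On the other hand, the source $f_*(c\otimes\DO)=f_*(c\otimes f^!\unit_{\cat D})$ is also canonically isomorphic to $\dual f_*\dual c$: for any $d\in\cat D$ one has the chain
\begin{equation*}
\cat D(d,f_*(c\otimes f^!\unit_{\cat D}))\cong\cat C(f^*d\otimes\dual c,f^!\unit_{\cat D})\cong\cat D(f_*(f^*d\otimes\dual c),\unit_{\cat D})\cong\cat D(d\otimes f_*\dual c,\unit_{\cat D})\cong\cat D(d,\dual f_*\dual c)
\end{equation*}
of isomorphisms natural in $d$ (using $f^*\dashv f_*$, rigidity of $c$, $f_*\dashv f^!$, the projection formula, and tensor--hom adjunction in $\cat D$), exactly as in the proof of Lemma~\ref{lem:adj}. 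In particular $f_*(c\otimes\DO)$ is compact-rigid. The claim of the proposition thus amounts to showing that $\varpi_b$ agrees, up to these two canonical isomorphisms, with an automorphism of $\dual f_*\dual c$, which will then necessarily be $\pm\id$.

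To carry this out one unravels the explicit formula of Remark~\ref{rem:defn-wirth} and passes to the adjoint under $f_*\dashv f^!$: the map $\varpi_b$ then corresponds to the composite
\begin{equation*}
\Gamma_!(b)\otimes\DO\xrightarrow{\hat\eta_b\otimes 1_{\DO}}f^*(f\circ\Gamma)_!(b)\otimes\DO\xrightarrow{\psi}f^!(f\circ\Gamma)_!(b),
\end{equation*}
where $\hat\eta_b$ is the transpose, under $\Gamma_!\dashv\Gamma^*$, of the unit of $(f\circ\Gamma)_!\dashv\Gamma^*f^*$. The only non-formal map here is $\psi$, and this is where Proposition~\ref{prop:b-iso} enters: $\Gamma^*(\psi)$ being an isomorphism says exactly that $\cone(\psi)$ lies in $\ker(\Gamma^*)=\cat C(V)=\Loc(\cat I)^\perp=\cat B^\perp$, and since $\cat B=\Loc_\otimes(\cat I)$ and $\cat B^\perp$ are both $\otimes$-ideals while $\cat B\cap\cat B^\perp=0$, tensoring $\cone(\psi)$ with any object of $\cat B$ vanishes; hence $1_e\otimes\psi_x$ is an isomorphism for every $e\in\cat B$ and every $x\in\cat D$. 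As the occurrence of $\psi$ above is tensored against an object built from $c\in\cat B$, it behaves like an isomorphism, and what remains is a (long but routine) diagram chase through the definition of $\psi$, the projection formula, and the units/counits involved, comparing this composite with the two $\dual f_*\dual c$-descriptions of the previous paragraph.

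The genuine obstacle is precisely this last diagram chase: because $\varpi$ mixes the left adjoint $\Gamma_!$ with the right adjoints $f_*,f^!$ (the point of Remark~\ref{rem:not-right-adjoint}), one must keep track simultaneously of the units and counits of three adjunctions together with the evaluation/coevaluation for the non-invertible object $\DO$, and it is exactly Proposition~\ref{prop:b-iso} — in the guise of the fact that $\psi$ becomes invertible upon tensoring with objects of $\cat B$ — that licenses treating $\psi$ as an isomorphism inside the chase. Equivalently, one may package the whole argument by observing (Remark~\ref{rem:defn-wirth}) that $\varpi$ is the mate of $\Gamma^*(\psi^{\text{ad}})$ under the adjunctions $(f\circ\Gamma)_!\dashv\Gamma^*f^*$ and $f_*(\Gamma_!(-)\otimes\DO)\dashv\Gamma^*\ihom(\DO,f^!(-))$, so that the proposition is equivalent to part (c) of Theorem~\ref{thm:main-thm}, i.e.\ to $\Gamma^*(\psi^{\text{ad}})$ being an isomorphism; the same reduction then runs inside $\cat B$, using that $\Gamma^*$ is strong monoidal, with Proposition~\ref{prop:b-iso} again the essential input.
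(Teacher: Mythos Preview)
Your reduction to compact $b\in\cat B^c$ and your identification of both the source and target of $\varpi_b$ with $\dual f_*\dual c$ (for $c=\Gamma_!b$) are correct and match the paper. But the argument breaks down after that, for several reasons.

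First, the remark that the resulting endomorphism of $\dual f_*\dual c$ ``will then necessarily be $\pm\id$'' is simply false: the endomorphism ring of a compact object in $\cat D$ can be arbitrary. Knowing that source and target are abstractly isomorphic tells you nothing about whether the \emph{specific} map $\varpi_b$ is an isomorphism. This is exactly what the lengthy diagram chase in the paper establishes, and there is no shortcut around it.

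Second, passing to the adjoint under $f_*\dashv f^!$ does not help: the bijection $\cat D(f_*X,Y)\cong\cat C(X,f^!Y)$ does not send isomorphisms to isomorphisms (the two maps live in different categories). So showing that the adjoint composite $c\otimes\DO\to f^*a\otimes\DO\xra{\psi_a}f^!a$ has some property does not directly tell you $\varpi_b$ is an isomorphism.

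Third, your use of Proposition~\ref{prop:b-iso} is misplaced. The observation that $1_e\otimes\psi_x$ is invertible for $e\in\cat B$ is correct, but in the composite above the map $\psi_a$ is \emph{not} tensored against anything in $\cat B$; it is simply $\psi$ evaluated at $a=(f\circ\Gamma)_!b\in\cat D$. So your statement that ``the occurrence of $\psi$ above is tensored against an object built from $c\in\cat B$'' is not literally true, and the argument does not go through.

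Finally, the mate argument you sketch at the end is correct in showing that $\varpi$ is an isomorphism iff $\Gamma^*(\psi^{\mathrm{ad}})$ is, but this is exactly how the paper derives (c) from (d), not the other way around. Proving $\Gamma^*(\psi^{\mathrm{ad}})$ is an isomorphism independently would require showing that $\Gamma^*$ of the coevaluation $f^*x\to\ihom(\DO,f^*x\otimes\DO)$ is an isomorphism, which amounts to $\DO$ being ``dualizable relative to $\cat B$''; this is not available and essentially as hard as the original statement. In fact, the paper's proof of Proposition~\ref{prop:wirth-is-iso} does not invoke Proposition~\ref{prop:b-iso} at all: it proceeds by writing down an explicit chain of isomorphisms $\cat D(d,f_*(c\otimes\DO))\cong\cat D(d,\dual f_*\dual c)\cong\cat D(d,(f\circ\Gamma)_!b)$ (using only rigidity of $c$, the projection formula, and the adjunctions) and then verifying by a substantial diagram chase that post-composition with $\varpi_b$ realizes this chain.
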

\begin{proof}
	Note that both functors preserve coproducts, being (composites of) left adjoints.
	Hence, since $\cat B$ is compactly generated, it suffices to check that 
	$\varpi$
	is a natural isomorphism
	for $x \in \cat B^c$.
	Then by Yoneda, it suffices to prove that
	\begin{equation}\label{eq:natc}
		\cat D(d,f_*(\Gamma_! x \otimes f^!\unit)) \xra{\varpi \circ -} \cat D(d,(f\circ \Gamma)_!x)
	\end{equation}
	is an isomorphism for all $d \in \cat D$ and all $x \in \cat B^c$.
	Now, for $x \in \cat B^c$, we have a chain of isomorphisms
	\begin{equation}\label{eq:chained-iso}
		\begin{aligned}
		\cat D(d,f_*(\Gamma_! x \otimes f^!\unit)) &\;\;\cong\;\; \cat C(f^*d,\Gamma_! x \otimes f^!\unit) && \\
		&\;\;\cong\;\; \cat C(f^*d\otimes \dual \Gamma_! x, f^!\unit) &&\Gamma_!x \in \cat C^c \text{ is rigid}\\
		&\;\;\cong\;\; \cat D(f_*(f^*d \otimes \dual \Gamma_! x),\unit) &&\\
		&\;\;\cong\;\; \cat D(d \otimes f_* \dual \Gamma_! x,\unit) && \text{projection formula} \\
		&\;\;\cong\;\; \cat D(d,\dual f_* \dual \Gamma_! x) && f_* \dual \Gamma_! x \in \cat D^c \text{ is rigid}\\
		&\;\;\cong\;\; \cat D(d,(f \circ \Gamma)_!x) &&\text{Remark~\ref{rem:canon-iso}}
	\end{aligned}
	\end{equation}
	which we claim coincides with \eqref{eq:natc}.
	Verifying this claim is quite involved, so for
	notational brevity, let us write 
	$\omega:=\DO=f^!\unit$ for the dualizing object,
	$g_! := (f \circ \Gamma)_!$ for the left adjoint,
	and set $c:=\Gamma_! x$.
	Then let $u \in \cat D(d,f_*(c \otimes \omega))$ and 
	consider the following diagram
	\[\xymatrix @C=0.3em @R=1.95em{
			d \ar[d]_-{u} \ar[r]^-{\text{coev}\otimes 1} & \dual f_* \dual c \otimes f_* \dual c \otimes d \ar[r]_-{\simeq}^-{1\otimes \pi} \ar[d]^{1\otimes 1 \otimes u} & \dual f_* \dual c \otimes f_*(\dual c \otimes f^* d) \ar[dd]^-{1\otimes f_*(1\otimes f^* u)} \\
			f_*(c\otimes \omega) \ar[d]_-{f_*(\Gamma_!\eta_x \otimes 1)} \ar[r]^-{\text{coev}\otimes 1} & \dual f_* \dual c \otimes f_* \dual c \otimes f_*(c \otimes \omega) \ar[dr]_-{\simeq}^-{1\otimes \pi} \ar@/_1pc/[ddr]_-{1\otimes\text{lax}} &  \\
			f_*(\Gamma_! \Gamma^* f^* g_! x \otimes \omega) \ar@{}[rrd]|-{(\dagger)} \ar[d]_-{f_*(\eps\otimes 1)} &&  \dual f_* \dual c \otimes f_*(\dual c \otimes f^* f_*(c \otimes \omega)) \ar[d]^-{1\otimes f_*(1\otimes \eps)}\\
			f_*(f^* g_! x \otimes \omega) \ar[d]_{\pi}^{\simeq} &&\dual f_* \dual c \otimes f_*(\dual c \otimes c \otimes \omega) \ar[d]^-{1\otimes f_*(\text{ev} \otimes 1)} \\
			g_!x \otimes f_* \omega \ar[d]_{1 \otimes \eps}\ar[rr]_{\simeq}^{\Psi \otimes 1} && \dual f_* \dual c \otimes f_* \omega \ar[d]^-{1 \otimes \eps} \\
			g_!x\ar[rr]_{\Psi}^{\simeq}  && \dual f_* \dual c 
		}\]
	where $\Psi$ denotes the isomorphism of Remark~\ref{rem:canon-iso}.
	Going along the top, right-hand side, and bottom we get the image of $u$ under the chained isomorphism~\eqref{eq:chained-iso} while going along the left-hand side
	we get the image of $u$ under \eqref{eq:natc} (cf.~Rem.~\ref{rem:defn-wirth}).
	The triangular region can be checked directly from the definition of the projection formula.
	What remains is to check the interior region~$(\dagger)$.
	This is a nightmarish diagram chase, so we will guide the reader through it.

	We begin by obtaining a description of the isomorphism $\Psi$.
	To this end, 
	let $\delta : x \to \Gamma^* f^*\dual f_*\dual c$ denote the morphism corresponding to the identity of $\dual f_* \dual c$ under the isomorphism~\eqref{eq:adj}. Chasing the identity through the definition of \eqref{eq:adj} we find that $\delta$ equals the following morphism
	\begin{equation}\label{eq:bleh}
		x \xra{\eta} \Gamma^* c \simeq \Gamma^*(\unit \otimes c) \xra{\Gamma^*(\zeta \otimes 1)} 
		\Gamma^*(f^*\dual f_*\dual c \otimes \dual c \otimes c) \xra{1\otimes \text{ev}} \Gamma^*f^*\dual f_*\dual c
	\end{equation}
	where $\zeta : \unit \to f^*\dual f_*\dual c \otimes \dual c$ is defined by
	\[ \zeta:\unit \simeq f^*\unit \xra{\text{coev}} f^*(\dual f_*\dual c\otimes f_*\dual c)\simeq f^*f_*(f^*\dual f_*\dual c\otimes \dual c)\xra{\eps}f^*\dual f_*\dual c\otimes\dual c.\]
	Let $\alpha_a:\Gamma^*a \otimes x \to \Gamma^*(a\otimes c)$ denote the map
	\[\alpha_a : \Gamma^* a \otimes x \xra{1\otimes \eta_x} \Gamma^*a \otimes \Gamma^*\Gamma_! x = \Gamma^*a\otimes \Gamma^*c \simeq \Gamma^*(a\otimes c)\]
	which is natural in $a$.  Then by applying naturality of $\alpha$ with respect to $\zeta$ one readily sees that \eqref{eq:bleh} coincides with the following morphism
\[x\simeq \Gamma^*\unit \otimes x \stackrel{\Gamma^*\zeta \otimes 1}{\longrightarrow} \Gamma^*(f^*\dual f_*\dual c \otimes \dual c)\otimes x \xra{\alpha} \Gamma^*(f^*\dual f_*\dual c \otimes \dual c \otimes c) \stackrel{1\otimes\text{ev}}{\longrightarrow} \Gamma^* f^*\dual f_*\dual c\]
where we have abbreviated $x\simeq \unit\otimes x \simeq \Gamma^*\unit\otimes x$.
Now, by definition, the isomorphism $\Psi:g_! x \xra{\sim} \dual f_* \dual c$ of Remark~\ref{rem:canon-iso} is given by 
\[g_!x \xra{g_!(\delta)} g_!(\Gamma^*f^*\dual f_*\dual c) \xra{\eps} \dual f_*\dual c.\]
Using our second description of $\delta$ (and consequently of $\Psi$) we 
check that
the bottom-left edge of $(\dagger)$ coincides with 
	\[\xymatrix {
			f_*(\Gamma_! x \otimes \omega)
			\simeq f_*(\Gamma_!(\Gamma^* \unit \otimes x)\otimes \omega)
			\ar[r]^-{\zeta} &
			f_*(\Gamma_!(\Gamma^* (f^* \dual f_* \dual c \otimes \dual c)\otimes x) \otimes \omega ) \ar[d]^{\alpha} \\
			& f_*(\Gamma_! \Gamma^*(f^* \dual f_*\dual c \otimes \dual c \otimes c) \otimes \omega) \ar[d]^{1\otimes\text{ev}} \\
			& f_*(\Gamma_! \Gamma^*(f^* \dual f_* \dual c) \otimes \omega) \ar[d]^{\eps}\\
			& f_*(f^* \dual f_* \dual c \otimes \omega) \ar[d]_{\simeq}^{\pi} \\
			& \dual f_* \dual c \otimes f_* \omega.
		}\]
	We are now in a position to check the commutativity of $(\dagger)$.
	Define
	\[\beta : \Gamma_!(\Gamma^* a \otimes b) \xra{\text{colax}} \Gamma_! \Gamma^* a \otimes \Gamma_! b \xra{\eps\otimes 1} a \otimes \Gamma_! b.\]
	The commutativity of $(\dagger)$ can now be outlined as follows
	\begin{equation}\label{eq:outline}
		\begin{gathered}
		\xymatrix{
			&&&&\bullet\ar[r]^-{\text{ev}}& \bullet\\
			\bullet \ar@/^1pc/[rrrru]^{1\otimes\text{lax}} \ar[r]^{\pi} & \bullet \ar[r]^{\pi} & \bullet \ar[r]^{\eps} & \bullet \ar@{=}[r] & \bullet \ar[r]^{\text{ev}} \ar[u]_{\pi} & \bullet \ar[u]_{\pi} \\
			\bullet \ar[r]_{\text{coev}} \ar@/_2pc/[rrr]_-{\zeta} \ar[u]^{\text{coev}} & \bullet \ar[u]_{\beta} \ar[r]_{\pi} & \bullet \ar[u]_{\beta} \ar[r]_{\eps} &\bullet \ar[u]_{\beta} \ar[r]_{\alpha} &\bullet \ar[u]_{\eps} \ar[r]_{\text{ev}}\ar[u]_{\eps} &\bullet \ar[u]_{\eps} 
		}
	\end{gathered}
	\end{equation}
	where for example the first square is
	\[\xymatrix @C=2em{
			\dual f_*\dual c \otimes f_*\dual c \otimes f_*(c\otimes \omega) \ar[r]^{\pi} & f_*(f^*(\dual f_*\dual c \otimes f_*\dual c)\otimes c\otimes \omega)\\
			f_*(\Gamma_!x\otimes \omega)\simeq f_*(\Gamma_!(\Gamma^*f^*\unit \otimes x)\otimes \omega) \ar@<40pt>[u]^{\text{coev}\otimes 1} \ar[r]^-{\text{coev}} & f_*(\Gamma_!(\Gamma^*f^*(\dual f_* \dual c \otimes f_* \dual c)\otimes x)\otimes \omega) \ar@<-40pt>[u]_{\beta}
		}\]
	and the second square is
	\[\xymatrix @C=.5em{
			f_*(f^*(\dual f_*\dual c \otimes f_*\dual c)\otimes c\otimes \omega) \ar[r]^{\pi} & f_*(f^*f_*(f^*\dual f_*\dual c \otimes \dual c)\otimes c\otimes\omega)\\
			f_*(\Gamma_!(\Gamma^*f^*(\dual f_* \dual c \otimes f_* \dual c)\otimes x)\otimes \omega) \ar[u]^-{\beta} \ar@{}[r]|-{\simeq}^-{\pi} & f_*(\Gamma_!(\Gamma^* f^*f_*(f^*\dual f_*\dual c \otimes \dual c) \otimes x)\otimes \omega)\ar[u]_{\beta}.
		}\]
%
The remaining squares should then be clear by following the guide~\eqref{eq:outline}.
The commutativity of the second, third, fifth, and sixth squares follows immediately from naturality, while the commutativity of the fourth square follows from the definitions of $\alpha$ and $\beta$.
The first square requires a diagram chase but is fairly routine.
Finally, the curved portion can be checked using the definition of the projection formula.
This completes the proof, modulo the details we have only sketched.
\end{proof}

\begin{proof}[Proof of Theorem~\ref{thm:main-thm}]
	Part (b) has been proved in Proposition~\ref{prop:b-iso}.
	It follows immediately (or directly from Lemma~\ref{lem:preserves-coproducts}) that (a) holds.
	Part (d) has been proved in Proposition~\ref{prop:preserves-products} and Proposition~\ref{prop:wirth-is-iso}.
	Part (c) then follows 
 (Rem.~\ref{rem:defn-wirth})
	by taking right adjoints of the isomorphism~\eqref{eq:generalized-wirthmuller} in
	Part (d).
\end{proof}

\bigbreak
\section{The Adams isomorphism as a Wirthm\"{u}ller isomorphism}
\label{sec:adams-wirth}

The purpose of this section is to show that the Adams isomorphism in equivariant stable homotopy theory can be realized as a special case of the formal Wirthm\"{u}ller isomorphism of Theorem~\ref{thm:main-thm}.  To this end, let $\SHG$ denote the genuine $G$-equivariant stable homotopy category for a compact Lie group~$G$.  It is a rigidly-compactly generated tensor-triangulated category which is generated by the orbits $\Sigma^\infty G/H_+$ as $H\le G$ ranges over all closed subgroups.  
(All subgroups are closed unless otherwise noted.)
Any homomorphism of compact Lie groups $f:G\to G'$ induces a
tensor-triangulated functor $f^*:\SH(G')\to\SH(G)$.  For example, the inclusion
$f:H \hookrightarrow G$ of a 
subgroup induces the restriction functor
$\Res_H^G:=f^*:\SH(G)\to\SH(H)$.  This geometric functor satisfies GN-duality,
and the resulting Wirthm\"{u}ller isomorphism is the eponymous classical
Wirthm\"{u}ller isomorphism (cf.~\cite{May03} and
\cite[Ex.~4.5]{BalmerDellAmbrogioSanders16}).  On the other hand, the quotient
$f:G\to G/N$ by a 
normal subgroup $N \lenormal G$ induces the inflation
functor $\Infl_{G/N}^G:=f^*:\SH(G/N)\to\SH(G)$.  This is again a geometric
functor whose right adjoint $\lambda^N := f_*:\SH(G)\to\SH(G/N)$ is the
so-called categorical \mbox{$N$-fixed} point functor.  By the usual formal story, it
itself admits a right adjoint $f^!:\SH(G/N)\to\SH(G)$ which, to the author's
knowledge, has not yet been studied
(cf.~Rem.~\ref{rem:right-adjoint-to-fixed-points}).  On the other hand, as
Prop.~\ref{prop:infl-doesnt-satisfy-gn-duality} below shows, $\Infl_{G/N}^G$
does not satisfy GN-duality (\ie does not have a left adjoint) except in the
trivial case when $N=1$.  To prove this, we will need the following standard
facts about group cohomology:

\begin{Prop}\label{prop:group-coho}
	Let $G$ be a compact Lie group.
	Then $H^*(BG;k)$ is not concentrated in finitely many degrees in either of the following cases:
	\begin{enumerate}
		\item $G$ is finite and $k=\Fp$ with $p$ dividing the order of $G$;
		\item $G$ is not finite and $k=\mathbb Q$.
		\end{enumerate}
\end{Prop}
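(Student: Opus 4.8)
The plan is to reduce both cases to the same elementary endgame: if $H^*(BG;k)$ contains a homogeneous non-nilpotent element $\eta$ of positive degree, then $\eta^n\ne 0$ for all $n$, so the ring is nonzero in the infinitely many degrees $n\,|\eta|$ and hence not concentrated in finitely many degrees. So in each case it suffices to produce one such $\eta$.

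For case (b) I would argue by hand. Since $G$ is an infinite compact Lie group it is positive-dimensional, so its identity component $G_0$ is a nontrivial connected compact Lie group and therefore has a maximal torus $T$ with $r:=\dim T\ge 1$ (a connected compact Lie group with trivial maximal torus is trivial, as every element lies in a maximal torus). Now invoke the classical description of rational cohomology: $H^*(BT;\mathbb Q)=\mathbb Q[u_1,\dots,u_r]$ with $|u_i|=2$, and by Borel's theorem (for the connected group $G_0$) together with the transfer/Serre-spectral-sequence argument for the fibration $BG_0\to BG\to B\pi_0(G)$ (the rational cohomology of the finite group $\pi_0(G)$ vanishes in positive degrees with any $\mathbb Q[\pi_0(G)]$-coefficients, by averaging), one gets a ring isomorphism $H^*(BG;\mathbb Q)\cong H^*(BT;\mathbb Q)^{\widetilde W}$, where $\widetilde W:=N_G(T)/T$ is a finite group acting linearly on $H^*(BT;\mathbb Q)$. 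This is a finitely generated graded $\mathbb Q$-algebra which is an integral domain (a subring of a polynomial ring) of Krull dimension $r\ge 1$; in particular it contains the nonzero positive-degree element $\prod_{w\in\widetilde W}w\cdot u_1$, and its powers provide nonzero classes in infinitely many degrees.

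For case (a) the naive analogue of the above — restrict to a small subgroup and pull back a polynomial generator — does \emph{not} work, since restriction to a subgroup of order $p$ is in general neither injective nor surjective, and inflation of a polynomial generator from a quotient $\mathbb Z/p$ can be nilpotent (for instance for the quaternion group at $p=2$). Instead I would quote the standard machinery. By Cauchy's theorem, $p\mid|G|$ gives a subgroup $C\cong\mathbb Z/p$, so the $p$-rank of $G$ is at least $1$; by the Evens–Venkov theorem $H^*(BG;\Fp)$ is a finitely generated graded $\Fp$-algebra, and by Quillen's dimension theorem its Krull dimension equals the $p$-rank, hence is $\ge 1$. A finitely generated connected graded $\Fp$-algebra of positive Krull dimension is infinite-dimensional over $\Fp$ — equivalently, contains a non-nilpotent homogeneous element of positive degree (a homogeneous system of parameters modulo the nilradical) — which is precisely the $\eta$ we want. (Alternatively one may cite Evens' explicit norm construction, or Quillen's $F$-isomorphism theorem, or simply the textbook treatments of this classical fact.)

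The main obstacle is case (a): producing a non-nilpotent positive-degree class in $H^*(BG;\Fp)$ is genuinely non-formal and is exactly where heavy classical input (Evens–Venkov finite generation together with Quillen's computation of the Krull dimension, or equivalently the Evens norm) is unavoidable; the hands-on commutative reasoning available in case (b), where everything is governed by a torus and a finite reflection-type group, has no counterpart at a prime dividing $|G|$.
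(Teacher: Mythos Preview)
Your proposal is correct. For case~(a) it coincides with the paper's argument: Evens--Venkov finite generation together with Quillen's theorem that the Krull dimension of $H^*(BG;\Fp)$ equals the $p$-rank forces positive Krull dimension, hence a non-nilpotent positive-degree class. The paper packages both cases uniformly by also invoking ``Quillen's theorem'' in characteristic~$0$ (with the convention that the $0$-rank is the rank of a maximal torus) together with graded Noether normalization; but of course the char~$0$ statement that $\dim H^*(BG;\bbQ)$ equals the torus rank is really Borel's theorem, which is exactly what you unpack explicitly in case~(b). So your treatment of~(b) is a more hands-on version of the same idea---you exhibit the invariant $\prod_{w\in\widetilde W} w\cdot u_1$ directly rather than appealing to Noether normalization---and has the virtue of making transparent why the rational case needs no Quillen-type machinery, at the cost of losing the paper's uniform one-line phrasing. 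Your identification $H^*(BG;\bbQ)\cong H^*(BT;\bbQ)^{N_G(T)/T}$ for disconnected~$G$ is correct (using surjectivity of $N_G(T)\to\pi_0(G)$, which follows from conjugacy of maximal tori in~$G_0$), though for the conclusion you only need that $H^*(BG;\bbQ)$ embeds in the polynomial ring $H^*(BT;\bbQ)$ and has positive Krull dimension.
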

\begin{proof}
These facts follow from the Evens-Venkov Theorem, graded Noether normalization (\eg~\cite[Thm.~4.2.3]{HunekeSwanson06}), and Quillen's theorem that the Krull dimension of the cohomology ring $H^*(BG;k)$ is equal to the $p$-rank of $G$, where $p=\chara k$.
(By definition, the $0$-rank of $G$ is the rank of a maximal torus in~$G$.)
\end{proof}

\begin{Prop}\label{prop:infl-doesnt-satisfy-gn-duality}
	Let $1 \neq N \lenormal G$ be a nontrivial 
	normal subgroup of a compact Lie group $G$.
	Then the categorical fixed point functor $\lambda^N:\SH(G) \to \SH(G/N)$
	does not preserve compact objects.
\end{Prop}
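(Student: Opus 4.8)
The plan is to apply $\lambda^N$ to a small compact object and exhibit a non-compact wedge summand in the output, the non-compactness being detected by Proposition~\ref{prop:group-coho}. The first move is to reduce to the case $N=G$. For any closed subgroup $M$ with $N\leq M\leq G$ --- note $N\lenormal M$ since $N$ is normal in $G$ --- the evident identification of $G$-sets $(G/N)\times_{M/N}S\cong G\times_M S$ yields a natural isomorphism $\Infl_{G/N}^G\circ\Ind_{M/N}^{G/N}\cong\Ind_M^G\circ\Infl_{M/N}^M$ of functors $\SH(M/N)\to\SH(G)$, and passing to right adjoints gives $\Res_{M/N}^{G/N}\circ\lambda^N\cong\lambda^N\circ\Res_M^G$ (with $\lambda^N$ denoting the categorical $N$-fixed points of $\SH(G)$ on the left and of $\SH(M)$ on the right). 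Since $\Res_M^G$ and $\Res^{G/N}_{M/N}$ preserve compact objects, if $\lambda^N\colon\SH(G)\to\SH(G/N)$ preserves compact objects then so does the composite $\lambda^N\circ\Res_M^G$; as the full subcategory of those $X\in\SH(M)$ with $\lambda^N(X)$ compact is thick and contains every orbit $\Sigma^\infty_+ M/K$ (a retract of $\Res^G_M\Sigma^\infty_+ G/K$ by the Mackey double-coset formula), it is all of $\SH(M)^c$, so $\lambda^N\colon\SH(M)\to\SH(M/N)$ preserves compact objects as well. Taking $M=N$ reduces us to proving that for a nontrivial compact Lie group $G$ the categorical fixed-point functor $\lambda^G\colon\SH(G)\to\SH$ does not preserve compact objects.

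For this I would evaluate $\lambda^G$ on the compact unit $\unit_{\SH(G)}=\Sigma^\infty_+\pt$. By the tom Dieck splitting theorem, $\lambda^G(\Sigma^\infty_+\pt)=(\Sphere_G)^G$ splits as a wedge indexed by the conjugacy classes of closed subgroups $H\leq G$, and the summand indexed by the trivial subgroup is $\Sigma^\infty_+ BG$ when $G$ is finite, and in general a Thom spectrum over $BG$ of a virtual bundle of rank $\pm\dim G$ built from the adjoint representation. By the Thom isomorphism, the mod-$p$ (resp.\ rational) homology of this summand agrees, up to a degree shift, with $H_*(BG;k)$; and Proposition~\ref{prop:group-coho} --- applied with $k=\Fp$ for a prime $p\mid|G|$ when $G$ is finite, and with $k=\mathbb Q$ when $G$ is infinite (using that an infinite compact Lie group has a maximal torus of positive rank) --- shows this is not concentrated in finitely many degrees. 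Hence the summand is not a finite spectrum, \ie not compact in $\SH$. Since a wedge summand of a compact object is a retract of it and therefore compact, $\lambda^G(\unit_{\SH(G)})$ is not compact while $\unit_{\SH(G)}$ is; thus $\lambda^G$ does not preserve compact objects, and by the reduction this establishes the proposition for every $N\neq1$.

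The main obstacle is the reduction in the first paragraph: one must verify carefully that restriction commutes with categorical fixed points and that the thick subcategory of ``$\lambda^N$-compact'' objects of $\SH(M)$ really exhausts $\SH(M)^c$ --- the latter resting on the double-coset decomposition of $\Res^G_M\circ\Ind^G_M$, which for non-finite compact Lie groups is more delicate than in the finite case. One could instead bypass this reduction and read the non-finite wedge summand directly off the generalized (relative) tom Dieck splitting of $\lambda^N(\Sphere_G)$ as a $G/N$-spectrum, whose summand for the trivial subgroup is built from $BN$ in the same way, at the cost of invoking the more elaborate relative splitting. The remaining ingredients --- the tom Dieck splitting, the Thom isomorphism, and Proposition~\ref{prop:group-coho} --- are standard.
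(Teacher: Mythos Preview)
Your approach and the paper's are closer than they might first appear: both ultimately reduce to showing that the nonequivariant spectrum $EN_+\wedge_N S^{\Ad(N)}$ is not compact. The paper reaches this object by applying the \emph{relative} tom Dieck splitting to $\lambda^N(\unit_G)$ and then restricting along $\Res^{G/N}_1$; you reach it by first reducing to the case $N=G$ and then applying the absolute tom Dieck splitting. These are essentially the same maneuver in different orders --- indeed $\Res^{G/N}_1\circ\lambda^N\cong\lambda^N\circ\Res^G_N$ applied to $\unit_G$ gives $\lambda^N(\unit_N)$ directly. (Incidentally, this observation shows your double-coset worry is moot: you only ever evaluate on the unit, and $\unit_N=\Res^G_N\unit_G$, so the full thick-subcategory argument for arbitrary orbits is unnecessary.)

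The genuine divergence is in the non-finite case. You invoke the Thom isomorphism with rational coefficients to identify the (co)homology of the Thom space of the adjoint bundle over $BN$ with a shift of $H_*(BN;\mathbb{Q})$. But the rational Thom isomorphism requires the bundle to be orientable, i.e.\ that $\det\circ\Ad\colon N\to\{\pm 1\}$ be trivial. This holds when $N$ is connected, but can fail for disconnected $N$ (e.g.\ $N=O(2)$), in which case one only gets cohomology with twisted coefficients $H^*(BN;\mathbb{Q}_\chi)$, and it is not immediately clear from Proposition~\ref{prop:group-coho} that this is unbounded. The paper avoids this issue entirely: rather than appealing to a Thom isomorphism, it reduces $\tilde H^*_N(S^{\Ad(N)};\mathbb{Q})$ to the maximal torus via three collapsing Leray--Serre spectral sequences (passing through $N_0$ and the normalizer of $T$), and then uses that representation spheres are equivariantly formal for tori to conclude $\tilde H^*_T(S^{\Ad(N)};\mathbb{Q})\cong H^{*-\dim N}(BT;\mathbb{Q})$. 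Your argument would be complete if you either (i) established orientability or handled the twisted case, or (ii) used $\mathbb{F}_2$-coefficients in the non-finite case (where every bundle is orientable and $H^*(BN;\mathbb{F}_2)$ is still unbounded since $N$ contains a circle, hence has positive $2$-rank) --- though the latter requires a mild extension of Proposition~\ref{prop:group-coho} beyond what is stated.
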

\begin{proof}
	According to the tom Dieck splitting theorem of \cite[Cor.~3.4 (a)]{Lewis00},
	the $N$-fixed points of the unit $\lambda^N(\unit)$ splits as a wedge of $G/N$-spectra
 indexed over the $G$-conjugacy classes of subgroups $H \le N$.
 The term for $H=1$ is
	\begin{equation}\label{eq:tomdieck-summand}
		\Sigma^\infty_{G/N}\left( E\cat F(N;G)_+ \smashh_N S^{\Ad(N;G)}\right)
	\end{equation}
 where $E\cat F(N;G)$ denotes the universal $N$-free $G$-space
	and $\Ad(N;G)$ denotes the adjoint representation that conjugation by $G$ 
	induces on the Lie algebra of $N$.
	Similarly,
	for $H = N_0$, the connected component of the identity of $N$,
	the corresponding term is
	\begin{equation}\label{eq:tomdieck-summandb}
	\Sigma^\infty_{G/N}\left( E\cat F(N/N_0;G/N_0)_+ \smashh_{N/N_0} S^{\Ad(N/N_0,G/N_0)}\right).
\end{equation}
Thus, if $\lambda^N(\unit)$ is compact then so are the two $G/N$-spectra
\eqref{eq:tomdieck-summand} and \eqref{eq:tomdieck-summandb}.
Since restriction $\SH(G/N)\to\SH$ preserves compactness,
	it would follow that
	\[ \Sigma^\infty( EN_+ \smashh_N S^{\Ad(N)} )
		\qquad\text{and}\qquad
		\Sigma^\infty B(N/N_0)_+\]
	are compact in $\SH$.
	If $N$ is not connected then $N/N_0$ is a nontrivial finite group and hence $\Sigma^\infty B(N/N_0)_+$ is not compact in $\SH$.
	Otherwise the cohomology groups
	\[ H^*(B(N/N_0);\Fp)\cong \tilde{H}^*(B(N/N_0)_+;\Fp) \cong H\Fpstar(\Sigma^\infty B(N/N_0)_+) \]
	would be concentrated in only finitely many degrees, contradicting Proposition~\ref{prop:group-coho} for any prime $p$ dividing the order of $N/N_0$.
	On the other hand, if $N$ is connected 
	then
$\Sigma^\infty( EN_+ \smashh_N S^{\Ad(N)} )$ 
is not
compact in $\SH$. 
Otherwise
the reduced Borel cohomology
\[ \tilde{H}_{N}^*(S^{\Ad(N)};\bbQ) \cong \tilde{H}^*(EN_+ \smashh_N S^{\Ad(N)};\mathbb Q) 
	\cong H\mathbb{Q}^*(\Sigma^\infty EN_+ \smashh_N S^{\Ad(N)})\]
	would be concentrated in finitely many degrees.
	To see that this cannot be the case, note
	that $EN_+ \smashh_N S^{\Ad(N)}$ is the Thom space
	for the vector bundle over $BN$ associated to the adjoint representation of $N$.
	This vector bundle is orientable since~$BN$ is simply connected.
Hence the Thom isomorphism gives
\[\tilde{H}^*(EN_+ \smashh_N S^{\Ad(N)};\bbQ)\cong H^{*-\dimm(N)}(BN;\bbQ)\]
and we invoke Proposition~\ref{prop:group-coho} again.
\end{proof}

\begin{Rem}
	Since $\Infl_{G/N}^G$ does not satisfy GN-duality, it is an excellent candidate for Theorem~\ref{thm:main-thm}.
	Let's first set the stage by recalling some additional notions from equivariant homotopy theory.
\end{Rem}

\begin{Rem}\label{rem:thick-is-tensor-ideal}
	Let $\cat F$ be a family of subgroups of $G$, closed under conjugation and taking subgroups.
	Then we can consider the thick subcategory 
	\begin{equation}\label{eq:family-thick}
		\thick\langle \Sigma^\infty G/K_+ \mid K \in \cat F\rangle \subset \SHGc
	\end{equation}
	generated by those orbits having isotropy in $\cat F$.
	The fact that $\cat F$ is closed under subconjugation implies that \eqref{eq:family-thick}
	is actually a thick tensor-ideal of $\SHGc$.
	Indeed, it suffices to check that
$\Sigma^\infty G/H_+ \smashh \Sigma^\infty G/K_+ \cong \Sigma^\infty (G/H \times G/K)_+$
	is contained in \eqref{eq:family-thick} for $K \in \cat F$ and $H \le G$ arbitrary.
	For $G$ finite this follows immediately from the double-coset decomposition of the product $G/H \times G/K$.
	For $G$ arbitrary, we invoke a theorem of Illman \cite{Illman83} that smooth compact $G$-manifolds
	are finite $G$-CW-complexes.
	We conclude that $G/H\times G/K$ is built out of finitely many cells $G/L \times D^n$ with $L \in \cat F$
	since each such cell contributes a point with isotropy $L$ (and the isotropy of a point in $G/H \times G/K$ is the intersection of a conjugate of $H$ with a conjugate of $K$).
\end{Rem}
\begin{Rem}
	The idempotent triangle for the finite localization (Rem.~\ref{rem:finite-localization})
	associated to the thick tensor-ideal \eqref{eq:family-thick} is nothing but 
	the exact triangle obtained by applying $\Sigma^\infty$ to the isotropy cofiber sequence
	$E\cat F_+ \to S^0 \to \tildeE\cat{F}$. 
	A $G$-spectrum $X$ is colocal, that is $E\cat F_+ \smashh X \simeq X$, if and only if $\Phi^H X = 0$ for all $H \not\in \cat F$,
	while a $G$-spectrum $X$ is local, that is $\tildeE\cat F\smashh X \simeq X$, if and only if $\Phi^H X = 0$ for all $H \in \cat F$.
	(Here we have dropped the suspensions, as usual, for readability.)
\end{Rem}

\begin{Exa}\label{exa:N-free}
	For $N \lenormal G$ a normal subgroup we can take the family $\cat F(N) := \SET{ {K \le G} }{K \cap N = 1}$.
The colocal objects are the so-called \emph{$N$-free $G$-spectra}, \ie those $G$-spectra $X$
such that $\Phi^H(X) = 0$ if $H \cap N\neq 1$.
\end{Exa}
\begin{Exa}\label{exa:N-concentrated}
	For $N \lenormal G$ a normal subgroup we can take the family
	${\cat F[\notsupseteq N]} := \SET{K \le G}{K \not\supseteq
		N}$.  The local objects are the so-called
	\emph{$N$-concentrated $G$-spectra}, \ie those $G$-spectra $X$
such that $\Phi^H(X) = 0$ unless $H \supseteq N$.
\end{Exa}

\begin{Lem}\label{lem:free-are-compact}
	Let $N \lenormal G$ be a 
	normal subgroup of a compact Lie group $G$.
	For any compact $N$-free $G$-spectrum $X \in \thick\langle \Sigma^\infty G/K_+ \mid K \cap N = 1 \rangle \subset \SHG^c$,
	the $N$-fixed point spectrum $\lambda^N(X)$ is compact in $\SH(G/N)$.
\end{Lem}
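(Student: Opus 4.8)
The plan is to reduce to the generating orbits $\Sigma^\infty_G G/K_+$ and then feed in the tom Dieck splitting of the categorical fixed-point functor $\lambda^N$. Since $\lambda^N$ is a triangulated functor and the compact objects of $\SH(G/N)$ form a thick subcategory, the full subcategory $\SET{ Y \in \SHG }{ \lambda^N(Y) \text{ is compact in } \SH(G/N)}$ is a thick subcategory of $\SHG$. Hence it suffices to prove that $\lambda^N(\Sigma^\infty_G G/K_+)$ is compact for every subgroup $K\le G$ with $K \cap N = 1$.

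To do this, I would invoke the tom Dieck splitting for $\lambda^N$ of a suspension spectrum (see \cite[Cor.~3.4]{Lewis00}, already used in the proof of Proposition~\ref{prop:infl-doesnt-satisfy-gn-duality}), which expresses $\lambda^N(\Sigma^\infty_G G/K_+)$ as a wedge of summands indexed by (conjugacy classes of) subgroups $L \le N$, the summand for $L$ being built from the fixed-point space $(G/K)^L$ together with a twist by the adjoint representation. The key observation is that since $K \cap N = 1$ --- and hence every $G$-conjugate of $K$ also meets $N$ trivially --- the space $(G/K)^L$ is empty whenever $L \neq 1$; so only the summand indexed by the trivial subgroup survives, namely
\[ \Sigma^\infty_{G/N}\big( E\cat F(N;G)_+ \smashh_N (G/K_+ \smashh S^{\Ad(N;G)}) \big). \]
As $G/K$ is $N$-free we have $E\cat F(N;G)_+ \smashh (G/K_+ \smashh S^{\Ad(N;G)}) \simeq G/K_+ \smashh S^{\Ad(N;G)}$ as $G$-spaces, so this summand is the suspension spectrum of $\big(G/K_+ \smashh S^{\Ad(N;G)}\big)/N$, the Thom space of a rank-$\dim(N)$ vector bundle over the $N$-orbit space $(G/K)/N$.

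Finally, the $N$-orbit space $(G/K)/N$ is the single $G/N$-orbit $(G/N)/\overline K$, where $\overline K := KN/N \cong K$; in particular it is a compact $G/N$-manifold, so the Thom space above is a finite $G/N$-CW complex and its suspension spectrum is compact in $\SH(G/N)$, as required. (When $N$ is finite, $\Ad(N;G) = 0$ and the surviving summand is simply $\Sigma^\infty_{G/N}\big((G/N)/\overline K\big)_+$.) I expect the main obstacle to be the bookkeeping: checking that the tom Dieck splitting for $\lambda^N$ collapses as asserted --- \ie correctly identifying which fixed-point spaces vanish --- and keeping track of the adjoint-representation twist, which is nevertheless harmless for compactness since the Thom space of a vector bundle over a compact base is again a finite CW complex.
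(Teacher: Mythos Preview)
Your proposal is correct and follows essentially the same route as the paper: reduce to the generators via a thick-subcategory argument, apply the tom Dieck splitting for $\lambda^N$ of a suspension spectrum, observe that only the summand for the trivial subgroup survives when $K\cap N=1$, and then use $N$-freeness of $G/K$ to kill the $E\cat F(N;G)_+$ factor. The only cosmetic difference is in the final compactness check: the paper argues that $G/K_+\smashh S^{\Ad(N;G)}$ is a finite $G$-CW complex and that the (spectrum-level) $N$-orbit functor preserves compacts, whereas you identify $(G/K)/N$ explicitly as the $G/N$-orbit $(G/N)/\overline K$ and appeal to the Thom space of a vector bundle over a compact base being a finite CW complex---both are perfectly good endings.
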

\begin{proof}
	The collection of $X \in \SH(G)^c$ such that $\lambda^N(X) \in \SH(G/N)^c$
	is a thick subcategory, so it suffices to check that $\lambda^N(\Sigma^\infty G/K_+) \in \SH(G/N)^c$
	whenever $K \cap N = 1$.
	For any based $G$-space $Y$, the tom Dieck splitting theorem (cf.~\cite[Corollary~3.4 (a)]{Lewis00}) provides an isomorphism
	\begin{eqnarray*}
		 \lambda^N(\Sigma_G^\infty Y) \cong \bigvee_{(H) \subseteq N} \Sigma^\infty_{G/N}\; G/N \ltimes_{\cat WH}(E\cat F(W_NH;W_GH)_+ \smashh_{W_NH} \Sigma^{\Ad{(W_NH)}} Y^H)
	 \end{eqnarray*}
	in $\SH(G/N)$.
	Here the wedge is indexed over the $G$-conjugacy classes of subgroups $H \leq N$.
	Note that $W_N H \lenormal W_G H$ and $\cat W H := W_GH / W_N H \hookrightarrow G/N$ embeds as a subgroup.
	Moreover, $E\cat F(W_NH;W_GH)$ denotes, as usual, the universal $W_NH$-free $W_GH$-space
	and $\Ad(W_NH)$ denotes the adjoint representation that $W_GH$ induces on the Lie algebra of its normal subgroup $W_N H$.
	However, $(G/K_+)^H = *$ if $H \not\le_G K$, so the only nonzero summands in the tom Dieck splitting
	for an orbit $Y=G/K_+$ are for those conjugacy classes of subgroups $H \leq K \cap N$.
	Thus, if $K \cap N=1$, then it collapses to
	\begin{equation}\label{eq:tom-dieck-G/K}
		\lambda^N(\Sigma^\infty_G \; G/K_+) \cong \Sigma^\infty_{G/N}\big( E\cat F(N;G)_+ \smashh_N \Sigma^{\Ad(N;G)} G/K_+ \big).
	\end{equation}
	Moreover, the space $G/K_+$ is $N$-free under our assumption $K \cap N=1$, so~\eqref{eq:tom-dieck-G/K} is just
	\[ \lambda^N(\Sigma^\infty_G G/K_+) \cong \Sigma^\infty_{G/N} \big( (G/K_+ \smashh S^{\Ad(N;G)})/N \big)\]
	which is certainly compact in $\SH(G/N)$.
	Indeed, (a) the $G$-space $G/K_+ \smashh S^{\Ad(N;G)}$ has the homotopy type of a finite $G$-CW-complex, so its suspension $G$-spectrum (indexed on any $G$-universe) is compact, (b) the space-level and spectrum-level orbit functors intertwine the suspension functors to $G/N$-spectra and to $G$-spectra indexed on an $N$-trivial $G$-universe, and (c) the spectrum-level orbit functor (defined for $G$-spectra indexed on an $N$-trivial $G$-universe) preserves compact objects (as it is an exact functor between compactly generated categories with a double right adjoint).
\end{proof}
\begin{Exa}\label{exa:adams}
	Let $N \lenormal G$ be a 
	normal subgroup of a compact Lie group $G$
	and let $f^*:=\Infl_{G/N}^G : \SH(G/N) \to \SH(G)$.
	By Lemma~\ref{lem:free-are-compact} and Remark~\ref{rem:thick-is-tensor-ideal}, 
	the subcategory
	\[\cat I := \thick\langle G/H_+ \mid H\cap N=1\rangle \subset \SHGc\]
	is a thick tensor-ideal of compact $G$-spectra,
	such that $\lambda^N(\cat I) \subset \SH(G/N)^c$,
	and we can apply Theorem~\ref{thm:main-thm}.
	Then $\Gamma^*\cong E\cat F(N)_+ \smashh - : \SH(G) \to N\text{-free-}\SH(G) := \Loc\langle G/H_+ \mid H \cap N =1 \rangle$
	is the colocalization onto the subcategory of genuine \mbox{$N$-free} $G$-spectra.
	As we now prove, the associated Wirthm\"{u}ller isomorphism~\eqref{eq:generalized-wirthmuller} can be identified as a natural isomorphism
	\[ (i^*X \smashh E\cat F(N)_+)/N \cong i^*(X \smashh \DO)^N \]
	defined for all $N$-free $G$-spectra $X$, where $i:\U^N\to \U$ denotes the inclusion of the $N$-fixed points of a complete $G$-universe $\U$. This requires some explanation.

	Recall from \cite{LewisMaySteinbergerMcClure86}
	that associated to $i:\U^N \to \U$ is the change of universe adjunction
	\[i_* : \SH_{\U^N}(G) \adjto \SH_\U(G) : i^*\]
	which unfortunately clashes with our own notational conventions, since here $i_*$ is left adjoint to $i^*$.
	We continue to write $\SH(G)=\SH_\U(G)$ for the stable homotopy category of genuine $G$-spectra,
	but also write $\SH_{N\text{-triv}}(G):=\SH_{\U^N}(G)$ for the stable homotopy category of $G$-spectra indexed on the $N$-trivial $G$-universe $\U^N$.
Regarding $\U^N$ as a complete $G/N$-universe, we have the change of group functor 
	\[\eps^*:\SH(G/N)\cong\SH_{\U^N}(G/N) \to \SH_{\U^N}(G)=\SH_{N\text{-triv}}(G)\]
which admits adjoints on both sides
	\[\xymatrix{
			\SH(G/N) \ar[d]^{\eps^*} \\
			\SH_{N\text{-triv}}(G) \ar@<15pt>[u]^-{(-)/N} \ar@<-15pt>[u]_-{(-)^N}
		}\]
	the $N$-orbits and the $N$-fixed points.
	By definition, the inflation functor is the composite $\Infl_{G/N}^G := i_*\circ \eps^*:\SH(G/N)\to\SH(G)$,
	while the categorical $N$-fixed point functor is the composite $\lambda^N:=(i^*(-))^N:\SH(G)\to\SH(G/N)$.

	Since $i_*\circ\Sigma^\infty_{\U^N} \cong \Sigma^\infty_\U$ and $i_*$ is tensor-functor,
	we see that if $X \in \SH_{N\text{-triv}}(G)$ is $N$-free
	then $i_*X$ is also $N$-free. (In contrast, $i^*$ does not preserve $N$-free spectra in general.)
	Thus $i_*:\SH_{N\text{-triv}}(G)\to\SH(G)$ restricts to a functor on the two categories of $N$-free spectra:
	$i_*:N\text{-free-}\SH_{N\text{-triv}}(G) \to N\text{-free-}\SH(G)$.
	The crucial fact about $N$-free $G$-spectra is that this functor is an equivalence (\cite[Theorem~2.8]{LewisMaySteinbergerMcClure86}).
	A quasi-inverse is given by $E\cat F(N)_+ \smashh i^*(-)$.
	Then contemplate the following diagram:
		\begin{equation}\label{eq:big-figure}
			\begin{gathered}
			\xymatrix @C=1em @R=1em{
				& \SH(G/N) \ar@<-16pt>@{<-}[dd]_-{-/N} \ar@{}@<-8pt>[dd]|-{\dashv} \ar[dd]^-{\eps^*} \ar@/^1pc/[ddddrrr]^-{\Infl_{G/N}^G} &&& \\
		&&&&\\
		&\SH_{N\text{-triv}}(G)\ar@{}@<-8pt>[dd]|-{\dashv} \ar[dd]^(.6){E\cat F(N)_+\smashh-} \ar[ddrrr]^-{i_*}&&&\\
		&&&&\\
		&N\text{-free-}SH_{N\text{-triv}}(G) \ar@<16pt>@{^{(}->}[uu] \ar[ddd]^-{i_*}&&& \SH(G) \ar@/^1pc/[dddlll]^-{E\cat F(N)_+ \smashh-}\\
		*+[r]{\SH_{N\text{-triv}}(G)}  \ar[ur]+<-30pt,-5pt>^-(.25){E\cat F(N)_+\smashh -} &&&&\\
		*+[r]{\SH(G)}\ar@<-10pt>[u]^-{i^*} \ar@{<-^{)}}[dr]+<-35pt,7pt>&&&&\\
		& N\text{-free-}SH(G) \ar@<32pt>@{}[uuu]|-{(\dagger)}  &&
	}
\end{gathered}
\end{equation}
	The two regions on the right commute, while the parallel arrows in the middle 
	are adjoints.
	Here $(\dagger)$ is the adjoint equivalence we just mentioned.
	In summary, we find that $(E\cat F(N)_+ \smashh i^*(-))/N: N\text{-free-}\SH(G) \to \SH(G/N)$ is left
	adjoint to $E\cat F (N)_+ \smashh \Infl_{G/N}^G = \Gamma^* \circ f^*$.
	Thus, the Wirthm\"{u}ller isomorphism~\eqref{eq:generalized-wirthmuller} takes the form 
	\begin{equation}\label{eq:adams-final}
		(i^*X \smashh E\cat F(N)_+)/N \cong i^*(X \smashh \DO)^N.
	\end{equation}
	That is, it is an isomorphism between the $N$-fixed points and the $N$-orbits of an $N$-free $G$-spectrum, up to a twist by the
	relative dualizing object $\DO$.
\end{Exa}

\begin{Rem}\label{rem:almost-dualizing}
	This isomorphism~\eqref{eq:adams-final} is the Adams isomorphism.
	As explained in Remark~\ref{rem:not-formal}, 
	it is by no means formal to give an ``explicit'' description of the relative dualizing object $\DO$.	
In our present case, however, we can apply what is classically known about the Adams isomorphism (\eg~\cite[Thm.~II.7.1]{LewisMaySteinbergerMcClure86}) 
via the method of Remark~\ref{rem:not-formal} to conclude that 
	\begin{equation}\label{eq:dualizing-up-to-e}
	E\cat F(N)_+ \smashh \DO \cong E\cat F(N)_+ \smashh S^{-\Adhspace\Ad(N;G)}
	\end{equation}
	where $S^{-\Adhspace\Ad(N;G)}$ is the inverse in $\SH(G)$ of the representation sphere for the adjoint \mbox{$G$-representation $\Ad(N;G)$}.
	It follows that
	$X \smashh \DO \cong X \smashh S^{-\Adhspace\Ad(N;G)}$ for any $N$-free \mbox{$G$-spectrum} $X$,
	so the isomorphism~\eqref{eq:adams-final} provided by Theorem~\ref{thm:main-thm} is indeed the Adams isomorphism
	\[ (i^*X \smashh E\cat F(N)_+)/N \cong i^*(X \smashh S^{-\Adhspace\Ad(N;G)})^N \]
	between $N$-fixed points and $N$-orbits up to a twist by the adjoint representation.
	However, it does not follow from \eqref{eq:dualizing-up-to-e} that $\DO \cong S^{-\Adhspace\Ad(N;G)}$ (\ie~before applying 
	the colocalization $E\cat F(N)_+\smashh -$).
	It remains an interesting challenge to obtain an explicit description of the
	relative dualizing object of inflation and, more generally, of the right adjoint
	$f^!$ to categorical fixed points. 
	(See Remark~\ref{rem:right-adjoint-to-fixed-points} for more on this issue.)
\end{Rem}
\bigbreak
\section{The compactness locus of a geometric functor}
\label{sec:locus}
The statement of Theorem~\ref{thm:main-thm} involved the choice of a certain thick tensor-ideal~$\cat I \subset \cat C^c$.
Let us clarify the dependence on this choice.
\begin{Lem}\label{lem:nested}
	Let $\cat I_1 \subset \cat I_2$ be two thick tensor-ideals of $\cat C^c$
	such that $f_*(\cat I_2) \subset \cat D^c$. Then 
the inclusion
	$\cat B_1 := \Loc(\cat I_1) \hookrightarrow \Loc(\cat I_2) =: \cat B_2$
	has a right adjoint, so that the colocalization $\Gamma_1^*:\cat C \to \cat B_1$
	factors through the colocalization $\Gamma_2^*:\cat C \to \cat B_2$. 
	It follows that we have a natural isomorphism
	\begin{equation}\label{eq:nest}
 (f\circ \Gamma_1)_!(x) \cong (f\circ \Gamma_2)_!(x)
	\end{equation}
for $x \in \cat B_1$. 
Moreover, under this isomorphism,
the Wirthm\"{u}ller isomorphism 
associated to $\cat I_1$ coincides with
the Wirthm\"{u}ller isomorphism associated to $\cat I_2$ restricted to $\cat B_1$.
\end{Lem}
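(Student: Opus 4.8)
The plan is to realize the passage from the larger ideal $\cat I_2$ to the smaller ideal $\cat I_1$ as a single finite colocalization taking place \emph{inside} $\cat B_2$, and then to exploit the fact, recorded in Remark~\ref{rem:defn-wirth}, that every isomorphism produced by Theorem~\ref{thm:main-thm} descends from the single natural transformation $\psi^{\text{ad}}$ of Remark~\ref{rem:defn-of-psi}, which is insensitive to the choice of ideal. First I would check that $\cat B_1=\Loc_{\cat C}(\cat I_1)$ coincides with $\Loc_{\cat B_2}(\cat I_1)$: since $\cat B_2=\Loc_{\cat C}(\cat I_2)$ is a localizing subcategory of $\cat C$ and $\cat I_1\subseteq\cat I_2\subseteq\cat B_2$, coproducts and cones formed in $\cat B_2$ agree with those formed in $\cat C$, so the two localizing closures agree. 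By Remark~\ref{rem:two-view-compact} we have $(\cat B_2)^c=\cat I_2\supseteq\cat I_1$, so $\cat B_1$ is the localizing subcategory of the compactly generated category $\cat B_2$ generated by a set of compact objects; hence the inclusion $j:\cat B_1\hookrightarrow\cat B_2$ preserves coproducts and admits a right adjoint $\Gamma_{12}^*:\cat B_2\to\cat B_1$ by Brown representability. Since $\Gamma_{1!}$ is literally the composite $\cat B_1\overset{j}{\hookrightarrow}\cat B_2\overset{\Gamma_{2!}}{\hookrightarrow}\cat C$, taking right adjoints gives the factorization $\Gamma_1^*\cong\Gamma_{12}^*\circ\Gamma_2^*$, which establishes the first two assertions of the lemma.

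The isomorphism~\eqref{eq:nest} is then immediate: $\Gamma_1^*\circ f^*=\Gamma_{12}^*\circ(\Gamma_2^*\circ f^*)$ and $j$ is left adjoint to $\Gamma_{12}^*$, so $(f\circ\Gamma_2)_!\circ j$ is a left adjoint of $\Gamma_1^*\circ f^*$ and hence, by uniqueness of adjoints, canonically isomorphic to $(f\circ\Gamma_1)_!$; evaluating at $x\in\cat B_1\subseteq\cat B_2$ is exactly~\eqref{eq:nest}. I would also record here that $\Gamma_{2!}\circ j=\Gamma_{1!}$ on the nose --- both being the inclusion $\cat B_1\hookrightarrow\cat C$ --- so that the functor $f_*(\Gamma_{1!}(-)\otimes\DO)$ appearing in the Wirthm\"uller isomorphism for $\cat I_1$ is literally $f_*(\Gamma_{2!}(-)\otimes\DO)$ precomposed with $j$, and likewise $\Gamma_1^*\ihom(\DO,f^!(-))=\Gamma_{12}^*\circ\Gamma_2^*\ihom(\DO,f^!(-))$.

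Finally, to compare the two Wirthm\"uller isomorphisms, write $\varpi_i$ for the one attached to $\cat I_i$; by Remark~\ref{rem:defn-wirth}, $\varpi_i$ is the mate of $\Gamma_i^*(\psi^{\text{ad}})$ under the adjunctions $(f\circ\Gamma_i)_!\dashv\Gamma_i^*\circ f^*$ and $f_*(\Gamma_{i!}(-)\otimes\DO)\dashv\Gamma_i^*\ihom(\DO,f^!(-))$. Since $\psi^{\text{ad}}$ does not depend on the ideal and $\Gamma_1^*\cong\Gamma_{12}^*\circ\Gamma_2^*$, we have $\Gamma_1^*(\psi^{\text{ad}})=\Gamma_{12}^*\bigl(\Gamma_2^*(\psi^{\text{ad}})\bigr)$. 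The key formal input is that the mate correspondence is compatible with whiskering along an adjunction: composing $j\dashv\Gamma_{12}^*$ on the right-adjoint side with each of the two adjunctions above, the mate of $\Gamma_{12}^*\bigl(\Gamma_2^*(\psi^{\text{ad}})\bigr)$ for the composed adjunctions is the mate $\varpi_2$ of $\Gamma_2^*(\psi^{\text{ad}})$ whiskered by $j$. Hence the mate of $\Gamma_1^*(\psi^{\text{ad}})$ is $\varpi_2\circ j$, which, after transport along the canonical isomorphism of the previous paragraph, is precisely $\varpi_1$; this is the assertion that $\varpi_1$ is $\varpi_2$ restricted to $\cat B_1$. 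I expect the main obstacle to be the bookkeeping in this last step --- keeping precise track of which units and counits each mate is formed from, and verifying that the canonical isomorphism $(f\circ\Gamma_2)_!\circ j\cong(f\circ\Gamma_1)_!$ is exactly the one intertwining the two computations of the mate of $\Gamma_1^*(\psi^{\text{ad}})$. This is routine $2$-categorical mate calculus (pasting of adjunction squares), so no genuinely new idea is needed, but it is the natural place for a coherence slip to hide; everything else follows directly from Brown representability and the construction of $\varpi$ in Remark~\ref{rem:defn-wirth}.
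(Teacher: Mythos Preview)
Your proposal is correct and follows essentially the same approach as the paper. The paper's proof is terser: it invokes the idempotent triangles to see that the colocalizations nest (where you use Brown representability and the identification $\cat B_1=\Loc_{\cat B_2}(\cat I_1)$), and for the comparison of Wirthm\"uller isomorphisms it simply declares the relevant square commutes by ``a straight-forward but lengthy diagram-chase utilizing the description of the Wirthm\"uller isomorphism in Remark~\ref{rem:defn-wirth} and how the various isomorphisms between adjoints are defined in terms of the units and counits.'' Your organization via mate calculus --- recognizing each $\varpi_i$ as the mate of $\Gamma_i^*(\psi^{\text{ad}})$ and invoking pasting compatibility --- is a cleaner conceptual packaging of exactly that diagram chase, and your flagged concern (that the canonical isomorphism $(f\circ\Gamma_2)_!\circ j\cong(f\circ\Gamma_1)_!$ must be the one arising from composing adjunctions) is precisely the coherence the paper sweeps into the phrase ``lengthy diagram-chase.''
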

\begin{proof}
	That the two colocalizations nest is standard (consider the two associated idempotent triangles)
	and we thus get the isomorphism~\eqref{eq:nest}. The real thing that needs to be checked is that
	the two Wirthm\"{u}ller isomorphisms coincide --- {\sl i.e.}, in pedantic notation, that
\[\xymatrix{
		f_*( (\Gamma_1)_!x \otimes \DO) \ar[r]^{\cong} \ar@{=}[d] &(f \circ \Gamma_1)_!(x) \ar[d]^{\cong} \\
		f_*( (\Gamma_2)_!(\gamma_! x) \otimes \DO) \ar[r]^{\cong} & (f \circ \Gamma_2)_!(\gamma_! x)	
	}\]
commutes, where $\gamma_!$ denotes the inclusion $\cat B_1 \hookrightarrow \cat B_2$.
	This is a straight-forward but lengthy diagram-chase
	utilizing the 
	description of the Wirtm\"{u}ller isomorphism 
	in Remark~\ref{rem:defn-wirth} and 
	how the various isomorphisms between adjoints are defined in terms of the units and counits.
\end{proof}
\begin{Rem}
Since the Wirthm\"{u}ller isomorphism obtained by applying
Theorem~\ref{thm:main-thm} to the subcategory $\cat I_1$ is completely contained in what we obtain by
applying the theorem to the larger subcategory $\cat I_2$,
we really are only interested in maximal such~$\cat I$.
Well, there is a unique maximal such subcategory: 
\end{Rem}
\begin{Def}\label{def:rel-comp}
Let $f^* : \cat D \to \cat C$ be a geometric functor between rigidly-compactly generated tensor-triangulated categories.
The \emph{subcategory of $f$-relatively compact objects} is the thick tensor-ideal
of $\cat C^c$ 
defined as follows:
\[ \A_f := \SET{ x \in \cat C^c }{ f_*(x \otimes y) \in \cat D^c \text{ for all } y \in \cat C^c}. \]
It is the largest thick tensor-ideal of $\cat C^c$ sent under $f_*$ to $\cat D^c$.
\end{Def}
\begin{Rem}\label{rem:only-one}
	According to Lemma~\ref{lem:nested},
	we should really only apply Theorem~\ref{thm:main-thm}
	to this specific 
	thick tensor-ideal
	$\cat I:=\A_f$ which is canonically determined by $f^*$.
	We have stated (and proved) the theorem in terms of a chosen subcategory $\cat I$ which could be smaller than $\A_f$ for added flexibility in situations where $\A_f$
	is not yet explicitly understood.
\end{Rem}
\begin{Rem}
	The thick tensor-ideal $\A_f$ corresponds to a certain Thomason subset $\Z_f \subset \Spc(\cat C^c)$ which we now single out for special attention.
\end{Rem}
\begin{Def}\label{def:locus}
	Let $f^*:\cat D \to \cat C$ be a geometric functor between rigidly-compactly generated tensor-triangulated categories.
	The \emph{\locus{}} of the functor $f^*$ is the Thomason subset
		\[\Z_f := \bigcup_{x \in \A_f} \supp(x) \subset \Spc(\cat C^c)\]
		corresponding to the thick tensor-ideal $\A_f \subset \cat C^c$
		(Def.~\ref{def:rel-comp}).
\end{Def}

\begin{Rem}
	The \locus{} $\Z_f \subset \Spc(\cat C^c)$ is a new topological invariant of the functor $f^*$
	and the remainder of this paper is devoted to its study.
	It is the whole space $\Spc(\cat C^c)$ precisely when $f^*$ satisfies GN-duality (cf.~Thm.~\ref{thm:original-thm}),
	but we would like to obtain a more refined and geometric understanding of $\Z_f$ in general.
	Before studying specific examples, we end this section with an alternative characterization 
	of the $f$-relatively compact objects:
\end{Rem}
\begin{Lem}\label{lem:alt-rel-compact-charac}
	Let $f^*:\cat D \to \cat C$ be a geometric functor between rigidly-compactly generated tensor-triangulated categories.
	A compact object $x \in \cat C^c$ is contained in
	the subcategory $\A_f\subset\cat C^c$
	(Def.~\ref{def:rel-comp}) if and only if 
	the functor $\ihom(x,f^!(-))\cong \dual x\otimes f^!(-)$ preserves coproducts.
\end{Lem}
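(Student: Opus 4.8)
The plan is to reduce everything to a single chain of adjunction isomorphisms and then invoke the characterization of compact objects as those whose corepresented functor preserves coproducts. First I would note that, since $\cat C$ is rigidly-compactly generated, the compact object $x$ is rigid, so there is indeed a natural isomorphism $\ihom(x,f^!(-)) \cong \dual x \otimes f^!(-)$; write $F \colon \cat D \to \cat C$ for this functor. It is exact, and $\cat C$ is compactly generated, so $F$ preserves coproducts if and only if for every $z \in \cat C^c$ the canonical comparison map
\[ \amalg_i \cat C(z, F(d_i)) \cong \cat C(z, \amalg_i F(d_i)) \to \cat C(z, F(\amalg_i d_i)) \]
is an isomorphism for every family $\{d_i\}_{i \in I}$ in $\cat D$ (the first identification uses compactness of $z$; and a cone of $\amalg_i F(d_i) \to F(\amalg_i d_i)$ killed by every $\cat C(z,-)$ with $z$ compact must vanish).

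Next I would compute, for $z \in \cat C^c$ and $d \in \cat D$, the chain of isomorphisms
\[ \cat C(z, F(d)) = \cat C(z, \dual x \otimes f^!d) \cong \cat C(x \otimes z, f^!d) \cong \cat D(f_*(x \otimes z), d), \]
natural in $d$, where the middle isomorphism is the rigid-duality adjunction $\cat C(z, \dual x \otimes -) \cong \cat C(x \otimes z, -)$ and the last is $f_* \dashv f^!$. Transporting the comparison map above through this chain turns it into the canonical map $\amalg_i \cat D(f_*(x \otimes z), d_i) \to \cat D(f_*(x \otimes z), \amalg_i d_i)$, which is an isomorphism for all families $\{d_i\}$ exactly when $f_*(x \otimes z)$ is compact in $\cat D$. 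Letting $z$ range over $\cat C^c$, I conclude that $F$ preserves coproducts if and only if $f_*(x \otimes z) \in \cat D^c$ for all $z \in \cat C^c$, which is precisely the defining condition for $x$ to lie in $\A_f$.

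The only genuinely delicate point — and the one I expect to absorb most of the write-up — is the verification that the two canonical comparison maps (for coproducts in $\cat C$ and for coproducts in $\cat D$) correspond under the chain of Hom-isomorphisms. This is formal once one checks that the chain is natural in $d$, and is exactly the kind of routine but slightly tedious diagram chase already carried out in the proof of Lemma~\ref{lem:preserves-coproducts}; I would handle it in the same way. Everything else is bookkeeping.
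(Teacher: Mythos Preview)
Your proposal is correct and follows essentially the same approach as the paper: both reduce to the chain of natural isomorphisms $\cat C(z,\ihom(x,f^!d)) \cong \cat C(x\otimes z,f^!d) \cong \cat D(f_*(x\otimes z),d)$ and use compact generation of $\cat C$ to pass between coproduct-preservation of $F$ and coproduct-preservation of each $\cat C(z,F(-))$ for $z$ compact. The paper presents this as a displayed chain of equivalences and then isolates the one non-immediate step (that $\cat C(y,F(-))$ preserving coproducts for all compact $y$ forces $F$ to preserve coproducts), which is exactly the point you flag as the delicate verification.
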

\begin{proof}
	Indeed, we have
	\begin{align*}
		x \in \A_f &\;\;\Leftrightarrow\;\; f_*(x\otimes y) \text{ is compact in $\cat D$ for all } y \in \cat C^c\\
		&\;\;\Leftrightarrow\;\; \cat D(f_*(x \otimes y),-) \text{ preserves coproducts for all } y \in \cat C^c\\
		&\;\;\Leftrightarrow\;\; \cat C(x\otimes y,f^!(-)) \text{ preserves coproducts for all } y \in \cat C^c\\
		&\;\;\Leftrightarrow\;\; \cat C(y,\ihom(x,f^!(-))) \text{ preserves coproducts for all } y \in \cat C^c\\
		&\;\;\Leftrightarrow\;\; \ihom(x,f^!(-)) \text{ preserves coproducts}
	\end{align*}
	where the only claim which is not immediate is the last $\Rightarrow$.
To prove this, by Yoneda, we need only check that
	post-composition
	\[\cat C(y,\amalg_i \ihom(x,f^!(z_i))) \xra{\varphi\circ -} \cat C(y,\ihom(x,f^!(\amalg_i z_i)))\]
	by the canonical map $\varphi:\amalg_i \ihom(x,f^!(z_i)) \to \ihom(x,f^!(\amalg_i z_i))$ is an isomorphism for all $y\in \cat C$.
	Since $\cat C$ is compactly generated, we need only check this for $y\in \cat C^c$ compact.
	But one can directly check that we have a factorization
	\[\xymatrix{
			{\cat C(y,\amalg_i \ihom(x,f^!z_i))} \ar[r]^-{\varphi \circ -} & {\cat C(y,\ihom(x,f^! \amalg_i z_i))} \\
			{\amalg_i \cat C(y,\ihom(x,f^!z_i))} \ar[ur]\ar[u] &
		}\]
	where the diagonal and vertical maps are isomorphisms (for ${y \in \cat C^c}$).
\end{proof}
\bigbreak
\section{The compactness locus of a finite localization}
\label{sec:finite}

We begin our study of the \locus{} by 
considering
it for
smashing and finite localizations.
Recall (\eg~from \cite[Sec.~3.3]{HoveyPalmieriStrickland97})
that a smashing localization of a rigidly-compactly generated category $\cat C$ is rigidly-compactly generated and the
localization functor $\cat C \to \cat C_L$ is a geometric functor.
\begin{Prop}\label{prop:smashing-localization}
	A smashing localization $\cat C \to \cat C_L$ satisfies GN-duality if and only if
	it is the finite localization associated to an open and closed subset $Y \subseteq \Spc(\cat C^c)$.
\end{Prop}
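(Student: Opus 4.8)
The plan is to reduce the whole statement to whether the idempotent triangle $e\to\unit\to\bar e\to\Sigma e$ underlying the localization \emph{splits}. Here $e$ is the left idempotent, so that $e\otimes\cat C=\Ker(\cat C\to\cat C_L)$ is the category of acyclics, while $\bar e=\unit_{\cat C_L}$ is the right idempotent, so that $\bar e\otimes\cat C=\cat C_L$ is the category of locals; recall that \emph{smashing} means precisely that $e\otimes-$ (equivalently $\bar e\otimes-$) preserves coproducts. I will show that GN-duality is equivalent to this triangle splitting, and that in that case the localization is forced to be the finite localization at the clopen subset $\supp(e)$.

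First suppose the localization satisfies GN-duality. Its right adjoint is the inclusion $\cat C_L\hookrightarrow\cat C$ of the local objects, so condition~(c) of Theorem~\ref{thm:original-thm} forces this inclusion to preserve compact objects; in particular $\bar e=\unit_{\cat C_L}$, being compact in $\cat C_L$, is compact --- hence rigid --- in $\cat C$. I then exploit this rigidity: since $\bar e=\cone(e\to\unit)$ and $e^{\otimes 2}\cong e$ we have $\bar e\otimes e=0$, and since $\dual{\bar e}$ is a retract of $\dual{\bar e}\otimes\bar e\otimes\dual{\bar e}$, the object $\dual{\bar e}\otimes e$ is a retract of $\dual{\bar e}\otimes\dual{\bar e}\otimes(\bar e\otimes e)=0$, so $\dual{\bar e}\otimes e=0$. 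Hence the connecting map lies in $\cat C(\bar e,\Sigma e)\cong\cat C(\unit,\Sigma(\dual{\bar e}\otimes e))=0$, and the triangle splits: $\unit\cong e\oplus\bar e$, with both summands now compact (being retracts of $\unit$). The symmetric argument (with $e$ in place of $\bar e$) gives $\dual e\otimes\bar e=0$, so all cross Hom-groups $\cat C(e\otimes x,\bar e\otimes y)$ and $\cat C(\bar e\otimes x,e\otimes y)$ vanish, and $\cat C$ decomposes as a product of tensor-triangulated categories $(e\otimes\cat C)\times(\bar e\otimes\cat C)$ with the localization being projection onto the second (rigidly-compactly generated) factor. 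Finally $\supp(e)$ and $\supp(\bar e)$ are disjoint closed subsets with $\supp(e)\cup\supp(\bar e)\supseteq\supp(\unit)=\Spc(\cat C^c)$, hence complementary and clopen; and a compact $x$ satisfies $\supp(x)\subseteq\supp(e)$ iff $x\otimes\bar e=0$ iff $x\in e\otimes\cat C$, so $\Loc(\cat C^c_{\supp(e)})=e\otimes\cat C=\Ker(\cat C\to\cat C_L)$ and the given localization is the finite localization at the clopen set $Y:=\supp(e)$.

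Conversely, suppose the localization is the finite localization at a clopen $Y\subseteq\Spc(\cat C^c)$, so $e=e_Y$ and $\bar e=f_Y$ in the notation of Remark~\ref{rem:finite-localization}. Since $\Spc(\cat C^c)=\supp(\unit)$ is quasi-compact, the closed sets $Y$ and $V:=\Spc(\cat C^c)\setminus Y$ are both quasi-compact, and being complementary each is a closed set with quasi-compact complement; thus $V$ is Thomason too and there is a second idempotent triangle $e_V\to\unit\to f_V\to\Sigma e_V$. As $\cat C^c_Y$ and $\cat C^c_V$ are mutually $\otimes$-annihilating, so are $e_Y\otimes\cat C=\Loc(\cat C^c_Y)$ and $e_V\otimes\cat C=\Loc(\cat C^c_V)$, whence $e_Y\otimes e_V=0$; and since the thick $\otimes$-ideal generated by $\cat C^c_Y\cup\cat C^c_V$ is all of $\cat C^c$ (its associated Thomason subset being $Y\cup V=\Spc(\cat C^c)$), the only object right-orthogonal to both is $0$, so $f_Y\otimes f_V=0$. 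Tensoring the two idempotent triangles against each other then yields $e_Y\cong f_V$ and $f_Y\cong e_V$ (alternatively, this is the Mayer--Vietoris triangle of \cite{BalmerFavi11} for $Y$ and $V$). Consequently the connecting map of $e_Y\to\unit\to f_Y\to\Sigma e_Y$ lies in $\cat C(f_Y,\Sigma e_Y)\cong\cat C(e_V,\Sigma f_V)$, which vanishes because $e_V$ is $V$-acyclic while $\Sigma f_V$ is $V$-local. So again the triangle splits, $\unit\cong e_Y\oplus f_Y$ with both summands compact, $\cat C\cong(e_Y\otimes\cat C)\times(f_Y\otimes\cat C)$, and the localization is projection onto the second factor; its right adjoint is the inclusion of a direct factor, which preserves compact objects, so GN-duality holds by Theorem~\ref{thm:original-thm}. (Finite localizations are always smashing, so the standing hypothesis of the proposition is met.)

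The book-keeping with supports, idempotents, and the product decomposition $\cat C\cong(e\otimes\cat C)\times(\bar e\otimes\cat C)$ is routine. The one genuinely delicate point is the splitting of the idempotent triangle: the orthogonality $\cat C(\text{acyclic},\text{local})=0$ holds for \emph{every} (co)localization, but $\cat C(\text{local},\text{acyclic})$ need not vanish --- as the finite localization $\Der(\bbZ)\to\Der(\bbZ[1/p])$ at the non-clopen closed point $(p)\in\Spec\bbZ$ shows --- so one is really forced to use the rigidity of $\bar e$ in the forward direction, and the clopen hypothesis (through the isomorphisms $e_Y\cong f_V$ and $f_Y\cong e_V$) in the backward direction, in order to recast the relevant Hom-group in the "good" orthogonality form.
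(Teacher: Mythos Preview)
Your proof is correct and shares the paper's overall strategy of reducing everything to the splitting of the idempotent triangle, but the backward direction uses a genuinely different mechanism. The paper invokes the generalized Carlson connectedness theorem \cite[Thm.~2.11]{Balmer07} to decompose $\unit\simeq a\oplus b$ with $\supp(a)=Y$ and $\supp(b)=V$, then shows $\Hom(f_Y,\Sigma e_Y)=0$ by a support computation involving $a\otimes\dual b$; it finishes by citing Thomason--Neeman to see that compacts in $\cat C(V)$ remain compact in $\cat C$. Your route instead exploits the symmetry that $V$ is itself Thomason (being clopen), builds the second idempotent triangle, and derives the swap $e_Y\cong f_V$, $f_Y\cong e_V$ --- which recasts the vanishing of the connecting map as the standard acyclic--local orthogonality for the \emph{$V$}-localization. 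This avoids both external citations and is arguably more transparent; the paper's approach, on the other hand, generalizes more readily to situations where one has a support decomposition but not an obvious ``dual'' Thomason set. In the forward direction your explicit splitting via the rigidity of $\bar e$ is a slight elaboration: the paper simply notes that $e$ is compact (third term of a triangle with $\unit$ and $f$ compact), reads off $\Spc(\cat C^c)=\supp(e)\sqcup\supp(f)$ from $e\otimes f=0$, and uses compactness of $e$ to identify $\Loc_\otimes(e)=\Loc(\cat C^c_Y)$ directly.
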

\begin{proof}
	$(\Rightarrow)$	Let $e \to \unit \to f \to \Sigma e$ be the idempotent triangle for the smashing localization.
	The right adjoint of localization is just the inclusion $\cat C_L \hookrightarrow \cat C$ of the local objects $\cat C_L = f \otimes \cat C$.
	So GN-duality implies that the right idempotent~$f$ is compact as an object of $\cat C$.
	But then so is $e$ (since $\unit$ is compact), so that
	$e \to \unit \to f \to \Sigma e$ is an idempotent triangle of compact objects.
	It follows that we have a decomposition $\Spc(\cat C^c) = \supp(\unit)=\supp(e) \sqcup \supp(f)$
	into disjoint closed sets. In particular, $Y:=\supp(e)$ is an open and closed subset of $\Spc(\cat C^c)$.
	Moreover, since $e$ is compact, 
	$\Loc_\otimes(e) = \Loc(\cat C^c_Y)$,
	so indeed $\cat C \to \cat C_L$ is nothing but the finite localization associated to the open and closed set $Y=\supp(e)$.
$(\Leftarrow)$
	Let $\cat C \to \cat C(V)$ 
	denote
	the finite localization associated to an open and closed
	subset $Y \subseteq \Spc(\cat C^c)$ (so $V:=\Spc(\cat C^c)\setminus Y$ as usual).
	Applying the generalized Carlson connectedness theorem \cite[Thm.~2.11]{Balmer07} to
	the decomposition $\supp(\unit)=\Spc(\cat C^c) = Y \sqcup V$, we conclude
	that $\unit \simeq a \oplus b$ for some $a,b \in \cat C^c$ 
	such that
	$\supp(a)=Y$ and $\supp(b)=V$.
	Now let $e \to \unit \to f \to \Sigma e$ denote the idempotent triangle of the localization.
	We claim that it splits.
	First note that $\supp(a)=Y$ implies $a\in \cat C^c_Y$ is acyclic so that $f\otimes a = 0$
	and hence $f \cong f\otimes b$.
	On the other hand, $\supp(b) \cap Y = \emptyset$ implies that $b \otimes z =0$
	for all $z \in \cat C^c_Y$.
	Hence $b \otimes e=0$ since $e \in \Loc(\cat C^c_Y)$.
	Thus, $e \cong e\otimes a$.
	Now, $\Hom_{\cat C}(f,\Sigma e) \cong \Hom_{\cat C}(f \otimes b,\Sigma e \otimes a)
	\cong \Hom(f,\Sigma e \otimes a \otimes \dual b{) =0}$
	since $\supp(a\otimes \dual b)=\supp(a)\cap\supp(\dual b)=\supp(a)\cap\supp(b)=\emptyset$.
	So the sequence splits, and hence  $f$ is compact (being a direct summand of $\unit$).
	Now let's prove that the localization satisfies GN-duality.
	Well, if $x \in \cat C(V)^c$ then
	the Thomason-Neeman localization theorem (cf.~\cite[Cor.~4.5.14, Rem.~4.5.15]{Neeman01}) implies
	$x \oplus \Sigma x = f \otimes d$ for some $d \in \cat C^c$,
	which is compact in $\cat C$. Thus any $x \in \cat C(V)^c$ is compact in $\cat C$,
	meaning that the inclusion $\cat C(V) \hookrightarrow \cat C$ preserves compact objects.
\end{proof}
\begin{Rem}
	For finite localizations we completely understand the spectrum of the compact part of the localized category, and we can give a precise topological description of the \locus{}, as follows:
\end{Rem}
\begin{Prop}\label{prop:locus-of-finite-localization}
	Let $Y \subseteq \Spc(\cat C^c)$ be a Thomason subset,
	with complement $V:=\Spc(\cat C^c)\setminus Y$,
	and let $f^*:\cat C \to \cat C(V)$ 
	denote
	the associated finite localization.
	Under the identification
	\[ \Spc(\cat C(V)^c) \cong V \subseteq \Spc(\cat C^c)\]
	the \locus{} of $f^*$, $\Z_f \subseteq V \subseteq \Spc(\cat C^c)$, coincides with the largest Thomason subset of $\Spc(\cat C^c)$
	which is contained in $V$.
	In other words,
	\[ \Z_f \;\;= \bigcup_{\substack{x\in \cat C^c: \\\supp(x) \subseteq V}} \supp(x) 
		\;\;= \bigcup_{\substack{Z \subset \Spc(\cat C^c)\\ {\textrm{Thomason closed}}\,:\\ Z \subseteq V}} Z.\]
\end{Prop}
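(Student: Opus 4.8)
The plan is to show directly that $\Z_f$ coincides with
$W:=\bigcup\SET{\supp_{\cat C}(c)}{c\in\cat C^c,\ \supp_{\cat C}(c)\subseteq V}$,
which will simultaneously establish both displayed equalities (that $W$ is the largest Thomason subset contained in $V$ is immediate: it is a union of Thomason closed sets, hence Thomason, it lies in $V$, and any Thomason subset of $V$ is a union of sets $\supp_{\cat C}(c)$, each of which is then itself contained in $V$). First I would record the book-keeping. Writing $L:=\cat C(V)$, the localization is $f^*=f_Y\otimes-:\cat C\to L$, its right adjoint $f_*:L\hookrightarrow\cat C$ is the inclusion of local objects (Rem.~\ref{rem:finite-localization}), $f^*$ preserves compacts, $\Spc(f^*)$ identifies $\Spc(L^c)$ with $V$, and $\supp_L(f^*c)=\supp_{\cat C}(c)\cap V$ for $c\in\cat C^c$. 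I would also invoke the standard fact that a compact object $w\in\cat C^c$ is local (\ie~$e_Y\otimes w=0$) if and only if $\supp_{\cat C}(w)\subseteq V$.

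For the inclusion $\Z_f\subseteq W$, the key observation is that taking $y:=\unit_L$ in Definition~\ref{def:rel-comp} forces, for every $x\in\A_f\subseteq L^c$, the object $f_*(x)$ to be \emph{compact in $\cat C$}. Since $f_*(x)$ is moreover a local object of $\cat C$, the recalled fact gives $\supp_{\cat C}(f_*x)\subseteq V$, so $\supp_{\cat C}(f_*x)$ is a Thomason closed subset of $V$ and hence contained in $W$. On the other hand the counit isomorphism $f^*f_*x\cong x$ together with functoriality of the support identifies $\supp_L(x)$ with $\supp_{\cat C}(f_*x)\cap V=\supp_{\cat C}(f_*x)$ under $\Spc(L^c)\cong V$. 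Taking the union over $x\in\A_f$ then yields $\Z_f\subseteq W$.

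For the reverse inclusion $W\subseteq\Z_f$, I would fix $c\in\cat C^c$ with $\supp_{\cat C}(c)\subseteq V$ and show that $f^*c\in\A_f$. By the recalled fact $c$ is local, so $f^*c\cong c$, $f^*c\in L^c$, and $\supp_L(f^*c)=\supp_{\cat C}(c)$ under $\Spc(L^c)\cong V$. To verify relative compactness, given $y\in L^c$ I would use the Thomason--Neeman localization theorem to write $y$ as a direct summand of $f^*d$ for some $d\in\cat C^c$; then $f^*c\otimes_L y$ is a summand of $f^*c\otimes_L f^*d\cong f^*(c\otimes_{\cat C}d)$, and since $c\otimes_{\cat C}d$ is compact with $\supp_{\cat C}(c\otimes d)\subseteq\supp_{\cat C}(c)\subseteq V$ it is local, whence $f^*(c\otimes_{\cat C}d)\cong c\otimes_{\cat C}d$. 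Applying the inclusion $f_*$ shows that $f_*(f^*c\otimes_L y)$ is a direct summand of the compact object $c\otimes_{\cat C}d$, hence compact; thus $f^*c\in\A_f$, so $\supp_{\cat C}(c)=\supp_L(f^*c)\subseteq\Z_f$, and unioning over such $c$ gives $W\subseteq\Z_f$.

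I expect the only delicate point to be the interface between supports computed in $\cat C$ and in $\cat C(V)$ — in particular the bookkeeping around the identification $\Spc(\cat C(V)^c)\cong V$ and the fact that objects of $\cat C(V)^c$ are only direct summands of localizations of objects of $\cat C^c$ — but each ingredient is a routine consequence of Balmer's functoriality of $\Spc$ and the Thomason--Neeman theorem, so no genuinely hard step is involved; the two nontrivial insights are the slick choice $y=\unit_L$ in one direction and the reduction via $y\,|\,f^*d$ in the other.
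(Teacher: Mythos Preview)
Your proof is correct and follows essentially the same strategy as the paper's: both establish $\Z_f\subseteq W$ by showing that $f_*$ carries $\A_f$ into the compact local objects of $\cat C$ (you via the choice $y=\unit_L$, the paper after first invoking Thomason--Neeman to write $a\oplus\Sigma a\simeq f_Y\otimes b$ and then implicitly taking $y=\unit$), and both establish $W\subseteq\Z_f$ by the same Thomason--Neeman trick of writing an arbitrary $y\in\cat C(V)^c$ as a summand of some $f^*d$. Your treatment of the first inclusion is arguably a bit cleaner since the support identification $\supp_L(x)=\supp_{\cat C}(f_*x)$ via $f^*f_*x\cong x$ avoids the $\oplus\,\Sigma a$ detour, but the two arguments are the same in substance.
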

\begin{proof}
	Let $e \to \unit \to f \to \Sigma e$ denote the idempotent triangle for the finite localization.
	We hope the double use of the letter $f$ will cause no confusion.
	Indeed, $f^*(-) = f\otimes -$ and $\Ker(f\otimes -) = \Loc_\otimes(e) = \Loc(\cat C_Y^c)$.
	Then observe that if $x \in \cat C^c$ then $\supp(x) \cap Y = \emptyset$
	iff $\supp(x) \cap \supp(y) = \emptyset$ for all $y\in \cat C_Y^c$
	iff $x \otimes \cat C_Y^c = 0$
	iff $x \otimes \Loc(\cat C_Y^c) = 0$
	iff $x \otimes \Loc_\otimes(e) = 0$
	iff $x \otimes e = 0$.
	Thus, $f \otimes x \simeq x$ iff $\supp(x) \subseteq V$.
	Now consider $a \in \cat C(V)^c$.
	By the 
	Thomason-Neeman localization theorem (cf.~\cite[Cor.~4.5.14, Rem.~4.5.15]{Neeman01}),
	$a \oplus \Sigma a \simeq f \otimes b$ for some $b \in \cat C^c$.
	If $a \in \A_f$ then so too $a \oplus \Sigma a \in \A_f$
	and hence $f \otimes b$ is compact in $\cat C$.
	Thus
	\begin{equation}\label{eq:finite-local-ineq}
		\Z_f \subseteq \bigcup_{\substack{b \in \cat C^c:\\f\otimes b \in \cat C^c}} \supp(f \otimes b)
		\subseteq \bigcup_{\substack{x \in \cat C^c:\\f \otimes x\simeq x}} \supp(x)
	\end{equation}
	where the second inequality follows just by taking $x := f\otimes b$.
	On the other hand, consider $x \in \cat C^c$ such that $f \otimes x \simeq x$.
	For any $y \in \cat C(V)^c$ we have $y \oplus \Sigma y \simeq f\otimes c$
	for some $c \in \cat C^c$, and then
	$x \otimes (y \oplus \Sigma y) \simeq x \otimes f \otimes c \simeq x \otimes c$
	is compact in $\cat C$.
	Hence its $\oplus$-summand $x \otimes y$ is also compact in $\cat C$.
	This shows that if $x \in \cat C^c$ satisfies $f \otimes x \simeq x$
	then $x \in \A_f$, showing that the inequalities in \eqref{eq:finite-local-ineq} are equalities.
\end{proof}
\begin{Exa}\label{exa:SH}
	The $p$-localization $\SH \to \SHp$ of the stable homotopy category is an example
	of a finite localization. 
	Indeed, if $\cat C_{p,n} \in \Spc(\SHc)$ denotes 
	the kernel in $\SHc$ of the $(n-1)$th Morava K-theory (after localization at $p$),
	then $p$-localization is the finite localization associated to the Thomason subset
	$Y = \bigcup_{q\neq p} \overline{\{\cat C_{q,2}\}}$.
	(We refer the reader to \cite{BalmerSanders17} or \cite[Sec.~9]{Balmer10b} for a 
	description of the space $\Spc(\SHc)$.)
	Then $\Spc(\SHcp) \cong V \subseteq \Spc(\SHc)$
	identifies the 
	spectrum of the $p$-local category
	with $V = \SET{\cat C_{p,n}}{1 \le n \le \infty}$.
	One immediately sees from Proposition~\ref{prop:locus-of-finite-localization}
	that the compactness locus of $\SH \to \SHp$ is
	\[ \Z_f=\SET{\cat C_{n}}{2 \le n \le \infty} \subset \Spc(\SHcp).\]
	That is, it is the whole of the $p$-local spectrum except for a single point --- the generic point.
	(See diagram (a) below.)
In fact, we can easily use Proposition~\ref{prop:locus-of-finite-localization},
	to describe the \locus{} of any finite localization of $\SH$.
	For illustrative purposes, we have drawn some examples below.
	In each example, we have drawn the three subsets
	$\Z_f \subset V \subset X$; the light blue region is~$\Z_f$ and the dark blue region is~$V$.
	(Note that in example (d), $\Z_f$ is empty.)
\end{Exa}
\begin{figure}[ht]
	\begin{subfigure}[b]{0.49\textwidth}
		\resizebox{\linewidth}{!}{
\def\diayoffbig{1.0}
\def\diayoffset{0.6}
\def\diaxoffset{0.3}
\def\bignubradius{0.6*\diaxoffset}
\def\smallnubradius{0.4*\diaxoffset}
\def\diabulletsize{0.03cm}
\def\diatinybulletsize{0.005cm}
\def\dianumbranches{9}
\def\diabranchlen{4}
\def\diabranchlenptwo{6}
\def\diasmallratio{0.25}
\def\diabigratio{0.5}
\def\diasmallopac{nearly opaque}
\def\diabigopac{nearly transparent}
\def\bigfillop{50}
\def\smallfillop{25}
\def\smallcol{blue}
\def\bigcol{blue}
\begin{tikzpicture}[trim left, scale=.5, every node/.style={scale=0.5}]

	\coordinate (base) at (-2,0);


	\coordinate (start) at ($(base)+(-1.5*\diaxoffset + 1*\diaxoffset,\diayoffbig)$);
	\path [fill=\bigcol!\bigfillop,rounded corners=2pt, thin] 
	($(base)-(\bignubradius,0)$) --
	($(start)-(\bignubradius,0)$) -- 
	($(start)-(\bignubradius,0)+(0,\bignubradius+\diabranchlen*\diayoffset+\diayoffset)$) --
	($(start)+(\bignubradius,0)+(0,\bignubradius+\diabranchlen*\diayoffset+\diayoffset)$) --
	($(start)+(\bignubradius,0)$) --
	($(base)+(\bignubradius,0)$) --
	($(base)-(0,\bignubradius)$) --
	($(base)-(\bignubradius,0)$);

	\path [fill=\smallcol!\smallfillop, thin] 
	($(start)-(\smallnubradius,0)$) -- 
	($(start)-(\smallnubradius,0)+(0,\diabranchlen*\diayoffset+\diayoffset)$) --
	($(start)+(\smallnubradius,0)+(0,\diabranchlen*\diayoffset+\diayoffset)$) --
	($(start)+(\smallnubradius,0)$) --
	($(start)-(0,\smallnubradius)$) --
	($(start)-(\smallnubradius,0)$);
	\path [fill=\smallcol!\smallfillop] (start) circle (\smallnubradius);
	\path [fill=\smallcol!\smallfillop] ($(start)+(0,\diabranchlen*\diayoffset+\diayoffset)$) circle (\smallnubradius);

\draw [fill] (base) circle (\diabulletsize);

\foreach \x in {1,...,\dianumbranches} {
	\coordinate (start) at ($(base)+(-1.5*\diaxoffset + \x*\diaxoffset,\diayoffbig)$);
	\foreach \n in {0,...,\diabranchlen} {
		\draw [fill] ($(start)+(0,\n*\diayoffset)$) circle (\diabulletsize);
	}
	\draw [very thin] (start) -- ($(start)+(0,\diabranchlen*\diayoffset)$);
	\draw [very thin] (base) -- (start);
	\node at ($(start)+(0,\diabranchlen*\diayoffset+0.65*\diayoffset)$) {$\vdots$};
	\draw [fill] ($(start)+(0,\diabranchlen*\diayoffset+\diayoffset)$) circle (\diabulletsize);
}
\node at ($(base)+(-1.8*\diaxoffset,\diayoffbig+\diabranchlen*\diayoffset+\diayoffset)$) {$\scriptstyle \infty$};
\foreach \n in {0,...,\diabranchlen} {
	\node at ($(base)+ (-1.5*\diaxoffset + \dianumbranches*\diaxoffset,0) +(0.4,\diayoffbig+\n*\diayoffset +0.5*\diayoffset)$) {$\cdots$};
}
\end{tikzpicture}
		}
		\caption*{(a)}
		\vspace{4ex}
	\end{subfigure}
	\begin{subfigure}[b]{0.49\textwidth}
		\resizebox{\linewidth}{!}{
\def\diayoffbig{1.0}
\def\diayoffset{0.6}
\def\diaxoffset{0.3}
\def\bignubradius{0.6*\diaxoffset}
\def\smallnubradius{0.4*\diaxoffset}
\def\diabulletsize{0.03cm}
\def\diatinybulletsize{0.005cm}
\def\dianumbranches{9}
\def\diabranchlen{4}
\def\diasmallratio{0.25}
\def\diabigratio{0.5}
\def\diasmallopac{nearly opaque}
\def\diabigopac{nearly transparent}
\def\bigfillop{50}
\def\smallfillop{25}
\def\smallcol{blue}
\def\bigcol{blue}
\begin{tikzpicture}[trim left, scale=.5, every node/.style={scale=0.5}]

	\coordinate (base) at (-2,0);

	\coordinate (start1) at ($(base)+(-1.5*\diaxoffset + 1*\diaxoffset,\diayoffbig)$);
	\coordinate (start2) at ($(base)+(-1.5*\diaxoffset + 2*\diaxoffset,\diayoffbig)$);
	\path [fill=\bigcol!\bigfillop,rounded corners=2pt, thin] 
	($(base)-(\smallnubradius,0)$) --
	($(start1)-(\bignubradius,0)$) -- 
	($(start1)-(\bignubradius,0)+(0,\bignubradius+0*\diayoffset+\diayoffset)$) --
	($(start2)-(\bignubradius,0)+(0,\bignubradius+0*\diayoffset+\diayoffset)$) --
	($(start2)-(\bignubradius,0)+(0,\bignubradius+\diabranchlen*\diayoffset+\diayoffset)$) --
	($(start2)+(\bignubradius,0)+(\dianumbranches*\diaxoffset,0)+(0,\bignubradius+\diabranchlen*\diayoffset+\diayoffset)$) --
	($(start2)-(0,\bignubradius)+(\bignubradius,0)+(\dianumbranches*\diaxoffset,0)$) --
	($(base)-(0,0.9*\smallnubradius)$) --
	($(base)-(\smallnubradius,0)$);
	\path [fill=\bigcol!\bigfillop] (base) circle (\smallnubradius);

	\coordinate (start) at ($(base)+(-1.5*\diaxoffset + 2*\diaxoffset,\diayoffbig)$);

	\path [fill=\smallcol!\smallfillop, thin] 
	($(start)-(\smallnubradius,0)$) -- 
	($(start)-(\smallnubradius,0)+(0,\diabranchlen*\diayoffset+\diayoffset)$) --
	($(start)+(0,\smallnubradius)+(0,\diabranchlen*\diayoffset+\diayoffset)$) --
	($(start)+(0,\smallnubradius)+(\dianumbranches*\diaxoffset,0)+(0,\diabranchlen*\diayoffset+\diayoffset)$) --
	($(start)+(\smallnubradius,0)+(\dianumbranches*\diaxoffset,0)+(0,\diabranchlen*\diayoffset+\diayoffset)$) --
	($(start)+(\smallnubradius,0)+(\dianumbranches*\diaxoffset,0)$) --
	($(start)-(0,\smallnubradius)+(\dianumbranches*\diaxoffset,0)$) --
($(start)-(0,\smallnubradius)$) --
	($(start)-(\smallnubradius,0)$);
	\path [fill=\smallcol!\smallfillop] (start) circle (\smallnubradius);
	\path [fill=\smallcol!\smallfillop] ($(start)+(\dianumbranches*\diaxoffset,0)$) circle (\smallnubradius);
	\path [fill=\smallcol!\smallfillop] ($(start)+(\dianumbranches*\diaxoffset,\diabranchlen*\diayoffset+\diayoffset)$) circle (\smallnubradius);
	\path [fill=\smallcol!\smallfillop] ($(start)+(0,\diabranchlen*\diayoffset+\diayoffset)$) circle (\smallnubradius);

\draw [fill] (base) circle (\diabulletsize);

\foreach \x in {1,...,\dianumbranches} {
	\coordinate (start) at ($(base)+(-1.5*\diaxoffset + \x*\diaxoffset,\diayoffbig)$);
	\foreach \n in {0,...,\diabranchlen} {
		\draw [fill] ($(start)+(0,\n*\diayoffset)$) circle (\diabulletsize);
	}
	\draw [very thin] (start) -- ($(start)+(0,\diabranchlen*\diayoffset)$);
	\draw [very thin] (base) -- (start);
	\node at ($(start)+(0,\diabranchlen*\diayoffset+0.65*\diayoffset)$) {$\vdots$};
	\draw [fill] ($(start)+(0,\diabranchlen*\diayoffset+\diayoffset)$) circle (\diabulletsize);
}
\node at ($(base)+(-1.5*\diaxoffset,\diayoffbig+\diabranchlen*\diayoffset+\diayoffset)$) {$\scriptstyle \infty$};
\foreach \n in {0,...,\diabranchlen} {
	\node at ($(base)+ (-1.5*\diaxoffset + \dianumbranches*\diaxoffset,0) +(0.4,\diayoffbig+\n*\diayoffset +0.5*\diayoffset)$) {$\cdots$};
}
\end{tikzpicture}
		}
		\caption*{(b)}
		\vspace{4ex}
	\end{subfigure}
	\begin{subfigure}[b]{0.49\textwidth}
		\resizebox{\linewidth}{!}{
\def\diayoffbig{1.0}
\def\diayoffset{0.6}
\def\diaxoffset{0.3}
\def\bignubradius{0.6*\diaxoffset}
\def\smallnubradius{0.4*\diaxoffset}
\def\diabulletsize{0.03cm}
\def\diatinybulletsize{0.005cm}
\def\dianumbranches{9}
\def\diabranchlen{4}
\def\diasmallratio{0.25}
\def\diabigratio{0.5}
\def\diasmallopac{nearly opaque}
\def\diabigopac{nearly transparent}
\def\bigfillop{50}
\def\smallfillop{25}
\def\smallcol{blue}
\def\bigcol{blue}
\begin{tikzpicture}[trim left, scale=.5, every node/.style={scale=0.5}]

	\coordinate (base) at (-2,0);


	\coordinate (start1) at ($(base)+(-1.5*\diaxoffset + 1*\diaxoffset,\diayoffbig)$);
	\coordinate (start2) at ($(base)+(-1.5*\diaxoffset + 2*\diaxoffset,\diayoffbig)$);
	\coordinate (start3) at ($(base)+(-1.5*\diaxoffset + 4*\diaxoffset,\diayoffbig)$);
	\coordinate (start4) at ($(base)+(-1.5*\diaxoffset + 5*\diaxoffset,\diayoffbig)$);
	\path [fill=\bigcol!\bigfillop] (base) circle (\smallnubradius);
	\path [fill=\bigcol!\bigfillop,rounded corners=2pt, thin] 
	($(base)-(\smallnubradius,0)$) --
	($(start1)-(\bignubradius,0)$) -- 
	($(start1)-(\bignubradius,0)+(0,\bignubradius+1*\diayoffset+\diayoffset)$) --
	($(start2)-(\bignubradius,0)+(0,\bignubradius+1*\diayoffset+\diayoffset)$) --
	($(start2)-(\bignubradius,0)+(0,\bignubradius+\diabranchlen*\diayoffset+\diayoffset)$) --
	($(start2)+(\bignubradius,0)+(0,\bignubradius+\diabranchlen*\diayoffset+\diayoffset)$) --
	($(start2)+(\bignubradius,0)+(0,\bignubradius+0*\diayoffset+\diayoffset)$) --
	($(start3)-(\bignubradius,0)+(0,\bignubradius+0*\diayoffset+\diayoffset)$) --
	($(start3)-(\bignubradius,0)+(0,\bignubradius+\diabranchlen*\diayoffset+\diayoffset)$) --
	($(start2)+(\bignubradius,0)+(\dianumbranches*\diaxoffset,0)+(0,\bignubradius+\diabranchlen*\diayoffset+\diayoffset)$) --
	($(start2)-(0,\bignubradius)+(\bignubradius,0)+(\dianumbranches*\diaxoffset,0)$) --
	($(base)-(0,0.9*\smallnubradius)$) --
	($(base)-(\smallnubradius,0)$);

	\path [fill=\smallcol!\smallfillop, thin] 
	($(start2)-(\smallnubradius,0)$) -- 
	($(start2)-(\smallnubradius,0)+(0,\diabranchlen*\diayoffset+\diayoffset)$) --
	($(start2)+(\smallnubradius,0)+(0,\diabranchlen*\diayoffset+\diayoffset)$) --
	($(start2)+(\smallnubradius,0)$) --
	($(start2)-(0,\smallnubradius)$) --
	($(start2)-(\smallnubradius,0)$);
	\path [fill=\smallcol!\smallfillop] (start2) circle (\smallnubradius);
	\path [fill=\smallcol!\smallfillop] ($(start2)+(0,\diabranchlen*\diayoffset+\diayoffset)$) circle (\smallnubradius);

	\path [fill=\smallcol!\smallfillop, thin] 
	($(start3)-(\smallnubradius,0)$) -- 
	($(start3)-(\smallnubradius,0)+(0,\diabranchlen*\diayoffset+\diayoffset)$) --
	($(start3)+(0,\smallnubradius)+(0,\diabranchlen*\diayoffset+\diayoffset)$) --
	($(start2)+(0,\smallnubradius)+(\dianumbranches*\diaxoffset,0)+(0,\diabranchlen*\diayoffset+\diayoffset)$) --
	($(start2)+(\smallnubradius,0)+(\dianumbranches*\diaxoffset,0)+(0,\diabranchlen*\diayoffset+\diayoffset)$) --
	($(start2)+(\smallnubradius,0)+(\dianumbranches*\diaxoffset,0)$) --
	($(start2)-(0,\smallnubradius)+(\dianumbranches*\diaxoffset,0)$) --
	($(start3)-(0,\smallnubradius)$) --
	($(start3)-(\smallnubradius,0)$);
	\path [fill=\smallcol!\smallfillop] (start3) circle (\smallnubradius);
	\path [fill=\smallcol!\smallfillop] ($(start2)+(\dianumbranches*\diaxoffset,0)$) circle (\smallnubradius);
	\path [fill=\smallcol!\smallfillop] ($(start2)+(\dianumbranches*\diaxoffset,\diabranchlen*\diayoffset+\diayoffset)$) circle (\smallnubradius);
	\path [fill=\smallcol!\smallfillop] ($(start3)+(0,\diabranchlen*\diayoffset+\diayoffset)$) circle (\smallnubradius);

\draw [fill] (base) circle (\diabulletsize);

\foreach \x in {1,...,\dianumbranches} {
	\coordinate (start) at ($(base)+(-1.5*\diaxoffset + \x*\diaxoffset,\diayoffbig)$);
	\foreach \n in {0,...,\diabranchlen} {
		\draw [fill] ($(start)+(0,\n*\diayoffset)$) circle (\diabulletsize);
	}
	\draw [very thin] (start) -- ($(start)+(0,\diabranchlen*\diayoffset)$);
	\draw [very thin] (base) -- (start);
	\node at ($(start)+(0,\diabranchlen*\diayoffset+0.65*\diayoffset)$) {$\vdots$};
	\draw [fill] ($(start)+(0,\diabranchlen*\diayoffset+\diayoffset)$) circle (\diabulletsize);
}
\node at ($(base)+(-1.5*\diaxoffset,\diayoffbig+\diabranchlen*\diayoffset+\diayoffset)$) {$\scriptstyle \infty$};
\foreach \n in {0,...,\diabranchlen} {
	\node at ($(base)+ (-1.5*\diaxoffset + \dianumbranches*\diaxoffset,0) +(0.4,\diayoffbig+\n*\diayoffset +0.5*\diayoffset)$) {$\cdots$};
}
\end{tikzpicture}
		}
		\caption*{(c)}
	\end{subfigure}
	\begin{subfigure}[b]{0.49\textwidth}
		\resizebox{\linewidth}{!}{
\def\diayoffbig{1.0}
\def\diayoffset{0.6}
\def\diaxoffset{0.3}
\def\bignubradius{0.6*\diaxoffset}
\def\smallnubradius{0.4*\diaxoffset}
\def\diabulletsize{0.03cm}
\def\diatinybulletsize{0.005cm}
\def\dianumbranches{9}
\def\diabranchlen{4}
\def\diasmallratio{0.25}
\def\diabigratio{0.5}
\def\diasmallopac{nearly opaque}
\def\diabigopac{nearly transparent}
\def\bigfillop{50}
\def\smallfillop{25}
\def\smallcol{blue}
\def\bigcol{blue}
\begin{tikzpicture}[trim left, scale=.5, every node/.style={scale=0.5}]

	\coordinate (base) at (-2,0);


	\coordinate (start) at ($(base)+(-1.5*\diaxoffset + 1*\diaxoffset,\diayoffbig)$);
	\path [fill=\bigcol!\bigfillop,rounded corners=2pt, thin] 
	($(base)-(\smallnubradius,0)$) --
	($(start)-(\bignubradius,0)$) -- 
	($(start)-(\bignubradius,0)+(0,\bignubradius+1*\diayoffset+\diayoffset)$) --
	($(start)+(\bignubradius,0)+(\dianumbranches*\diaxoffset+\diaxoffset,0)+(0,\bignubradius+1*\diayoffset+\diayoffset)$) --
	($(start)-(0,\bignubradius)+(\diaxoffset,0)+(\bignubradius,0)+(\dianumbranches*\diaxoffset,0)$) --
	($(base)-(0,0.9*\smallnubradius)$) --
	($(base)-(\smallnubradius,0)$);
	\path [fill=\bigcol!\bigfillop] (base) circle (\smallnubradius);
%

\draw [fill] (base) circle (\diabulletsize);

\foreach \x in {1,...,\dianumbranches} {
	\coordinate (start) at ($(base)+(-1.5*\diaxoffset + \x*\diaxoffset,\diayoffbig)$);
	\foreach \n in {0,...,\diabranchlen} {
		\draw [fill] ($(start)+(0,\n*\diayoffset)$) circle (\diabulletsize);
	}
	\draw [very thin] (start) -- ($(start)+(0,\diabranchlen*\diayoffset)$);
	\draw [very thin] (base) -- (start);
	\node at ($(start)+(0,\diabranchlen*\diayoffset+0.65*\diayoffset)$) {$\vdots$};
	\draw [fill] ($(start)+(0,\diabranchlen*\diayoffset+\diayoffset)$) circle (\diabulletsize);
}
\node at ($(base)+(-1.5*\diaxoffset,\diayoffbig+\diabranchlen*\diayoffset+\diayoffset)$) {$\scriptstyle \infty$};
\foreach \n in {0,...,\diabranchlen} {
	\node at ($(base)+ (-1.5*\diaxoffset + \dianumbranches*\diaxoffset,0) +(0.4,\diayoffbig+\n*\diayoffset +0.5*\diayoffset)$) {$\cdots$};
}
\end{tikzpicture}
		}
		\caption*{(d)}
	\end{subfigure}
\end{figure}
\begin{Exa}\label{exa:open-imm}
	Besides $\cat C=\SH$, we can also apply Proposition~\ref{prop:locus-of-finite-localization} to some other interesting examples, notably $\cat C=\Derqc(X)$ for a quasi-compact and quasi-separated scheme $X$.  Indeed, open immersions and the ``inclusion of a stalk'' are both examples of finite localizations on $\Derqc(X)$ so Proposition~\ref{prop:locus-of-finite-localization} applies.  For example, if $f:U \hookrightarrow X$ is the inclusion of a quasi-compact open subset then the compactness locus $\Z_f \subset U \cong \Spc(\Derqc(U)^c)$	of the derived pull-back $f^*:\Derqc(X)\to\Derqc(U)$ is precisely the union of all Thomason closed subsets of $X \cong \Spc(\Derqc(X)^c)$ which are contained in $U$.
\end{Exa}
\begin{Rem}
	If $X=\Spc(\cat C^c)$ is noetherian, then $X$ has finitely many connected
	components and the components are both open and closed.
	In this case, a subset $Y \subseteq X$ is open and closed iff it is a (possibly empty)
	union of connected components.
	Note also that when $X$ is noetherian we do not need to distinguish between
	closed subsets and Thomason closed subsets, so our understanding of Proposition~\ref{prop:locus-of-finite-localization} simplifies somewhat.
	In this case, $\Z_f$ is the union of all closed subsets of $X$ which are contained in $V$ ---
	\ie~which do not intersect $Y$ -- or, thinking more geometrically, a point
	$x \in X$ is contained in the \locus{} iff $\overline{\{x\}} \subset V$.
	In other words, $\Z_f$ and $V$ have the same closed points, but $\Z_f$
	really consists of those points $x$ which are not contained in $Y$,
	and for which, moreover, the entire irreducible closed set~$\overline{\{x\}}$ does not intersect $Y$.
	This kind of interplay has to do with the fact that $\Z_f$ is closed under specialization
	while $V$ is closed under generalization.
	In any case, the assumption that $X$ is noetherian is sometimes overkill for these
	considerations.
For example, although the space $X=\Spc(\SHc)$ in Example~\ref{exa:SH} is not noetherian,
the \locus{} of any finite localization coincides with the union of all (not necessarily Thomason) closed subsets contained in $V$.
	In any case, let us end this section by giving an explicit example where these considerations do matter.
\end{Rem}
\begin{Exa}
	Let $\cat C=\Der(\mathbb{F}_2^{\mathbb N})$ be the derived category of a countably infinite product of the field $\mathbb{F}_2$.
	The spectrum $X:=\Spc(\cat C^c)=\Spec(\mathbb{F}_2^{\mathbb N})$ is homeomorphic
	to the Stone-\v{C}ech compactification of $\mathbb N$ --- that is, the Cantor space.
	Since the Cantor space is compact Hausdorff, we have that a subset is (quasi-)compact iff
	it is closed.
	Thus, the Thomason closed subsets are precisely the clopen subsets.
	Moreover, since the clopen subsets form a basis for the topology on the Cantor space,
	we see that the Thomason subsets are precisely the open subsets.
	Then take any point $x \in X$. It is closed
	hence the complement $Y:=X\setminus\{x\}$ is open, and hence Thomason.
	Consider the associated finite localization of $\Der(\mathbb{F}_2^{\mathbb N})$
	associated to $Y$.
	According to Proposition~\ref{prop:locus-of-finite-localization}, $\Z_f$ is the union of all Thomason subsets of $X$
	which are contained in $V=\{x\}$.
	But the point $x$ is not open (every open set is uncountable)
	so $\Z_f = \emptyset$.
	On the other hand, if we took the union of all (not necessarily Thomason) closed subsets 
	of $X$ which are contained in $V$, then we would obtain $V$ itself.
\end{Exa}
\bigbreak
\section{The compactness locus of inflation}
\label{sec:adams-locus}
We have seen that by taking $\cat I$ to be the subcategory of finite $N$-free $G$-spectra, 
then Theorem~\ref{thm:main-thm}
applies
and provides the Adams isomorphism (Example~\ref{exa:adams}). But really one should apply the theorem
to the canonically determined subcategory associated to the \locus{} of the functor (Remark~\ref{rem:only-one}).
Does the \locus{} of inflation single out precisely the $N$-free $G$-spectra, or does it
single out a larger collection of $G$-spectra?
In this section, we will 
answer
this question, by figuring out precisely what the \locus{}
of inflation is.
This is possible, because we know what the spectrum of $\SHGc$
looks like, by \cite{BalmerSanders17},
up to an unresolved indeterminacy in the topology, related to the chromatic shifting behaviour of the
Tate construction.
Even though this indeterminacy in the topology is presently unresolved, 
we know enough about the topology to completely describe the \locus{} of inflation (Theorem~\ref{thm:locus-of-inflation} below).
All of this will be for finite groups $G$ as it is only in this case that we have a description of the spectrum $\Spc(\SHGc)$.

We require that the reader has some familiarity with \cite{BalmerSanders17}.
Recall that for a finite group $G$, the topological space $\Spc(\SHGc)$ consists of points
\[ \cat P(H,p,n) := \SET{ X \in \SH(G)^c}{\Phi^H(X) \in \cat C_{p,n} \text{ in } \SH^c} \]
for each 
conjugacy class of subgroups $H \le G$, prime number $p$, and ``chromatic'' 
integer $1 \le n \le \infty$, where $\Phi^H:\SH(G)^c \to \SH^c$ denotes the geometric $H$-fixed point functor.
These points are all distinct except when $n=1$ where we 
have $\cat P(H,p,1)=\cat P(H,q,1)$ for all primes $p,q$;
consequently, we will just write $\cat P(H,1)$ for this point.
We also sometimes write $\cat P_G(H,p,n)$ to indicate the ambient group~$G$.
Finally, recall that $\Op(G)$ denotes the smallest normal subgroup of $G$ with index a power of $p$.

\begin{Not}
	For a normal subgroup $N \lenormal G$, 
	we write 
	\[\ZGN \subset \Spc(\SHGc)\]
	for the \locus{} of the inflation functor $\Infl_{G/N}^G:\SH(G/N)\to\SH(G)$
	and write $\AGN \subset \SHGc$ for the corresponding thick tensor-ideal.
\end{Not}

The ultimate goal of this section is to prove the following theorem:

\begin{Thm}\label{thm:locus-of-inflation}
	Let $G$ be a finite group and let $N \lenormal G$ be a normal subgroup.
	The \locus{} $\ZGN \subset \Spc(\SHGc)$
	of $\Infl_{G/N}^G:\SH(G/N) \to \SH(G)$
	may be described as follows.
	For any $H \le G$, we have the following:
	\begin{enumerate}
		\item If $N\cap H \not\subseteq \Op(H)$, then $\cat P_G(H,p,n) \not\in \ZGN$ for all $2 \le n \le \infty$.
		\item If $N\cap H \subseteq \Op(H)$, then $\cat P_G(H,p,n) \in \ZGN$ for all $2 \le n \le \infty$.
		\item $\cat P_G(H,1) \in \ZGN$ if and only if $N\cap H \subseteq \Op(H)$ for all primes $p$.
	\end{enumerate}
\end{Thm}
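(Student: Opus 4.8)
The plan is to identify the thick tensor-ideal $\AGN\subseteq\SHGc$ of $\Infl_{G/N}^G$-relatively compact objects and then read off its support $\ZGN$; by Definition~\ref{def:locus} the working criterion is ``$\cat P\notin\ZGN$ iff $\AGN\subseteq\cat P$''. Two inputs are used throughout. First, the description of $\Spc(\SHGc)$ from \cite{BalmerSanders16pp}: every prime has the form $\cat P_G(H,p,n)=\Spc(\Phi^H)(\cat C_{p,n})$, a compact $x$ lies in $\cat P_G(H,p,n)$ exactly when $\Phi^H(x)\in\cat C_{p,n}$, and the (partially known) topology dictates the behaviour of $\supp$ and of the specialization order --- the residual indeterminacy being precisely the chromatic shifting of the Tate construction, which we shall have to show is irrelevant to $\ZGN$. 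Second, the tom Dieck splitting for $\lambda^N=f_*$, in the form used to prove Lemma~\ref{lem:free-are-compact} but applied to an \emph{arbitrary} compact $X$:
\[\lambda^N(X)\;\cong\;\bigvee_{(K)\le N}\Sigma^\infty_{G/N}\;G/N\ltimes_{\cat W K}\bigl(E\cat F(W_N K;W_G K)_+\smashh_{W_N K}\Sigma^{\Ad(W_N K)}\Phi^K(X)\bigr).\]
Since $G/N\ltimes_{\cat W K}(-)$, the suspensions $\Sigma^{\Ad(W_N K)}(-)$, and $X\mapsto\Phi^K(X)$ all preserve and detect compactness, this reduces ``$\lambda^N(X)$ is compact'' to the conjunction, over $G$-conjugacy classes $(K)$ of subgroups of $N$, of ``$E\cat F(W_N K;W_G K)_+\smashh_{W_N K}\Phi^K(X)$ is compact''.

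The second step analyzes this Borel construction. With $Q:=W_G K$ and $Q':=W_N K\lenormal Q$, the functor $Z\mapsto E\cat F(Q';Q)_+\smashh_{Q'}Z$ on $\SH(Q)^c$ is the colocalization onto $Q'$-free $Q$-spectra followed by formation of $Q'$-orbits, the latter preserving and reflecting compactness on $Q'$-free spectra by \cite[Thm.~2.8]{LewisMaySteinbergerMcClure86}. The crux is a characterization --- via a mod-$p$ (co)homology computation fed by Proposition~\ref{prop:group-coho} --- of when this output is compact for compact $Z$, essentially in terms of $p$-local vanishing of suitable geometric fixed points of $Z$ for the primes $p$ dividing $|Q'|$; the decisive feature is that a $p'$-group $Q'$ imposes no condition whatsoever at $p$ (concretely: the $Q'$-homotopy orbits of a compact spectrum that is $q$-locally trivial for every prime $q\mid|Q'|$ are again compact, by a transfer--retract argument). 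This is the precise mechanism by which $\AGN$ can be strictly larger than the thick tensor-ideal of finite $N$-free $G$-spectra.

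Third, I would prove part (a); this then yields the ``only if'' half of (c), since $\Phi^H(x)_{(p)}=0$ forces the compact spectrum $\Phi^H(x)$ to be torsion, hence to lie in $\cat C_1$. Suppose $N\cap H\not\subseteq\Op(H)$; then there is $M\lenormal H$ with $H/M\cong C_p$ and $(N\cap H)M=H$, and putting $K:=M\cap N$ --- a subgroup of $N$, normal in $H$ --- one checks that $(H\cap N)/K\cong C_p$ is a subgroup of $W_N K$, that $H/K\hookrightarrow W_G K$, and that $\Phi^{H/K}\Phi^K(-)\cong\Phi^H(-)$. Given $x\in\AGN$, assume $\Phi^H(x)_{(p)}\neq0$; for a suitable compact $y$ one keeps $\Phi^K(x\otimes y)$ $p$-locally nonzero after restriction along $H/K\hookrightarrow W_G K$, so the $K$-summand of $\lambda^N(x\otimes y)$ contains a wedge summand of the form $E\cat F(C_p)_+\smashh_{C_p}(-)$ applied to such a spectrum, and its mod-$p$ cohomology is infinite by Proposition~\ref{prop:group-coho} --- contradicting compactness of $\lambda^N(x\otimes y)$. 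Hence $\Phi^H(x)_{(p)}=0$ for all $x\in\AGN$, that is, $\AGN\subseteq\cat P_G(H,p,n)$ for every $2\le n\le\infty$ and $\AGN\subseteq\cat P_G(H,1)$.

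Finally, for part (b) and the ``if'' half of (c), I would exhibit an explicit member of $\AGN$ witnessing the relevant prime. Assuming $N\cap H\subseteq\Op(H)$, take $x$ whose geometric fixed points are supported on subgroups subconjugate to $H$, with $\Phi^H(x)$ a $p$-local finite complex of type $\le1$ (for instance $S/p$; or, when $N\cap H\subseteq\Op(H)$ for \emph{all} primes $p$, the sphere itself, which gives the ``if'' of (c)) and with $\Phi^K(x)$ arranged $q$-locally trivial for every prime $q$ dividing $|W_N K|$. Running the translation of step three in reverse, the hypothesis $N\cap H\subseteq\Op(H)$ forces $p\nmid|W_N K|$ at every $K$ where $\Phi^K(x)$ is $p$-locally nonzero, so by step two every Borel piece of $x\otimes y$ is compact for every compact $y$; thus $x\in\AGN$, and $\Phi^H(x)\notin\cat C_{p,2}$ gives $\cat P_G(H,p,2)\in\ZGN$, whence $\cat P_G(H,p,n)\in\ZGN$ for all $2\le n\le\infty$ because $\ZGN$ is a Thomason (hence specialization-closed) subset. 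I expect step two to be the main obstacle --- characterizing compactness of the iterated ``colocalize, then take homotopy orbits'' construction --- together with the bookkeeping that makes the combinatorics of which $K\le N$ satisfy $p\mid|W_N K|$ agree, uniformly in $H$, with the condition $N\cap H\subseteq\Op(H)$; a further delicate point is verifying that the resulting $\ZGN$ is closed under \emph{all} specializations in $\Spc(\SHGc)$, including the ``horizontal'' ones coming from the Tate blueshift, which is exactly where the detailed structure from \cite{BalmerSanders16pp} is indispensable.
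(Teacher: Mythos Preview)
Your proposal rests on a version of the tom Dieck splitting that does not exist. The displayed decomposition
\[
\lambda^N(X)\;\cong\;\bigvee_{(K)\le N}\Sigma^\infty_{G/N}\;G/N\ltimes_{\cat W K}\bigl(E\cat F(W_N K;W_G K)_+\smashh_{W_N K}\Sigma^{\Ad(W_N K)}\Phi^K(X)\bigr)
\]
is valid only when $X=\Sigma^\infty_G Y$ is a suspension spectrum (with $\Phi^K(X)$ then replaced by the actual fixed points $Y^K$); this is how Lemma~\ref{lem:free-are-compact} uses it. For an arbitrary compact $X\in\SHGc$ one has only the isotropy \emph{filtration} on $\lambda^N(X)$, and the associated graded pieces do not split off in general. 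Since both your exclusion argument for part~(a) (analyzing the ``$K$-summand'' of $\lambda^N(x\otimes y)$ for arbitrary $x\in\AGN$) and your inclusion argument for part~(b) (``running step three in reverse'' to certify $x\in\AGN$) depend on this splitting, neither goes through. The group-theoretic combinatorics you outline is fine, but it is attached to a machine that is not available.

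The paper avoids this obstacle by a two-step reduction that never requires tom Dieck beyond the sphere. First, Proposition~\ref{prop:reduce-to-subgroup} shows $\cat P_G(H,p,n)\in\ZGN$ iff $\cat P_H(H,p,n)\in\Z_{H\cap N,H}$, reducing everything to the case $H=G$. For the \emph{inclusion} (part~(b)), Proposition~\ref{prop:inclusions-G} exhibits a compact object supported on $\overline{\{\cat P(G,p,2)\}}$ which, under the hypothesis $N\subseteq\Op(G)$, is $N$-\emph{concentrated}; on such objects $\lambda^N$ agrees with the geometric fixed points $\tilde\Phi^N$ and hence preserves compactness (Remark~\ref{rem:N-concentrated-in-locus}). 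For the \emph{exclusion} (part~(a)), Proposition~\ref{prop:no-inclusions-G} passes to the $p$-group quotient $G/\Op(G)$ via Lemma~\ref{lem:concentrated}, and then Proposition~\ref{prop:p-group-no-inclusions} handles $p$-groups by applying tom Dieck only to $\lambda^N(\unit)$ (which \emph{is} a suspension spectrum) tensored against a trivially-inflated finite type-$m$ complex. Your remark that $\ZGN$ must be checked to be specialization-closed is also superfluous: it is a Thomason subset by definition.
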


This will be proved on page~
\pageref{prf:locus-of-inflation}, after we have developed the necessary lemmas. 
Some explicit examples will be drawn in 
Example~\ref{exa:locus-for-C_p} and Example~\ref{exa:locus-for-D_10}.

\begin{Rem}\label{rem:alternative-statement}
	The theorem can be reformulated as follows.
Define the $p$-local isotropy of a spectrum $X \in \SHGc$ to consist of those subgroups $H \le G$
such that $\Phi^H(X)_{(p)} \neq 0$.
Theorem~\ref{thm:locus-of-inflation} is equivalent to the statement that $X \in \AGN$
if and only if
	for all primes~$p$ and all $p$-local isotropy subgroups $H$ 
	of $X$
	we have $N \cap H \subseteq \Op(H)$.
	On the other hand,
	$X$ is $N$-free if and only if for every $p$-local isotropy subgroup~$H$ 
	of $X$
	we have $N \cap H=1$.
\end{Rem}

\begin{Exa}
	To gather some intuition in light of the above remark, take $N=G$ and consider the \mbox{mod-$p$} Moore spectrum 
	with trivial $G$-action: $\triv_G(M(p))$.
	For this $G$-spectrum, every subgroup of $G$ is $p$-local isotropy and no subgroup of $G$ is $q$-local isotropy for $q\neq p$.
	Evidently, it is not $G$-free (for $G \neq 1$).
	On the other hand, it is relatively compact iff every subgroup of $G$ is $p$-perfect iff $p$ does not divide the order of $G$.
	We can see this directly as follows.
	The fixed points are 
	\[\lambda^G(\triv_G(M(p))) \cong M(p) \smashh \lambda^G(\unit)
		\cong \bigvee_{(H)} M(p) \smashh \Sigma^\infty BWH_+\]
	by the tom Dieck splitting theorem
	and if $p$ does not divide the order of $G$ then this is just a finite wedge of finite spectra.
Indeed, in this case
\[\HZ_*(M(p)\smashh \Sigma^\infty BWH_+) \cong \HFp_*(\Sigma^\infty BWH_+) \cong H_*(WH;\Fp) = \Fp\]
and a connective spectrum is finite iff it has finitely generated total homology.
More generally, Theorem~\ref{thm:locus-of-inflation} says that $\lambda^N(X)$ is compact for any finite $G$-spectrum~$X$ which is $p$-torsion for some prime $p$ which does not divide $|N|$.
Indeed, a $p$-torsion spectrum only has $p$-isotropy for this particular prime $p$ and if $p$ does not divide the order of $N$ then $N \cap H \subseteq \Op(H)$ for any subgroup $H \le G$.

Although this example provides some intuition, keep in mind that it doesn't tell the whole story as the difference between the $N$-free $G$-spectra and the relatively compact $G$-spectra (with respect to inflation) manifests even $p$-locally for primes dividing $|N|$; see Remark~\ref{rem:adams-larger} for example.
\end{Exa}

\begin{Lem}\label{lem:supp-of-induction}
	Let $K \le H \le G$ be finite and let $x \in \SH(H)^c$.
	If $\cat P_H(K,p,n) \in \supp(x)$ then $\cat P_G(K,p,n) \in \supp(\Ind_H^G(x))$.
\end{Lem}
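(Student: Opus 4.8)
The plan is to reduce the assertion to the observation that $\cat C_{p,n}$ is a thick subcategory of $\SH^c$, hence closed under retracts. First I would unwind the descriptions recalled above: the hypothesis $\cat P_H(K,p,n)\in\supp(x)$ says precisely that $x\notin\cat P_H(K,p,n)$, i.e.\ that $\Phi^K(x)\notin\cat C_{p,n}$ in $\SH^c$, while the desired conclusion $\cat P_G(K,p,n)\in\supp(\Ind_H^G x)$ says precisely that $\Phi^K(\Ind_H^G x)\notin\cat C_{p,n}$. So it is enough to exhibit $\Phi^K(x)$ as a retract of $\Phi^K(\Ind_H^G x)$ inside $\SH^c$: if the latter belonged to the thick subcategory $\cat C_{p,n}$, then so would its retract $\Phi^K(x)$, contradicting the hypothesis.

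To produce this retract I would use two standard facts from the equivariant machinery of \cite{LewisMaySteinbergerMcClure86}. First, since $K\le H$, the geometric $K$-fixed point functor of a $G$-spectrum factors through restriction along $H$, so $\Phi^K(\Ind_H^G x)\cong\Phi^K\big(\Res^G_H\Ind_H^G x\big)$. Second, the Mackey double-coset formula for $\Res^G_H\circ\Ind_H^G$ exhibits $x$ --- the summand indexed by the trivial double coset $HeH$, for which $H\cap{}^{e}H=H$ --- as a wedge summand, hence a retract, of $\Res^G_H\Ind_H^G x$ in $\SH(H)^c$. Applying the functor $\Phi^K\colon\SH(H)^c\to\SH^c$ (which is defined since $K\le H$, and which, like any functor, preserves retracts) then displays $\Phi^K(x)$ as a retract of $\Phi^K(\Res^G_H\Ind_H^G x)\cong\Phi^K(\Ind_H^G x)$, as needed. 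Equivalently, one can restrict all the way down to $K$ and invoke that $\Phi^K$ annihilates $\Ind^K_L(-)$ whenever $L\subsetneq K$; the double-coset formula then yields a wedge decomposition $\Phi^K(\Ind_H^G x)\cong\bigvee\Phi^{g^{-1}Kg}(x)$ indexed by the double cosets $KgH$ with $g^{-1}Kg\le H$, and the double coset of $g=e$ --- which is a legitimate term precisely because $K\le H$ --- contributes exactly the summand $\Phi^K(x)$.

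The genuine content is entirely contained in these two classical equivariant facts --- that $\Phi^K$ of a $G$-spectrum depends only on the restriction along $K\le H$, and the double-coset decomposition of $\Res\circ\Ind$ (equivalently, the vanishing of $\Phi^K\circ\Ind^K_L$ for $L\subsetneq K$). The hard part, such as it is, is bookkeeping: one must track the double cosets carefully enough to be certain that the $g=e$ contribution really is the honest spectrum $\Phi^K(x)$ rather than some conjugated variant. Given that, the remainder is the one-line retract argument against the thick subcategory $\cat C_{p,n}$.
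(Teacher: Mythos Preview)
Your proposal is correct and follows essentially the same approach as the paper: both unwind the definitions of $\cat P_H(K,p,n)$ and $\cat P_G(K,p,n)$, observe that $\Phi^K$ on $G$-spectra factors through $\Res^G_H$, and use that $x$ is a direct summand of $\Res^G_H\Ind_H^G x$ (the paper cites \cite[Lem.~3.3]{BalmerDellAmbrogioSanders15} for this, while you invoke the Mackey double-coset formula directly) to conclude via closure of $\cat C_{p,n}$ under retracts.
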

\begin{proof}
	The hypothesis means that $x \not\in \cat P_H(K,p,n)$, \ie~that $\Phi^K(\Res^H_K(x)) \not\in \cat C_{p,n}$.
	On the other hand, the conclusion means that
	$\Ind_H^G(x) \not\in \cat P_G(K,p,n)$, \ie~that
	$\Phi^K(\Res_K^G \Ind_H^G(x)) {\not\in \cat C_{p,n}}$.
	But 
	$\Phi^K(\Res_K^G \Ind_H^G x) \cong \Phi^K(\Res_K^H\Res_H^G \Ind_H^G x)$
	and $x$ is a direct summand of $\Res_H^G \Ind_H^G x$
	(see \eg~\cite[Lem.~3.3]{BalmerDellAmbrogioSanders15}).
	Thus the hypothesis implies the conclusion.
\end{proof}
\begin{Prop}\label{prop:reduce-to-subgroup}
	Let $G$ be a finite group and let $N \lenormal G$ and $H \le G$.
	Then $\cat P_G(H,p,n) \in \ZGN$ if and only if
	$\cat P_H(H,p,n) \in \Z_{H\cap N,H}$.
\end{Prop}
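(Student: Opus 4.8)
The plan is to transport the statement between $G$ and $H$ along the induction/restriction adjunction, after first establishing that categorical fixed points commute with induction. For a subgroup $K\le G$ the composite homomorphisms $K\hookrightarrow G\twoheadrightarrow G/N$ and $K\twoheadrightarrow K/(K\cap N)\cong KN/N\hookrightarrow G/N$ coincide, so functoriality of $G'\mapsto\SH(G')$ in group homomorphisms gives a natural isomorphism $\Res^G_K\circ\Infl_{G/N}^G\cong\Infl_{K/(K\cap N)}^K\circ\Res^{G/N}_{KN/N}$. Passing to right adjoints, and using that for finite groups the right adjoint $\CoInd_K^G$ of $\Res^G_K$ agrees with $\Ind_K^G$ (the classical Wirthm\"uller isomorphism, cf.~\cite[Ex.~4.5]{BalmerDellAmbrogioSanders16}), produces a natural isomorphism
\[ \lambda^N_G\circ\Ind_K^G\;\cong\;\Ind_{KN/N}^{G/N}\circ\lambda^{K\cap N}_K .\]

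Next I would give an orbit-wise criterion for membership in $\AGN$ (Def.~\ref{def:rel-comp}). Since $\lambda^N_G$ and $x\otimes-$ are exact and the compact objects of $\SH(G/N)$ form a thick subcategory, the class $\SET{y\in\SHGc}{\lambda^N_G(x\otimes y)\text{ is compact}}$ is a thick subcategory of $\SHGc$; as the orbits $\Sigma^\infty G/K_+$ generate $\SHGc$ as a thick subcategory, membership in $\AGN$ can be tested on orbits. Using the projection formula $x\otimes\Sigma^\infty G/K_+\cong\Ind_K^G\Res^G_K x$ together with the commutation isomorphism above, and the fact that (finite-index) induction both preserves compact objects and reflects them (as $z$ is a direct summand of $\Res^G_K\Ind_K^G z$, cf.~\cite[Lem.~3.3]{BalmerDellAmbrogioSanders15}), one arrives at
\[ x\in\AGN\iff\lambda^{K\cap N}_K(\Res^G_K x)\text{ is compact for every }K\le G .\]
The same reasoning applied to $H$ in place of $G$ (and $H\cap N\lenormal H$ in place of $N$) gives the analogous criterion for $\A_{H\cap N,H}$; here one uses $\Res^G_K=\Res^H_K\circ\Res^G_H$ and $K\cap N=K\cap(H\cap N)$ for $K\le H$.

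With these in hand both implications are short; I use throughout the dictionary $\cat P_G(L,p,n)\in\supp(z)\iff\Phi^L(\Res^G_L z)\notin\cat C_{p,n}$. Suppose $\cat P_G(H,p,n)\in\ZGN$ and choose $x\in\AGN$ with $\cat P_G(H,p,n)\in\supp(x)$, i.e.\ $\Phi^H(\Res^G_H x)\notin\cat C_{p,n}$. By the orbit-wise criterion the conditions defining $\A_{H\cap N,H}$ form a sub-collection of the conditions satisfied by $x$, so $\Res^G_H x\in\A_{H\cap N,H}$; and $\Phi^H(\Res^G_H x)\notin\cat C_{p,n}$ means precisely $\cat P_H(H,p,n)\in\supp(\Res^G_H x)$, whence $\cat P_H(H,p,n)\in\Z_{H\cap N,H}$. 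Conversely, suppose $\cat P_H(H,p,n)\in\Z_{H\cap N,H}$ and choose $w\in\A_{H\cap N,H}$ with $\cat P_H(H,p,n)\in\supp(w)$. For every $y\in\SHGc$ the projection formula and the commutation isomorphism give
\[ \lambda^N_G\big(\Ind_H^G(w)\otimes y\big)\cong\lambda^N_G\,\Ind_H^G\big(w\otimes\Res^G_H y\big)\cong\Ind_{HN/N}^{G/N}\big(\lambda^{H\cap N}_H(w\otimes\Res^G_H y)\big),\]
which is compact because $w\in\A_{H\cap N,H}$, $\Res^G_H y$ is compact, and induction preserves compact objects; hence $\Ind_H^G w\in\AGN$. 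Finally Lemma~\ref{lem:supp-of-induction} (applied to $H\le H\le G$) turns $\cat P_H(H,p,n)\in\supp(w)$ into $\cat P_G(H,p,n)\in\supp(\Ind_H^G w)$, so $\cat P_G(H,p,n)\in\ZGN$.

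The main obstacle is the commutation statement of the first step: one must check that taking right adjoints in the functoriality isomorphism, together with the identification $\CoInd_K^G\cong\Ind_K^G$, yields a natural isomorphism that is genuinely compatible with the two projection formulas invoked afterwards (the $\Ind\dashv\Res$ projection formula and the $\Infl\dashv\lambda$ one). Once this is established, the remainder is routine manipulation of thick subcategories and of the support dictionary.
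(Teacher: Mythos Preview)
Your proof is correct and follows essentially the same route as the paper's: both directions hinge on the commutation $\lambda^{N}_G\circ\CoInd_H^G\cong\CoInd_{HN/N}^{G/N}\circ\lambda^{H\cap N}_H$ together with the projection formula and the fact that (co)induction preserves and reflects compactness, with Lemma~\ref{lem:supp-of-induction} handling the support claim. Your orbit-wise criterion is a clean repackaging that lets you avoid the paper's explicit double-coset expansion in the $(\Leftarrow)$ direction; and your stated ``main obstacle'' is not a genuine one---you only need the \emph{existence} of the commutation isomorphism (which is immediate from taking right adjoints), not any compatibility with the projection formulas, since compactness transfers along any isomorphism.
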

\begin{proof}
	First we prove the ($\Rightarrow$) direction.
	Let $x \in \AGN$ with $\cat P_G(H,p,n) \in \supp(x)$.
	Then $\cat P_H(H,p,n) \in \supp(\Res_H^G(x))$ by 
	\cite[Cor.~4.4]{BalmerSanders17}, so it
	suffices to prove that $\Res_H^G(x) \in \A_{H \cap N,H}$.
	For any $y \in \SH(H)^c$, we have that
	\[ \lambda^{H\cap N,H}(\Res^G_H(x) \smashh y) \in \SH(H/H\cap N) \cong \SH(HN/N) \]
	is a direct summand of 
	\begin{align*}
		\Res_{HN/N}^{G/N}\CoInd_{HN/N}^{G/N}\lambda^{H \cap N,H}(\Res_H^G(x) \smashh y)
		&\cong \Res_{HN/N}^{G/N} \lambda^{N,G} \CoInd_H^G(\Res_H^G(x) \smashh y) \\
		&\cong \Res_{HN/N}^{G/N} \lambda^{N,G} (x \smashh \CoInd_H^G(y))
	\end{align*}
	which is compact since restriction and coinduction both preserve compact objects
	and $x \in \AGN$ by assumption.
	Hence $\Res_H^G(x) \in \A_{H\cap N,H}$.
Now we prove the ($\Leftarrow$) direction.
	Let $x \in \A_{H\cap N,H}$ with $\cat P_H(H,p,n) \in \supp(x)$.
We claim that $\Ind_H^G(x)$ is contained in $\AGN$.
	The conclusion will then follow since Lemma~\ref{lem:supp-of-induction} implies that $\cat P_G(H,p,n) \in \supp(\Ind_H^G(x))$.
	Now, $x \in \A_{H\cap N,H}$ implies that
	\[ \lambda^{H\cap N,H}(x \smashh H/L_+) \]
	is compact in $\SH(H/H\cap N)$ for all $L \le H$.
	So, since $\Ind \cong \CoInd$ preserves compacts it follows (using $H/H\cap N \cong HN/N \hookrightarrow G/N$)
	that 
	\[ \lambda^{N,G}(\CoInd_H^G(x \smashh H/L_+)) \]
	is compact in $\SH(G/N)$ for all $L \le H$.
	On the other hand, for any $K \le G$, we have
	\begin{align*}
		\lambda^{N,G}(\Ind_H^G(x) \smashh G/K_+) &\cong \lambda^{N,G}(\Ind_H^G(x \smashh \Res_H^G(G/K_+))) \\
		& \cong \bigoplus_{[g]\in H \backslash G/K} \lambda^{N,G}(\Ind_H^G(x \smashh (H/H^g \cap K)_+))
	\end{align*}
	by the double-coset formula, which is compact.
	This proves $\Ind_H^G(x) \in \AGN$.
\end{proof}
\begin{Rem}\label{rem:N-concentrated-in-locus}
	Recall
	that a $G$-spectrum 
	is said to be $N$-concentrated
	if it is local 
	for 
	the finite localization associated 
	to the family $\cat F{[\notsupseteq N]} := \SET{{H \le G}}{{H \not\supseteq N}}$.
By definition, the geometric $N$-fixed point functor $\tilde{\Phi}^N:\SH(G) \to \SH(G/N)$
is given by $\tilde{\Phi}^N(x) := \lambda^N( \tildeE\cat F{[\notsupseteq N]} \smashh x)$.
Thus, if $x \in \SHGc$ is \mbox{$N$-concentrated} then $\lambda^N(x) \cong \tilde{\Phi}^N(x)$
is just the geometric $N$-fixed points of $x$, and is therefore compact in $\SH(G/N)$.
It follows that if $x \in \SHGc$ is $N$-concentrated then $x \in \AGN$.
\end{Rem}
\begin{Rem}\label{rem:N-concentrated-quasi-inverse}
	It will also be useful to recall (cf.~\cite[Sec.~2 (H)]{BalmerSanders17}) that 
	restricting $\lambda^N:\SH(G) \to \SH(G/N)$ to the subcategory of $N$-concentrated $G$-spectra
	produces
	an equivalence $\lambda^N:\tildeE\cat F[\notsupseteq N] \smashh \SH(G) \xra{\simeq} \SH(G/N)$,
	with quasi-inverse given by $x \mapsto \tildeE\cat F[\notsupseteq N]\smashh \Infl_{G/N}^G(x)$.
\end{Rem}
\begin{Lem}\label{lem:concentrated}
	Let $K,N \lenormal G$ be two normal subgroups of a finite group.
	Let ${x \in \AGN}$ and assume that $x$ is $K$-concentrated.
	Then $\lambda^K(x) \in \A_{KN/K,G/K}$.
\end{Lem}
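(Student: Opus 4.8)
The plan is to verify the defining condition of $\A_{KN/K,G/K}$ (Def.~\ref{def:rel-comp}) directly: for every $z\in\SH(G/K)^c$, the object $\lambda^{KN/K,G/K}(\lambda^K(x)\smashh z)$ should be compact in $\SH(G/KN)$. As the $z$'s for which this holds form a thick subcategory, and $\SH(G/K)^c$ is generated as a thick subcategory by the orbits $(G/K)/(L/K)_+$ with $K\le L\le G$, I would only test these orbits. The first step is to pull such an orbit back through $\lambda^K$: by Remark~\ref{rem:N-concentrated-quasi-inverse} (for $K\lenormal G$) one has $(G/K)/(L/K)_+\cong\lambda^K(a_L)$, where $a_L:=\tildeE\cat F[\notsupseteq K]\smashh\Sigma^\infty_G G/L_+$ is $K$-concentrated, and by Remark~\ref{rem:N-concentrated-in-locus} the restriction of $\lambda^K$ to $K$-concentrated spectra is the strong monoidal functor $\tilde{\Phi}^K$. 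Since $x$ is $K$-concentrated and $K$-concentrated objects are closed under $\smashh$ with arbitrary objects, this yields
\[
\lambda^K(x)\smashh(G/K)/(L/K)_+\;\cong\;\lambda^K(x\smashh a_L)\;\cong\;\lambda^K\big(x\smashh\Sigma^\infty_G G/L_+\big).
\]
Applying the transitivity isomorphism $\lambda^{KN/K,G/K}\circ\lambda^K\cong\lambda^{KN,G}$ (dual to $\Infl_{G/K}^G\circ\Infl_{G/KN}^{G/K}\cong\Infl_{G/KN}^G$), and likewise $\lambda^{KN,G}\cong\lambda^{KN/N,G/N}\circ\lambda^N$ along $G\to G/N\to G/KN$, the whole problem reduces to showing that $\lambda^{KN/N,G/N}(\lambda^N(w))$ is compact, where $w:=x\smashh\Sigma^\infty_G G/L_+$. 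Now $w$ is again $K$-concentrated, and $w\in\AGN$ since $\AGN$ is a $\otimes$-ideal; in particular $\lambda^N(w)$ is compact in $\SH(G/N)$ by the very definition of $\AGN$.

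The key point, which I would isolate as a lemma, is: \emph{if $w$ is a $K$-concentrated $G$-spectrum then $\lambda^N(w)$ is a $KN/N$-concentrated $G/N$-spectrum}. To prove it, it suffices to show $E\cat F[\notsupseteq KN/N]_+\smashh\lambda^N(w)\simeq 0$. By the projection formula for $\Infl_{G/N}^G\dashv\lambda^N$ (cf.~\cite[(2.16)]{BalmerDellAmbrogioSanders16}), the left-hand side equals $\lambda^N\big(\Infl_{G/N}^G(E\cat F[\notsupseteq KN/N]_+)\smashh w\big)$, so it is enough to see that $\Infl_{G/N}^G(E\cat F[\notsupseteq KN/N])_+\smashh w\simeq 0$. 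The $G$-space $\Infl_{G/N}^G(E\cat F[\notsupseteq KN/N])$ is built from cells $G/L_+$ with $N\le L$ and $L/N\not\supseteq KN/N$, i.e.\ $L\not\supseteq K$; hence its suspension spectrum lies in $\Loc\langle G/L_+\mid L\not\supseteq K\rangle$, which is exactly the subcategory of objects of the form $E\cat F[\notsupseteq K]_+\smashh(-)$. Since $w$ is $K$-concentrated we have $E\cat F[\notsupseteq K]_+\smashh w\simeq 0$, and therefore $E\cat F[\notsupseteq K]_+\smashh c\smashh w\cong c\smashh(E\cat F[\notsupseteq K]_+\smashh w)\simeq 0$ for any such $c$, giving the claim. (Alternatively, one can just compute $\Phi^H$ of the relevant smash product and check it vanishes for every $H$, using that $w$ is $K$-concentrated.)

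Granting the claim, $\lambda^N(w)$ is compact and $KN/N$-concentrated in $\SH(G/N)$, so by Remark~\ref{rem:N-concentrated-in-locus} applied to $KN/N\lenormal G/N$ we get $\lambda^{KN/N,G/N}(\lambda^N(w))\cong\tilde{\Phi}^{KN/N,G/N}(\lambda^N(w))$, which is compact in $\SH(G/KN)$ because geometric fixed points preserve compact objects. This finishes the verification and hence the proof. I expect the two points needing the most care to be: (i) propagating $K$-concentration carefully through $\smashh$ and through $\lambda^K$ so that the monoidality of $\tilde{\Phi}^K$ can be used, and (ii) the identification of the inflated universal space $\Infl_{G/N}^G(E\cat F[\notsupseteq KN/N])$ and its isotropy, which is what makes the projection-formula argument close.
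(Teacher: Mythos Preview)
Your argument is correct and follows the same overall strategy as the paper: reduce to showing that $\lambda^{KN}(w)$ is compact for a $K$-concentrated $w\in\AGN$, prove the key lemma that $\lambda^N$ takes $K$-concentrated spectra to $KN/N$-concentrated spectra (via the projection formula and the identification of the inflated universal space), and conclude using that geometric fixed points preserve compacts. The only real difference is in the initial reduction: the paper applies the projection formula $\lambda^K(x)\smashh y\cong\lambda^K(x\smashh\Infl_{G/K}^G(y))$ directly for arbitrary $y\in\SH(G/K)^c$, which immediately reduces to the case $y=\unit$, whereas you take the more explicit route through orbit generators and the monoidality of $\tilde{\Phi}^K$ on $K$-concentrated spectra---both land in the same place, but the paper's version is a bit slicker.
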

\begin{proof}
	For any $y \in \SH(G/K)^c$, we have
	\begin{align*}
		\lambda^{KN/K}(\lambda^K(x) \smashh y) &\cong \lambda^{KN/K}\lambda^K(x \smashh \Infl_{G/K}^G(y)) \\
		&\cong \lambda^{KN}(x \smashh \Infl_{G/K}^G(y))
	\end{align*}
	and $x \smashh \Infl_{G/K}^G(y) \in \AGN$ and is also $K$-concentrated.
	So, it suffices to prove 
	just 
	that $\lambda^{KN}(x)$ is compact in $\SH(G/KN)$.
	Now compute in the opposite direction: $\lambda^{KN}(x) \cong \lambda^{KN/N}\lambda^N(x)$.
	Observe that if $H \supseteq K$ then $HN/N \supseteq KN/N$.
	So, if $H \le G$ is such that $HN/N \not\supseteq KN/N$ then $H \not\supseteq K$ and hence
	$G/H_+ \smashh x = 0$ since~$x$ is $K$-concentrated.
	In other words, $E\cat F_+ \smashh x= 0$ where $\cat F = \SET{{H \le G}}{HN/N \not\supseteq KN/N}$.
	But this family $\cat F$ is the family ``inflated'' from the family of subgroups of $G/N$ which do not contain $KN/N$
	(cf.~\cite[Thm~5.11]{BalmerSanders17}
	and \cite[Cor.~4.5]{BalmerSanders17}).
	That is, $\Infl_{G/N}^G(E\cat F[\notsupseteq KN/N]_+) \smashh x=0$,
	\ie~$x \cong \Infl_{G/N}^G(\tildeE\cat F[\notsupseteq KN/N])\smashh x$.
	So, $\lambda^N(x) \cong \lambda^N(\Infl_{G/N}^G(\tildeE\cat F[\notsupseteq KN/N])\smashh x)
	\cong \tildeE\cat F[\notsupseteq KN/N] \smashh \lambda^N(x)$.
	In other words, if $x$ is $K$-concentrated then $\lambda^N(x)$ is $KN/N$-concentrated.
	Hence $\lambda^{KN}(x) \cong \lambda^{KN/N}(\lambda^N(x)) \cong \Phi^{KN/N}(\lambda^N(x))$
	is compact \emph{provided that} $\lambda^N(x)$ is compact in $\SH(G/N)$.
	But this is the case since $x \in \AGN$ by assumption.
\end{proof}

\begin{Prop}\label{prop:inclusions-G}
	Let $G$ be a finite group and let $N \lenormal G$ be a normal subgroup.
	If $N \subseteq \Op(G)$ for some prime $p$, then $\cat P(G,p,n) \in \ZGN$ for all $2 \le n \le \infty$.
	If $N \subseteq \Op(G)$ for all primes $p$, then $\cat P(G,1) \in \ZGN$.
\end{Prop}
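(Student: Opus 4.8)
The strategy is to produce the required objects inside $\SHGc$ directly, using idempotents and related elements of the Burnside ring $A(G):=\End_{\SH(G)}(\unit)$.

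First I would record the basic device. Proving $\cat P_G(G,p,n)\in\ZGN$ is equivalent to exhibiting a compact object of $\AGN$ lying outside the prime $\cat P_G(G,p,n)$, i.e.\ a compact $x\in\AGN$ with $\Phi^G(x)\notin\cat C_{p,n}$; and by Remark~\ref{rem:N-concentrated-in-locus} any $N$-concentrated compact $G$-spectrum lies in $\AGN$. For $L\le G$ write $\operatorname{mark}_L:=\pi_0(\Phi^L):A(G)\to\pi_0(\mathbb{S})=\mathbb{Z}$ for the mark homomorphism; since $\Phi^L:\SHGc\to\SH^c$ is an exact monoidal functor sending $\unit$ to $\mathbb{S}$, for any $\xi\in A(G)$ and $c\in\mathbb{Z}$ the compact object $\operatorname{cofib}(\xi-c:\unit\to\unit)$ has $\Phi^L$ equal to $\operatorname{cofib}(\operatorname{mark}_L(\xi)-c:\mathbb{S}\to\mathbb{S})$, which vanishes as soon as $\operatorname{mark}_L(\xi)-c=\pm 1$. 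Thus everything comes down to finding, for the given prime $p$, an element $\xi\in A(G)$ with $\operatorname{mark}_L(\xi)=0$ for every $L\le G$ with $L\not\supseteq N$, together with appropriate control over $c:=\operatorname{mark}_G(\xi)$.

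This is where the hypothesis on $\Op(G)$ enters, via the classical description (due to Dress) of the idempotents of $A(G)$. Localized at $p$, the primitive idempotents of $A(G)_{(p)}$ are indexed by conjugacy classes of $p$-perfect subgroups, and the one $f:=f^G_{\Op(G)}$ attached to the $p$-perfect subgroup $\Op(G)$ has $\operatorname{mark}_L(f)=1$ exactly when $\Op(L)$ is $G$-conjugate to $\Op(G)$, and $\operatorname{mark}_L(f)=0$ otherwise. If $N\subseteq\Op(G)$, then $\operatorname{mark}_L(f)=1$ forces $\Op(L)$ to be a $G$-conjugate of $\Op(G)\supseteq N$, hence — as $N\lenormal G$ is normal — to contain $N$, so that $L\supseteq\Op(L)\supseteq N$. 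Therefore $f\in A(G)_{(p)}$ has all marks at subgroups $L\not\supseteq N$ equal to $0$ and $\operatorname{mark}_G(f)=1$; clearing denominators yields an honest element $\xi:=mf\in A(G)$ with $m$ coprime to $p$, still with all marks at $L\not\supseteq N$ equal to $0$ and with $c:=\operatorname{mark}_G(\xi)=m$ coprime to $p$. When moreover $N\subseteq\Op(G)$ for \emph{all} primes $p$, running this at every $p$ shows that the ideal $I_N:=\{\xi\in A(G):\operatorname{mark}_L(\xi)=0\text{ for all }L\not\supseteq N\}$ has $\operatorname{mark}_G(I_N)=\mathbb{Z}$, so one may then choose $\xi\in I_N$ with $\operatorname{mark}_G(\xi)=1$ on the nose.

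Now the assembly. For the second assertion pick $\xi\in A(G)$ with marks $0$ at all $L\not\supseteq N$ and $\operatorname{mark}_G(\xi)=1$, and put $x:=\operatorname{cofib}(\xi-1:\unit\to\unit)\in\SHGc$. Then $\Phi^L(x)=\operatorname{cofib}(-1)=0$ for $L\not\supseteq N$, so $x$ is $N$-concentrated and hence $x\in\AGN$, while $\Phi^G(x)=\operatorname{cofib}(0:\mathbb{S}\to\mathbb{S})\simeq\mathbb{S}\oplus\Sigma\mathbb{S}$ is rationally nontrivial, so $\Phi^G(x)\notin\cat C_{p,1}$ and therefore $\cat P_G(G,1)\in\supp(x)\subseteq\ZGN$. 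For the first assertion we can only arrange $c:=\operatorname{mark}_G(\xi)$ coprime to $p$, not $c=1$; the remedy is to smash with the mod $p$ unit $\unit/p:=\operatorname{cofib}(p:\unit\to\unit)$. Put $x:=\operatorname{cofib}(\xi-c:\unit\to\unit)\smashh(\unit/p)\in\SHGc$. For $L\not\supseteq N$ we get $\Phi^L(x)\simeq\operatorname{cofib}(-c:\mathbb{S}\to\mathbb{S})\smashh(\mathbb{S}/p)\simeq(\mathbb{S}/c)\smashh(\mathbb{S}/p)=0$, because $c$ is invertible on the $p$-local spectrum $\mathbb{S}/p$; hence $x$ is again $N$-concentrated, so $x\in\AGN$. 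On the other hand $\Phi^G(x)\simeq\operatorname{cofib}(0:\mathbb{S}\to\mathbb{S})\smashh(\mathbb{S}/p)\simeq\mathbb{S}/p\oplus\Sigma\mathbb{S}/p$ has nonzero mod $p$ homology, hence nonzero Morava $K(m)$-homology at $p$ for all $m\ge 1$ and is $p$-locally nonzero, so $\Phi^G(x)\notin\cat C_{p,n}$ for every $2\le n\le\infty$, giving $\cat P_G(G,p,n)\in\supp(x)\subseteq\ZGN$.

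The point I would expect to take the most care is the Burnside-ring input: extracting the idempotent $f^G_{\Op(G)}$ and its exact marks from Dress's structure theorem, and then checking that the innocuous-looking normality of $N$ really does upgrade ``$\Op(L)=_G\Op(G)$'' to ``$L\supseteq N$''. The passage from the $p$-local idempotent to an honest compact $G$-spectrum is then handled by the two small tricks above — clearing denominators, and smashing with $\unit/p$ to discard the $p'$-torsion this introduces — neither of which disturbs the crucial nonvanishing of $\Phi^G$.
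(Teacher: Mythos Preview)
Your proof is correct and takes a genuinely different route from the paper's. The paper argues abstractly via tensor-triangular geometry: it picks any compact object $X$ with $\supp(X)=\overline{\{\cat P(G,p,2)\}}$ (such exist because this closure is Thomason closed), and then uses the description of point-closures in $\Spc(\SHGc)$ from \cite{BalmerSanders16pp} --- specifically, that every prime in $\overline{\{\cat P(G,p,2)\}}$ has the form $\cat P(K,p,m)$ with $K$ a $p$-subnormal subgroup of $G$, hence $K\supseteq_G\Op(G)\supseteq N$ --- to conclude $\supp(X)\cap\supp(G/H_+)=\emptyset$ whenever $H\not\supseteq N$, so that $X$ is $N$-concentrated. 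You instead manufacture an explicit $N$-concentrated object from Dress's idempotent $f^G_{\Op(G)}\in A(G)_{(p)}$, clearing denominators and smashing with $\unit/p$ to kill the resulting prime-to-$p$ torsion. Your argument is more elementary in that it does not rely on the computed topology of $\Spc(\SHGc)$ --- only on the identification of $\pi_0\Phi^L$ with the mark homomorphism and on Dress's classical theorem --- while the paper's argument is cleaner once that topology is in hand and avoids any Burnside-ring bookkeeping. Both approaches land at the same endpoint, an $N$-concentrated compact object supporting the required primes, and then invoke Remark~\ref{rem:N-concentrated-in-locus}.
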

\begin{proof}
	Suppose $N \subseteq \Op(G)$ for some prime $p$.
	Consider the closed set $\overline{\{\cat P(G,p,2)\}} \subset \Spc(\SHGc)$.
	It is a \emph{Thomason} closed set (cf.~\cite[Prop.~10.1]{BalmerSanders17}), and hence is the support of some $X \in \SHGc$.
	We claim that $G/H_+ \smashh X = 0$ for all $H \le G$ such that $H \not\supseteq N$.
	Indeed, if $\cat P(K,q,m) \in \supp(X) = \overline{\{\cat P(G,p,2)\}}$
	then it follows from \cite[Cor.~6.4]{BalmerSanders17} 
	and \cite[Prop.~6.9]{BalmerSanders17} that $q=p$ and that $K$ is 
	a $p$-subnormal subgroup of $G$.
	On the other hand, if $\cat P(K,p,m) \in \supp(G/H_+)$ then $K \le_G H$ (by \cite[Cor.~4.13]{BalmerSanders17}).
	Hence, if $H \not\supseteq N$ then $K \not\supseteq N$, and so $K \not\supseteq \Op(G)$ by our hypothesis; that is, $K$ is not a $p$-subnormal subgroup of $G$ (by \cite[Lem.~3.3]{BalmerSanders17}).
	We thus conclude that
	$\supp(G/H_+) \cap \supp(X) = \emptyset$ (and hence $G/H_+ \smashh X = 0$) if $H \not\supseteq N$.
	Hence $E\cat F[\notsupseteq N]_+ \smashh X = 0$.
	In other words, $X$ is $N$-concentrated: $X \cong X \smashh \tildeE\cat F[\notsupseteq N]$.
	Hence $X \in \AGN$ (cf.~Rem.~\ref{rem:N-concentrated-in-locus})
	so $\cat P(G,p,2) \in \ZGN$, which proves the first claim (cf.~\cite[Prop.~6.2]{BalmerSanders17}).
	Next, suppose $N \subseteq \Op(G)$ for all primes $p$.
	Then we can apply the same argument to the Thomason closed subset $\overline{\{\cat P(G,1)\}}$.
	In this case, if a prime $\cat P(K,q,m) \in \overline{\{\cat P(G,1)\}}$
	then $K$ is 
	a $q$-subnormal subgroup of $G$, and we use $N \subseteq \Oq(G)$
	to ensure 
	that such a prime cannot be contained 
	in $\supp(G/H_+)$ when $H \not\supseteq N$.
	The argument is otherwise identical.
\end{proof}

\begin{Prop}\label{prop:p-group-no-inclusions}
	Let $G$ be a $p$-group and let $1 \neq N \lenormal G$ be a nontrivial normal subgroup.
	Then $\cat P(G,p,n) \not\in \ZGN$ for all $2 \le n \le \infty$.
\end{Prop}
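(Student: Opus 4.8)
The plan is as follows. Since $\ZGN$ is a Thomason, hence specialization-closed, subset of $\Spc(\SHGc)$, and $\cat P(G,p,\infty)$ lies in the closure of $\cat P(G,p,n)$ for every $2\le n\le\infty$ (cf.~\cite[Prop.~6.2]{BalmerSanders16pp}), it suffices to prove that $\cat P(G,p,\infty)\notin\ZGN$. So suppose, for a contradiction, that there is a compact object $X\in\AGN$ with $\cat P(G,p,\infty)\in\supp(X)$, that is, with $H\Fp_*(\Phi^G X)\neq 0$. I would argue by induction on $|G|$, the case $G=1$ being vacuous.

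Since $N$ is a nontrivial normal subgroup of the $p$-group $G$, I can choose a central subgroup $C\le N$ of order $p$; then $C\lenormal G$, the quotient $G/C$ is again a $p$-group, and the composition of geometric fixed points gives a natural identification $\Phi^G_G X\cong\Phi^{G/C}_{G/C}\bigl(\tilde\Phi^C X\bigr)$, where $\tilde\Phi^C\colon\SHGc\to\SH(G/C)^c$ is the geometric $C$-fixed point functor, which does preserve compactness. In particular $\cat P_{G/C}(G/C,p,\infty)\in\supp(\tilde\Phi^C X)$. The key step would be to show that $\tilde\Phi^C X$ lies in $\A_{N/C,\,G/C}$; granting this, when $N\neq C$ the inductive hypothesis applied to the $p$-group $G/C$ and its nontrivial normal subgroup $N/C$ gives $\cat P_{G/C}(G/C,p,\infty)\notin\Z_{N/C,G/C}$, contradicting the previous sentence. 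When $N=C$ one argues directly instead: restricting $X$ to $C$ via (the proof of) Proposition~\ref{prop:reduce-to-subgroup} gives $\Res^G_C X\in\A_{C,C}$, and the Adams isomorphism for the finite group $C$ (Example~\ref{exa:adams}, whose right-hand side for a finite group is a homotopy-orbit spectrum) together with the Segal conjecture for $C_p$ and the vanishing of Morava $K$-theory Tate spectra forces $H\Fp_*\bigl(\Phi^C_C\Res^G_C X\bigr)=0$; one then propagates this vanishing through $\Phi^{G/C}_{G/C}$ by restricting $X$ to each proper subgroup of $G$ containing $C$ and repeating, to conclude $H\Fp_*(\Phi^G X)=0$, again a contradiction.

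I expect the main obstacle to be the key step, namely that $\tilde\Phi^C X\in\A_{N/C,G/C}$ (and its analogue in the base case). The difficulty is that $\tilde\Phi^C=\lambda^C(\tildeE\cat F[\notsupseteq C]\smashh-)$ is built from functors that do \emph{not} preserve compact objects (cf.~Proposition~\ref{prop:infl-doesnt-satisfy-gn-duality}), so one cannot just run the defining test objects of $\AGN$ through $\lambda^C$; one must instead combine the isotropy-separation triangle for the family $\cat F[\notsupseteq C]$ with the projection formula, with the compatibility of $\lambda^C$ with restriction along $C\lenormal G$, and with the description of $\Spc(\SHGc)$ from \cite{BalmerSanders16pp} (using that, for a $p$-group, the family $\cat F(N)$ of $N$-free subgroups is contained in $\cat F[\notsupseteq C]$), while keeping careful track of which of the resulting homotopy colimits are filtered and when they stabilise. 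Chouinard's theorem should enter at the end to package the surviving compactness constraints — visible one elementary abelian subgroup at a time through restriction — into the projectivity of the $\Fp[G]$-module needed to contradict $H\Fp_*(\Phi^G X)\neq 0$. Since $N\neq 1$, the facts that $\lambda^{N}$ does not preserve compacts (Proposition~\ref{prop:infl-doesnt-satisfy-gn-duality}) and that $H^*(BN;\Fp)$ is infinite-dimensional (Proposition~\ref{prop:group-coho}) are exactly the phenomena that keep this argument from collapsing to triviality.
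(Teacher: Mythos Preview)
Your reduction to $n=\infty$ via specialization is fine, but the heart of the argument has two genuine gaps.

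\emph{The key step is not established.} You want $\tilde\Phi^C X\in\A_{N/C,G/C}$ for arbitrary $X\in\AGN$. Unwinding, this requires $\lambda^{N/C}\tilde\Phi^C(X\smashh z)\cong\lambda^N(\tildeE\cat F[\notsupseteq C]\smashh X\smashh z)$ to be compact for every compact $z$. The isotropy triangle reduces this to the compactness of $\lambda^N(E\cat F[\notsupseteq C]_+\smashh X\smashh z)$, but $E\cat F[\notsupseteq C]_+\smashh X\smashh z$ is not compact, so the hypothesis $X\in\AGN$ does not apply and Lemma~\ref{lem:concentrated} does not help (it needs a \emph{compact} $K$-concentrated input). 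Your outline invoking filtered colimits, Chouinard, the Segal conjecture, and Tate vanishing is too vague to be a proof; in particular it is unclear how Chouinard, which concerns detection of projectivity over elementary abelian subgroups, produces the required compactness in $\SH(G/N)$.

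\emph{The base case $N=C$ is also incomplete.} Even if you establish $H\Fp_*(\Phi^C_C\Res^G_C X)=0$ (which is itself the $G=C_p$ case of the proposition and needs an honest argument), this concerns $\Phi^C_G X$, not $\Phi^G_G X$; your ``propagation through $\Phi^{G/C}_{G/C}$ by restricting to each proper subgroup containing $C$'' does not explain how to bridge this gap, since Proposition~\ref{prop:reduce-to-subgroup} is vacuous at $H=G$.

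The paper avoids all of this by working with a \emph{specific} test object rather than an arbitrary $X$. If $\cat P(G,p,n)\in\ZGN$ for some finite $n$, consider $\triv_G(X_{p,m})\in\SHGc$ where $X_{p,m}\in\SHc$ is a finite spectrum with $\supp(X_{p,m})=\overline{\{\cat C_{p,m}\}}$. Since $G$ is a $p$-group, every subgroup is $p$-subnormal, so for $m\ge n+\log_p|G|$ the support of $\triv_G(X_{p,m})$ lies inside $\overline{\{\cat P(G,p,n)\}}\subseteq\ZGN$; hence $\triv_G(X_{p,m})\in\AGN$. Then the projection formula gives $\lambda^N(\triv_G(X_{p,m}))\cong\lambda^N(\unit)\smashh\triv_{G/N}(X_{p,m})$, tom Dieck extracts $\Sigma^\infty_{G/N}(E\cat F(N;G)/N)_+$ as a summand of $\lambda^N(\unit)$, and restricting to the trivial group yields $\Sigma^\infty BN_+\smashh X_{p,m}$, which is not compact since $H^*(BN;\Fp)\otimes H\Fp^*(X_{p,m})$ is nonzero in infinitely many degrees (Proposition~\ref{prop:group-coho}). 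No induction, no Segal conjecture, no Chouinard --- the crucial move is choosing a test object whose support you can compute and whose image under $\lambda^N$ you can evaluate explicitly.
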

\begin{proof}
	For any $2 \le m < \infty$, the subset $\overline{\{ \cat C_{p,m} \}} \subset \Spc(\SHc)$ 
	is a Thomason closed subset of the spectrum of the non-equivariant stable homotopy category (cf.~\cite[Cor.~10.5]{BalmerSanders17} or \cite[Cor.~9.5(d)]{Balmer10b}).
	Hence, there exists $X_{p,m} \in \SH^c$ such that $\supp(X_{p,m}) = \overline{\{\cat C_{p,m}\}}$.
	Then $\triv_G(X_{p,m}) \in \SHGc$ has support in $\Spc(\SHGc)$ equal to
	\[ \supp(\triv_G(X_{p,m})) = \SET{ \cat P(H,p,n)}{n \ge m, p \text{ fixed}, H \le G}.\]
	Suppose for a contradiction that $\cat P(G,p,n) \in \ZGN$ for some $2 \le n < \infty$.
	Then since $G$ is a $p$-group, every subgroup of $G$ is a $p$-subnormal subgroup, and thus there
	is some $m \ge n$ such that $\supp(\triv_G(X_{p,m})) \subseteq \ZGN$.
	Indeed, we can take $m = n + \log_p(|G|)$ for example (cf.~\cite[Cor.~8.3]{BalmerSanders17}).
	Hence, $\triv_G(X_{p,m}) \in \AGN$,
	so that $\lambda^N(\triv_G(X_{p,m}))$ is a compact object of $\SH(G/N)$.
	We claim that this cannot be the case.
	Indeed, by the projection formula we have
	\[\lambda^N(\triv_G(X_{p,m}))\cong \lambda^N(\Infl_{G/N}^G(\triv_{G/N}(X_{p,m})))
		\cong \lambda^N(\unit) \otimes \triv_{G/N}(X_{p,m})\]
	and the tom Dieck splitting theorem
	implies that $\lambda^N(\unit)$ has 
\[\Sigma^\infty_{G/N} E\cat F(N;G)/N_+\]
	as a direct summand.
	Since restriction $\SH(G/N)\to\SH$ preserves compact objects, 
 we would then have that $\Sigma^\infty BN_+ \smashh X_{p,m}$ is compact in $\SH$.
	But $\cat C_{p,\infty} \in \supp(X_{p,m})$
	means that $X_{p,m} \not\in \cat C_{p,\infty}$; \ie $H\Fp_*(X_{p,m}) \neq 0$.
	Since $X_{p,m}$ is compact (hence dualizable) we similarly have non-vanishing of cohomology: $H\Fpstar(X_{p,m}) \neq 0$.
	Now $H\Fpstar$ is a tensor-functor to the category of graded $\Fp$-modules.
	So if $\Sigma^\infty BN_+ \smashh X_{p,m}$ were compact then
	\begin{align*}
		H\Fpstar(\Sigma^\infty BN_+ \smashh X_{p,m}) &\cong H\Fpstar(\Sigma^\infty BN_+) \otimes_\Fp H\Fpstar(X_{p,m}) \\
		&\cong H^*(BN;\Fp) \otimes_{\Fp} H\Fpstar(X_{p,m})
	\end{align*}
	would be concentrated in finitely many degrees.
	Since $H\Fpstar(X_{p,m})\neq 0$, this contradicts Proposition~\ref{prop:group-coho}.
	This completes the proof that $\cat P(G,p,n) \not\in \ZGN$ for all $2 \le n < \infty$.
	Finally, let's prove that $\cat P(G,p,\infty) \not\in \ZGN$.
	By definition $\ZGN$ is a \emph{Thomason} closed subset.
	Thus, by \cite[Cor.~10.5]{BalmerSanders17} it is a union of irreducible closed sets $\overline{\{\cat P(H,q,n)\}}$
	for $H \le G$, $q$ a prime, and $1 \le n <\infty$ \emph{finite}.
	Thus, $\cat P(G,p,\infty) \in \ZGN$ necessarily implies that $\cat P(G,p,n) \in \ZGN$
	for some finite $n$, 
	which we have just shown is not possible.
\end{proof}

\begin{Prop}\label{prop:no-inclusions-G}
	Let $G$ be a finite group, and let $N \lenormal G$ be a normal subgroup.
	If $N \not\subseteq \Op(G)$ then 
	$\cat P(G,p,n) \not\in \ZGN$ for all $2 \le n \le \infty$.
\end{Prop}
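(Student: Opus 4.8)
The plan is to reduce the statement to the case of a $p$-group, where it is precisely Proposition~\ref{prop:p-group-no-inclusions}, by passing to the quotient $G/\Op(G)$. Write $K := \Op(G)$. By definition $[G:K]$ is a power of $p$, so $G/K$ is a $p$-group, and since $N \not\subseteq K$ the image $\overline{N} := NK/K$ is a \emph{nontrivial} normal subgroup of $G/K$. (If $\Op(G)=G$ the hypothesis is vacuous, so we may assume $G/K \neq 1$.) I argue by contradiction, assuming $\cat P_G(G,p,n)\in\ZGN$ and deducing that $\cat P_{G/K}(G/K,p,m)\in\Z_{\overline{N},G/K}$ for some $2\le m\le\infty$, which is impossible by Proposition~\ref{prop:p-group-no-inclusions}.

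First suppose $2\le n<\infty$, and pick $x\in\AGN$ with $\cat P_G(G,p,n)\in\supp(x)$. Because $n$ is finite, the irreducible closed set $\overline{\{\cat P_G(G,p,n)\}}$ is Thomason closed (\cite[Prop.~10.1]{BalmerSanders16pp}) and hence equals $\supp(t)$ for some compact $t\in\SHGc$. The crucial observation is that $t$ is $K$-concentrated: by the description of the topology of $\Spc(\SHGc)$ in \cite{BalmerSanders16pp} (see \cite[Cor.~6.4, Prop.~6.9]{BalmerSanders16pp}), every point of $\supp(t)=\overline{\{\cat P_G(G,p,n)\}}$ has the form $\cat P_G(H,p,r)$ with $H$ a $p$-subnormal subgroup of $G$, that is (\cite[Lem.~3.3]{BalmerSanders16pp}) with $\Op(G)=K\subseteq H$; equivalently $\Phi^H(t)=0$ whenever $H\not\supseteq K$. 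Set $Y:=t\smashh x$. Then $Y\in\AGN$ (it being a thick $\otimes$-ideal), $Y$ is $K$-concentrated (as $t$ is), and $\cat P_G(G,p,n)\in\supp(Y)=\supp(t)\cap\supp(x)$.

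Now Lemma~\ref{lem:concentrated}, applied to the $K$-concentrated object $Y\in\AGN$, gives $\lambda^K(Y)\in\A_{KN/K,G/K}=\A_{\overline{N},G/K}$, and $\lambda^K(Y)\cong\tilde{\Phi}^K(Y)$ since $Y$ is $K$-concentrated (Rem.~\ref{rem:N-concentrated-in-locus}). The geometric fixed point functor $\tilde{\Phi}^K$ preserves compact objects, and the induced map $\Spc(\tilde{\Phi}^K)$ carries $\cat P_{G/K}(L/K,q,r)$ to $\cat P_G(L,q,r)$ whenever $L\supseteq K$ (by iterability of geometric fixed points); since $\supp(\tilde{\Phi}^K(Y))=\Spc(\tilde{\Phi}^K)^{-1}(\supp(Y))$, the membership $\cat P_G(G,p,n)\in\supp(Y)$ yields $\cat P_{G/K}(G/K,p,n)\in\supp(\tilde{\Phi}^K(Y))\subseteq\Z_{\overline{N},G/K}$. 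As $G/K$ is a $p$-group and $\overline{N}\neq 1$, this contradicts Proposition~\ref{prop:p-group-no-inclusions}. The case $n=\infty$ then follows exactly as at the end of the proof of Proposition~\ref{prop:p-group-no-inclusions}: $\ZGN$ is a Thomason subset, hence a union of closed sets $\overline{\{\cat P(H,q,r)\}}$ with $r$ finite (\cite[Cor.~10.5]{BalmerSanders16pp}), so $\cat P_G(G,p,\infty)\in\ZGN$ would force $\cat P_G(G,p,m)\in\ZGN$ for some finite $m$, which the previous paragraph excludes.

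The main obstacle is the middle step: manufacturing a $K$-concentrated object of $\AGN$ that still detects the prime $\cat P_G(G,p,n)$. Here the geometry of $\Spc(\SHGc)$ from \cite{BalmerSanders16pp} is indispensable — the point being that every specialization of $\cat P_G(G,p,n)$ lives over a $p$-subnormal subgroup, hence over a subgroup containing $\Op(G)$, so the minimal closed set $\overline{\{\cat P_G(G,p,n)\}}$ already lies in the $\Op(G)$-concentrated locus and is cut out by a $K$-concentrated compact object $t$. With $Y=t\smashh x$ in hand, Lemma~\ref{lem:concentrated} and Proposition~\ref{prop:p-group-no-inclusions} close the argument.
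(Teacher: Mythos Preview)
Your proof is correct and follows the same overall strategy as the paper: reduce to the $p$-group quotient $G/\Op(G)$ via Lemma~\ref{lem:concentrated} and then invoke Proposition~\ref{prop:p-group-no-inclusions}. The one notable difference is how you manufacture the $\Op(G)$-concentrated compact object in $\AGN$. The paper builds an object $X_{p,m}$ with support $Z_{p,m}=\bigcup_{H\le G}\overline{\{\cat P(H,p,m)\}}$, shows this support decomposes as a disjoint union of closed sets according to whether $H\supseteq\Op(G)$, and applies the generalized Carlson connectedness theorem to split $X_{p,m}$; the $\Op(G)$-concentrated summand $\tildeE\cat F\smashh X_{p,m}$ is then compact, and for $m$ sufficiently large its support lies in $\overline{\{\cat P(G,p,n)\}}\subset\ZGN$. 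Your construction is more direct: the object $t$ with $\supp(t)=\overline{\{\cat P_G(G,p,n)\}}$ is already $\Op(G)$-concentrated (since every specialization of $\cat P_G(G,p,n)$ lives over a $p$-subnormal subgroup), and smashing with any $x\in\AGN$ keeps you inside the tensor-ideal. This bypasses both the splitting argument and the ``$m$ large enough'' step; in fact you could simplify further and take $Y=t$, since $\supp(t)\subset\ZGN$ already forces $t\in\AGN$. The endgame---transporting the support along $\tilde{\Phi}^{\Op(G)}$ to land in $\Z_{\overline N,G/\Op(G)}$---is equivalent to the paper's use of Remark~\ref{rem:N-concentrated-quasi-inverse} and \cite[Cor.~4.5]{BalmerSanders16pp}.
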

\begin{proof}
	For any fixed prime $p$ and $2 \le m < \infty$, consider the closed subset
	\[ Z_{p,m} := \SET{\cat P(H,p,n)}{n \ge m, p \text{ fixed}, H \le G} = \bigcup_{H\le G} \overline{\{ \cat P(H,p,m)\}}\]
	of $\Spc(\SHGc)$ (cf.~\cite[Prop.~6.2]{BalmerSanders17}).
	By \cite[Cor.~10.5]{BalmerSanders17}, it is a \emph{Thomason} closed subset.
	Hence, 
	there exists
	$X_{p,m} \in \SHGc$ such that $\supp(X_{p,m}) = Z_{p,m}$.
	Now consider the family of subgroups $\cat F := \cat F[\notsupseteq\Op(G)]  = \SET{H \le G}{H \not\supseteq \Op(G)}$.
	The set $Z_{p,m}$ decomposes into two disjoint subsets:
	\[ \SET{\cat P(H,p,n)}{n \ge m, p \text{ fixed}, H \in \cat F} \amalg \SET{\cat P(H,p,n)}{n \ge m, p \text{ fixed}, H \not\in \cat F}.\]
	The first subset is closed; indeed, it is equal to
	\[ Z_{p,m} \cap \bigcup_{H \in \cat F} \supp(G/H_+) = \bigcup_{H \in \cat F} \overline{\{\cat P(H,p,m)\}}=:Z_1.\]
	On the other hand, the key reason for our choice of this particular family $\cat F$ is that 
	the right-hand subset is also closed. Indeed,
	\begin{equation}\label{eq:closed-blah}
		\SET{\cat P(H,p,n)}{n \ge m, p \text{ fixed}, H \not\in \cat F}
		= \bigcup_{H \supseteq \Op(G)} \overline{\{\cat P(H,p,m)\}} =:Z_2.
	\end{equation}
	The inclusion $\subseteq$ is evident.
	The point is that if $\cat P(K,q,l) \in \overline{\{\cat P(H,p,m)\}}$,
	then $q=p$ and $K$ is conjugate to a $p$-subnormal subgroup of $H$.
	But $H \supseteq \Op(G)$ means that $H$ is a $p$-subnormal subgroup of $G$.
	So $K$ is conjugate to a $p$-subnormal subgroup of $G$ which means that $K^g \supseteq \Op(G)$ for some $g \in G$
	so that $\cat P(K,q,l) = \cat P(K^g,p,l)$ is contained in the left-hand side of \eqref{eq:closed-blah}.

	Thus, $\supp(X_{p,m}) = Z_1 \amalg Z_2$ is a decomposition into two disjoint closed sets.
	It follows from \cite[Thm.~2.11]{Balmer07} that we have a decomposition $X_{p,m} = x_1 \oplus x_2$
	where $\supp(x_i) = Z_i$ for each $i=1,2$.
	In fact, one easily sees that $x_1 \cong E\cat F_+ \smashh X_{p,m}$ and $x_2 \cong \tildeE\cat F \smashh X_{p,m}$.
	The key point is that these objects are compact in $\SH(G)$.

	Now, suppose for a contradiction that $\cat P(G,p,n) \in \ZGN$ for some $2 \le n < \infty$.
	Then 
	\[\supp(\tildeE\cat F \smashh X_{p,m}) \subset \overline{\{\cat P(G,p,n)\}}\subset \ZGN\]
	for $m$ large enough, \eg, for $m \ge n + \log_p(|G/\hspace{-0.2ex}\Op(G)|)$.
	Then $\tildeE\cat F \smashh X_{p,m} \in \AGN$ for $m$ large enough.
	By Lemma~\ref{lem:concentrated}, 
	\[\lambda^{\Op(G)}(\tildeE\cat F \smashh X_{p,m}) \cong \Phi^{\Op(G)}(\tildeE\cat F \smashh X_{p,m})\]
	is contained in $\A_{\Op(G)N/\hspace{-0.2ex}\Op(G),G/\hspace{-0.2ex}\Op(G)}$.
	Moreover, 
	\[\tildeE\cat F \smashh X_{p,m} \cong \tildeE\cat F \smashh \Infl_{G/\hspace{-0.2ex}\Op(G)}^G(\Phi^{\Op(G)}(\tildeE\cat F \smashh X_{p,m}))\]	
	by Remark~\ref{rem:N-concentrated-quasi-inverse}.
	Thus $\cat P(G,p,m) \in \supp(X_{p,m})$
	implies that 
	\[\cat P(G,p,m) \in \supp(\Infl_{G/\hspace{-0.2ex}\Op(G)}^G(\Phi^{\Op(G)}(\tildeE\cat F\smashh X_{p,m})))\]
	and hence (by \cite[Cor.~4.5]{BalmerSanders17}) that
	\[\cat P(G/\hspace{-0.2ex}\Op(G),p,m) \in \supp(\Phi^{\Op(G)}(\tildeE\cat F \smashh X_{p,m})).\]
	In summary, if $\cat P(G,p,n) \in \ZGN$ for some $2 \le n < \infty$
	then $\cat P(G/\hspace{-0.2ex}\Op(G),p,m) \in \Z_{N\hspace{-0.2ex}\Op(G)/\hspace{-0.2ex}\Op(G),G/\hspace{-0.2ex}\Op(G)}$ for some $m \ge n \ge 2$.
	Our assumption that $N \not\subseteq \Op(G)$ implies that $N\hspace{-0.2ex}\Op(G)/\hspace{-0.2ex}\Op(G)$
	is a nontrivial subgroup of the $p$-group $G/\hspace{-0.2ex}\Op(G)$, and hence we conclude by 
	Proposition~\ref{prop:p-group-no-inclusions}.
	Finally, $\cat P_G(G,p,\infty)\in \ZGN$
	implies that $\cat P_G(G,p,n) \in \ZGN$ for some
	finite $n$ (by the same argument as in the proof of 
	Proposition~\ref{prop:p-group-no-inclusions})
	and so the full claim is proved.
\end{proof}

Finally, let us prove~Theorem~\ref{thm:locus-of-inflation}.

\begin{proof}[Proof of Theorem~\ref{thm:locus-of-inflation}]
	\label{prf:locus-of-inflation}
	By Proposition~\ref{prop:reduce-to-subgroup}, it suffices to prove (a) in the case ${H=G}$, which is Proposition~\ref{prop:inclusions-G}.
	On the other hand, Proposition~\ref{prop:reduce-to-subgroup}
	and Proposition~\ref{prop:no-inclusions-G} together prove (b) and the $\Leftarrow$ direction of (c).
	Finally, the $\Rightarrow$ direction of~(c) follows from (a), 
	since $\cat P_G(H,1) \in \ZGN$ implies $\cat P_G(H,p,2) \in \ZGN$ for all~$p$.
\end{proof}

\begin{Rem}
	We proved in Proposition~\ref{prop:infl-doesnt-satisfy-gn-duality} that $\Infl_{G/N}^G$ does not satisfy GN-duality, except when $N=1$.
	We can also deduce this (for $G$ finite) from Theorem~\ref{thm:locus-of-inflation}.
	Indeed, if $\Infl_{G/N}^G$ satisfied GN-duality then by the theorem, $H\cap N \subseteq \Op(H)$ for all $H \le G$
	which implies that $N=1$. Indeed, if $p$ divides $|N|$ then $N$ would contain a subgroup $H$ of order $p$, yielding the contradiction $H=H\cap N \subseteq \Op(H)=1$.
\end{Rem}

\begin{Exa}\label{exa:locus-for-C_p} 
	The \locus{} for $G=N=C_p$, the cyclic group of order~$p$, is displayed in Figure~\ref{fig:Cp} below.
Note that the $G$-free spectra correspond to the irreducible component $\overline{\{\cat P(1,1)\}} = \supp(G_+)$.
\begin{figure}[htp]
\begin{tikzpicture}
	\def\bradius{0.062};
	\def\nbranches{6}
	\def\njoints{5}
	\def\njointsplustwo{7}
	\def\yoffsmall{0.6};
	\def\yoffbig{2.9};
	\def\xoffmed{3*\xoffsmall};
	\def\xoffsmall{0.3};
	\def\xoffbase{3};
	\def\xbufferwidth{2};
	\def\yoffglabel{-0.35};
	\def\nubradius{0.25*\xoffmed};
	\def\diaopac{nearly transparent};
	\def\fillop{25};
	\def\braceextra{0.15*\xoffmed};

	\coordinate (base1) at (0*\xoffbase,0);
	\coordinate (baseCp) at (1*\xoffbase,0);
\path [fill=blue!\fillop] (base1) circle (\nubradius);
\foreach \flowernum in {0,1} {
	\coordinate (base) at (\flowernum*\xoffbase,0);
	\coordinate (start) at ($(base)+(-1.5*\xoffsmall + 1*\xoffsmall,\yoffbig)$);
	\path [fill=blue!\fillop,rounded corners=8pt] ($(start)+(-1*\nubradius,-1*\nubradius)$) rectangle ($(start)+(\nubradius,\nubradius)+(\nbranches*\xoffsmall+1*\xoffsmall,\njoints*\yoffsmall+2*\yoffsmall)$);
}
\coordinate (base) at (0*\xoffbase,0);
\coordinate (start) at ($(base)+(-1.5*\xoffsmall + 1*\xoffsmall,\yoffbig)$);
\path [fill=blue!\fillop]
($(base1)-(\nubradius,0)$) -- 
($(base1)-(\nubradius,0)-(0.5*\xoffsmall,0)+(0,\yoffbig)$)--
($(base1)-(\nubradius,0)-(0.5*\xoffsmall,0)+(0,\yoffbig)$)--
	($(start)+(\nubradius,0)+(\nbranches*\xoffsmall+1*\xoffsmall,0)$)--
($(base1)+(\nubradius,0)$);

	\coordinate (base) at (-\xbufferwidth-1*\xoffmed,0);
\path [fill=blue!\fillop]
($(base1)-(\nubradius,0)$) -- 
($(base)-(\nubradius,0)+(-1.5*\xoffsmall + 1*\xoffsmall,\yoffbig)$)--
($(base)+(\nubradius,0)+(-1.5*\xoffsmall + 1*\xoffsmall,\yoffbig)$)--
($(base1)+(\nubradius,0)$);
	\foreach \m in {0} {
	\coordinate (start) at ($(base)+(\m*\xoffmed,0)+(-1.5*\xoffsmall + \xoffsmall,\yoffbig)$);
	\path [fill=blue!\fillop, rounded corners=8pt] ($(start)+(-1*\nubradius,-1*\nubradius)$) rectangle ($(start)+(\nubradius,\njoints*\yoffsmall+2*\yoffsmall+\nubradius)$);
	}

\node at ($(base1)+(0,\yoffglabel)$) {$\scriptstyle \cat P(1,1)$};
\node at ($(baseCp)+(0,\yoffglabel)$) {$\scriptstyle \cat P(C_p,1)$};
\foreach \flowernum in {0,1} {
	\coordinate (base) at (\flowernum*\xoffbase,0);
	\draw [fill] ($(base)+(0,0)$) circle [radius=\bradius];
	\foreach \x in {1,2,...,\nbranches} {
		\coordinate (start) at ($(base)+(-1.5*\xoffsmall + \x*\xoffsmall,\yoffbig)$);
		\foreach \n in {0,...,\njoints} {
			\draw [fill]  ($(start)+(0,\n*\yoffsmall)$) circle (\bradius);
		}
		\draw [fill]  ($(start)+(0,\njoints*\yoffsmall+2*\yoffsmall)$) circle (\bradius);
		\draw (start) -- ($(start)+(0,\njoints*\yoffsmall+0.666*\yoffsmall)$);
		\foreach \n in {0,1,2} {
			\draw [fill] ($(start)+(0,\njoints*\yoffsmall+0.666*\yoffsmall+0.4166*\yoffsmall+0.25*\n*\yoffsmall)$) circle (0.25*\bradius);
		}
		\draw (base) -- (start);
	}
	\foreach \n in {1,2,...,\njoints} {
		\coordinate (start) at ($(base)+(\xoffsmall,0)+(-1.5*\xoffsmall + \nbranches*\xoffsmall,\yoffbig)$);
		\node at ($(start)+(0,\n*\yoffsmall-0.5*\yoffsmall)$) {$\hdots$};
	}
	\node at ($(base)+(\xoffsmall,0)+(-1.5*\xoffsmall + \nbranches*\xoffsmall,\yoffbig)+(0,\njoints*\yoffsmall+0.666*\yoffsmall+0.4166*\yoffsmall+0.25*1*\yoffsmall)$) {$\hdots$};
}	
	\coordinate (base) at (0*\xoffbase,0);
	\coordinate (start) at ($(base)+(-1.5*\xoffsmall + 4*\xoffsmall,\yoffbig)$);
	\node at ($(start)+(0,0.52)+(0,\njoints*\yoffsmall+2*\yoffsmall)$) {$\scriptstyle \substack{\cat P(1,q,n)\;\;\cdots\\(q\neq p,\,n \ge 2)}$};
	\coordinate (base) at (1*\xoffbase,0);
	\coordinate (start) at ($(base)+(-1.5*\xoffsmall + 4*\xoffsmall,\yoffbig)$);
	\node at ($(start)+(0,0.52)+(0,\njoints*\yoffsmall+2*\yoffsmall)$) {$\scriptstyle \substack{\cat P(C_p,q,n)\;\;\cdots\\\hspace{-1.5ex}(q\neq p,\,n \ge 2)}$};
	\coordinate (base) at (-\xbufferwidth-1*\xoffmed,0);
	\foreach \n in {2,3,...,\njointsplustwo} {
		\node at ($(base)-(0.6,0)+(0,\n*\yoffsmall-2*\yoffsmall)-(\nubradius,0)+(-1.5*\xoffsmall + 1*\xoffsmall,\yoffbig)$) {$\scriptstyle \cat P(1,p,\n)$};
		\node at ($(base)+(0.5,0)+(0,\n*\yoffsmall-2*\yoffsmall)+(\xoffmed,0)+(\nubradius,0)+(-1.5*\xoffsmall + 1*\xoffsmall,\yoffbig)$) {$\scriptstyle \cat P(C_p,p,\n)$};
	}
	\node at ($(base)-(0.6,0)-(\nubradius,0)+(-0.5*\xoffsmall,\yoffbig)+(0,\njoints*\yoffsmall+2*\yoffsmall)$) {$\scriptstyle \cat P(1,p,\infty) $};
	\node at ($(base)+(0.57,0)+(\nubradius,0)+(\xoffmed,0)+(-0.5*\xoffsmall,\yoffbig)+(0,\njoints*\yoffsmall+2*\yoffsmall)$) {$\scriptstyle \cat P(C_p,p,\infty) $};



	\coordinate (base) at (-\xbufferwidth-1*\xoffmed,0);
	\foreach \m in {0,1} {
		\coordinate (start) at ($(base)+(\m*\xoffmed,0)+(-1.5*\xoffsmall + \xoffsmall,\yoffbig)$);
		\foreach \n in {0,1,...,\njoints} {
				\draw [fill]  ($(start)+(0,\n*\yoffsmall)$) circle (\bradius);
			\draw [fill]  ($(start)+(0,\njoints*\yoffsmall+2*\yoffsmall)$) circle (\bradius);
			\draw (start) -- ($(start)+(0,\njoints*\yoffsmall+0.666*\yoffsmall)$);
			\foreach \n in {0,1,2} {
				\draw [fill] ($(start)+(0,\njoints*\yoffsmall+0.666*\yoffsmall+0.4166*\yoffsmall+0.25*\n*\yoffsmall)$) circle (0.25*\bradius);
			}
		}
	}
	\foreach \m in {0} {
		\coordinate (start) at ($(base)+(1*\xoffmed+\m*2*\xoffmed,0)+(-1.5*\xoffsmall + \xoffsmall,\yoffbig)$);
		\foreach \n in {1,...,\njoints} {
			\draw ($(start)+(0,\n*\yoffsmall-\yoffsmall)$) -- ($(start)+(0,\n*\yoffsmall)-(\xoffmed,0)$);
		}
		\begin{scope}
			\path [clip] ($(start)+(-1*\xoffmed,\njoints*\yoffsmall+0.666*\yoffsmall)$) rectangle ($(start)+(0,\njoints*\yoffsmall)$);
			\draw ($(start)+(0,\njoints*\yoffsmall)$) -- ($(start)+(0,\njoints*\yoffsmall+\yoffsmall)-(\xoffmed,0)$);
		\end{scope}
		\draw ($(start)+(0,\njoints*\yoffsmall+2*\yoffsmall)$) -- ($(start)+(-1*\xoffmed,\njoints*\yoffsmall+2*\yoffsmall)$);
	}
	\draw (base1) -- ($(base)+(0*\xoffmed,0)+(-1.5*\xoffsmall+\xoffsmall,\yoffbig)$);
	\draw (baseCp) -- ($(base)+(0*\xoffmed,0)+(-1.5*\xoffsmall+\xoffsmall,\yoffbig)$);
	\draw (baseCp) -- ($(base)+(1*\xoffmed,0)+(-1.5*\xoffsmall+\xoffsmall,\yoffbig)$);

\end{tikzpicture}
\caption{The \locus{} of $\Infl_{G/N}^G$ for $G=N=C_p$.}\label{fig:Cp}
\end{figure}
\end{Exa}
\begin{Exa}\label{exa:locus-for-D_10}
	The \locus{} for $G=D_{10}$, the dihedral group of order 10, is displayed in Figure~\ref{fig:D10}
	(for $N=G$)
	and in Figure~\ref{fig:D10C5} (for $N=C_5$) on pages~\pageref{fig:D10}--\pageref{fig:D10C5}.
	(We have a complete understanding of the topology 
	of the spectrum in this example since $D_{10}$ is a group of square-free order; cf.~\cite[Thm~8.12]{BalmerSanders17}.)
	The group $D_{10}$ has a unique (normal) subgroup of order $5$, and has five subgroups of order $2$ (forming a single conjugacy class).
	Observe the different behavior at the primes 2 and 5 due to the fact that $C_5$ is normal while the copies of $C_2$ are not.
\end{Exa}
\begin{figure}
\begin{sideways}
\begin{minipage}{\textheight}
\begin{tikzpicture}
	\def\bradius{0.062};
	\def\nbranches{6}
	\def\njoints{5}
	\def\yoffsmall{0.6};
	\def\yoffbig{4};
	\def\xoffmed{3*\xoffsmall};
	\def\xoffsmall{0.3};
	\def\xoffbase{3};
	\def\xbufferwidth{2};
	\def\yoffglabel{-0.5};
	\def\nubradius{0.25*\xoffmed};
	\def\diaopac{nearly transparent};
	\def\fillop{25};
	\def\braceextra{0.15*\xoffmed};

	\coordinate (base1) at (0*\xoffbase,0);
	\coordinate (baseC2) at (1*\xoffbase,0);
	\coordinate (baseC5) at (2*\xoffbase,0);
	\coordinate (baseD10) at (3*\xoffbase,0);
\foreach \flowernum in {1,2,3} {
	\coordinate (base) at (\flowernum*\xoffbase,0);
	\coordinate (start) at ($(base)+(-1.5*\xoffsmall + 1*\xoffsmall,\yoffbig)$);
	\path [fill=blue!\fillop,rounded corners=8pt] ($(start)+(-1*\nubradius,-1*\nubradius)$) rectangle ($(start)+(\nubradius,\nubradius)+(\nbranches*\xoffsmall+1*\xoffsmall,\njoints*\yoffsmall+2*\yoffsmall)$);
}
\begin{scope}
	\coordinate (base) at (0*\xoffbase,0);
	\coordinate (start) at ($(base)+(-1.5*\xoffsmall + 1*\xoffsmall,\yoffbig)$);
\path [fill=blue!\fillop, rounded corners=8pt] 
($(base1)-(\nubradius,0)$) -- ($(start)+(-1*\nubradius,-1*\nubradius)$) --
($(start)+(-1*\nubradius,\nubradius)+(0,\njoints*\yoffsmall+2*\yoffsmall)$) -- 
($(start)+(\nubradius,\nubradius)+(\nbranches*\xoffsmall+1*\xoffsmall,\njoints*\yoffsmall+2*\yoffsmall)$) -- 
($(start)+(\nubradius,0*\nubradius)+(\nbranches*\xoffsmall+1*\xoffsmall,0)$) -- 
($(base1)+(\nubradius,0)$);
\end{scope}
\path [fill=blue!\fillop] (base1) circle (\nubradius);
	\coordinate (base) at (\xbufferwidth+3*\xoffbase+\nbranches*\xoffsmall,0);
	\foreach \m in {0,1,3} {
	\coordinate (start) at ($(base)+(\m*\xoffmed,0)+(-1.5*\xoffsmall + \xoffsmall,\yoffbig)$);
	\path [fill=blue!\fillop] ($(start)+(-1*\nubradius,0)$) rectangle ($(start)+(\nubradius,\njoints*\yoffsmall+2*\yoffsmall)$);
	\path [fill=blue!\fillop] (start) circle (\nubradius);
	\path [fill=blue!\fillop] ($(start)+(0,\njoints*\yoffsmall+2*\yoffsmall)$) circle (\nubradius);
	}
	\coordinate (start) at ($(base)+(0*\xoffmed,0)+(-1.5*\xoffsmall + \xoffsmall,\yoffbig)$);
	\path [fill=blue!\fillop] ($(base1)+(0,\nubradius)$) -- ($(start)+(0,1*\nubradius)$) -- ($(start)+(0,-1*\nubradius)$) -- ($(base1)+(0,-1*\nubradius)$);
	\coordinate (base) at (-\xbufferwidth-3*\xoffmed,0);
	\coordinate (start) at ($(base)+(2*\xoffmed,0)+(-1.5*\xoffsmall + \xoffsmall,\yoffbig)$);
	\path [fill=blue!\fillop] ($(start)+(-1*\nubradius,0)$) rectangle ($(start)+(\nubradius,\njoints*\yoffsmall+2*\yoffsmall)$);
	\path [fill=blue!\fillop] (start) circle (\nubradius);
	\path [fill=blue!\fillop] ($(start)+(0,\njoints*\yoffsmall+2*\yoffsmall)$) circle (\nubradius);

	\coordinate (start) at ($(base)+(0*\xoffmed,0)+(-1.5*\xoffsmall + \xoffsmall,\yoffbig)$);
	\path [fill=blue!\fillop] ($(start)+(-1*\nubradius,0)$) -- ($(start)+(-1*\nubradius,\njoints*\yoffsmall+2*\yoffsmall)$) -- ($(start)+(1*\nubradius,\njoints*\yoffsmall+2*\yoffsmall)$) -- ($(start)+(1*\nubradius,0)$);
	\path [fill=blue!\fillop] ($(base1)+(0,\nubradius)$) -- ($(start)+(0,\nubradius)$) -- ($(start)+(0,-1*\nubradius)$) -- ($(base1)+(0,-1*\nubradius)$);
	\path [fill=blue!\fillop] (start) circle (\nubradius);
	\path [fill=blue!\fillop] ($(start)+(0,\njoints*\yoffsmall+2*\yoffsmall)$) circle (\nubradius);

	\node at ($(base1)+(0,\yoffglabel)$) {$1$};
	\node at ($(baseC2)+(0,\yoffglabel)$) {$C_2$};
	\node at ($(baseC5)+(0,\yoffglabel)$) {$C_5$};
	\node at ($(baseD10)+(0,\yoffglabel)$) {$D_{10}$};
\foreach \flowernum in {0,1,2,3} {
	\coordinate (base) at (\flowernum*\xoffbase,0);
	\draw [fill] ($(base)+(0,0)$) circle [radius=\bradius];
	\foreach \x in {1,2,...,\nbranches} {
		\coordinate (start) at ($(base)+(-1.5*\xoffsmall + \x*\xoffsmall,\yoffbig)$);
		\foreach \n in {0,...,\njoints} {
			\draw [fill]  ($(start)+(0,\n*\yoffsmall)$) circle (\bradius);
		}
		\draw [fill]  ($(start)+(0,\njoints*\yoffsmall+2*\yoffsmall)$) circle (\bradius);
		\draw (start) -- ($(start)+(0,\njoints*\yoffsmall+0.666*\yoffsmall)$);
		\foreach \n in {0,1,2} {
			\draw [fill] ($(start)+(0,\njoints*\yoffsmall+0.666*\yoffsmall+0.4166*\yoffsmall+0.25*\n*\yoffsmall)$) circle (0.25*\bradius);
		}
		\draw (base) -- (start);
	}
	\foreach \n in {1,2,...,\njoints} {
		\coordinate (start) at ($(base)+(\xoffsmall,0)+(-1.5*\xoffsmall + \nbranches*\xoffsmall,\yoffbig)$);
		\node at ($(start)+(0,\n*\yoffsmall-0.5*\yoffsmall)$) {$\hdots$};
	}
	\node at ($(base)+(\xoffsmall,0)+(-1.5*\xoffsmall + \nbranches*\xoffsmall,\yoffbig)+(0,\njoints*\yoffsmall+0.666*\yoffsmall+0.4166*\yoffsmall+0.25*1*\yoffsmall)$) {$\hdots$};

}


	\coordinate (base) at (-\xbufferwidth-3*\xoffmed,0);
	\foreach \m in {0,1,2,3} {
		\coordinate (start) at ($(base)+(\m*\xoffmed,0)+(-1.5*\xoffsmall + \xoffsmall,\yoffbig)$);
		\foreach \n in {0,1,...,\njoints} {
				\draw [fill]  ($(start)+(0,\n*\yoffsmall)$) circle (\bradius);
			\draw [fill]  ($(start)+(0,\njoints*\yoffsmall+2*\yoffsmall)$) circle (\bradius);
			\draw (start) -- ($(start)+(0,\njoints*\yoffsmall+0.666*\yoffsmall)$);
			\foreach \n in {0,1,2} {
				\draw [fill] ($(start)+(0,\njoints*\yoffsmall+0.666*\yoffsmall+0.4166*\yoffsmall+0.25*\n*\yoffsmall)$) circle (0.25*\bradius);
			}
		}
	}
	\foreach \m in {0,1} {
		\coordinate (start) at ($(base)+(1*\xoffmed+\m*2*\xoffmed,0)+(-1.5*\xoffsmall + \xoffsmall,\yoffbig)$);
		\foreach \n in {1,...,\njoints} {
			\draw ($(start)+(0,\n*\yoffsmall-\yoffsmall)$) -- ($(start)+(0,\n*\yoffsmall)-(\xoffmed,0)$);
		}
		\begin{scope}
			\path [clip] ($(start)+(-1*\xoffmed,\njoints*\yoffsmall+0.666*\yoffsmall)$) rectangle ($(start)+(0,\njoints*\yoffsmall)$);
			\draw ($(start)+(0,\njoints*\yoffsmall)$) -- ($(start)+(0,\njoints*\yoffsmall+\yoffsmall)-(\xoffmed,0)$);
		\end{scope}
		\draw ($(start)+(0,\njoints*\yoffsmall+2*\yoffsmall)$) -- ($(start)+(-1*\xoffmed,\njoints*\yoffsmall+2*\yoffsmall)$);
	}
	\draw (base1) -- ($(base)+(0*\xoffmed,0)+(-1.5*\xoffsmall+\xoffsmall,\yoffbig)$);
	\draw (baseC2) -- ($(base)+(0*\xoffmed,0)+(-1.5*\xoffsmall+\xoffsmall,\yoffbig)$);
	\draw (baseC2) -- ($(base)+(1*\xoffmed,0)+(-1.5*\xoffsmall+\xoffsmall,\yoffbig)$);
	\draw (baseC5) -- ($(base)+(2*\xoffmed,0)+(-1.5*\xoffsmall+\xoffsmall,\yoffbig)$);
	\draw (baseD10) -- ($(base)+(2*\xoffmed,0)+(-1.5*\xoffsmall+\xoffsmall,\yoffbig)$);
	\draw (baseD10) -- ($(base)+(3*\xoffmed,0)+(-1.5*\xoffsmall+\xoffsmall,\yoffbig)$);

	\draw[snake=brace, thick] 
	($(base)+(\braceextra,0)+(3*\xoffmed,0)+(-1.5*\xoffsmall+\xoffsmall,0.25*\yoffbig)$)
	-- ($(base)-(\braceextra,0)+(0*\xoffmed,0)+(-1.5*\xoffsmall+\xoffsmall,0.25*\yoffbig)$);
	 \node at ($(base)+(1.5*\xoffmed,0)+(-1.5*\xoffsmall+\xoffsmall,0.25*\yoffbig+\yoffglabel)$) {$p=2$};

	\coordinate (base) at (\xbufferwidth+3*\xoffbase+\nbranches*\xoffsmall,0);

	\foreach \m in {0,1,2,3} {
		\coordinate (start) at ($(base)+(\m*\xoffmed,0)+(-1.5*\xoffsmall + \xoffsmall,\yoffbig)$);
		\foreach \n in {0,1,...,\njoints} {
				\draw [fill]  ($(start)+(0,\n*\yoffsmall)$) circle (\bradius);
			\draw [fill]  ($(start)+(0,\njoints*\yoffsmall+2*\yoffsmall)$) circle (\bradius);
			\draw (start) -- ($(start)+(0,\njoints*\yoffsmall+0.666*\yoffsmall)$);
			\foreach \n in {0,1,2} {
				\draw [fill] ($(start)+(0,\njoints*\yoffsmall+0.666*\yoffsmall+0.4166*\yoffsmall+0.25*\n*\yoffsmall)$) circle (0.25*\bradius);
			}
		}
	}

	\coordinate (start) at ($(base)+(2*\xoffmed,0)+(-1.5*\xoffsmall + \xoffsmall,\yoffbig)$);
	\foreach \n in {1,...,\njoints} {
		\draw ($(start)+(0,\n*\yoffsmall-\yoffsmall)$) -- ($(start)+(0,\n*\yoffsmall)-(2*\xoffmed,0)$);
	}
	\begin{scope}
		\path [clip] ($(start)+(-2*\xoffmed,\njoints*\yoffsmall+0.666*\yoffsmall)$) rectangle ($(start)+(0,\njoints*\yoffsmall)$);
		\draw ($(start)+(0,\njoints*\yoffsmall)$) -- ($(start)+(0,\njoints*\yoffsmall+\yoffsmall)-(2*\xoffmed,0)$);
	\end{scope}
	\draw ($(start)+(0,\njoints*\yoffsmall+2*\yoffsmall)$) to [out=200,in=340] ($(start)+(-2*\xoffmed,\njoints*\yoffsmall+2*\yoffsmall)$);
	\draw (base1) -- ($(base)+(0*\xoffmed,0)+(-1.5*\xoffsmall+\xoffsmall,\yoffbig)$);
	\draw (baseC5) -- ($(base)+(0*\xoffmed,0)+(-1.5*\xoffsmall+\xoffsmall,\yoffbig)$);
	\draw (baseC2) -- ($(base)+(1*\xoffmed,0)+(-1.5*\xoffsmall+\xoffsmall,\yoffbig)$);
	\draw (baseC5) -- ($(base)+(2*\xoffmed,0)+(-1.5*\xoffsmall+\xoffsmall,\yoffbig)$);
	\draw (baseD10) -- ($(base)+(3*\xoffmed,0)+(-1.5*\xoffsmall+\xoffsmall,\yoffbig)$);
	\draw[snake=brace, thick] 
	($(base)+(\braceextra,0)+(3*\xoffmed,0)+(-1.5*\xoffsmall+\xoffsmall,0.25*\yoffbig)$)
	-- ($(base)-(\braceextra,0)+(0*\xoffmed,0)+(-1.5*\xoffsmall+\xoffsmall,0.25*\yoffbig)$);
	 \node at ($(base)+(1.5*\xoffmed,0)+(-1.5*\xoffsmall+\xoffsmall,0.25*\yoffbig+\yoffglabel)$) {$p=5$};

\end{tikzpicture}
\end{minipage}
\end{sideways}
\caption{The \locus{} of $\Infl_{G/N}^G$ for $G=N=D_{10}$.}\label{fig:D10}
\end{figure}
\begin{figure}
\begin{sideways}
\begin{minipage}{\textheight}
\begin{tikzpicture}
	\def\bradius{0.062};
	\def\nbranches{6}
	\def\njoints{5}
	\def\yoffsmall{0.6};
	\def\yoffbig{4};
	\def\xoffmed{3*\xoffsmall};
	\def\xoffsmall{0.3};
	\def\xoffbase{3};
	\def\xbufferwidth{2};
	\def\yoffglabel{-0.5};
	\def\nubradius{0.25*\xoffmed};
	\def\dnubradius{0.15*\xoffmed};
	\def\diaopac{nearly transparent};
	\def\fillop{25};
	\def\braceextra{0.15*\xoffmed};

	\coordinate (base1) at (0*\xoffbase,0);
	\coordinate (baseC2) at (1*\xoffbase,0);
	\coordinate (baseC5) at (2*\xoffbase,0);
	\coordinate (baseD10) at (3*\xoffbase,0);
\foreach \flowernum in {1,2,3} {
	\coordinate (base) at (\flowernum*\xoffbase,0);
	\coordinate (start) at ($(base)+(-1.5*\xoffsmall + 1*\xoffsmall,\yoffbig)$);
	\path [fill=blue!\fillop,rounded corners=8pt] ($(start)+(-1*\nubradius,-1*\nubradius)$) rectangle ($(start)+(\nubradius,\nubradius)+(\nbranches*\xoffsmall+1*\xoffsmall,\njoints*\yoffsmall+2*\yoffsmall)$);
}
\begin{scope}
	\coordinate (base) at (0*\xoffbase,0);
	\coordinate (start) at ($(base)+(-1.5*\xoffsmall + 1*\xoffsmall,\yoffbig)$);
\path [fill=blue!\fillop, rounded corners=8pt] 
($(base1)-(\nubradius,0)$) -- ($(start)+(-1*\nubradius,-1*\nubradius)$) --
($(start)+(-1*\nubradius,\nubradius)+(0,\njoints*\yoffsmall+2*\yoffsmall)$) -- 
($(start)+(\nubradius,\nubradius)+(\nbranches*\xoffsmall+1*\xoffsmall,\njoints*\yoffsmall+2*\yoffsmall)$) -- 
($(start)+(\nubradius,0*\nubradius)+(\nbranches*\xoffsmall+1*\xoffsmall,0)$) -- 
($(base1)+(\nubradius,0)$);
\end{scope}
\begin{scope}
	\coordinate (base) at (1*\xoffbase,0);
	\coordinate (start) at ($(base)+(-1.5*\xoffsmall + 1*\xoffsmall,\yoffbig)$);
\path [fill=blue!\fillop, rounded corners=8pt] 
($(baseC2)-(\nubradius,0)$) -- ($(start)+(-1*\nubradius,-1*\nubradius)$) --
($(start)+(-1*\nubradius,\nubradius)+(0,\njoints*\yoffsmall+2*\yoffsmall)$) -- 
($(start)+(\nubradius,\nubradius)+(\nbranches*\xoffsmall+1*\xoffsmall,\njoints*\yoffsmall+2*\yoffsmall)$) -- 
($(start)+(\nubradius,0*\nubradius)+(\nbranches*\xoffsmall+1*\xoffsmall,0)$) -- 
($(baseC2)+(\nubradius,0)$);
\end{scope}
\begin{scope}
	\coordinate (base) at (3*\xoffbase,0);
	\coordinate (start) at ($(base)+(-1.5*\xoffsmall + 1*\xoffsmall,\yoffbig)$);
\path [fill=blue!\fillop, rounded corners=8pt] 
($(baseD10)-(\nubradius,0)$) -- ($(start)+(-1*\nubradius,-1*\nubradius)$) --
($(start)+(-1*\nubradius,\nubradius)+(0,\njoints*\yoffsmall+2*\yoffsmall)$) -- 
($(start)+(\nubradius,\nubradius)+(\nbranches*\xoffsmall+1*\xoffsmall,\njoints*\yoffsmall+2*\yoffsmall)$) -- 
($(start)+(\nubradius,0*\nubradius)+(\nbranches*\xoffsmall+1*\xoffsmall,0)$) -- 
($(baseD10)+(\nubradius,0)$);
\end{scope}
\path [fill=blue!\fillop] (base1) circle (\nubradius);
\path [fill=blue!\fillop] (baseC2) circle (\nubradius);
\path [fill=blue!\fillop] (baseD10) circle (\nubradius);
	\coordinate (base) at (\xbufferwidth+3*\xoffbase+\nbranches*\xoffsmall,0);
	\foreach \m in {0,1,3} {
	\coordinate (start) at ($(base)+(\m*\xoffmed,0)+(-1.5*\xoffsmall + \xoffsmall,\yoffbig)$);
	\path [fill=blue!\fillop] ($(start)+(-1*\nubradius,0)$) rectangle ($(start)+(\nubradius,\njoints*\yoffsmall+2*\yoffsmall)$);
	\path [fill=blue!\fillop] (start) circle (\nubradius);
	\path [fill=blue!\fillop] ($(start)+(0,\njoints*\yoffsmall+2*\yoffsmall)$) circle (\nubradius);
	}
	\coordinate (start) at ($(base)+(0*\xoffmed,0)+(-1.5*\xoffsmall + \xoffsmall,\yoffbig)$);
	\path [fill=blue!\fillop] ($(base1)+(0,\nubradius)$) -- ($(start)+(0,1*\nubradius)$) -- ($(start)+(0,-1*\nubradius)$) -- ($(base1)+(0,-1*\nubradius)$);
	\coordinate (start) at ($(base)+(1*\xoffmed,0)+(-1.5*\xoffsmall + \xoffsmall,\yoffbig)$);
	\path [fill=blue!\fillop] ($(baseC2)+(0,\nubradius)$) -- ($(start)+(0,1*\nubradius)$) -- ($(start)+(0,-1*\nubradius)$) -- ($(baseC2)+(0,-1*\nubradius)$);
	\coordinate (start) at ($(base)+(3*\xoffmed,0)+(-1.5*\xoffsmall + \xoffsmall,\yoffbig)$);
	\path [fill=blue!\fillop] ($(baseD10)+(0,\nubradius)$) -- ($(start)+(0,1*\nubradius)$) -- ($(start)+(0,-1*\nubradius)$) -- ($(baseD10)+(0,-1*\nubradius)$);
	\coordinate (base) at (-\xbufferwidth-3*\xoffmed,0);
	\foreach \n in {0,1,2,3} {
		\coordinate (start) at ($(base)+(\n*\xoffmed,0)+(-1.5*\xoffsmall + \xoffsmall,\yoffbig)$);
		\path [fill=blue!\fillop] ($(start)+(-1*\nubradius,0)$) rectangle ($(start)+(\nubradius,\njoints*\yoffsmall+2*\yoffsmall)$);
		\path [fill=blue!\fillop] (start) circle (\nubradius);
		\path [fill=blue!\fillop] ($(start)+(0,\njoints*\yoffsmall+2*\yoffsmall)$) circle (\nubradius);
	}
	\coordinate (start) at ($(base)+(0*\xoffmed,0)+(-1.5*\xoffsmall + \xoffsmall,\yoffbig)$);
	\path [fill=blue!\fillop] ($(base1)+(0,\nubradius)$) -- ($(start)+(0,\nubradius)$) -- ($(start)+(0,-1*\nubradius)$) -- ($(base1)+(0,-1*\nubradius)$);
	\path [fill=blue!\fillop] ($(baseC2)+(0,\nubradius)$) -- ($(start)+(0,\nubradius)$) -- ($(start)+(0,-1*\nubradius)$) -- ($(baseC2)+(0,-1*\nubradius)$);
	\coordinate (start) at ($(base)+(1*\xoffmed,0)+(-1.5*\xoffsmall + \xoffsmall,\yoffbig)$);
	\path [fill=blue!\fillop] ($(baseC2)+(0,\nubradius)$) -- ($(start)+(0,\nubradius)$) -- ($(start)+(0,-1*\nubradius)$) -- ($(baseC2)+(0,-1*\nubradius)$);
	\coordinate (start) at ($(base)+(2*\xoffmed,0)+(-1.5*\xoffsmall + \xoffsmall,\yoffbig)$);
	\path [fill=blue!\fillop] ($(baseD10)+(0,\nubradius)$) -- ($(start)+(0,\nubradius)$) -- ($(start)+(0,-1*\nubradius)$) -- ($(baseD10)+(0,-1*\nubradius)$);
	\coordinate (start) at ($(base)+(3*\xoffmed,0)+(-1.5*\xoffsmall + \xoffsmall,\yoffbig)$);
	\path [fill=blue!\fillop] ($(baseD10)+(0,\nubradius)$) -- ($(start)+(0,\nubradius)$) -- ($(start)+(0,-1*\nubradius)$) -- ($(baseD10)+(0,-1*\nubradius)$);

	\foreach \m in {0,1} {
		\coordinate (start) at ($(base)+(1*\xoffmed+\m*2*\xoffmed,0)+(-1.5*\xoffsmall + \xoffsmall,\yoffbig)$);
		\foreach \n in {0,...,\njoints} {
			\path [fill=blue!\fillop] ($(start)+(0,-1*\dnubradius)+(0,\n*\yoffsmall)$) -- ($(start)+(0,-1*\dnubradius)+(0,\yoffsmall+\n*\yoffsmall)-(\xoffmed,0)$)
			-- ($(start)+(0,1*\dnubradius)+(0,\n*\yoffsmall+\yoffsmall)-(\xoffmed,0)$) -- ($(start)+(0,1*\dnubradius)+(0,\n*\yoffsmall)$);
		}
		\path [fill=blue!\fillop] ($(start)+(0,-1*\dnubradius)+(0,\njoints*\yoffsmall+2*\yoffsmall)$) --
		($(start)+(0,-1*\dnubradius)+(-1*\xoffmed,\njoints*\yoffsmall+2*\yoffsmall)$) --
		($(start)+(0,1*\dnubradius)+(-1*\xoffmed,\njoints*\yoffsmall+2*\yoffsmall)$) --
		($(start)+(0,1*\dnubradius)+(0,\njoints*\yoffsmall+2*\yoffsmall)$);
	}

	\node at ($(base1)+(0,\yoffglabel)$) {$1$};
	\node at ($(baseC2)+(0,\yoffglabel)$) {$C_2$};
	\node at ($(baseC5)+(0,\yoffglabel)$) {$C_5$};
	\node at ($(baseD10)+(0,\yoffglabel)$) {$D_{10}$};
\foreach \flowernum in {0,1,2,3} {
	\coordinate (base) at (\flowernum*\xoffbase,0);
	\draw [fill] ($(base)+(0,0)$) circle [radius=\bradius];
	\foreach \x in {1,2,...,\nbranches} {
		\coordinate (start) at ($(base)+(-1.5*\xoffsmall + \x*\xoffsmall,\yoffbig)$);
		\foreach \n in {0,...,\njoints} {
			\draw [fill]  ($(start)+(0,\n*\yoffsmall)$) circle (\bradius);
		}
		\draw [fill]  ($(start)+(0,\njoints*\yoffsmall+2*\yoffsmall)$) circle (\bradius);
		\draw (start) -- ($(start)+(0,\njoints*\yoffsmall+0.666*\yoffsmall)$);
		\foreach \n in {0,1,2} {
			\draw [fill] ($(start)+(0,\njoints*\yoffsmall+0.666*\yoffsmall+0.4166*\yoffsmall+0.25*\n*\yoffsmall)$) circle (0.25*\bradius);
		}
		\draw (base) -- (start);
	}
	\foreach \n in {1,2,...,\njoints} {
		\coordinate (start) at ($(base)+(\xoffsmall,0)+(-1.5*\xoffsmall + \nbranches*\xoffsmall,\yoffbig)$);
		\node at ($(start)+(0,\n*\yoffsmall-0.5*\yoffsmall)$) {$\hdots$};
	}
	\node at ($(base)+(\xoffsmall,0)+(-1.5*\xoffsmall + \nbranches*\xoffsmall,\yoffbig)+(0,\njoints*\yoffsmall+0.666*\yoffsmall+0.4166*\yoffsmall+0.25*1*\yoffsmall)$) {$\hdots$};

}


	\coordinate (base) at (-\xbufferwidth-3*\xoffmed,0);
	\foreach \m in {0,1,2,3} {
		\coordinate (start) at ($(base)+(\m*\xoffmed,0)+(-1.5*\xoffsmall + \xoffsmall,\yoffbig)$);
		\foreach \n in {0,1,...,\njoints} {
				\draw [fill]  ($(start)+(0,\n*\yoffsmall)$) circle (\bradius);
			\draw [fill]  ($(start)+(0,\njoints*\yoffsmall+2*\yoffsmall)$) circle (\bradius);
			\draw (start) -- ($(start)+(0,\njoints*\yoffsmall+0.666*\yoffsmall)$);
			\foreach \n in {0,1,2} {
				\draw [fill] ($(start)+(0,\njoints*\yoffsmall+0.666*\yoffsmall+0.4166*\yoffsmall+0.25*\n*\yoffsmall)$) circle (0.25*\bradius);
			}
		}
	}
	\foreach \m in {0,1} {
		\coordinate (start) at ($(base)+(1*\xoffmed+\m*2*\xoffmed,0)+(-1.5*\xoffsmall + \xoffsmall,\yoffbig)$);
		\foreach \n in {1,...,\njoints} {
			\draw ($(start)+(0,\n*\yoffsmall-\yoffsmall)$) -- ($(start)+(0,\n*\yoffsmall)-(\xoffmed,0)$);
		}
		\begin{scope}
			\path [clip] ($(start)+(-1*\xoffmed,\njoints*\yoffsmall+0.666*\yoffsmall)$) rectangle ($(start)+(0,\njoints*\yoffsmall)$);
			\draw ($(start)+(0,\njoints*\yoffsmall)$) -- ($(start)+(0,\njoints*\yoffsmall+\yoffsmall)-(\xoffmed,0)$);
		\end{scope}
		\draw ($(start)+(0,\njoints*\yoffsmall+2*\yoffsmall)$) -- ($(start)+(-1*\xoffmed,\njoints*\yoffsmall+2*\yoffsmall)$);
	}
	\draw (base1) -- ($(base)+(0*\xoffmed,0)+(-1.5*\xoffsmall+\xoffsmall,\yoffbig)$);
	\draw (baseC2) -- ($(base)+(0*\xoffmed,0)+(-1.5*\xoffsmall+\xoffsmall,\yoffbig)$);
	\draw (baseC2) -- ($(base)+(1*\xoffmed,0)+(-1.5*\xoffsmall+\xoffsmall,\yoffbig)$);
	\draw (baseC5) -- ($(base)+(2*\xoffmed,0)+(-1.5*\xoffsmall+\xoffsmall,\yoffbig)$);
	\draw (baseD10) -- ($(base)+(2*\xoffmed,0)+(-1.5*\xoffsmall+\xoffsmall,\yoffbig)$);
	\draw (baseD10) -- ($(base)+(3*\xoffmed,0)+(-1.5*\xoffsmall+\xoffsmall,\yoffbig)$);

	\draw[snake=brace, thick] 
	($(base)+(\braceextra,0)+(3*\xoffmed,0)+(-1.5*\xoffsmall+\xoffsmall,0.25*\yoffbig)$)
	-- ($(base)-(\braceextra,0)+(0*\xoffmed,0)+(-1.5*\xoffsmall+\xoffsmall,0.25*\yoffbig)$);
	 \node at ($(base)+(1.5*\xoffmed,0)+(-1.5*\xoffsmall+\xoffsmall,0.25*\yoffbig+\yoffglabel)$) {$p=2$};

	\coordinate (base) at (\xbufferwidth+3*\xoffbase+\nbranches*\xoffsmall,0);

	\foreach \m in {0,1,2,3} {
		\coordinate (start) at ($(base)+(\m*\xoffmed,0)+(-1.5*\xoffsmall + \xoffsmall,\yoffbig)$);
		\foreach \n in {0,1,...,\njoints} {
				\draw [fill]  ($(start)+(0,\n*\yoffsmall)$) circle (\bradius);
			\draw [fill]  ($(start)+(0,\njoints*\yoffsmall+2*\yoffsmall)$) circle (\bradius);
			\draw (start) -- ($(start)+(0,\njoints*\yoffsmall+0.666*\yoffsmall)$);
			\foreach \n in {0,1,2} {
				\draw [fill] ($(start)+(0,\njoints*\yoffsmall+0.666*\yoffsmall+0.4166*\yoffsmall+0.25*\n*\yoffsmall)$) circle (0.25*\bradius);
			}
		}
	}

	\coordinate (start) at ($(base)+(2*\xoffmed,0)+(-1.5*\xoffsmall + \xoffsmall,\yoffbig)$);
	\foreach \n in {1,...,\njoints} {
		\draw ($(start)+(0,\n*\yoffsmall-\yoffsmall)$) -- ($(start)+(0,\n*\yoffsmall)-(2*\xoffmed,0)$);
	}
	\begin{scope}
		\path [clip] ($(start)+(-2*\xoffmed,\njoints*\yoffsmall+0.666*\yoffsmall)$) rectangle ($(start)+(0,\njoints*\yoffsmall)$);
		\draw ($(start)+(0,\njoints*\yoffsmall)$) -- ($(start)+(0,\njoints*\yoffsmall+\yoffsmall)-(2*\xoffmed,0)$);
	\end{scope}
	\draw ($(start)+(0,\njoints*\yoffsmall+2*\yoffsmall)$) to [out=200,in=340] ($(start)+(-2*\xoffmed,\njoints*\yoffsmall+2*\yoffsmall)$);
	\draw (base1) -- ($(base)+(0*\xoffmed,0)+(-1.5*\xoffsmall+\xoffsmall,\yoffbig)$);
	\draw (baseC5) -- ($(base)+(0*\xoffmed,0)+(-1.5*\xoffsmall+\xoffsmall,\yoffbig)$);
	\draw (baseC2) -- ($(base)+(1*\xoffmed,0)+(-1.5*\xoffsmall+\xoffsmall,\yoffbig)$);
	\draw (baseC5) -- ($(base)+(2*\xoffmed,0)+(-1.5*\xoffsmall+\xoffsmall,\yoffbig)$);
	\draw (baseD10) -- ($(base)+(3*\xoffmed,0)+(-1.5*\xoffsmall+\xoffsmall,\yoffbig)$);
	\draw[snake=brace, thick] 
	($(base)+(\braceextra,0)+(3*\xoffmed,0)+(-1.5*\xoffsmall+\xoffsmall,0.25*\yoffbig)$)
	-- ($(base)-(\braceextra,0)+(0*\xoffmed,0)+(-1.5*\xoffsmall+\xoffsmall,0.25*\yoffbig)$);
	 \node at ($(base)+(1.5*\xoffmed,0)+(-1.5*\xoffsmall+\xoffsmall,0.25*\yoffbig+\yoffglabel)$) {$p=5$};

\end{tikzpicture}
\end{minipage}
\end{sideways}
\caption{The \locus{} of $\Infl_{G/N}^G$ for $G=D_{10}$, $N=C_5$.}\label{fig:D10C5}
\end{figure}
\begin{Rem}\label{rem:adams-larger}
	The Adams isomorphism is classically defined for $N$-free $G$-spectra (cf.~Ex.~\ref{exa:N-free}).
	Note that in both cases of Ex.~\ref{exa:locus-for-D_10}, even if we look $p$-locally, the \locus{} is larger than what is given by the $N$-free $G$-spectra.
	For $N=G$, the $G$-free spectra correspond to the irreducible closed subset $\supp(G_+) = \overline{\{\cat P(1,1)\}}$,
	while for $N=C_5$ the $N$-free $G$-spectra correspond to $\supp(G_+) \cup \supp({G/C_2}_+) = \overline{\{\cat P(1,1)\}} \cup \overline{\{\cat P(C_2,1)\}}$.
\end{Rem}

\section{The compactness locus in algebraic geometry}
\label{sec:alg-geom}

Our next goal is to provide a geometric description of the \locus{} for examples arising in algebraic geometry. Namely, if $f:X \to Y$ is a morphism of quasi-compact and quasi-separated schemes, then the \locus{} of the derived pull-back $f^*:\Derqc(Y) \to \Derqc(X)$ is a categorically defined subset of the domain scheme:
\[ \Z_f \subset X \cong \Spc(\Derqc(X)^c).\]
We would like to obtain a scheme-theoretic description of $\Z_f \subset X$ in terms of the morphism $f$.

\begin{Ter}
	Let $X$ be a quasi-compact and quasi-separated scheme.  Recall that the compact objects of $\Derqc(X)$ are the perfect complexes, \ie those complexes which are locally quasi-isomorphic to bounded complexes of finitely generated free modules. This is equivalent to being pseudo-coherent and of finite tor-dimension (cf.~\cite[Sec.~2]{ThomasonTrobaugh90}).  
\end{Ter}
\begin{Exa}\label{Exa:two-aspects-of-perfection}
	These two aspects of perfection (pseudo-coherence and finite tor-dimension)
	are well demonstrated in the affine case:
	An $R$-module~$M$ regarded as an object of $\Der(R)$
	is pseudo-coherent
	iff it has a (possibly infinite) resolution by finitely generated free modules,
	and it has finite tor-dimension iff 
	it has a finite resolution by (possibly infinitely generated) flat modules.
	Together, $M$ is perfect iff it has a finite resolution by finitely generated free modules.
\end{Exa}
\begin{Rem}
	We are interested in understanding when $\bbR f_* :\Derqc(X) \to \Derqc(Y)$ preserves perfect complexes and, as we shall see, it will be helpful to consider separately the question of when 
	it
	preserves pseudo-coherent complexes and when it preserves complexes of finite tor-dimension.
	An informal discussion of some basic examples will help us gain some intuition
	about how these two situations differ 
	and will set the stage for the results that follow.
\end{Rem}
\begin{Exa}\label{Exa:open-immersion-test}
	We have already obtained a description of the \locus{} of an open immersion in
	Proposition~\ref{prop:locus-of-finite-localization}
	(see Example~\ref{exa:open-imm}).
	In particular, an open immersion only satisfies GN-duality (\ie $\bbR f_*$ only preserves perfect complexes) if it is the inclusion of a set which is both open and Thomason closed (\ie a union of connected components in the noetherian case).
Since flat morphisms preserve complexes of finite tor-dimension (see Remark~\ref{rem:preserve-finite-tor-dim} below), this failure of open immersions to preserve perfect complexes is 
	really the failure of open immersions to preserve pseudo-coherent complexes.
	For example, 
	consider
	the inclusion of a principal open in an affine scheme: $D(s) \hookrightarrow \Spec(R)$.
	The (derived) pushforward is just restriction of scalars along $R \to R[1/s]$.
	If $R[1/s]$ is pseudo-coherent as an object of $\Der(R)$ 
	then in particular it is finitely generated as an $R$-module (cf.~Example~\ref{Exa:two-aspects-of-perfection}).
A straightforward consequence is that 
$s^n(1-rs)=0$ for some $r \in R$ and $n \ge 1$.
It follows that we have an equality of principal ideals 
	$(s^n)=(r^n s^n)$ and that $e:=r^n s^n$ is an idempotent.
	Hence the open set $D(s)=D(s^n)=D(e)$ is also a (Thomason) closed subset of $\Spec(R)$.
\end{Exa}
\begin{Exa}\label{Exa:closed-immersions-test}
	In contrast, closed immersions of noetherian schemes always preserve pseudo-coherent complexes. 
	In fact, more generally, proper morphisms of noetherian schemes preserve pseudo-coherent complexes
	(see \eg~\cite[Thm.~III.2.2, pp.~236--38]{SGA6} or \cite[\S 4.3]{Lipman09}).
	The key to understanding this fact is that over a noetherian scheme, a complex
	$\cat E$ is pseudo-coherent iff each cohomology sheaf $H^i(\cat E)$ is coherent and $H^i(\cat E) = 0$ for $i \gg 0$.
	Thus, the fact that proper morphisms preserve pseudo-coherent complexes
	is conceptually just a generalization of the facts (due to Serre and Grothendieck)
	that the cohomology groups $H^i(X,\cat F)$ of a coherent sheaf 
	on a proper algebraic variety are each finite-dimensional and vanish for $i > \dimm X$.

	For example, consider a map $A \to B$ of noetherian rings.
	Over a noetherian ring, every finitely generated module admits a resolution by finitely generated free modules and thus a module is pseudo-coherent iff it is finitely generated.
	(This is, of course, just a special case of the cohomological characterization of pseudo-coherent complexes mentioned above.)
	Thus, $B$ is pseudo-coherent as an $A$-module iff $B$ is finitely generated as an $A$-module.
	Moreover, in this case every pseudo-coherent complex of $B$-modules is pseudo-coherent when regarded as a complex of $A$-modules --- just use the cohomological characterization of pseudo-coherent complexes mentioned above.
	We conclude that a morphism of noetherian affine schemes preserves pseudo-coherent complexes iff it is a finite morphism (\ie a proper morphism).

Anticipating the importance of properness in what follows, let us introduce the following terminology.
\end{Exa}
\begin{Ter}
	Let $f :X \to Y$ be a morphism of finite type between quasi-compact, quasi-separated schemes.
We will say that a closed subset $Z \subseteq X$ is \emph{proper over $Y$}
	or that \emph{$f$ is proper on $Z$}
	if there exists a closed subscheme structure on~$Z$ such that
	$Z \hookrightarrow X \to Y$ is a proper morphism.
	This is equivalent to saying that $Z \hookrightarrow X \to Y$ is a
	proper morphism when~$Z$ is equipped with any closed subscheme structure
	(see \eg~\cite[\href{https://stacks.math.columbia.edu/tag/0CYL}{Lemma 0CYL}]{stacks-project}).
\end{Ter}
\begin{Exa}
Let $j:U \hookrightarrow X$ be an inclusion of a quasi-compact open subscheme.
The \locus{} of $j$ is, according to Example~\ref{exa:open-imm}, the set of points $x \in U$ whose closure in $X$ is contained in~$U$.
This coincides with the union of all closed subsets of $U$ 
on which $j$ is proper.
\end{Exa}
\begin{Exa}
	A closed immersion $Z \hookrightarrow X$ of noetherian schemes,
	being proper, preserves pseudo-coherent complexes (Example~\ref{Exa:closed-immersions-test}), so the question of whether it preserves perfect complexes
	boils down to the question of whether it preserves complexes of finite tor-dimension.
	This is usually not the case.
	For simplicity, consider the affine case $R \to R/I$.
	This will preserve complexes of finite tor-dimension (and consequently perfect complexes)
	iff $R/I$ has finite flat dimension as an $R$-module
	(which, since $R$ is noetherian, is equivalent to $R/I$ having finite projective dimension as an $R$-module).
	In the special case that $I$ is generated by a regular sequence,
	the associated Koszul complex provides a finite resolution of
	$R/I$ by finitely generated free $R$-modules, but in general there
	is no reason for $\pd_R(R/I)$ to be finite.
	For an explicit counterexample,
	take $R=k[X]/(X^2)$ and $I=(X)$ so that $R/I=k$.
	(As~$R$ is a local ring of Krull dimension $0$, the Auslander-Buchsbaum formula
	implies that a finitely generated $R$-module has finite projective dimension
	iff it is free.)
\end{Exa}
\begin{Rem}\label{rem:preserve-finite-tor-dim}
In contrast, if $f:X \to Y$ is a flat morphism then it
is straightforward to see that $\bbR f_*$ preserves complexes of finite tor-dimension
(although we have seen in Example~\ref{Exa:open-immersion-test} that it need not preserve pseudo-coherent complexes).
More generally, Illusie introduces the notion of a perfect morphism of schemes in \cite[Expos\'{e}~III]{SGA6}.
The precise definition is slightly technical but some technicalities 
are removed by restricting attention to finite type morphisms of noetherian schemes.\footnote{Our restriction to morphisms of finite type between noetherian schemes keeps some bugbears under the rug. Hidden there is the fact that a finite type morphism between noetherian schemes is automatically a pseudo-coherent morphism, which is a notion we do not wish to discuss.  Without noetherian assumptions, a morphism of finite type is perfect if $\cat O_X$ is ``perfect relative to $f$'' which means ``has finite tor-dimension relative to $f$'' and ``is pseudo-coherent relative to $f$''. The first condition is the definition above, while the second condition is the definition of $f$ being pseudo-coherent. One reason we have avoided discussing pseudo-coherence of morphisms is that the terminology could be confusing in the present context: the derived pushforward of a pseudo-coherent morphism does not preserve pseudo-coherent complexes; the crucial requirement, as we have discussed and as we shall see below, is that the morphism be \emph{proper}.}
In this case the definition becomes: $f:X\to Y$ is a perfect morphism if $\cat O_X$ 
	is perfect when regarded as a complex of $f^{-1}(\cat O_Y)$
	modules. As perfection can be checked on stalks this is equivalent to 
	saying that for each $x \in X$, the local ring $\cat O_{X,x}$ has finite tor-dimension when regarded as
	a $\cat O_{Y,f(x)}$-module.
Examples of perfect morphisms (of noetherian schemes) include 
regular immersions (see \cite[\S 16.9]{EGA4part4} and \cite[\href{https://stacks.math.columbia.edu/tag/068C}{Tag 068C}]{stacks-project})
and 
flat morphisms of finite type (\eg~open immersions and smooth morphisms).
\end{Rem}
	Let us now clarify that perfection is a local notion.
\begin{Lem}\label{lem:perf-locus}
	Let $f:X\to Y$ be a morphism of finite type between noetherian schemes.
	For a point $x \in X$, the following two conditions are equivalent:
	\begin{enumerate}
		\item There exists an open neighbourhood $x \in U$ such that $U \hookrightarrow X\to Y$ is a perfect morphism.
		\item The local ring $\cat O_{X,x}$ has finite tor-dimension when regarded as an $\cat O_{Y,f(x)}$-module.
	\end{enumerate}
\end{Lem}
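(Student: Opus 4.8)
The plan is to identify both (a) and (b) with the condition that $x$ lies in the \emph{perfect locus} of $f$, and then to use the openness of that locus. Recall first that, $f$ being of finite type and $X$, $Y$ noetherian, the object $\cat O_X$ is automatically pseudo-coherent relative to $f$ (see \cite[Exp.~III]{SGA6}); hence $f$ is perfect precisely when $\cat O_X$ has finite tor-dimension relative to $f$. Since the tor-dimension of $\cat O_X$ relative to $f$ at a point $x$ is, by definition, the tor-dimension (equivalently flat dimension) of the stalk $\cat O_{X,x}$ over $(f^{-1}\cat O_Y)_x = \cat O_{Y,f(x)}$, the set $W \subseteq X$ of points where this quantity is finite is exactly the set of $x$ satisfying (b). Since perfection is a condition on $\cat O_X$ and so is local on the source, and since the pointwise tor-dimension is bounded on any quasi-compact open contained in $W$ (the loci where it is $\le m$ being open and exhausting $W$), condition (a) says precisely that $x$ admits an open neighbourhood contained in $W$.

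Granting this reformulation, (a) $\Rightarrow$ (b) is immediate: a perfect $U \hookrightarrow X \to Y$ has $U \subseteq W$, hence $x \in W$. For (b) $\Rightarrow$ (a) it remains to show that $W$ is open; then any affine (hence quasi-compact) open $U$ with $x \in U \subseteq W$ satisfies, for $U \hookrightarrow X \to Y$, that $\cat O_U$ is pseudo-coherent relative to it (finite type, noetherian) and of bounded tor-dimension relative to it, so that $U \to Y$ is perfect. The openness of $W$ is part of the general theory of \cite[Exp.~III]{SGA6} (and is where much of \cite{LipmanNeeman07} implicitly sits); as it is the only substantive point, I sketch a direct route.

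Openness being local on $X$, restrict to affine opens $\Spec B \subseteq X$ and $\Spec A \subseteq Y$ with $f(\Spec B) \subseteq \Spec A$, and write $B = C/I$ with $C := A[t_1,\dots,t_n]$; then $C$ is noetherian, $B$ is a coherent $C$-module, and $\Spec B \to \Spec A$ factors through the closed immersion $\Spec B \hookrightarrow \Spec C$. For $\mathfrak q \in \Spec B$, let $\mathfrak p$ be its contraction to $A$ and view $\mathfrak q$ also as a prime of $C$; then $A_{\mathfrak p} \to C_{\mathfrak q}$ is flat with regular fibre $C_{\mathfrak q}\otimes_{A_{\mathfrak p}}\kappa(\mathfrak p)$, and a double application of the change-of-rings spectral sequences for $\mathrm{Tor}$ yields $\fd_{A_{\mathfrak p}}(B_{\mathfrak q}) \le \pd_{C_{\mathfrak q}}(B_{\mathfrak q}) \le \fd_{A_{\mathfrak p}}(B_{\mathfrak q}) + \dim\bigl(C_{\mathfrak q}\otimes_{A_{\mathfrak p}}\kappa(\mathfrak p)\bigr)$. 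Thus $W$ meets $\Spec B$ in the locus where $B_{\mathfrak q}$ has finite projective dimension over $C_{\mathfrak q}$, and this locus is open: choosing a resolution of $B$ by finite free $C$-modules, a sufficiently high syzygy $\Omega$ --- a finitely presented $C$-module --- is $C_{\mathfrak q}$-free at $\mathfrak q$ exactly when $\pd_{C_{\mathfrak q}}(B_{\mathfrak q}) < \infty$, and the locus where a finitely presented module over a noetherian scheme is free (equivalently flat) is open. Patching over the affine charts shows $W$ is open. The step requiring the most care is precisely this comparison of the relative tor-dimension over the base with the projective dimension over the polynomial ring $C$ --- the flat-base-change bookkeeping resting on the regularity of the fibres of $A \to C$ --- along with verifying that the resulting open loci agree on overlaps; everything else is routine.
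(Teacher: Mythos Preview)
Your argument is correct and follows essentially the same route as the paper's: factor locally through a smooth (in your case, polynomial) morphism, compare the relative tor-dimension over the base with the projective dimension over the intermediate ring, and use that the locus of finite projective dimension is open --- the paper cites \cite[Cor.~9.4.7]{BrodmannSharp13} where you give a direct syzygy argument. The one imprecision is the phrase ``a sufficiently high syzygy $\Omega$ is $C_{\mathfrak q}$-free at $\mathfrak q$ exactly when $\pd_{C_{\mathfrak q}}(B_{\mathfrak q}) < \infty$'': there is no uniform such height in general, but what you actually need (and what works) is that for each fixed $n$ the free locus of $\Omega_n$ equals $\{\mathfrak q : \pd_{C_{\mathfrak q}}(B_{\mathfrak q}) \le n\}$, with $n$ chosen depending on the point whose neighbourhood you seek.
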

\begin{proof}
	First we prove
	(a)$\Rightarrow$(b). 
	Let $j:U\hookrightarrow X$ be an open neighbourhood of $x$
	such that $f\circ j:U\to Y$ is perfect.
	By definition this means that $\cat O_U$ 
	has finite tor-dimension as a complex
	of $(f\circ j)^{-1}\cat O_Y$-modules.
	Taking the stalk at $x$ it follows that
	$\cat O_{X,x} \cong \cat O_{U,x}$ has finite tor-dimension as a complex
	of $((f\circ j)^{-1}\cat O_Y)_x\cong \cat O_{Y,f(x)}$-modules
	(using \eg~the characterization of tor-dimension in terms of the existence of bounded resolutions by flat modules).
	Conversely, let's prove
	(b)$\Rightarrow$(a). 
	Suppose $\cat O_{X,x}$ has finite tor-dimension as an $\cat O_{Y,f(x)}$-module, say
$h := \tordim_{\cat O_{Y,f(x)}} \cat O_{X,x}$.
	Since $f$ is of finite type, there exists an open neighbourhood $j:W\hookrightarrow X$ of $x$
	such that $f\circ j:W\to Y$ factors as
	a closed immersion $i:W\hookrightarrow X'$ followed by a smooth morphism $g:X'\to Y$.
	(Moreover, we may assume without loss of generality that $i:W \hookrightarrow X'$ is a closed immersion of affine schemes.)
	If $g$ is smooth of relative dimension $d$, the argument in the proof of 
	\cite[Prop.~III.3.6(ii)]{SGA6}
	implies that $\tordim_{\cat O_{X',x}} (i_*\cat O_W)_x \le h+d$.
	Now the set of points $x' \in X'$ such that $\tordim_{\cat O_{X',x'}} (i_*\cat O_W)_{x'} \le h+d$
	is an open subset of $X'$ (cf.~\cite[Cor.~9.4.7]{BrodmannSharp13}).
	So
	\[ U:=\SET{w\in W}{\tordim_{\cat O_{X',w}}(i_*\cat O_W)_w \le h+d} \]
	is an open neighbourhood of $x$ in $W$ (and hence in $X$).
	Moreover, for $w \in U$ we have
	\begin{align*}
		\tordim_{\cat O_{Y,f(w)}} \cat O_{U,w} &= \tordim_{\cat O_{Y,f(w)}} \cat O_{W,w} \\
	 &= \tordim_{\cat O_{Y,f(w)}} (i_*\cat O_W)_w \\
	 &\le \tordim_{\cat O_{X',w}}(i_* \cat O_W)_w \\
	 &\le h+d
	\end{align*}
	where the first inequality comes from the fact that smooth morphisms are perfect.
	It follows (cf.~\cite[Prop.~III.3.3]{SGA6}) that $U \hookrightarrow X \to Y$ has finite tor-dimension $\le h+d$.
	Since it is automatically pseudo-coherent, we conclude that it is a perfect morphism.
\end{proof}
\begin{Def}\label{def:perfect-locus}
	Let $f:X \to Y$ be a morphism of finite type between noetherian schemes.
The \emph{perfect locus} of $f$ is the set
	\[ \cat P_f := \SET{x \in X}{\cat O_{X,x}\text{ has finite tor-dimension as an }\cat O_{Y,f(x)}\text{-module}}. \]
	It is an open subset of $X$ and the morphism $f$ is perfect precisely when $\cat P_f = X$ (cf.~Lemma~\ref{lem:perf-locus}).
\end{Def}
\begin{Rem}
	The following theorem, the main result of this section, shows how these two notions --- 
	properness and perfection --- combine to give the compactness locus.
	The remainder of the section will be devoted to its proof (culminating on page~\pageref{proof:main-algebro-thm}).
\end{Rem}
\begin{Thm}\label{thm:main-algebro-thm}
	Let $f:X \to Y$ be a separated, finite type morphism of noetherian schemes.
	The \locus{} of
	$f^* :\Dqc(Y) \to \Dqc(X)$
	is the union of all closed subsets $Z \subseteq X$
	which are proper over Y and which are contained in the perfect locus of $f$.
\end{Thm}
\begin{Cor}
	If $f:X \to Y$ is a proper morphism of noetherian schemes
	then the \locus{} of 
	$f^* :\Dqc(Y) \to \Dqc(X)$
	is the largest specialization closed subset of $X$
	which is contained in the perfect locus of $f$.
	In other words, a point $x \in X$ is contained in the \locus{}
	iff the closure of $x$ is contained in the perfect locus.
\end{Cor}
\begin{Cor}
	If $f:X \to Y$ is a separated, perfect morphism
	of noetherian schemes
	then the \locus{} of
	$f^* :\Dqc(Y) \to \Dqc(X)$
	is the union of all closed subsets of $X$
	which are proper over $Y$.
\end{Cor}
\begin{Rem}
	Another corollary of the theorem is that a separated morphism of finite type
	between noetherian schemes satisfies GN-duality iff it is proper and perfect.
	This characterization was obtained 
	by Lipman and Neeman \cite{LipmanNeeman07} who made a detailed study of 
	those morphisms which satisfy GN-duality.
	In fact, we will utilize several of their techniques in the proof of Theorem~\ref{thm:main-algebro-thm}.
\end{Rem}
\begin{Ter}
	A functor $F:\Der(\cat A)\to \Der(\cat B)$ between two derived categories (or two subcategories thereof) is said to be \emph{bounded above} if there exists an integer $d$ such that for any object $\E$ and integer $m$,  $H^i(\E)=0$ for $i>m$ implies $H^i(F(\E))=0$ for $i>m+d$.  Similarly, the functor is \emph{bounded below} if there exists an integer $d$ such that $H^i(\E)=0$ for $i<m$ implies that $H^i(F(\E))=0$ for $i<m-d$.  We say the functor is \emph{bounded} if it is both bounded above and bounded below.
\end{Ter}

\begin{Prop}\label{prop:bounded}
	Let $f:X\to Y$ be a morphism of quasi-compact, quasi-separated schemes.
	If $\E \in \A_f$ then $\ihom(\E,f^!(-)):\Derqc(Y)\to\Derqc(X)$ is bounded.
\end{Prop}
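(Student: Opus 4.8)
The plan is to treat bounded-below and bounded-above separately, and to begin by trading the a~priori unwieldy functor $\ihom(\E,f^!(-))$ for something concrete. By Lemma~\ref{lem:alt-rel-compact-charac}, since $\E\in\A_f$ we already know $\ihom(\E,f^!(-))\cong\dual\E\otimes f^!(-)$ is a coproduct-preserving exact functor $\Derqc(Y)\to\Derqc(X)$. Moreover, applying Proposition~\ref{prop:b-iso} with $\cat I:=\A_f$ (legitimate, since $f_*(\A_f)\subseteq\cat D^c$ by the very definition of $\A_f$) gives $\Gamma^*(f^*x\otimes\DO)\cong\Gamma^*(f^!x)$, where $\Gamma^*\colon\cat C\to\cat B=\Loc(\A_f)$ is the colocalization of Theorem~\ref{thm:main-thm} and $\Gamma_!\Gamma^*\cong e\otimes-$ for the left idempotent $e$ attached to $\Z_f$. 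Because $\A_f$ is a thick $\otimes$-ideal it is closed under duality (the argument recalled inside the proof of Lemma~\ref{lem:adj}), so $\dual\E\in\A_f\subseteq\cat B$ and hence $e\otimes\dual\E\cong\dual\E$ while the cofibre of $e\to\unit_{\cat C}$ tensored with $\dual\E$ vanishes. Tensoring the isomorphism $\Gamma_!\Gamma^*(f^*x\otimes\DO)\cong\Gamma_!\Gamma^*(f^!x)$ with $\dual\E$ therefore removes the idempotent on both sides and yields a natural isomorphism $\ihom(\E,f^!(-))\cong K\otimes f^*(-)$, where $K:=\dual\E\otimes\DO=\ihom(\E,f^!\unit_{\cat D})$ is a single object. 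So the whole question reduces to controlling $K$ together with the always-boundable functor $f^*$.

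Bounded below is the easy half and uses only that $\E$ is perfect. Since $f$ is quasi-compact and quasi-separated, $\RRb f_*$ has finite cohomological dimension, i.e.\ is bounded above; a formal adjunction argument — using that for the standard $t$-structure $\Derqc(X)^{\ge k}$ is the right orthogonal of $\Derqc(X)^{\le k-1}$ — then shows the right adjoint $f^!$ is bounded below, and tensoring with the perfect (hence cohomologically bounded) complex $\dual\E$ preserves this. In particular $K=\dual\E\otimes f^!\unit_{\cat D}$ is cohomologically bounded below.

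For bounded above it now suffices, by the reduction and the fact that $f^*=\Lder f^*$ is right $t$-exact (hence bounded above), to show that $K$ is cohomologically bounded above. Fix a compact generator $G$ of $\Derqc(X)$, available because $X$ is quasi-compact and quasi-separated. Using that $G\otimes\E$ is rigid and the adjunction $f_*\dashv f^!$, one gets
\[ \Hom_{\Derqc(X)}(G,K[n])\;\cong\;\Hom_{\Derqc(X)}\!\big(G\otimes\E,\,f^!\unit_{\cat D}[n]\big)\;\cong\;\Hom_{\Derqc(Y)}\!\big(\RRb f_*(G\otimes\E),\,\unit_{\cat D}[n]\big). \]
Here the hypothesis $\E\in\A_f$ enters decisively: $G\otimes\E$ is a perfect complex on $X$, so $\RRb f_*(G\otimes\E)$ is a perfect — in particular cohomologically bounded — complex on $Y$, and therefore the right-hand side vanishes for all $n$ outside a fixed finite interval, in particular for all $n\gg0$. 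Thus $K$ is a cohomologically bounded-below object of $\Derqc(X)$ whose maps from the compact generator $G$ vanish in all sufficiently positive degrees, and such an object is automatically cohomologically bounded above.

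The one genuinely non-formal point — the main obstacle — is this last implication: that a bounded-below complex in $\Derqc(X)$ whose maps from a fixed compact generator vanish in all sufficiently positive degrees is bounded above, with bound controlled by the cohomological amplitude of $G$ and the cohomological dimension of $X$. This is exactly where the quasi-compactness and quasi-separatedness of $X$ are really used — through the existence of $G$ and the finiteness of the cohomological dimension of $X$ — and it is the technical heart of the argument. It can be deduced from Neeman's theory of approximable triangulated categories (the approximating perfect complexes are forced to be uniformly bounded above), or proved by hand along the lines of Lipman–Neeman's analysis of the boundedness of $f^!$. (One could also bypass Proposition~\ref{prop:b-iso} entirely and run the same compact-generator computation with an arbitrary $z\in\Derqc(Y)$ in place of $\unit_{\cat D}$, reducing to bounded $z$ via the canonical filtration and coproduct-preservation; this still bottoms out at the same $t$-structural fact.)
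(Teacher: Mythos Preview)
Your argument is correct, and both the bounded-below half and your identification of the key non-formal input (the Lipman--Neeman detection result, \cite[Thm.~4.2]{LipmanNeeman07}, or an approximability argument) match the paper. Where you diverge is in the bounded-above half. The paper works directly with $\ihom(\E,f^!\F)$ for arbitrary $\F\in\Derqc(Y)$: it computes
\[
\Hom_{\Derqc(X)}(\perfgen[n],\ihom(\E,f^!\F))\;\cong\;\Hom_{\Derqc(Y)}\bigl(f_*(\E\otimes\perfgen),\F[-n]\bigr)
\]
for a compact generator $\perfgen$, invokes perfectness of $f_*(\E\otimes\perfgen)$, and feeds this into \cite[Lem.~3.2 and Thm.~4.2]{LipmanNeeman07} to obtain a uniform upper bound on $H^j(\ihom(\E,f^!\F))$. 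You instead first use Proposition~\ref{prop:b-iso} together with the idempotent trick $e\otimes\dual\E\cong\dual\E$ to obtain the identification $\ihom(\E,f^!(-))\cong K\otimes f^*(-)$ with $K=\dual\E\otimes\DO$, which reduces the whole question to the cohomological boundedness of the single object $K$; your Hom computation is then only needed at $\F=\unit_{\cat D}$. This is a genuinely cleaner packaging, at the cost of importing the Section~\ref{sec:main-thm} machinery into a proof the paper keeps self-contained within algebraic geometry. Your parenthetical alternative at the end --- running the compact-generator computation for arbitrary $z$ in place of $\unit_{\cat D}$ --- is essentially the paper's actual argument.
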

\begin{proof}
	The functor $f^!$ is bounded below (cf.~\cite[Thm~4.1]{Lipman09}) and it is straightforward to see
	that $\ihom(\E,-)\cong \dual \E \otimes -$ is bounded for any perfect $\E$ (since the 
	perfect complex
	$\dual \E$ is globally quasi-isomorphic to a bounded complex of flat modules).
	Thus, the functor $\ihom(\E,f^!(-))$ is bounded below for any perfect $\E$; the key is to
	prove that it is bounded above when $\E \in \A_f$.
	To prove this, we will adapt the methods of \cite[Sect.~3--4]{LipmanNeeman07}.
	By \cite[Thm.~3.1.1]{BondalVAndenbergh03}, $\Derqc(X)$ is generated by a single perfect
	complex $\perfgen \in \Derqc(X)$; that is, there exists a perfect complex $\perfgen$ such that 
	if $\G \in \Derqc(X)$ then
	\[ \G \neq 0 \text{ in } \Derqc(X) \Longrightarrow \Hom(\perfgen[n],\G) \neq 0 \text{ for some } n \in \mathbb{Z}.\]
	More precisely, by \cite[Thm.~4.2]{LipmanNeeman07}, if $\perfgen$ is such a perfect generator, there
	exists an integer $A=A(\perfgen)$ such that for any $\G \in \Derqc(X)$ and $j \in \mathbb{Z}$,
	\begin{equation}\label{eq:bddedabove-ddag}
		H^j(\G) \neq 0 \Longrightarrow \Hom(\perfgen[n],\G) \neq 0 \text{ for some } n \le A-j.
	\end{equation}
	On the other hand, $\E \in \A_f$ implies that $f_*(\E\otimes \perfgen)$ is perfect, and hence
	is $a$-locally projective for some $a \in \mathbb{Z}$ (cf.~\cite[p.~218]{LipmanNeeman07}).
	Hence, by \cite[Lem.~3.2]{LipmanNeeman07}, there exists $s=s(Y)>0$ such that
	\[ \Hom(f_*(\E\otimes \perfgen),\F[-n]) = 0\]
	for any $\F \in \Derqc(Y)$ for which $H^j(\F) = 0$ for $j>a-s-n$.
	Then let's prove that $\ihom(\E,f^!(-))$ is bounded above.
	To this end,
	consider $\F\in \Derqc(Y)$ and suppose that $H^j(\F) = 0$ for $j>m$.
	If $j \ge m+A+s-a$ then for any $n \le A-j$ we have that $a-s-n\ge a-s-A+j \ge m$
	and hence
	\[ \Hom(\perfgen[n],\ihom(\E,f^!\F))\cong \Hom(f_*(\E\otimes \perfgen),\F[-n])= 0.\]
	Hence by~\eqref{eq:bddedabove-ddag}, we conclude that $H^j(\ihom(\E,f^!\F))=0$.
	This establishes that $\ihom(\E,f^!(-))$ is bounded above.
\end{proof}
\begin{Lem}\label{lem:restrict-to-open-in-base}
	Let $f:X \to Y$ be a morphism of quasi-compact, quasi-separated schemes.
	Let $V \subset Y$ be a quasi-compact open subset, and set
	$f_V := f|_{f^{-1}(V)} : f^{-1}(V) \to V$.
	If $\E \in \A_f$ then $\E|_{f^{-1}(V)} \in \A_{f_V}$.
\end{Lem}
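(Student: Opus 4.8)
The plan is to exhibit $\E|_{f^{-1}(V)}$ as an $f_V$-relatively compact object by a flat base-change argument.  First I would record that $f^{-1}(V)$ is again quasi-compact (hence quasi-compact and quasi-separated, so that $f_V$ is a morphism of such schemes and $\A_{f_V}\subseteq\Derqc(f^{-1}(V))^c$ is defined): covering $X$ by finitely many affine opens $U_j$ and $V$ by finitely many affine opens $V_i$, each $U_j\cap f^{-1}(V_i)\cong U_j\times_Y V_i$ is quasi-compact because $Y$ is quasi-separated, and $f^{-1}(V)$ is a finite union of such.  Write $j:V\hookrightarrow Y$ and $j':f^{-1}(V)\hookrightarrow X$ for the two open immersions; they form a pull-back square, and (flat) base change along an open immersion gives a natural isomorphism $j^{*}\circ\RRb f_{*}\cong\RRb(f_V)_{*}\circ(j')^{*}$ of functors $\Derqc(X)\to\Derqc(V)$.

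Next I would use that $j'$ is the finite localization of $\Derqc(X)$ associated to the closed complement $X\setminus f^{-1}(V)$ --- which is a Thomason subset precisely because $f^{-1}(V)$ is quasi-compact (cf.~Example~\ref{exa:open-imm}) --- so that $(j')^{*}$ preserves compact objects and, by the Thomason--Neeman localization theorem \cite[Thm.~2.1]{Neeman92b}, every compact object $y'\in\Derqc(f^{-1}(V))^c$ is a direct summand of $(j')^{*}(y)$ for some $y\in\Derqc(X)^c$; likewise $j^{*}$ preserves compacts.  Fixing an arbitrary such $y'$ and such a $y$, the object $(f_V)_*\bigl((j')^{*}\E\otimes y'\bigr)$ is then a direct summand of
\[
	(f_V)_*\bigl((j')^{*}\E\otimes(j')^{*}y\bigr)\;\cong\;(f_V)_*\bigl((j')^{*}(\E\otimes y)\bigr)\;\cong\;j^{*}f_*(\E\otimes y),
\]
using that $(j')^{*}$ is monoidal together with the base-change isomorphism.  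Since $\E\in\A_f$ and $\E\otimes y\in\Derqc(X)^c$, the object $f_*(\E\otimes y)$ is compact in $\Derqc(Y)$, and applying $j^{*}$ keeps it compact in $\Derqc(V)$.  Because $\Derqc(V)$ is rigidly-compactly generated, its compact objects are closed under direct summands, so $(f_V)_*\bigl(\E|_{f^{-1}(V)}\otimes y'\bigr)$ is compact.  As $y'$ was arbitrary and $\E|_{f^{-1}(V)}=(j')^{*}\E$ is itself compact, this yields $\E|_{f^{-1}(V)}\in\A_{f_V}$.

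I do not expect any genuine obstacle here: the argument is formal.  The only two points needing a little care are the (routine) verification that $f^{-1}(V)$ is quasi-compact and the invocation of flat base change for $\RRb f_*$ along the open immersion in the possibly-unbounded setting, which is standard and can in any case be checked on stalks.  Note that, unlike most of the arguments in this paper, this one does not even use the projection formula --- only that open restriction is a monoidal, compact-object-preserving localization functor commuting with $\RRb f_*$.
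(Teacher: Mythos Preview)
Your proof is correct and follows essentially the same approach as the paper: both use the Thomason--Neeman localization theorem to lift a compact object on $f^{-1}(V)$ to (a summand of) the restriction of a compact on $X$, then apply flat base change along the open immersion to identify $\RRb(f_V)_*\bigl((\E\otimes y)|_{f^{-1}(V)}\bigr)$ with $\bigl(\RRb f_*(\E\otimes y)\bigr)|_V$, which is compact. Your added justification that $f^{-1}(V)$ is quasi-compact is a welcome detail the paper leaves implicit.
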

\begin{proof}
	Let $\E \in \A_f$ and consider $\E|_{f^{-1}(V)} \in \Derqc(f^{-1}(V))^c$.
	We claim that 
	\[\RRb(f_V)_*(\E|_{f^{-1}(V)}\otimes \F)\]
	is compact in $\Derqc(V)$
	for any $\F \in \Derqc(f^{-1}(V))^c$.
	By the Thomason-Neeman localization theorem (cf.~\cite[Cor.~4.5.14, Rem.~4.5.15]{Neeman01}),
	$\F \oplus \Sigma \F \simeq
	\G|_{f^{-1}(V)}$ for some $\G \in \Derqc(X)^c$, and it suffices to prove that
	\[ \RRb(f_V)_*(\E|_{f^{-1}(V)}\otimes \G|_{f^{-1}(V)}) \simeq \RRb(f_V)_*((\E\otimes \F)_{f^{-1}(V)})\]
	is compact. 
	By flat base change
	we have
	\[ \RRb(f_V)_*\left((\E\otimes \G)|_{f^{-1}(V)}\right)
		\simeq \left( \RRb f_*(\E\otimes \G)\right)|_V\]
and the latter is compact since restriction to an open preserves compacts and $\E \in \A_f$ by hypothesis.
\end{proof}
\begin{Prop}\label{prop:finite-tor-dimension}
	Let $f:X \to Y$ be a morphism of quasi-compact, quasi-separated schemes.
	If $\E \in \A_f$ then $\E$ has finite tor-dimension as a complex of $f^{-1}\cat O_Y$-modules.
\end{Prop}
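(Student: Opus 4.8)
The assertion is local on $X$ and, by Lemma~\ref{lem:restrict-to-open-in-base}, compatible with shrinking the target, so the plan is to reduce to an affine situation. First I would cover $Y$ by finitely many affine opens $V=\Spec A$ and, using Lemma~\ref{lem:restrict-to-open-in-base}, replace $f$ by $f_V\colon f^{-1}(V)\to V$ and $\E$ by $\E|_{f^{-1}(V)}\in\A_{f_V}$; note $f^{-1}(V)$ is again quasi-compact and quasi-separated. Since finite tor-dimension relative to $f_V$ is a local question on the source, it then suffices to fix an affine open $U=\Spec B\subseteq f^{-1}(V)$ and show that $\E|_U$, regarded as a complex of $A$-modules via restriction of scalars along $A\to B$, has finite tor-dimension over $A$; a finite-cover argument (or \cite[Exp.~III]{SGA6}) reassembles these into the global statement with a uniform bound.

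The key local input is that $f^{!}$ localizes on the source ($f^{!}$ commutes with restriction to opens of $X$, since open immersions are \'etale) and, for the affine morphism $U=\Spec B\to\Spec A=V$, is computed by $\Rder\Hom_{A}(B,-)$ (cf.~\cite{Lipman09,Neeman01}). Hence, for $\cat G\in\Derqc(V)$,
\[ \ihom(\E|_{f^{-1}(V)},f_V^{!}\cat G)|_U\;\cong\;\Rder\Hom_{B}(\E|_U,\Rder\Hom_{A}(B,\cat G))\;\cong\;\Rder\Hom_{A}(\E|_U,\cat G) \]
naturally in $\cat G$, the last isomorphism being tensor--hom adjunction together with $\E|_U\Lotimes_{B}B\simeq\E|_U$. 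By Proposition~\ref{prop:bounded} applied to $f_V$ and $\E|_{f^{-1}(V)}\in\A_{f_V}$, the functor $\ihom(\E|_{f^{-1}(V)},f_V^{!}(-))$ is bounded; restricting to $U$, the functor $\Rder\Hom_{A}(\E|_U,-)$ is in particular bounded above, i.e.\ there is an integer $d$ with $\Ext^{i}_{A}(\E|_U,N)=0$ for all $i>d$ and all $A$-modules $N$. To finish, I would note that $\E|_U$ is a perfect, hence bounded, complex of $B$-modules, so a bounded complex of $A$-modules; a bounded complex of $A$-modules whose $\Ext^{i}_{A}(-,N)$ vanish for $i>d$ uniformly in $N$ is quasi-isomorphic to a bounded complex of projective $A$-modules --- resolve by a bounded-above complex of projectives and truncate, using the vanishing of the high $\Ext$-groups to see the relevant syzygy is projective --- hence to a bounded complex of flat $A$-modules. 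Therefore $\E|_U$ has finite tor-dimension over $A$, as required.

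I expect the main obstacle to be this first reduction: correctly invoking the source-locality of $f^{!}$ and its identification with $\Rder\Hom_{A}(B,-)$ for an affine morphism in the full generality of (possibly non-noetherian) quasi-compact quasi-separated schemes, together with keeping the numerical bound $d$ uniform as $V$ ranges over a finite affine cover of $Y$. Once that identification is in hand, the chain ``$\ihom(\E,f^{!}(-))$ bounded $\Rightarrow$ $\Rder\Hom_{A}(\E|_U,-)$ bounded above $\Rightarrow$ $\E|_U$ of finite tor-dimension over $A$'' is essentially formal, and the local-to-global step for tor-amplitude of a pseudo-coherent complex is standard.
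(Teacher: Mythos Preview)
Your overall strategy matches the paper's: reduce to $Y=\Spec A$ affine via Lemma~\ref{lem:restrict-to-open-in-base}, then for each affine open $j:U=\Spec B\hookrightarrow f^{-1}(V)$ show $\E|_U$ has finite projective (hence flat) dimension over $A$ by bounding $\Ext_A^*(\E|_U,-)$ using Proposition~\ref{prop:bounded}. But your bridge from Proposition~\ref{prop:bounded} to that $\Ext$-bound has a real gap. In this paper $f^!$ always means the right adjoint of $\RRb f_*$, and this functor does \emph{not} localize on the source: for an open immersion $j$ one has $(f_Vj)^!\cong j^!\,f_V^!$ with $j^!$ the right adjoint of $\RRb j_*$, and $j^!\neq j^*$ in general (e.g.\ for $j:\Spec k[x,x^{-1}]\hookrightarrow\Spec k[x]$ one has $\Hom_{k[x]}(k[x,x^{-1}],k[x])=0$, so $j^!\cat O_X\not\simeq j^*\cat O_X$). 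The identity $j^!\simeq j^*$ you invoke holds for the Grothendieck-duality twisted inverse image, which agrees with the right adjoint of $\RRb f_*$ only for proper $f$. So your displayed isomorphism $(f_V^!\cat G)|_U\cong\Rder\Hom_A(B,\cat G)$ is unjustified.

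The paper avoids restricting $f^!$ to $U$ altogether. Using the projection formula $\RRb j_*(\E|_U)\simeq \E\otimes\RRb j_*\cat O_U$ it rewrites
\[
\Ext_{\cat O_Y}^j\big((f\circ j)_*(\E|_U),\G\big)\;\cong\;\Hom_{\Der(X)}\big(\RRb j_*\cat O_U,\;\ihom(\E,f^!\G)[j]\big)
\]
and then controls the right-hand side with two results from \cite{LipmanNeeman07}: $\RRb j_*\cat O_U$ is $(-t)$-locally projective for some $t=t_U$ (their Lemma~3.3), and maps from a locally projective complex into one with cohomology bounded above vanish in high degrees (their Lemma~3.2). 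Together with the boundedness of $\ihom(\E,f^!(-))$ from Proposition~\ref{prop:bounded}, this yields $\Ext_A^j(\E|_U,\G)=0$ for $j\gg 0$, and your final truncation argument finishes the proof.
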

\begin{proof}
	Let $Y=\bigcup V_i$ be an open affine cover and set $f_i : f^{-1}(V_i)\to V_i$.
	By Lemma~\ref{lem:restrict-to-open-in-base}, 
	if $\E\in \A_f$ then
	$\E|_{f^{-1}(V_i)}\in \A_{f_i}$.
	On the other hand, if $\E|_{f^{-1}(V_i)}$ has finite tor-dimension as a complex of
	$f_i^{-1}\cat O_{V_i}\simeq f^{-1}\cat O_Y|_{f^{-1}(V_i)}$-modules
	for all~$i$, then~$\E$ has finite tor-dimension as a complex of $f^{-1}\cat O_Y$-modules.
	Thus, the problem is local in the base, and we can assume without loss of generality that $Y=\Spec(A)$ is affine.

	Now to show 
	that $\E$ has finite tor-dimension as complex of
	$f^{-1}(\cat O_Y)$-modules, it suffices to check that $\E|_U$ has finite tor-dimension
	as a complex of $(f^{-1}\cat O_Y)|_U \simeq (f\circ j)^{-1}(\cat O_Y)$-modules
	for any
	open affine $j:U\hookrightarrow X$, say $U=\Spec(B)$.
	Moreover, since $f\circ j$ is a morphism of affine schemes, we just need to prove that $(f\circ j)_*(\E|_U)$ has finite tor-dimension as a complex of $\cat O_Y$-modules (\ie~the complex of $B$-modules $\E|_U$ has finite tor-dimension when regarded as a complex of $A$-modules).

	Then 
	observe that for any $\G \in \Qcoh(Y)$ and $j \in \mathbb Z$ we have
	\begin{align*} \Ext_{\cat O_Y}^j( (f\circ j)_*(\E|_U),\G) &=
		\Hom_{\Der(Y)}( (f\circ j)_*(\E|_U),\G[j])\\
		&\cong \Hom_{\Der(Y)}(\RRb(f\circ j)_*(\E|_U),\G[j])\\
		&\cong \Hom_{\Der(Y)}(\RRb f_* \RRb j_*(\E|_U),\G[j])\\
		&\cong \Hom_{\Der(X)}(\RRb j_*(\E|_U),f^!\G[j])\\
		&\cong \Hom_{\Der(X)}(\E\otimes \RRb j_*\cat O_U,f^!\G[j])\\
		&\cong \Hom_{\Der(X)}(\RRb j_*\cat O_U,\ihom(\E,f^!\G)[j]).
	\end{align*}
	Applying \cite[Lem.~3.3]{LipmanNeeman07} to $j:U\hookrightarrow X$, there
	exists $t=t_U >0$ such that
	for any $a$-locally projective $\F \in \Derqc(U)$,
	$\RRb j_*(\F) \in \Derqc(X)$ is $(a-t)$-locally projective.
	Now, $\cat O_U$ is $0$-locally projective, so
	then $\RRb j_*(\cat O_U)$ is $-t$-locally projective.
	Then by \cite[Lem.~3.2]{LipmanNeeman07}, there exists $s=s(X)>0$
	such that 
	\[ \Hom_{\Der(X)}(\RRb j_* \cat O_U,\ihom(\E,f^!(\G))[j])=0\]
	for any $\G\in \Derqc(Y)$ with the property that $H^i(\ihom(\E,f^!\G)[j])=0$ for all $i> -t-s$.
	On the other hand, by Proposition~\ref{prop:bounded}, we 
	know
	that $\ihom(\E,f^!(-))$ is bounded since $\cat E \in \A_f$ by assumption.
	So there exists $m=m_\E \in \mathbb Z$ such that
	if $\G \in \Derqc(Y)$ has the property that $H^i(\G) = 0$ for $i>n$,
	then 
$H^i(\ihom(\E,f^!\G))=0$ for all $i > n +m$.
	Thus, setting $j_0 := m+t+s$ we conclude that
	\[ \Ext_{\cat O_Y}^j((f\circ j)_*(\E|_U),\G)\cong\Hom_{\Der(X)}(\RRb i_*\cat O_U,\ihom(\E,f^!\G)[j])=0 \]
	for all $j>j_0$ and $\G \in \Qcoh(Y)$.
	It follows that the bounded complex $(f\circ j)_*(\E|_U)$ 
	has finite projective dimension, hence has finite flat dimension (\ie is
	isomorphic to a bounded complex of flat modules) which completes the proof.
\end{proof}
\begin{Lem}
	Let $R$ be a commutative noetherian local ring and let $M$ and $N$ be complexes of $R$-modules.
	Suppose that $M$ is bounded below and homologically finite, and that $N$ is perfect.
	If $N \neq 0$ in $\Der(R)$ and $M \otimes^{\mathbb L}_R N$ is perfect then $M$ is perfect.
\end{Lem}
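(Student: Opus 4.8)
\noindent
The plan is to detect perfectness by tensoring with the residue field $k=R/\gm$ and to exploit that every complex of $k$-vector spaces is formal. First I would record the standard criterion, valid over a noetherian local ring: a homologically finite complex $P$ is perfect if and only if $H^i(P\Lotimes_R k)=0$ for $i\gg 0$ (equivalently, $P$ has finite projective dimension), the vanishing for $i\ll 0$ being automatic once $P$ is bounded below. This follows by taking a minimal free resolution $F\to P$ by finitely generated free modules with differential valued in $\gm F$, so that $F\Lotimes_R k$ has zero differential and $H^i(P\Lotimes_R k)=k^{\rank F^i}$; then $P$ is perfect iff the minimal $F$ is bounded iff these ranks vanish for $i\gg 0$. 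From the same minimal-complex picture one sees that a perfect complex $N$ satisfies $N\Lotimes_R k\neq 0$ unless $N=0$ in $\Der(R)$, and that for perfect $N$ the complex $N\Lotimes_R k$ has bounded, finite-dimensional homology.

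The key move is then: since $N\Lotimes_R k$ is a bounded complex of $k$-vector spaces it is formal, hence isomorphic in $\Der(k)\subseteq\Der(R)$ to a finite direct sum $\bigoplus_{i\in S}k[-i]^{\oplus n_i}$ of shifted copies of $k$, where $S=\{i: H^i(N\Lotimes_R k)\neq 0\}$ is finite and — because $N\neq 0$ — nonempty, with each $n_i=\dim_k H^i(N\Lotimes_R k)\ge 1$. Using associativity of $\Lotimes_R$ together with the fact that it commutes with finite direct sums and shifts, I obtain
\[
 (M\Lotimes_R N)\Lotimes_R k\;\simeq\; M\Lotimes_R(N\Lotimes_R k)\;\simeq\;\bigoplus_{i\in S}\bigl(M\Lotimes_R k\bigr)[-i]^{\oplus n_i}.
\]
Taking cohomology, $H^j$ of the right-hand side is $\bigoplus_{i\in S}H^{j-i}(M\Lotimes_R k)^{\oplus n_i}$, so for each $i\in S$ the group $H^{j-i}(M\Lotimes_R k)$ occurs as a direct summand of $H^j\bigl((M\Lotimes_R N)\Lotimes_R k\bigr)$.

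Now the hypothesis enters. Since $M\Lotimes_R N$ is perfect, $(M\Lotimes_R N)\Lotimes_R k$ has bounded cohomology, say $H^j=0$ for $j>L$. For every $j>L$ and every $i\in S$ the summand $H^{j-i}(M\Lotimes_R k)^{\oplus n_i}$ must therefore vanish, and since $n_i\ge 1$ this forces $H^{j-i}(M\Lotimes_R k)=0$. As $j$ ranges over the integers $>L$ and $i$ over the nonempty finite set $S$, the indices $j-i$ exhaust all integers $>L-\max S$, so $H^\ell(M\Lotimes_R k)=0$ for $\ell\gg 0$. Combined with the vanishing in low degrees coming from $M$ being bounded below, $M\Lotimes_R k$ has bounded cohomology, and by the criterion of the first paragraph the homologically finite complex $M$ is perfect.

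I expect the only delicate point — the "main obstacle" — to be the bookkeeping in this last step: one genuinely needs $H^{j-i}(M\Lotimes_R k)$ to appear as a \emph{direct summand} (not merely a subquotient) of $H^j\bigl((M\Lotimes_R N)\Lotimes_R k\bigr)$, which is exactly why the formality splitting of $N\Lotimes_R k$ is used rather than, say, a spectral sequence, and one should check that the hypotheses $N\neq 0$ and "$M$ bounded below" are each used precisely where stated (nonemptiness of $S$, and automatic low-degree vanishing, respectively).
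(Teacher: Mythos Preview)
Your proof is correct and follows essentially the same approach as the paper: both reduce to the residue field $k$, use that $N\Lotimes_R k\neq 0$ when $N\neq 0$, and deduce boundedness of $H^*(M\Lotimes_R k)$ from boundedness of $H^*\bigl((M\Lotimes_R N)\Lotimes_R k\bigr)$. The only cosmetic difference is that the paper invokes the K\"unneth isomorphism $H_*\bigl((k\Lotimes_R M)\otimes_k(k\Lotimes_R N)\bigr)\cong H_*(k\Lotimes_R M)\otimes_k H_*(k\Lotimes_R N)$ directly, whereas you phrase the same thing via formality of $N\Lotimes_R k$ over the field $k$; these are equivalent.
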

\begin{proof}
	Over a noetherian ring, a bounded below, homologically finite complex is perfect
	if and only if it has finite projective dimension if and only if it has finite flat dimension,
	in which case the projective and flat dimensions coincide (cf.~\cite[Cor.~2.10.F]{AvramovFoxby91}).
	Moreover, when $R$ is local, its flat dimension can be computed as
	\begin{equation}\label{eq:local-perfa}
		\fd_R(M) = \sup\SET{i \in \mathbb Z}{H_i(k\otimes_R^{\mathbb L} M) \neq 0}.
	\end{equation}
	This is mentioned, \eg, in \cite{DwyerGreenleesIyengar06}, and can be
	obtained by computing the derived tensor-product using a minimal free resolution of $M$.
	Now, if $M \otimes_R^{\mathbb L} N$ is perfect then evidently
	$\fd_R(M\otimes_R^{\mathbb L}N) < \infty$, which implies 
	that
	\begin{equation}\label{eq:local-perfb}
		H_i(k\otimes_R^{\mathbb L}(M\otimes_R^{\mathbb L} N)) = 0 \qquad\text{ for } i \gg0.
	\end{equation}
	Next note that since $R$ is local, a pefect complex $N$ is zero in $\Der(R)$
	if and only if its homological support $\supp_R(N) \neq \emptyset$
	if and only if $\supp_R(N)$ contains the closed point
	if and only if $\supp_k(k \otimes_R^{\mathbb L} N) \neq \emptyset$
	if and only if $k \otimes_R^{\mathbb L} N \neq 0$ in $\Der(k)$.
	So $H_*(k\otimes_R^{\mathbb L} N)\neq 0$.
	Hence, from
	\begin{align*}
		H_*(k\otimes_R^{\mathbb L}(M\otimes_R^{\mathbb L}N)) &\cong H_*((k\otimes_R^{\mathbb L}M)\otimes_k(k\otimes_R^{\mathbb L}N))\\
		&\cong H_*(k\otimes_R^{\mathbb L} M)\otimes_k H_*(k\otimes_R^{\mathbb L}N)
	\end{align*}
	we see that \eqref{eq:local-perfb} implies $H_*(k\otimes_R^{\mathbb L} M)$ must also be
	bounded above 
	and thus $M$ has finite projective dimension by \eqref{eq:local-perfa}.
\end{proof}
\begin{Cor}\label{cor:local-koszul}
	Let $R$ be a commutative noetherian local ring and $I \subset R$ an ideal.
	Let $K(J)$ denote the Koszul complex associated to a nonzero ideal $J \subset R$.
	If $R/I \otimes_R^{\mathbb L} K(J)$ is compact in $\Der(R)$ then
	$R/I$ is compact in $\Der(R)$.
\end{Cor}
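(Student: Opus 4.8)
The plan is to obtain this as an immediate application of the preceding Lemma. First I would set $M := R/I$, regarded as a complex concentrated in degree zero; since $R$ is noetherian, $R/I$ is a finitely generated module, so $M$ is bounded below and homologically finite. Next, fixing a finite generating set $a_1,\dots,a_n$ of $J$, I take $N := K(J) = K(a_1,\dots,a_n)$, which is a bounded complex of finite free $R$-modules, hence perfect, and in particular compact in $\Der(R)$.

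The one hypothesis of the Lemma that requires a small check is that $N \neq 0$ in $\Der(R)$. Because $R$ is local and $J$ is a proper ideal, each $a_i$ lies in the maximal ideal $\gm$, so $k \otimes_R^{\mathbb{L}} N$ is the Koszul complex on the zero sequence over the residue field $k$, namely the exterior algebra $\bigwedge^{\bullet} k^{n}$, which is nonzero; hence $N \neq 0$ in $\Der(R)$. (Equivalently, $H_0(N) = R/J \neq 0$.)

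Finally, recalling that the compact objects of $\Der(R)$ are precisely the perfect complexes, the hypothesis that $R/I \otimes_R^{\mathbb{L}} K(J) = M \otimes_R^{\mathbb{L}} N$ is compact says exactly that $M \otimes_R^{\mathbb{L}} N$ is perfect. Applying the Lemma to $M$ and $N$ then gives that $M = R/I$ is perfect, i.e.\ compact in $\Der(R)$, which is the assertion. There is essentially no obstacle: the proof is just a matter of verifying the hypotheses of the Lemma, the only mildly delicate input being the nonvanishing of the Koszul complex, which is handled above.
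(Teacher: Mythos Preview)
Your proof is correct and is exactly the intended argument: the paper states the corollary without proof, as an immediate consequence of the preceding Lemma, and you have simply spelled out the verification of its hypotheses. One small caveat: you write ``$J$ is a proper ideal'' to ensure $K(J)\neq 0$, whereas the statement literally says only that $J$ is nonzero; in fact properness is what is needed (if $J=R$ then $K(J)$ is contractible and the conclusion can fail), so you have tacitly corrected a minor imprecision in the hypothesis rather than introduced a gap.
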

\begin{Prop}\label{prop:closed-immersion}
	Let $i:Z \hookrightarrow X$ be a closed immersion of a noetherian scheme~$X$.
	Then the \locus{} $\Z_i$ is contained in the perfect locus $\cat P_i$.
\end{Prop}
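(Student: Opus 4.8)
The plan is to localize at a point of $\Z_i$ and reduce the statement to Corollary~\ref{cor:local-koszul}. First note that a closed immersion is a finite-type morphism between noetherian schemes ($Z$ being a closed subscheme of the noetherian $X$), so the perfect locus $\cat P_i \subseteq Z$ is defined, and by Definition~\ref{def:perfect-locus} a point $z$ lies in $\cat P_i$ exactly when the stalk $\cat O_{Z,z}$ has finite tor-dimension over $R := \cat O_{X,i(z)}$; since $i$ is a closed immersion, $S := \cat O_{Z,z}$ is a quotient ring $R/I$ with $I \subseteq \mathfrak m_R$, the $R$-module structure on any $\cat O_Z$-complex at the stalk being restriction of scalars along $R \twoheadrightarrow S$. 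As $\Z_i \subseteq Z$ by definition, it therefore suffices to fix $z \in \Z_i$ and prove $\fd_R(R/I) < \infty$.

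Two ingredients feed in. First, by definition of $\Z_i$ there is some $\cat E \in \A_i$ with $z \in \supp(\cat E)$, i.e. the stalk $\cat E_z$ is a \emph{nonzero} perfect complex over $S$. Second, by Proposition~\ref{prop:finite-tor-dimension} applied to $f=i$, the complex $\cat E$ has finite tor-dimension as a complex of $i^{-1}\cat O_X$-modules, so passing to the stalk at $z$ (using $(i^{-1}\cat O_X)_z \cong R$) the complex $\cat E_z$ has finite tor-dimension over $R$. The heart of the argument is to convert these into the hypothesis of Corollary~\ref{cor:local-koszul}, namely that $R/I \otimes^{\mathbb L}_R K(J)$ is perfect over $R$ for a suitable nonzero ideal $J \subseteq R$. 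Assuming $\mathfrak m_S \neq 0$, I would pick $a_1,\dots,a_n \in \mathfrak m_R$ whose images generate $\mathfrak m_S$ and set $J := (a_1,\dots,a_n)$ (nonzero in $R$, since its image $\mathfrak m_S$ is). Then $S \otimes^{\mathbb L}_R K_R(J) = S \otimes_R K_R(a_1,\dots,a_n)$ — no higher Tor, as $K_R(J)$ is a bounded complex of free modules — is the Koszul complex $K_S(\mathfrak m_S)$ on generators of $\mathfrak m_S$, a perfect $S$-complex whose (homological) support is the single closed point $\mathfrak m_S$. Since $\cat E_z \neq 0$ is homologically finite over the local ring $S$, its support $\supp_S(\cat E_z)$ is a nonempty closed subset of $\Spec S$ and hence contains $\mathfrak m_S$; by the Hopkins--Neeman classification of the thick subcategories of $\Dperf(S)$ by specialization-closed subsets of $\Spec S$, this puts $K_S(\mathfrak m_S) = S \otimes^{\mathbb L}_R K_R(J)$ into $\thick\langle\cat E_z\rangle$. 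Because finite tor-dimension over $R$ is stable under shifts, cones and retracts, the $S$-complexes of finite tor-dimension over $R$ form a thick subcategory of $\Dperf(S)$ containing $\cat E_z$; hence $S \otimes^{\mathbb L}_R K_R(J)$ has finite tor-dimension over $R$, and being also bounded and homologically finite over $R$ it is perfect over $R$. Corollary~\ref{cor:local-koszul} then yields that $R/I = S$ is perfect over $R$, i.e. $\fd_R(S) < \infty$.

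There remains the degenerate case $\mathfrak m_S = 0$, i.e. $S$ a field, where the choice of $J$ fails; this is easy: $\cat E_z$ splits in $\Der(S)$ as a finite nonempty direct sum of shifts of $S$, so $S$ is a retract (up to shift) of $\cat E_z$ in $\Der(R)$, and since finite-tor-dimension $R$-complexes are closed under retracts, $\fd_R(S) < \infty$ again. In all cases $z \in \cat P_i$, proving $\Z_i \subseteq \cat P_i$.

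I expect the middle step — transporting finiteness of tor-dimension from $\cat E_z$ to the Koszul-twisted quotient $S \otimes^{\mathbb L}_R K_R(J)$ — to be the main obstacle. The key point is that over the local ring $S$ a nonzero perfect complex always ``sees'' the closed point, so its thick span contains $K_S(\mathfrak m_S)$; combining this with the classification of thick subcategories of $\Dperf(S)$ and with the (elementary but essential) stability of the finite-tor-dimension condition along the operations of a thick subcategory lets one bend the conclusion of Proposition~\ref{prop:finite-tor-dimension} into the precise shape required by Corollary~\ref{cor:local-koszul}. Everything else is routine localization and base change.
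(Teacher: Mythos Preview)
Your proof is correct, but it takes a different route from the paper's. Both arguments converge on Corollary~\ref{cor:local-koszul} at the end, but the paths there diverge.

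The paper first reduces to the affine case $f:\Spec(R/I)\hookrightarrow\Spec(R)$ using Lemma~\ref{lem:restrict-to-open-in-base}, and then works \emph{directly} with Koszul complexes rather than going through Proposition~\ref{prop:finite-tor-dimension}. For an ideal $J\supseteq I$, one has $f^*(K_R(J))\cong K_{R/I}(\overline{J})$, so by the projection formula $f_*(K_{R/I}(\overline{J}))\cong R/I\otimes^{\mathbb L}_R K_R(J)$. Since $\supp(K_{R/I}(\overline{J}))=V(\overline{J})$, the condition $V(\overline{J})\subseteq\Z_f$ is equivalent (via the thick-subcategory classification) to $K_{R/I}(\overline{J})\in\A_f$, which forces $R/I\otimes^{\mathbb L}_R K_R(J)$ to be perfect in $\Der(R)$. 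Localizing at any $\mathfrak{q}\in V(J)$ then feeds straight into Corollary~\ref{cor:local-koszul}. No appeal to Proposition~\ref{prop:finite-tor-dimension} or to Hopkins--Neeman is needed.

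Your approach is more indirect: you invoke Proposition~\ref{prop:finite-tor-dimension} to get finite relative tor-dimension for an arbitrary $\E\in\A_i$ supported at $z$, and then bring in the Hopkins--Neeman classification to transport that property from $\E_z$ to the specific Koszul complex needed by Corollary~\ref{cor:local-koszul}. This works, and has the virtue of illustrating how the general Proposition~\ref{prop:finite-tor-dimension} already contains the key content; but the paper's argument is shorter, avoids the external classification theorem, and exhibits the explicit Koszul witness in $\A_f$ rather than deducing its existence abstractly.
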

\begin{proof}
	It follows from Lemma~\ref{lem:restrict-to-open-in-base} that the problem is
	local in the base, so it suffices to prove the claim for a closed immersion of affine schemes.
	So let $R$ be a commutative noetherian ring, $I\subset R$ an ideal
	and set $f:=\Spec(R/I)\hookrightarrow \Spec(R)$.
	Let~$J$ be any ideal of $R$ containing $I$
	and let $\overline{J}$ denote the corresponding ideal of~$R/I$.
	Now $f^*(K_R(J))\cong K_{R/I}(\overline{J})$, and hence
	\[ f_*(K_{R/I}(\overline{J}))\cong R/I \otimes_R^{\mathbb L} K_R(J)\]
	by the projection formula.
	Since $\supp(K_{R/J}(\overline{J}))=V(\overline{J})$,
	we have that $V(\overline{J}) \subset \Z_f$ if and only if
	$f_*(K_{R/I}(\overline{J}))$ is compact in $\Der(R)$ if and only if $R/I \otimes_R^{\mathbb L} K_R(J)$ is compact in $\Der(R)$.
	This is the case if and only if 
	\[ ( (R/I) \otimes_R^{\mathbb L} K_R(J))_{\mathfrak q}
		\simeq R_{\mathfrak q}/I_{\mathfrak q} \otimes_{R_{\mathfrak q}}^{\mathbb L} K_{R_{\mathfrak q}}(J_{\mathfrak q})\]
	is compact in $\Der(R_{\mathfrak q})$ for all $\mathfrak q \in V(J)$.
	Invoking Corollary~\ref{cor:local-koszul}, we conclude that if
	$V(J) \subset \Z_f \subset V(I)$ then 
	$R_{\mathfrak q}/I_{\mathfrak q}$ is compact in $\Der(R_{\mathfrak q})$ for all ${\mathfrak q}\in V(J)$,
	which means by definition that $V(J) \subset \cat P_f$.
	Since $\Z_f$ is closed under specialization, it follows that $\Z_f \subset \cat P_f$.
\end{proof}

\begin{Prop}\label{prop:finite-type}
	Let $f:X \to Y$ be a morphism of finite type between noetherian schemes.
	Then the \locus{} of $f^*:\Derqc(Y)\to\Derqc(X)$ is contained in the perfect
	locus of $f$.
\end{Prop}
\begin{proof}
	If $x \in \cat Z_f$ is a point in the \locus{}, then there exists a complex ${\E \in \A_f}$ with $\supp(\E)=\overline{\{x\}}$.
	By Proposition~\ref{prop:finite-tor-dimension}, $\E$ has finite tor-dimension 
	as a complex of $f^{-1}\cat O_Y$-modules.
	Since $f$ is of finite type, there is an open affine neighbourhood $j:U\hookrightarrow X$ of $x$
	such that $f\circ j$ factors as a closed immersion 
	${i:U\hookrightarrow X'}$ followed by a smooth morphism $g:X'\to Y$.
	Now, $j^*(\E) \in \Der(U)^c$ has finite tor-dimension relative to $f \circ j$,
	hence \cite[p.~246, Prop.~3.6]{SGA6} implies that $\RRb i_*(j^*\E)$ is perfect.
Moreover, since $\Derqc(U)$ is generated by $\unit$ (since $U$ is affine),
	$\mathbb{R}i_*(j^*\E)$ perfect implies $j^*\E \in \A_i$.
	Furthermore, since
	$\supp_U(j^*\E) = {U \cap \supp(\E) \ni x}$,
we conclude that $x \in \cat Z_i$.
	Hence by Proposition~\ref{prop:closed-immersion}, $x$ is contained in the perfect locus $\cat P_i$ of the closed immersion $i$.
	Since $g$ is smooth (hence perfect), it follows that $x \in \cat P_{g\circ i}$.
	Thus $x \in \cat P_{g \circ i}=\cat P_{f \circ j} = U \cap \cat P_f$.
\end{proof}
\begin{Prop}\label{prop:must-be-proper}
Let $f:X\to Y$ be a separated morphism of finite type between noetherian schemes
and let $Z \subseteq X$ be a closed subset.
If $Z$ is contained in the \locus{} of $f^*:\Derqc(Y)\to\Derqc(X)$ then $f$ is proper on $Z$.
\end{Prop}
\begin{proof} 
Let $i:Z\hookrightarrow X$ denote the closed immersion of $Z$ equipped with its reduced scheme structure.
Our assumption that $Z$ is contained in the \locus{} of $f^*$ ensures that 
$\RRb (f\circ i)_*: \Dqc(Z) \to \Dqc(Y)$ preserves perfect complexes.
On the other hand, 
it will be convenient to say that a morphism is ``quasi-proper'' if its derived pushforward preserves pseudo-coherent complexes.
Approximation of pseudo-coherent complexes by perfect complexes implies that a morphism is quasi-proper 
if and only if it sends perfect complexes to pseudo-coherent complexes 
(see \cite[\href{https://stacks.math.columbia.edu/tag/08EL}{Section 08EL}]{stacks-project} and \cite[Corollary~4.3.2]{LipmanNeeman07}).
Since perfect complexes are pseudo-coherent, it follows that $f\circ i$ is quasi-proper.
Our goal is to prove that $f\circ i$ is proper.
Since properness and quasi-properness are both local in
the target (for the latter just use flat base change as in
the proof of Lemma~\ref{lem:restrict-to-open-in-base})
we can assume that the target 
$Y$ is affine.
By Nagata compactification (\eg~\cite[Theorem~4.1]{Conrad07}) we can write $f\circ i$ as a composite
$Z \hookrightarrow W \to Y$
where $j:Z \hookrightarrow W$ 
is an open immersion and $g:W \to Y$ is proper.
We next prove that the open immersion $j$ is in fact quasi-proper.

To this end, let $\E \in \Dqc(Z)$ be pseudo-coherent and consider 
$j_*\E \in \Dqc(W)$.
An application 
of the derived Chow lemma 
(see \cite[\href{https://stacks.math.columbia.edu/tag/0CSL}{Lemma 0CSL}]{stacks-project}
and \cite[Corollaire III.1.12]{SGA6})
is 
that $j_*\E$ is pseudo-coherent if 
$\RRb g_*( j_*\E \otimes_{W}^{\mathbb{L}} \F)$ is pseudo-coherent for every 
pseudo-coherent $\F \in \Dqc(W)$.
This is indeed the case since
\[
\RRb g_*(j_*\E \otimes_{W}^{\mathbb{L}} \F)
\cong \RRb g_*(j_*(\E \otimes_Z^{\mathbb{L}} j^* \F)) \cong
\RRb (f\circ i)_* (\E \otimes_Z^{\mathbb{L}} j^* \F)\]
and $f\circ i=g\circ j$ is quasi-proper.
We conclude that the open immersion $j:Z \hookrightarrow W$ is quasi-proper.
But it is easy to see that a quasi-proper open immersion is in fact proper.
Indeed, we can cover $W$ by open affines and reduce to the case 
of a quasi-proper open immersion of affine schemes $\Spec(B) \to \Spec(A)$. 
This must be a finite (hence proper) morphism of affine schemes since quasi-properness implies that $B$ is finitely generated as an $A$-module
(cf.~Example~\ref{Exa:closed-immersions-test}).
We conclude that the open immersion $j:Z\hookrightarrow W$ is proper, and hence so is 
$g\circ j=f\circ i$.
\end{proof}
\begin{Prop}\label{prop:proper-preserves-enhanced}
Let $f:X\to Y$ be a finite type morphism of noetherian schemes.
Let $\E \in \Derqc(X)^c$ be a compact object 
and assume that $f$ is proper over $\supp(\E)$ and that $\E$ has finite tor-dimension as a complex of $f^{-1}\cat O_Y$-modules.
Then $\RRb f_*(\E)$ is a compact object in $\Derqc(Y)$.
\end{Prop}
\begin{proof}
	The argument is essentially contained in 
	\cite[\href{https://stacks.math.columbia.edu/tag/08EV}{Tag 08EV}]{stacks-project}.
We sketch the proof for convenience.
Each cohomology sheaf $H^q(\E)$ is a coherent $\cat O_X$-module whose support is proper over $Y$.
Hence by \cite[III.\,Cor.\,3.2.4]{EGA3}, the $\cat O_Y$-modules $R^pf_* H^q(\E)$ are also coherent.
It then follows that the cohomology sheaves $H^n(\RRb f_*\E)$ are also coherent by considering
the spectral sequence
$R^p f_* H^q(\E) \Rightarrow H^{p+q}(\RRb f_*\E)$.
Moreover, the $H^n(\RRb f_* \E)$ vanish for $n \gg 0$ (by \eg~\cite[\href{https://stacks.math.columbia.edu/tag/01XJ}{Tag 01XJ}]{stacks-project}).
This establishes that $\RRb f_* \E$ is pseudo-coherent  and
it remains to show that it has finite tor-dimension as a complex of $\cat O_Y$-modules.
Since $Y$ is quasi-separated, an object $\F \in \Derqc(Y)$ has tor-amplitude in $[a,b]$ if and only if
	$H^i(\F\otimes_{\cat O_Y}^{\mathbb L} \G)=0$ 
	for all $i \not\in [a,b]$ and $\G \in \Qcoh(Y)$.
	(The point being that we need only check this condition for quasi-coherent sheaves $\G$ rather than 
	for all $\cat O_Y$-modules; see \eg~\cite[\href{https://stacks.math.columbia.edu/tag/08EA}{Tag 08EA}]{stacks-project}.)	
	Then observe that for any $\G \in \Qcoh(Y)$, we have
	\begin{align*}
		\RRb f_*\E \otimes_{\cat O_Y}^{\mathbb L} \G &\simeq \RRb f_*(\E\otimes_{\cat O_X}^{\mathbb L} \mathbb{L}f^* \G)
		\simeq \RRb f_*(\E \otimes_{f^{-1}\cat O_Y}^{\mathbb L} f^{-1} \G).
	\end{align*}
	By assumption, $\E$ has finite tor-dimension as an object of $\Der(f^{-1}\cat O_Y)$.
	Hence the complex $\E \otimes_{f^{-1}\cat O_Y}^{\mathbb L} f^{-1} \G$ has cohomology sheaves
	in a given finite range, say $[a,b]$.
	Then its image under $\RRb f_*$ has cohomology in the range $[a,b+d]$ for some integer~$d$ depending on $Y$ but not on $\G$ (cf.~\cite[Prop.~3.9.2]{Lipman09}).
	So there is a universal bound for the cohomology
	\[ H^i(\RRb f_*\E \otimes^{\mathbb{L}}_{\cat O_Y} \G) \]
	for all quasi-coherent sheaves $\G\in\Qcoh(Y)$, independent of $\G$,
	and we conclude that $\RRb f_*(\E)$ has finite tor-dimension.
\end{proof}
\begin{proof}[Proof of Theorem~\ref{thm:main-algebro-thm}]
\label{proof:main-algebro-thm}
Since the \locus{} $\Z_f \subseteq X$ is specialization closed, it follows
from Proposition~\ref{prop:finite-type} and Proposition~\ref{prop:must-be-proper}
that
it is contained in the union of all closed subsets $V \subseteq X$ which are both proper over $Y$
and contained in the perfect locus $\cat P_f$.
For the converse, let $V \subseteq X$ be a closed subset over which $f$ is proper
and which is contained in the perfect locus.
To prove that $V \subseteq \Z_f$, consider
any perfect complex $\E \in \Derqc(X)^c$ with $\supp(\E) \subseteq V$.
	The assumption that $V \subseteq \cat P_f$ implies that $\E$ has finite tor-dimension as a complex of $f^{-1}\cat O_Y$-modules.
	Indeed, it suffices to check that the complex of $\cat O_{X,x}$-modules $\E_x$ has
	finite tor-dimension as an $(f^{-1}\cat O_Y)_x \cong \cat O_{Y,f(x)}$-module for each $x \in X$.
	For $x \in \supp(\E)$, this follows from the fact that $\cat O_{X,x}$ itself has finite tor-dimension as an $\cat O_{Y,f(x)}$-module,
	since $x \in \supp(\E)\subseteq V\subseteq\cat P_f$ by assumption, while $\E_x$ is zero if $x \not\in\supp(\E)$.
	Invoking Proposition~\ref{prop:proper-preserves-enhanced}, we conclude that $\RRb f_*(\E)$ is
	compact in $\Derqc(Y)$.
	This establishes that $V \subseteq \Z_f$ and the proof is complete.
\end{proof}
\bigbreak
\section{Further examples and discussion}
\label{sec:further-examples}
\label{sec:further-directions}

We conclude with 
some
additional examples
and a discussion of future research directions.

\begin{Exa}[Eilenberg-MacLane spectra] \label{exa:eilenberg-maclane}
	Consider the change-of-rings adjunction
	$f^* : \SH \adjto \Ho(H\mathbb Z\text{-Mod}):f_*$
	associated to the map of ring spectra $\mathbb S \to H \mathbb Z$.
	Under the equivalence $\Ho(H\mathbb Z\text{-Mod})\cong \Der(\mathbb Z)$
	the right adjoint $f_*:\Der(\mathbb Z) \to \SH$ sends an abelian group $A$ to its associated
	Eilenberg-MacLane spectrum $HA$.
	Since~$\mathbb Z$ is hereditary, every object $X \in \Der(\mathbb Z)^c$ splits as 
	a direct sum of shifts of its (finitely generated) homology groups.
	Since $HA$ is not compact for any nonzero finitely generated abelian group $A$ (by standard facts about stable cohomology operations),
	we conclude that if $f_*(X) \in \SH^c$ then 
	$X=0$ in $\Der(\mathbb Z)$.
	Thus, the \locus{} of the extension-of-scalars functor $f^*:\SH \to \Der(\mathbb Z)$ is empty.
\end{Exa}

\begin{Exa}[Geometric fixed points]
	Let $G$ be a finite group and let $N \lenormal G$ be a normal subgroup.
	We have discussed at length the 
	inflation functor in equivariant stable homotopy theory (see Section~\ref{sec:adams-locus}).
	However, there is another prominent tensor-triangulated functor in the theory: the geometric
	$N$-fixed point functor $\tilde{\Phi}^{N,G}:\SH(G) \to \SH(G/N)$.
	It is an example of a finite localization, namely finite localization with respect to the 
	family $\cat F[\notsupseteq N]:=\SET{H\le G}{H \not\supseteq N}$ (cf.~Example~\ref{exa:N-concentrated}).
	Thinking geometrically, this is the finite localization associated to the Thomason (closed) set
	\[ Y:=\bigcup_{H \not\supseteq N} \supp(G/H_+) = \SET{\cat P(H,p,n)}{1\le n \le \infty, \text{\,all } p, H \not\supseteq N} \subseteq \Spc(\SHGc)\]
	and we have an identification 
	\[ \Spc(\SH(G/N)^c \cong V 
		:= \SET{\cat P(H,p,n)}{1 \le n \le \infty, \text{\,all } p, H \supseteq N} \subseteq\Spc(\SHGc).\]
	Applying Proposition~\ref{prop:locus-of-finite-localization}, we can describe the \locus{} as follows:
	\[ \Z_{\tilde{\Phi}^{N,G}} = \big(\bigcup_{\substack{(H,p):\Op(H)\supseteq N}} \SET{\cat P(H,p,n)}{2 \le n \le \infty} \big) \cup \big(\bigcup_{H: \Op(H) \supseteq N\;\forall p} \big\{ \cat P(H,1) \big\}\big).\]
	To facilitate easy comparison with Theorem~\ref{thm:locus-of-inflation}, we can rewrite this as follows:
	For any $H \le G$, we have
	\begin{enumerate}
		\item If $\Op(H) \not\supseteq N$, then $\cat P(H,p,n) \not\in \Z_{\tilde{\Phi}^{N,G}}$ for all $2 \le n \le \infty$.
		\item If $\Op(H) \supseteq N$, then $\cat P(H,p,n) \in \Z_{\tilde{\Phi}^{N,G}}$ for all $2 \le n \le \infty$.
		\item $\cat P(H,1) \in \Z_{\tilde{\Phi}^{N,G}}$ if and only if $\Op(H) \supseteq N$ for all $p$.
	\end{enumerate}
	In the terminology of Remark~\ref{rem:alternative-statement}, this says that a compact object $X \in \SH(G/N)^c$ is relatively compact for $\tilde{\Phi}^{N,G}$ iff each $p$-local isotropy group $H/N$ satisfies ${\Op(H) \supseteq N}$.
	In fact, we can also describe the \locus{} of the absolute geometric \mbox{$H$-fixed} point functor
	$\Phi^{H,G}:\SH(G) \to \SH$ for any subgroup $H\le G$.
	Regarding $\Phi^{H,G} \cong \tilde{\Phi}^{H,H}\circ \Res_H^G$ as 
	the composite of $f^*:=\tilde{\Phi}^{H,H}$ and $g^*:=\Res_H^G$, 
	it is immediate that $\A_{g\circ f} \supseteq \A_f$ since $g^*$ satisfies GN-duality.
	Moreover, $g_* = \CoInd_H^G$ has the special property that it reflects compact objects.
	Indeed, $g^*$ preserves compactness and any $x \in \SH(H)$ is a direct summand of $g^*g_*(x)$
	(\eg~by \cite[Lem.~3.3]{BalmerDellAmbrogioSanders15}).
	Thus, $\A_f = \A_{g\circ f}$.
	Therefore, the \locus{} of $\Phi^{H,G} : \SHG \to \SH$ is the subset of $\Spc(\SHc)$ described as follows:
	\begin{enumerate}
		\item If $H$ is not $p$-perfect then $\cat C_{p,n}$ is not contained in $\Z_{\Phi^{H,G}}$ for any $2 \le n \le \infty$.
		\item If $H$ is $p$-perfect then $\cat C_{p,n}$ is contained in $\Z_{\Phi^{H,G}}$ for all $2 \le n\le \infty$.
		\item The generic point $\cat C_{p,1}$ ($=\cat C_{q,1}$ for all $q$) is contained in $\Z_{\Phi^{H,G}}$ if and only if~$H$ is $p$-perfect for all $p$.
	\end{enumerate}
	This can be rephrased as follows.
	If $H$ is perfect then all finite spectra are relatively compact for $\Phi^{H,G}$
	and so GN-duality holds.
	Otherwise, the relatively compact objects are the finite torsion spectra of exponent coprime
	to the order of the abelianization of $H$.
	(Note that the primes $p$ dividing the order of the abelianization of $H$ are precisely those primes $p$ for which $H$ is not $p$-perfect.)
\end{Exa}

\begin{Exa}\label{exa:incomplete}
	Let $G$ be a compact Lie group. For any $G$-universe $\U$ we can consider the 
	stable homotopy category $\SH_\U(G)$ of $G$-spectra indexed on $\U$.
	It is always compactly generated by the orbits $\Sigma_\U^\infty G/H_+$ but 
	these generators need not be rigid if the universe $\U$ is not complete.
	Indeed, by \cite[Prop.~7.1]{Lewis00}, $\Sigma_\U^\infty G/H_+$ is rigid in $\SH_\U(G)$
	if and only if $G/H$ embeds in $\U$ as a $G$-space.
	Then, 
	defining $\Iso(\U) := \SET{H \le G}{G/H \hookrightarrow \U}$, we can 
	consider
	the subcategory 
	\[ \Loc\langle \Sigma^\infty_\U G/H_+ \mid H \in \Iso(\U) \rangle \subset \SH_\U(G) \]
	generated by those orbits which are rigid.
	It is a rigidly-compactly generated tensor-triangulated subcategory of $\SH_\U(G)$ by \cite[p.~87]{HoveyPalmieriStrickland97}.
	Now, for any $G$-linear isometry $i:\V\hookrightarrow \U$, the
	change-of-universe functor
	$i_*:\SH_\V(G)\to\SH_\U(G)$
	induces a functor
	\begin{equation}\label{eq:induced-rigid}
		\Loc\langle \Sigma_\V^\infty G/H_+ \mid H \in \Iso(\V) \rangle \xra{i_*} \Loc\langle \Sigma^\infty_\U G/H_+ \mid H\in \Iso(\U) \rangle
	\end{equation}
	which is a geometric functor between rigidly-compactly generated categories.
	In fact, the inflation functor can be realized as a special case of this construction.
	Indeed, consider the inclusion $i:\U^N \hookrightarrow \U$ of the $N$-fixed points 
	in a
	complete $G$-universe $\U$.
	In this case, 
	\eqref{eq:induced-rigid} takes the form
	\begin{equation}\label{eq:induced-change}
		\Loc\langle \Sigma^\infty_{\U^N}G/H_+ \mid H \in \Iso(\U^N)\rangle \hookrightarrow \SH_{\U^N}(G) \xra{i_*} \SH(\U).
	\end{equation}
	But the change-of-groups functor $\eps^*:\SH_{\U^N}(G/N) \to \SH_{\U^N}(G)$ is fully faithful with essential image 
$\Loc\langle G/H_+ \mid H \supseteq N\rangle = \Loc\langle G/H_+ \mid H\in\Iso(\U^N)\rangle.$
Thus, under this equivalence 
$\SH_{\U^N}(G/N)\cong \Loc\langle G/H_+ \mid H\in \Iso(\U^N)\rangle$,
	the functor \eqref{eq:induced-change} is nothing but the 
	inflation functor $\Infl_{G/N}^G := i_*\circ \eps^*$.
The author has not pursued this
but one could attempt an analysis of the adjoints and \locus{} of the more general geometric functor \eqref{eq:induced-rigid} associated to any $G$-linear isometry $\V \hookrightarrow \U$.
\end{Exa}

\begin{Rem}\label{rem:right-adjoint-to-fixed-points}
	As discussed in Remark~\ref{rem:almost-dualizing}, we know that the relative dualizing object $\DO:=f^!(\unit)$ of the inflation functor $f^*:=\Infl_{G/N}^G$ coincides with $S^{-\Adhspace\Ad(N;G)}$ \emph{after} performing the colocalization $\Gamma^*:= E\cat F(N)_+ \smashh -$.
	However, it would be desirable to obtain an explicit description of $\DO\in\SH(G)$ in general, \ie~before performing the colocalization.
	(In particular, such a description may shed light upon the relationship between the Adams isomorphism and the tom Dieck splitting theorem.)
	More generally, the right adjoint $f^!$ of the categorical fixed point functor $f_*=\lambda^N$
	deserves further study.
	By Theorem~\ref{thm:main-thm}, we know that
	$\Gamma^* f^! \cong {\Gamma^*(\DO \otimes f^*)}$,
	but we also know (by Theorem~\ref{thm:original-thm} and Proposition~\ref{prop:infl-doesnt-satisfy-gn-duality})
	that $f^! \not\cong \DO \otimes f^*$ before colocalization,
	so the problem of understanding $f^!$ does not simply reduce to understanding~$\DO$.
\end{Rem}

\begin{Rem}
	In this work, we have focused on Grothendieck-Neeman duality and the construction of the Wirthm\"{u}ller isomorphism ---
	\ie~on the original Theorem~\ref{thm:original-thm} from \cite{BalmerDellAmbrogioSanders16}.
	However, in that work there is one more theorem which completes the picture.
	Indeed, \cite[Thm.~C]{BalmerDellAmbrogioSanders16}
	establishes that, for a functor $f^*$ satisfying GN-duality and hence having the five adjoints of Theorem~\ref{thm:original-thm},
	the existence of one more adjoint on the left is equivalent to the existence of one more adjoint on the right,
	and that this is equivalent to there being an infinite sequence of adjoints in both directions.
	This leads to the Trichotomy Theorem \cite[Cor.~1.13]{BalmerDellAmbrogioSanders16} which states that a 
	geometric functor between rigidly-compactly generated categories admits exactly 3, 5, or infinitely many adjoints.
	It would be interesting to see if one can force the next layer of adjoints, just as 
	our Theorem~\ref{thm:main-thm-intro} forces the first layer of adjoints after a colocalization.
	It would also be interesting to know if there is a dual story whereby one can force GN-duality by performing some
	kind of (co)localization on the source category $\cat D$ rather than 
	on the target category $\cat C$.
\end{Rem}
\begin{Rem}
	There is significant motivation for relaxing our assumption that the tensor-triangulated categories $\cat C$ and $\cat D$
	are rigidly-compactly generated, as this would potentially enable the theory developed here (and in \cite{BalmerDellAmbrogioSanders16})
	to embrace several important examples in chromatic and motivic homotopy theory.
	For example, although the stable $\bbA^1$-homotopy category $\SHA\hspace{-.7ex}(S)$ is compactly generated, it
	is unlikely to be generated by rigid objects if the base scheme $S$ is positive dimensional 
	(cf.~Example~\ref{exas:examples}(e) and Example~\ref{exa:incomplete}).
	On the other hand, the $K(n)$-local stable homotopy category 
 (see \cite{HoveyStrickland99})
	is a prominent example of a tensor-triangulated category which is
	\emph{almost} rigidly-compactly generated: it is generated by a set of compact-rigid objects and the compact objects are rigid, but rigid objects need not be compact (\eg~ the unit object is not compact).
	Generalizing our treatment of the Wirthm\"{u}ller isomorphism (and the duality results of \cite{BalmerDellAmbrogioSanders16})
	to cover such examples of non-unital algebraic stable homotopy categories 
	may
	lead to connections with 
	Gross-Hopkins duality 
	(\`{a} la~\cite{DwyerGreenleesIyengar11})
	and Hopkins and Lurie's work on ``ambidexterity''
	in $K(n)$-local stable homotopy theory (see \cite[Sec.~4--5]{HopkinsLurie13pp}). 
	This will be pursued in future work.
	Since the category of colocal objects appearing
	in Theorem~\ref{thm:main-thm} is precisely such an example of a non-unital algebraic stable homotopy category,
	the author has some optimism
	that there may be fruit down this road.
\end{Rem}

\bibliographystyle{alpha}%
\bibliography{TG-articles}

\end{document}